\documentclass[a4paper,11pt,reqno,oneside]{amsart}
\input{declarations}
\pdfoutput=1 


\title{Higher operations in string topology of classifying spaces}
\date{\today}

\author{Anssi Lahtinen}
\address{%
Universit\"{a}t Hamburg\\
Fachbereich Mathematik/AZ\\
Bundesstrasse 55\\ 
20146 Hamburg\\
Germany}
\email{anssi.lahtinen@uni-hamburg.de}

\subjclass[2010]{%
55P50 
(Primary)
55R35,
81T40, 
20J06,
20F28
(Secondary)}

\begin{document}

\begin{abstract}
Examples of non-trivial higher string topology operations have
been regrettably rare in the literature. 
In this paper, working in the context of string topology 
of classifying spaces, we provide explicit calculations of 
a wealth of non-trivial higher string topology operations
associated to a number of different Lie groups.
As an application of these calculations, we obtain 
an abundance of interesting homology classes in the 
twisted homology groups of automorphism groups of 
free groups, the ordinary homology groups of 
holomorphs of free groups, and
the ordinary homology groups of 
affine groups over the integers and 
the field of two elements.
\end{abstract}

\maketitle

\setcounter{tocdepth}{2}
\tableofcontents

\section{Introduction}

Since Chas and Sullivan's original observation that the 
homology of the free loop space of a manifold
admits the structure of a Batalin--Vilkovisky algebra
\cite{ChasSullivan},
string topology has been extended in many ways,
both by replacing Batalin--Vilkovisky algebras
with more elaborate algebraic structures, and by
constructing similar structures from objects other than manifolds.
In modern formulations, string topology of a space $E$ is 
typically expressed as a field theory consisting (roughly speaking)
of compatible maps 
\[
	H_\ast (\calM) \tensor H_\ast (E^X) \longto H_\ast (E^Y)
\]
where $X$ and $Y$ are (typically) compact 1-manifolds, 
$E^X$ and $E^Y$ denote the
spaces of maps from $X$ and $Y$ into $E$, 
respectively, and $\calM$ is some kind of
space of cobordisms between $X$ and $Y$.
The space $E$ could be, as in Chas and Sullivan's original work,
a closed oriented manifold \cite{Godin},
or it could be
the classifying space of a compact Lie group \cite{ChataurMenichi}
or even a stack \cite{BGNX}. 
The space $\calM$, on the other hand, might be 
a model for the moduli space of Riemann surfaces
modelled on a cobordism $\Sigma\colon X \to Y$
\cite{Godin,ChataurMenichi,BGNX}
or a space of suitable Sullivan chord diagrams 
\cite{Chataur-bordism};
see also \cite{Kaufmann,PoirierRounds,WahlWesterland}.
Typically, the degree 0 part of the structure, 
that is, the part corresponding 
to $H_0$ of the spaces $\calM$, is relatively accessible. 
On the other hand, \emph{higher string topology operations}, that is, 
maps $H_\ast (E^X) \to H_\ast (E^Y)$ corresponding to 
positive-degree homology classes of $\calM$, 
have proved much harder to understand: 
while the Batalin--Vilkovisky $\Delta$-operator has 
been computed in a number of cases (see eg.\
\cite{TamanoiStiefel, 
MenichiSpheres, 
HepworthProjective, 
HepworthLie, 
Yang,
BB}),
examples of other non-trivial higher operations 
remain rare.
Indeed, Tamanoi \cite{TamanoiStable}
has shown that string topology operations
associated to the best-understood classes
in the homology of moduli spaces of Riemann surfaces,
namely those in the stable range, vanish.
In the context of string topology of manifolds,
Wahl \cite{Wahl}
has recently succeeded in constructing families of 
non-trivial higher operations not arising 
from the $\Delta$-operator, and it appears that 
these are the only examples of such operations 
so far (with the possible exception of the 
Goresky--Hingston $\circledast$-product \cite{GoreskyHingston},
which could be interpreted as a higher operation
associated to a degree 1 homology class 
of a compactified moduli space).
On the other hand, in the context of string topology
of classifying spaces of compact Lie groups,
it seems that the first and only 
examples of non-trivial higher operations
are given by the computation in 
\cite[section 9]{HL} for the group $\Z/2$.

The present paper can be viewed as an argument for 
the following three theses: first, that non-trivial higher string 
topology operations exist in abundance; second, that
many examples of such operations can be computed
explicitly; and third, that such computations have 
applications to mathematics outside string topology.
To support the first two theses, we will provide 
a wealth of explicit calculations of non-trivial
higher operations in the string topology 
of classifying spaces of 
a number of different compact Lie groups.
We will work within the context of the recent extension \cite{HL}
(in characteristic 2)
of Chataur and Menichi's
string topology of classifying spaces \cite{ChataurMenichi} into 
a novel kind of field theory called 
a Homological H-Graph Field Theory,
or an HHGFT. 
Recall from \cite{HL} that an \emph{h-graph}
means any space homotopy equivalent
to a finite graph, and that an 
\emph{h-graph cobordism} $S\colon X \hto Y$
from an h-graph $X$ to another h-graph $Y$ is a diagram 
$X \incl S \hookleftarrow Y$
of h-graphs satisfying certain conditions
\cite[Definition~2.5]{HL}.
A \emph{family of h-graph cobordisms}
$S/B \colon X \hto Y$ from $X$ to $Y$ over a space $B$
consists of a fibration $S \to B$ and maps 
\begin{equation}
\label{maps:family-of-h-graph-cobs}
	X\times B \longto S \longleftarrow Y\times B
\end{equation}
over $B$ satisfying certain conditions \cite[Definition~2.8]{HL}
which in particular imply that for every $b\in B$,
the restriction to fibres over $b$ in 
\eqref{maps:family-of-h-graph-cobs}
gives an h-graph cobordism $S_b \colon X \hto Y$.
An h-graph cobordism $S \colon X \hto Y$  is 
called \emph{positive} if $X$ meets every component of $S$,
and a family of h-graph cobordism $S/B \colon X \hto Y$
is called positive if all its fibres are positive.
Given a compact Lie group $G$, the HHGFT $\Phi^G$
constructed in \cite{HL}  consists of maps
\[
	\Phi^G(S/B) 
	\colon 
	H_{\ast + \dim(G) \chi(S,X)}(B) \tensor H_\ast (BG^X)
	\longto
	H_\ast (BG^Y),
\]
one for every positive family of h-graph cobordisms 
$S/B \colon X \hto Y$, satisfying various 
compatibility axioms \cite[subsection 3.1]{HL}. 
Here $\chi(S,X)$ denotes the locally constant
function $B \to \Z$ sending each point $b\in B$
to the Euler characteristic $\chi(S_b,X)$.

The string topology operations we compute
are associated to families of h-graph cobordisms
$S_n/B\Sigma_n\colon \pt \hto \pt$ 
from the one-point space to itself
constructed as follows.
Let $\hat S_n \colon \pt \hto \pt$ be the h-graph
cobordism depicted below 
consisting of $n$ strings joining the two points.
\[
	\hat S_n 
	= 
	\begin{tikzpicture}[scale=0.02,baseline=-2.7]
		\path[ARC, fill=black] (0,0) circle (3);
		\path[ARC, fill=black] (100,0) circle (3);
		\path[ARC] (0,0) .. controls (20,50) and (80,50) .. (100,0) 
				node [midway, above, yshift=-2.5] {\tiny $1$};
		\path[ARC] (0,0) .. controls (20,20) and (80,20) .. (100,0) 
				node [midway, above, yshift=-2.5] {\tiny $2$};
		\path (50,30) -- (50,-50) node [black,midway,rotate=90]  {$\cdots$};
		
		\path[ARC] (0,0) .. controls (20,-50) and (80,-50) .. (100,0) 
				node [midway, below, yshift=2.5] {\tiny $n$};
	\end{tikzpicture}
	\colon
	\pt \longhto \pt
\]
The symmetric group $\Sigma_n$ acts on 
$\hat S_n$ by permuting the $n$ strings,
and performing the Borel construction gives us the
family h-graph cobordisms
\[
	S_n/B\Sigma_n 
	= 
	E\Sigma_n \times_{\Sigma_n} \hat S_n / B\Sigma_n 
	\colon 
	\pt \longhto \pt.
\]
We will compute explicitly the operations 
\[
	\Phi^G(S_n/B\Sigma_n)
	\colon 
	H_{\ast-\dim(G)(n-1)}(B\Sigma_n)\tensor H_\ast(BG)
	\longto
	H_\ast(BG)	
\]
for all $n\geq 1$ when $G$ is an elementary abelian 2-group
or a dihedral group of order $2$ mod $4$,
for $1 \leq n \leq 7$ when $G$ is a torus  $\T^l = (S^1)^l$,
and for $1\leq n \leq 3$ when $G=SU(2)$. 
The computation done in \cite[section 9]{HL} amounts 
to the calculation of $\Phi^{\Z/2} (S_2/B\Sigma_2)$.
It turns out that for any compact Lie group $G$, 
all higher operations associated to $S_n/B\Sigma_n$ 
vanish when $n$ is not a power of 2. 
On the other hand, when $n$ is a power of 
2, our calculations provide a wealth of non-trivial 
operations for all of the groups $G$ above.
In particular, these calculations give the first 
examples of non-trivial higher operations 
in string topology of classifying spaces 
where the group $G$ is non-abelian
or a positive-dimensional compact Lie group.

To support the third thesis, we show 
that our calculations detect a wealth 
of non-trivial elements in the homology groups of 
certain highly interesting groups,
namely in the twisted homology groups  
$H_\ast (B\Aut(F_n);\, \tilde \F_2^n)$
of $\Aut(F_n)$
and in the ordinary mod 2 homology groups 
of the holomorph $\Hol(F_n) = F_n \rtimes \Aut(F_n)$
and the affine groups 
$\Aff_n(\Z) = \Z^n \rtimes GL_n(\Z)$
and
$\Aff_n(\F_2) = \F_2^n \rtimes GL_n(\F_2)$.
Here $F_n$ denotes the free group on $n$ generators,
and $\tilde \F_2^n$ denotes $\F_2^n$ equipped 
with the $\Aut(F_n)$-action given by the 
composite $\Aut(F_n) \to GL_n(\Z) \to GL_n(\F_2)$,
where the first homomorphism is given by abelianization
and the second one by reduction mod~2.
The elements we detect are exhibited in 
Corollaries~\ref{cor:hol-app},
\ref{cor:non-stab},
\ref{cor:aut-app},
\ref{cor:hol-fam2},
\ref{cor:aff-app},
and
\ref{cor:aff-app2} below.
For each of the aforementioned groups, 
the homology groups are known to stabilize
in the sense that the homology groups for successive
values of $n$ are connected by a map,
the \emph{stabilization map}, which is 
known to be an isomorphism in degrees small enough
relative to $n$, the \emph{stable range}. 
In each case, the homology groups are 
well understood in the stable range, 
but remain mysterious outside it; 
see section~\ref{subsec:stability-results}
and the discussion following
Corollary~\ref{cor:aut-app}.
It is in this poorly understood unstable 
range where the elements we 
detect live. Indeed, many of the elements 
are \emph{unstable} in the sense that they 
are annihilated by an iteration of the 
stabilization map; 
see Corollaries~\ref{cor:hol-fam2} and
\ref{cor:aff-app2}
as well as Corollary~\ref{cor:aut-app}
together with the discussion following it.

\begin{example}
The reader may appreciate an example of 
the non-trivial homology classes we construct.
Let $u$ be an integer of the form $u=\sum_{i=1}^k u_i$
where $u_1,\ldots, u_k$ are positive integers with the
property that no two of them
have a $1$ in common in their binary expansions.
Let $f \colon \{1,\ldots,k\} \to \{1,\ldots,r\}$
be a surjective function, and denote
\[
	N = \sum_{i=1}^r  (2^{|f^{-1}(i)|}-1).
\]
Then, in view of Proposition~\ref{prop:example-of-non-triv-op},
our computations for the group $G=\Z/2$ combine with:
\begin{enumerate}
\item Corollary~\ref{cor:hol-app} 
	to produce a non-trivial element in 
	$H_u(B\Hol(F_N))$.
	This class is stable,
	but not in the image of the stabilization map
	$H_u(B\Hol(F_{N-1})) \to H_u(B\Hol(F_{N}))$.
	See Remark~\ref{rk:stab} and Corollary~\ref{cor:non-stab}.
\item Corollary~\ref{cor:aut-app}
	to produce a non-trivial element in 
	$H_{u-1} (B\Aut(F_N);\,\tilde \F_2^N)$.
	Like all elements in 
	$H_* (B\Aut(F_N);\,\tilde \F_2^N)$,
	this class is unstable. See 
	the discussion following
	Corollary~\ref{cor:aut-app}.
\item Corollary~\ref{cor:hol-fam2}
	to produce a non-trivial unstable element in 
	$H_u(B\Hol(F_N))$ which is not in 
	the image of the stabilization map
	$H_u(B\Hol(F_{N-1})) \to H_u(B\Hol(F_{N}))$.
\item Corollary~\ref{cor:aff-app} 
	to produce a non-trivial element in 
	$H_u(B\Aff_N(\Z))$ and another one in 
	$H_u(B\Aff_N(\F_2))$.
	Like all positive-dimensional classes in the 
	homology of $\Aff_N(\F_2)$, the
	element in $H_u(B\Aff_N(\F_2))$
	is unstable. See Theorems~\ref{thm:glstab} 
	and	\ref{thm:quillen-calc}.
\item Corollary~\ref{cor:aff-app2} 
	to produce a non-trivial unstable element in 
	$H_u(B\Aff_N(\Z))$ and another one in 
	$H_u(B\Aff_N(\F_2))$.
\end{enumerate}
\end{example}

The paper is organized as follows. 
In section~\ref{sec:product-thm}, we prove 
a general result,
Theorem~\ref{thm:product-thm},
which expresses $\Phi^{G_1\times G_2}$
in terms of $\Phi^{G_1}$ and $\Phi^{G_2}$.
This result is used to reduce the 
computation of $\Phi^{(\Z/2)^l}$  and $\Phi^{\T^l}$
to the calculation of 
$\Phi^{(\Z/2)^1}$  and $\Phi^{\T^1}$, respectively,
and could be used to further extend the list 
of groups $G$ for which the operations
$\Phi^G(S_n/B\Sigma_n)$
are known by taking products of 
the groups we consider.
In section~\ref{sec:vanishing-result}, we  prove
a vanishing result, Theorem~\ref{thm:vanishing},
which implies the vanishing of 
the higher operations 
associated to $S_n/B\Sigma_n$
when $n$ is not a power of 2, and 
reduces the calculation of 
the operations $\Phi^G(S_{2^k}/B\Sigma_{2^k})$
to the computation of certain maps
\[
	\alpha^G_k
	\colon 
	H_{\ast-\dim(G)(2^k-1)}(BV_k)\tensor H_\ast(BG)
	\longto 
	H_\ast(BG).
\]
where $V_k$ denotes the elementary abelian 2-group 
of dimension $k$. See Definition~\ref{def:alphagk}.
In section~\ref{sec:preliminary-calculations},
we develop descriptions of the maps $\alpha^G_k$
suitable for computations.
Sections~\ref{sec:e-and-dcalc},
\ref{sec:tcalc} and \ref{sec:su2calc}
are then devoted to the computation of the 
maps $\alpha^G_k$
when $G$ is an elementary abelian 2-group
or a dihedral group of order 2 mod 4,
a torus,
or $SU(2)$,
respectively. 
Finally, in section~\ref{sec:applications}
we discuss the aforementioned applications
of our computations to the homology of 
$\Aut(F_n)$, $\Hol(F_n)$,
$\Aff_n(\Z)$ and $\Aff_n(\F_2)$.

\subsection*{Notation and conventions}

Throughout the paper, we will work over the 
field $\F_2$ of two elements. Unless stated
otherwise, homology and cohomology groups 
are taken with $\F_2$ coefficients.
If $S/B \colon X \hto Y$ is a positive family of 
h-graph cobordisms, we denote by 
$\Phi^G(S/B)^\sharp$ the adjoint
\[
	\Phi^G(S/B)^\sharp \colon H_{\ast + \dim(G) \chi(S,X)}(B) 
	\longto 
	\Hom_\ast(H_\ast(BG^X), H_\ast(BG^Y))
\]
of the map 
\[
	\Phi^G(S/B) 
	\colon
	H_{\ast + \dim(G) \chi(S,X)}(B) \tensor H_\ast(BG^X)
	\longto
	H_\ast(BG^Y).
\]
Here $\Hom_\ast$ denotes the internal hom in the 
category of graded $\F_2$-vector spaces.
If $S\colon X \hto Y$ is an h-graph cobordism,
we write $\hAut(S)$ for the topological monoid
of self homotopy equivalences of $S$ fixing $X$ and $Y$
pointwise. 
The symbols $\ucirc$ and $\usqcup$ 
refer to external composition and external disjoint union
of families of h-graph cobordisms, respectively; 
see \cite[Definition~2.8]{HL}.
If $H$ is a group or monoid, possibly with a topology,
$BH$ and $EH$ always stand for the bar constructions 
$B(\pt,H,\pt)$ and $B(\pt,H,H)$, respectively. 
See for example~\cite[section 7]{MayClassifying}.
As in \cite{HL}, when working with topological spaces,
we will work within the category of $k$-spaces,
and when working with fibred spaces,
the base spaces are in addition assumed to be
weak Hausdorff spaces.

\subsection*{Acknowledgements}

The author would like to thank 
Tilman Bauer,
Gregory Ginot, 
Jesper Grodal,
Richard Hepworth,
Ib Madsen,
Luc Menichi,
Christine Vespa,
Karen Vogtmann
and 
Nathalie Wahl
for useful conversations
and 
the editor 
and 
the anonymous referee 
for valuable comments and suggestions.

\section{The product theorem}
\label{sec:product-thm}

Let $G_1$ and $G_2$ be compact Lie groups. For any space
$X$, the projections of $G_1\times G_2$ onto $G_1$ and $G_2$ induce 
a natural homeomorphism
\[
	B(G_1\times G_2)^X
	\xto{\ \homeom\ }
	BG_1^X \times BG_2^X.
\]
Combining this with the homology cross product, we obtain a 
natural isomorphism
\[
	\kappa_X \colon H_\ast(BG_1^X) \tensor H_\ast(BG_2^X) 
						\xto{\ \homeom\ }
					H_\ast(B(G_1\times G_2)^X)
\]
The purpose of this section is to prove the following theorem
expressing $\Phi^{G_1 \times G_2}$ in terms of 
$\Phi^{G_1}$ and $\Phi^{G_2}$.
\begin{theorem}[Product theorem]
\label{thm:product-thm}
Suppose $S/B\colon X \hto Y$ is a positive family of h-graph cobordisms,
and let $d_1 = \dim G_1$ and $d_2 = \dim G_2$.
Then the following diagram commutes.
\[\xymatrix@C+4em{
	H_{\ast + (d_1+d_2)\chi(S,X)} (B) \tensor H_\ast(B(G_1 \times G_2)^X)
	\ar[r]^-{\Phi^{G_1 \times G_2}(S/B)}
	&
	H_\ast(B(G_1 \times G_2)^Y)	
	\\
	\ar[u]^-{1\tensor\kappa_X}_-\isom
	{\left[\begin{array}{c}
		H_{\ast + (d_1+d_2)\chi(S,X)} (B) \\
			\tensor \\
		H_\ast(BG_1^X)\tensor H_\ast(BG_2^X)
	\end{array}\right]}
	\ar[d]_-{\Delta_\ast \tensor 1 \tensor 1}
	\\
	{\left[\begin{array}{c}
		H_{\ast + d_1\chi(S\times B,X) + d_2\chi(B \times S,X)} (B\times B) \\
			\tensor \\
		H_\ast(BG_1^X)\tensor H_\ast(BG_2^X)
	\end{array}\right]}
	\\
	\ar@{<-}[u]^-{(\times \tensor 1 \tensor 1)^{-1}}_-\isom
	{\left[\begin{array}{c}
		H_{\ast + d_1\chi(S,X)} (B) \tensor H_{\ast + d_2\chi(S,X)}(B) \\
			\tensor \\
		H_\ast(BG_1^X) \tensor H_\ast(BG_2^X)	
	\end{array}\right]}
	\ar[d]_-{1\tensor \tau \tensor 1}^-\isom
	\\
	{\left[\begin{array}{c}
		H_{\ast + d_1\chi(S,X)} (B) 
			\tensor 
		H_\ast(BG_1^X) \\
			\tensor \\
		H_{\ast + d_2\chi(S,X)}(B) 
			\tensor
		H_\ast(BG_2^X)	
	\end{array}\right]}
	\ar[r]^-{\Phi^{G_1} (S/B) \tensor \Phi^{G_2} (S/B)}
	&
	H_\ast(BG_1^Y) \tensor H_\ast(BG_2^Y)
	\ar[uuuu]_{\kappa_Y}^\isom
}\]
Here $\Delta$ denotes the diagonal map and $\tau$ denotes the twist map.
\end{theorem}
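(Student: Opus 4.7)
The plan is to reduce the statement to the explicit construction of $\Phi^G(S/B)$ given in \cite{HL} and then to verify that each ingredient of that construction interacts multiplicatively with products of Lie groups. Recall that $\Phi^G(S/B)$ is assembled from the zigzag $BG^X \times B \longleftarrow BG^S \longto BG^Y \times B$ of fibrations over $B$ by first applying an umkehr (Gysin) map along the left-hand restriction (which accounts for the degree shift $\dim(G)\chi(S,X)$), pushing forward along the right-hand restriction, and finally projecting away the $B$-factor.

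First I would exploit the natural homeomorphism $B(G_1\times G_2)^Z \homeom BG_1^Z \times BG_2^Z$ for each of $Z \in \{X, Y, S\}$, which is compatible with all restriction maps induced by the inclusions $X\times B \hookrightarrow S$ and $Y\times B \hookrightarrow S$. Under these homeomorphisms, the entire zigzag defining $\Phi^{G_1\times G_2}(S/B)$ becomes the fibrewise product over $B$ of the corresponding zigzags for $G_1$ and for $G_2$.

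The heart of the argument is then a multiplicativity assertion for umkehr maps: the umkehr of a fibrewise product of the relevant restriction fibrations factors, via the K\"unneth isomorphism (which is an isomorphism here since we work over the field $\F_2$), as the external product of the individual umkehr maps. This follows from the multiplicativity of the Thom/Poincar\'e duality data used in the construction, together with the additive splitting of the virtual fibre dimension,
\[
	\dim(G_1\times G_2)\,\chi(S,X) \;=\; \dim(G_1)\,\chi(S,X) + \dim(G_2)\,\chi(S,X),
\]
which matches the Euler-characteristic shifts appearing in the diagram. Once this multiplicativity is in place, the pushforward along the right-hand restriction and the projection away from $B$ likewise decompose as external products.

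The diagonal $\Delta_*$ appears at the point where the single parameter space $B$ must be shared between two otherwise independent constructions, one for $G_1$ and one for $G_2$: after identifying $H_*(B\times B)$ with $H_*(B)\tensor H_*(B)$ via the cross product, pulling back the external construction $\Phi^{G_1}(S/B)\tensor \Phi^{G_2}(S/B)$ along $\Delta$ exactly reproduces the $\Phi^{G_1\times G_2}(S/B)$ construction. The twist $\tau$ and the symmetry $\kappa$ in the diagram are then purely formal reorderings of tensor factors. The main obstacle I anticipate is making the umkehr multiplicativity rigorous within the homotopical framework of \cite{HL}: one must verify that the Thom-class (or equivalent Poincar\'e duality) data chosen there for the map $B(G_1\times G_2)^S \to B(G_1\times G_2)^X\times B$ corresponds, under the splitting $B(G_1\times G_2)^S \homeom BG_1^S \times BG_2^S$, to the external product of the Thom classes used to define $\Phi^{G_1}(S/B)$ and $\Phi^{G_2}(S/B)$ individually. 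Once this compatibility is established, the commutativity of the displayed diagram follows by tracing a general element through the two paths.
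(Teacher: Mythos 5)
Your proposal is correct and follows essentially the same route as the paper's proof: both reduce to the explicit push--pull description of $\Phi^G(S/B)$, exploit the natural splitting $B(G_1\times G_2)^Z \homeom BG_1^Z\times BG_2^Z$ compatible with the restriction maps of the zigzag, invoke the monoidality of the umkehr construction to split the Gysin map as an external product, and account for the shared base $B$ via the diagonal. The paper additionally notes the preliminary reduction to a connected CW base via the base-change axiom and phrases the umkehr compatibility in terms of the 2-cell formalism of the double category $\bbS^{\ds}(\calM)$ and the monoidality constraint $U_{\mfld,\tensor,1}$ from \cite{HL}, which is precisely the rigorous form of the ``umkehr multiplicativity'' you flag as the main thing to verify.
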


Recall from \cite[Definition~5.2]{HL} that a base space $B$
is called \emph{good} if it 
can be embedded as an open subset of some CW complex.
Let $G$ be a compact Lie group and let $S/B\colon X \hto Y$ be 
a positive family of h-graph cobordisms over a good base space.
In \cite[section 7.6 and 7.8]{HL},
it is shown 
that for an h-graph with basepoints $(Z,R)$, there
is a natural zigzag of homotopy equivalences
\begin{equation}
\label{eq:zigzag}
\xymatrix@C+1.4em{
	B(G^{\Pi_1(Z,R)})
	\ar[r]^{\theta^G_{\Pi_1(Z,R)}}_\homot 
	&
	BG^{B\Pi_1(Z,R)} 
	\ar[r]^{\tilde\alpha^G_{Z,R}}_\homot
	&
	BG^{WQ(Z,R)}
	&
	\ar[l]_-{\tilde\beta^G_{Z,R}}^-\homot
	BG^Z.
}
\end{equation}
Let $\eta^G_{Z,R}$ denote the induced isomorphism
\[
	\eta^G_{Z,R} 
	\colon 
	H_\ast B (G^{\Pi_1(Z,R)}) 
	\xto{\ \isom\ }
	H_\ast (BG^Z).
\]
Unrolling the construction of $\Phi^G(S/B)$ 
given in \cite[sections 6 and 7]{HL},
we obtain the following description of the operation $\Phi^G(S/B)$.
\begin{proposition}
\label{prop:opdesc}
Let $S/B \colon X \hto Y$ be a positive family of h-graph cobordisms
over a good base space.
Choose basepoints $P\to X $ and $Q \to Y$ for $X$ and $Y$, respectively.
Then $\Phi^G(S/B)$ is given by the composite
\begin{equation*}
\vcenter{\xymatrix@1@!0@C=3.4em{
	*!L{H_{\ast+\dim(G)\chi(S,X)} (B) \tensor H_\ast(BG^X)}
	\\
	&
	\ar[rr]^{1\tensor (\eta_{X,P}^G)^{-1}}_\isom
	&&
	*!L{\;H_{\ast+\dim(G)\chi(S,X)} (B) \tensor H_\ast B(G^{\Pi_1(X,P)})}
	\\
	&
	\ar[rr]^\times_\isom
	&&
	*!L{\;H_{\ast+\dim(G)\chi(S,X)} (B\times B(G^{\Pi_1(X,P)}))}
	\\
	&
	\ar[rr]^{!}
	&&
	*!L{\;H_\ast B(G^{\Pi_1(S,P)})}
	\\
	&
	\ar[rr]^{(a)}_\isom
	&&
	*!L{\;H_\ast B(G^{\Pi_1(S,P\sqcup Q)})}
	\\
	&
	\ar[rr]^{(b)}
	&&
	*!L{\;H_\ast B(G^{\Pi_1(Y Q)})}
	\\
	&
	\ar[rr]^{\eta_{Y,Q}^G}_\isom
	&&
	*!L{\;H_\ast ( BG^Y )}
}}
\end{equation*}
where the map labeled by \textup{$!$}
is the umkehr map \cite[section 7.2]{HL} associated to the map 
\begin{equation}
\label{map:umkehr-data}
\vcenter{\xymatrix{
	B \times B(G^{\Pi_1(X,P)})
	\ar[dr]
	&&
	B(G^{\Pi_1(S,P)})
	\ar[ll]
	\ar[dl]
	\\
	&
	B\times BG^P
}}
\end{equation}
of fibrewise manifolds induced by the map $B\times X \to S$;
where the map \textup{(a)} is induced by the inverse of the 
homotopy equivalence
\[
	B(G^{\Pi_1(S,P\sqcup Q)}) 
	\xto{\ \homot\ } 
	B(G^{\Pi_1(S,P)}) 
\]
induced by the inclusion $P\incl P\sqcup Q$;
and where the map \textup{(b)}
is induced by the map $(S\to B,P\sqcup Q) \to (Y\to \pt,Q)$
in the category $\calHbp^\fop$ of 
\cite[subsection 7.3 and Definition 5.10]{HL} given by the diagram
\[\vcenter{\xymatrix{
	(S, P\sqcup Q) 
	\ar[d]
	&
	\ar[l]
	(B\times Y,Q)
	\ar[r]^-{\pr}
	\ar[d]
	&
	(Y,Q)
	\ar[d]
	\\
	B
	\ar@{=}[r]
	&
	B
	\ar[r]
	&
	\pt
}}\]
where the top left map is induced by the inclusions $B\times Y \to S$ and
$Q \to P\sqcup Q$. \qed
\end{proposition}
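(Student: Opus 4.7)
The proof is a matter of unpacking the construction of $\Phi^G(S/B)$ from \cite[sections 6 and 7]{HL} and matching each piece of that construction with the corresponding arrow of the displayed composite. My plan is to proceed through the zigzag from top to bottom, identifying each arrow with a step in the definition of $\Phi^G$.

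First I would recall that $\Phi^G(S/B)$ is defined in \cite{HL} by transporting to the groupoid-representation model $B(G^{\Pi_1(Z,R)})$ for each of $Z=X,S,Y$ via the zigzag \eqref{eq:zigzag}. The outermost two rungs, $\eta^G_{X,P}$ and $\eta^G_{Y,Q}$, thus appear as the first and last maps of the proposed composite, and their placement is dictated by the naturality of \eqref{eq:zigzag} in the h-graph with basepoints $(Z,R)$, which is established in \cite[sections 7.6 and 7.8]{HL}.

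Next I would identify the middle of the composite with the umkehr step. The operation in \cite{HL} is produced by applying the umkehr map of \cite[section 7.2]{HL} to a diagram of fibrewise manifolds induced by the inclusion $B\times X \to S$; after transporting everything to the groupoid-representation picture via the zigzag, this diagram becomes precisely \eqref{map:umkehr-data}, so the map labelled ``!'' in the statement is the same umkehr map as in \cite{HL}, and the resulting degree shift by $\dim(G)\chi(S,X)$ matches the fibre-dimension computation there.

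Finally, after the umkehr step the construction in \cite{HL} restricts to the outgoing boundary $Y$ via a morphism in the category $\calHbp^\fop$. The map (b) of the composite is exactly that morphism, while the auxiliary isomorphism (a) arises because the umkehr is performed using the basepoint set $P$ alone, whereas the restriction-to-codomain map is naturally defined starting from the enlarged basepoint set $P\sqcup Q$; the two give canonically homotopy equivalent models because $S$ is positive, so every component of $S$ already meets $P$ and adjoining $Q$ yields a Morita equivalent groupoid. The only real subtlety is bookkeeping: verifying the compatibility of the umkehr map with the change-of-basepoints (a) and with the subsequent $\calHbp^\fop$-functoriality (b). Once these compatibilities are collected from the naturality statements of \cite[sections 6--7]{HL}, each arrow of the composite is forced by the corresponding step in the construction, and the proposition follows.
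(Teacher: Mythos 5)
Your proposal is correct and follows exactly the route the paper takes: the paper offers no separate argument beyond the remark that the description follows by ``unrolling the construction of $\Phi^G(S/B)$ given in \cite[sections 6 and 7]{HL}'' and ends the statement with a \qed, and your write-up is precisely a verbalization of that unwinding (zigzag at each end, umkehr in the middle, $\calHbp^\fop$-functoriality for the restriction to $Y$, and positivity to justify that adjoining $Q$ gives a Morita-equivalent groupoid so that (a) is invertible).
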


\begin{proof}[Proof of Theorem \ref{thm:product-thm}]
By picking a CW approximation to $B$ and using the 
base change axiom of HHGFTs, we may assume that $B$ is
a CW complex and hence a good base space. 
A further application of the base change axiom shows that 
without loss of generality we may restrict to the case where
$B$ is connected. This simplifies notation, since in this
case $\chi(S,X)$ is an integer rather than an integer-valued
locally constant function on $B$.
The theorem follows by a lengthy but straightforward 
diagram chase using the description for $\Phi^G(S/B)$
given in Proposition~\ref{prop:opdesc} and the 
following observations:
\begin{itemize}
\item For any family $(T,R)$ of h-graphs with basepoints over 
	a good base space $C$, the projections of $G_1\times G_2$ onto
	$G_1$ and $G_2$
	induce a natural isomorphism
	\[\xymatrix{
		B\big((G_1\times G_2)^{\Pi_1(T,R)} \big)
		\ar[r]_-\isom
		\ar[d]
		&
		B\big(G_1^{\Pi_1(T,R)} \big)
			\times_C
		B\big(G_2^{\Pi_1(T,R)} \big)
		\ar[d]
		\\
		C\times B(G_1\times G_2)^R
		\ar[r]_-\isom
		&
		C \times BG_1^R \times BG_2^R
	}\]
	of fibrewise manifolds.
\item The zigzag \eqref{eq:zigzag} is natural with respect
	to maps of Lie groups. It follows that the diagram
	\[\xymatrix@C+2.5em{
		H_\ast B(G_1\times G_2)^{\Pi_1(X,P)})
		\ar[r]^{\eta^{G_1\times G_2}_{X,P}}_\isom
		\ar[d]^\isom
		&
		H_\ast (B(G_1 \times G_2)^X)
		\ar[d]_\isom
		\\
		H_\ast \big( B(G_1^{\Pi_1(X,P)}) \times B(G_2^{\Pi_1(X,P)}) \big)
		\ar[r]
		&
		H_\ast (BG_1^X\times BG_2^X)
		\\
		\ar[u]^\times_\isom
		H_\ast B(G_1^{\Pi_1(X,P)}) \tensor H_\ast B(G_2^{\Pi_1(X,P)})
		\ar[r]^-{\eta^{G_1}_{X,P} \tensor \eta^{G_2}_{X,P} }_-\isom
		&
		\ar[u]_\times^\isom
		H_\ast (BG_1^X) \tensor H_\ast(BG_2^X)
		\ar `r[u]`[uu]_{\kappa_X}^\isom [uu]
	}\]
	commutes, and similarly for $(Y,Q)$.
	Here the middle horizontal arrow is 
	\[
		(\tilde\beta^{G_1} \times \tilde\beta^{G_2})^{-1}_\ast
			\circ
		(\tilde\alpha^{G_1} \times \tilde\alpha^{G_2})_\ast
			\circ 
		(\theta^{G_1} \times \theta^{G_2})_\ast
	\]
	and the vertical maps in the top square are
	induced by the projections of $G_1\times G_2$ onto $G_1$ and $G_2$.
\item Consider the commutative diagram
	\begin{equation}
	\label{diag:sdsm}
	\vcenter{\xymatrix@!0@C=9em@R=9ex{
		B\times B(G_1^{\Pi_1(X,P)})\times B(G_2^{\Pi_1(X,P)})
		\ar[ddd]
		\ar[dr]
		&&
		B(G_1^{\Pi_1(S,P)})\times_B B(G_2^{\Pi_1(S,P)})
		\ar[ddd]
		\ar[ll]
		\ar[dl]
		\\
		&
		B\times BG_1^P \times BG_2^P
		\ar[d]
		\\
		&
		B\times BG_1^P \times B\times BG_2^P
		\\
		B\times B(G_1^{\Pi_1(X,P)})\times B\times B(G_2^{\Pi_1(X,P)})
		\ar[ur]	
		&&
		\ar[ll]
		B(G_1^{\Pi_1(S,P)})\times B(G_2^{\Pi_1(S,P)})
		\ar[ul]
	}}
	\end{equation}
	where the horizontal arrows are induced by the 
	maps \eqref{map:umkehr-data} of fibrewise manifolds
	for $G=G_1,G_2$;
	the right-hand vertical map is the inclusion;
	and the middle and left-hand vertical maps
	are induced by the diagonal map $B\to B\times B$.
	Decorating the left-hand corners of the diagram with degree shift
	$(d_1+d_2)\chi(S,X)$ and the right-hand corners
	with degree shift $0$, we obtain a 
	2-cell in the double category $\bbS^\ds(\calM)$ of
	\cite[subsection~7.1]{HL}. Applying the umkehr functor 
	$U_\mfld$ \cite[subsection~7.2]{HL}, 
	we therefore obtain a commutative square
	{\small
	\[\xymatrix@C-1.2em{
		H_{\ast + (d_1+d_2)\chi(S,X)}
			\big(B\times B(G_1^{\Pi_1(X,P)})\times B(G_2^{\Pi_1(X,P)})\big)
		\ar[r]^-{!}
		\ar[d]
		&
		H_\ast \big(B(G_1^{\Pi_1(S,P)})\times_B B(G_2^{\Pi_1(S,P)})\big)
		\ar[d]
		\\
		H_{\ast + (d_1+d_2)\chi(S,X)}
			\big(B\times B(G_1^{\Pi_1(X,P)})
					\times 
				 B\times B(G_2^{\Pi_1(X,P)})\big)
		\ar[r]^-{!}
		&
		H_\ast \big(B(G_1^{\Pi_1(S,P)})\times B(G_2^{\Pi_1(S,P)})\big)
	}\]}%
	where the horizontal maps are the umkehr maps associated
	with the respective horizontal maps in \eqref{diag:sdsm}.
\item The square
	{\small
	\[\xymatrix{
		{\left[\begin{array}{c}
	 		H_{\ast+ d_1\chi(S,X)} \big( B \times B(G_1^{\Pi_1(X,P)})\big)\\
				\tensor \\
			H_{\ast+ d_2\chi(S,X)} \big( B \times B(G_2^{\Pi_1(X,P)})\big)
		\end{array}\right]}
		\ar[d]_\times^\isom
		\ar[r]^{!\tensor !}
		&
		H_\ast \big(B(G_1^{\Pi_1(S,P)})\big)
			\tensor
		H_\ast \big(B(G_2^{\Pi_1(S,P)})\big)
		\ar[d]^\times_\isom
		\\		
		H_{\ast + (d_1+d_2)\chi(S,X)} 
		{\left(\begin{array}{c}
			  B \times B(G_1^{\Pi_1(X,P)})\\
				\times\\
			  B \times B(G_2^{\Pi_1(X,P)})
		\end{array}\right)}
		\ar[r]^-{!}
		&
		H_\ast \big(
			B(G_1^{\Pi_1(S,P)})
				\times
			B(G_2^{\Pi_1(S,P)})
		\big)
	}\]}%
	commutes, as it is an instance of the
	monoidality constraint $U_{\mfld,\tensor,1}$.
	Here the top horizontal map is 
	the tensor product of the umkehr maps associated
	with the 
	maps \eqref{map:umkehr-data} of fibrewise manifolds
	for $G=G_1,G_2$, while the bottom horizontal map
	is the umkehr map associated with the 
	direct product of these maps.\qedhere
\end{itemize}
\end{proof}

\section{A vanishing result}
\label{sec:vanishing-result}

Our aim in this section is to prove a vanishing result,
Theorem~\ref{thm:vanishing} below. Combined with 
a sufficient understanding of the homology of 
symmetric groups, this result in particular 
shows that the operations
$\Phi^G(S_n/B\Sigma_n)$ 
vanish whenever $n$ is not a power of 2,
and reduces the computation of the 
operations $\Phi^G(S_{2^k}/B\Sigma_{2^k})$ 
to the computation of certain maps
\[
	\alpha^G_k
	\colon 
	H_{\ast-\dim(G)(2^k-1)}(BV_k)\tensor H_\ast(BG)
	\longto 
	H_\ast(BG)
\]
where $V_k$ is an elementary abelian 2-group 
of dimension $k$.
See Definition~\ref{def:alphagk}.

We start by recalling facts about the homology of symmetric groups.
The standard inclusions 
$\Sigma_n \times \Sigma_m \to \Sigma_{m+n}$ induce on
$\bigsqcup_{n\geq 0} B\Sigma_n$ the structure of a topological monoid,
making the homology groups $H_\ast(\bigsqcup_{n\geq 0} B\Sigma_n)$
into a ring bigraded by degree and \emph{weight}, the homogeneous
elements of degree $d$ and weight $n$ being precisely the elements
of $H_d(B\Sigma_n)$. In fact, $\bigsqcup_{n\geq 0} B\Sigma_n$
is not just a monoid, but an $E_\infty$-space. The following 
description of its homology in terms of the concomitant Dyer--Lashof
operations $Q^s$ can be found in \cite[\S I.4]{IteratedLoopSpaces}.
\begin{theorem}
\label{thm:dldesc}
We have
\[\pushQED{\qed} 
	H_\ast\Big(\bigsqcup_{n\geq 0} B\Sigma_n\Big)
		=
	\F_2\big[Q^I [1]\, |\, I \text{ is admissible and } e(I) > 0\big] . \qedhere
\popQED
\]
\end{theorem}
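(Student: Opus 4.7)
The plan is to identify $\bigsqcup_{n\geq 0} B\Sigma_n$ with the free $E_\infty$-space on a single point in degree $0$, and then invoke the Dyer--Lashof computation of the mod $2$ homology of such free algebras.

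First I would fix an $E_\infty$-operad $\calE$ with $\calE(n)$ contractible and $\Sigma_n$-free, giving $\calE(n)/\Sigma_n \homot B\Sigma_n$. The monad associated to $\calE$, applied to a based space $X$, returns $\bigsqcup_n \calE(n)\times_{\Sigma_n} X^n$; specializing to $X = S^0$ gives precisely $\bigsqcup_n B\Sigma_n$ with its natural $E_\infty$-structure (induced by the block inclusions $\Sigma_m \times \Sigma_n \incl \Sigma_{m+n}$), and exhibits it as the free $E_\infty$-algebra on the class $[1]\in H_0(B\Sigma_1)$ of weight $1$.

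Next I would show that the mod $2$ homology of the free $E_\infty$-algebra on a single generator $x$ of degree $q$ is the polynomial algebra
\[
	\F_2\bigl[\,Q^I x \,:\, I \text{ admissible},\ e(I) > q\,\bigr],
\]
where by convention $I = \emptyset$ (with $e(\emptyset) = +\infty$) contributes $x$ itself. The approach is to filter the free algebra by weight, use the resulting spectral sequence to reduce the $E^1$-term to the group homologies $H_\ast(\Sigma_n;(H_\ast x)^{\tensor n})$, and then appeal to the structural apparatus of \cite[\S I]{IteratedLoopSpaces}: existence of the operations $Q^s$, the boundary relations $Q^s x = 0$ for $s < |x|$ and $Q^{|x|} x = x^2$, the Adem relations (forcing admissibility), the Cartan formula (producing the polynomial algebra structure), and the Nishida relations. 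Together these identify a generating set of precisely the shape advertised; the excess condition $e(I) > q$ is exactly what remains after absorbing the identifications $Q^{|y|}y = y^2$ into the polynomial structure. Specializing to $q = 0$ and $x = [1]$ yields the claimed formula.

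The step I expect to be the main obstacle is establishing algebraic independence of the generators $Q^I[1]$, and here I would pass to the group completion. By Barratt--Priddy--Quillen, the group completion of $\bigsqcup_n B\Sigma_n$ is $\Omega^\infty S^\infty$, and $H_\ast(\Omega^\infty S^\infty)$ is the classical Dyer--Lashof polynomial algebra on (essentially) the same generators together with an invertible weight-shift class. Since group completion is a localization at the weight-$1$ class $[1]$, the map from our bigraded ring into $H_\ast(\Omega^\infty S^\infty)$ is injective in non-negative weights, so algebraic independence transfers back to $\bigsqcup_n B\Sigma_n$. Tracking the bigrading carefully through this localization, and matching the generators on the two sides, is the delicate accounting needed to conclude.
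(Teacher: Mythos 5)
The paper does not actually prove this statement; it cites \cite[\S I.4]{IteratedLoopSpaces}, where the mod $2$ homology of a free $E_\infty$-space is computed. Your outline follows the right general strategy (identify $\bigsqcup_n B\Sigma_n$ as the free $E_\infty$-algebra on $[1]$, decompose by weight, establish generation via the Dyer--Lashof apparatus), but the algebraic-independence step has a genuine gap. The inference ``group completion is a localization, therefore the map $H_\ast\bigl(\bigsqcup_n B\Sigma_n\bigr) \to H_\ast(QS^0)$ is injective in non-negative weights'' does not hold as stated: a localization $R \to R\bigl[s^{-1}\bigr]$ is injective exactly when $s$ is a non-zero-divisor, and for $s=[1]$ this is a theorem, not a formality. (It is Nakaoka's theorem, \cite[Theorem~5.8]{NakaokaDecomposition}, which this paper invokes for precisely this purpose in its proof of Theorem~\ref{thm:turnerdesc}.) You should also guard against circularity: your deduction uses that $H_\ast(QS^0)$ is the expected polynomial algebra on Dyer--Lashof classes, so you need a source that establishes this independently of the free-$E_\infty$-algebra computation. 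Turner's calculation \cite{Turner} is one such, whereas \cite{IteratedLoopSpaces} itself proceeds in the opposite direction, first computing the free case and then group-completing. With the Nakaoka input supplied and a non-circular reference for $H_\ast(QS^0)$, your route does close, and at that point it closely parallels the argument the paper gives for the companion Theorem~\ref{thm:turnerdesc}.
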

Here a multi-index $I = (s_1,\ldots, s_k)$ with $s_j \geq 0$ is 
called \emph{admissible}
if $s_j \leq 2s_{j+1}$ for all $1 \leq j < k$;
is said to have \emph{excess}
$e(I) = s_1 - \sum_{j=2}^k s_j$;
and has associated operation 
$Q^I = Q^{s_1} \cdots Q^{s_k}$.
(By convention, 
the empty multi-index $I=\emptyset$
is admissible, has excess $\infty$, and has associated
operation $Q^{\emptyset} = 1$.)
The element $[1]$ is the generator of $H_0(B\Sigma_1)$,
and $Q^I[1]$ for $I = (s_1,\ldots, s_k)$ has degree
$s_1+\cdots+s_k$ and weight $2^k$.

For our purposes, a slightly different description of 
$H_\ast(\bigsqcup_{n\geq 0} B\Sigma_n)$
is more convenient. A choice of
bijection between the product $\{1,\ldots,n\} \times \{1,\ldots, m\}$
and $\{1,2,\ldots,nm\}$ induces a homomorphism 
$\Sigma_n\times \Sigma_m \to \Sigma_{nm}$,
with different choices leading to conjugate homomorphisms.
Thus we obtain a map
\[
	H_\ast(B\Sigma_n)\tensor H_\ast (B\Sigma_m) \longto H_\ast(B\Sigma_{nm})
\]
which does not depend on the bijection chosen.
These maps for varying $n$ and $m$ fit together to equip
$H_\ast(\bigsqcup_{n\geq 0} B\Sigma_n)$
with another commutative associative product which we 
denote by $\circ$. The element $[1]$ is the identity element for 
this product. For each $i\geq 0$, let
$E_i$ denote the non-trivial element of 
$H_i(B\Sigma_2)$. From Theorem~\ref{thm:dldesc} 
and the computation of $H_\ast(QS^0)$ 
given in \cite{Turner}, we can easily read off the 
following result.
\begin{theorem}
\label{thm:turnerdesc}
We have
\[
	H_\ast\Big(\bigsqcup_{n\geq 0} B\Sigma_n\Big) 
		= 
	\F_2[E_{i_1} \circ E_{2i_2}\circ \cdots \circ E_{2^{k-1}i_k} 
		\,|\, k\geq0, 1\leq i_1 \leq \cdots \leq i_k]
\]
where the empty $\circ$-product is to be interpreted as the element
 $[1] \in H_0(B\Sigma_1)$.
\end{theorem}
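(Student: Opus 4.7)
The plan is to deduce the theorem from Theorem~\ref{thm:dldesc} via a change of polynomial generators, using Turner's computation \cite{Turner} to control the interaction between the $\circ$-product and the Dyer--Lashof operations.

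First, I would set up the bijection between the two indexing sets. Given an admissible sequence $I = (s_1, \ldots, s_k)$ of positive excess, define
$$
i_j := s_j - (s_{j+1} + s_{j+2} + \cdots + s_k), \qquad j = 1, \ldots, k,
$$
with empty sum interpreted as zero. A direct check shows that admissibility $s_j \leq 2 s_{j+1}$ is equivalent to $i_j \leq i_{j+1}$, while positive excess $e(I) > 0$ is equivalent to $i_1 \geq 1$ (note that $s_k \geq 1$ is automatic for admissible sequences of positive excess). Inverting the triangular system yields $s_j = i_j + \sum_{l > j} 2^{l-j-1} i_l$, and hence $\sum_{j} s_j = \sum_{j} 2^{j-1} i_j$. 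Thus we obtain a bijection, preserving both homological degree and weight, between the Dyer--Lashof generators of Theorem~\ref{thm:dldesc} and the proposed $\circ$-monomial generators.

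Second, I would show that, modulo Pontryagin-decomposable terms of the same bidegree,
$$
E_{i_1} \circ E_{2 i_2} \circ \cdots \circ E_{2^{k-1} i_k} \equiv Q^{s_1} Q^{s_2} \cdots Q^{s_k}[1]
$$
with $s_j$ as in step one. Since $E_m = Q^m[1]$ for all $m \geq 0$ (the case $m=0$ giving $E_0 = [1]^2 = [2]$) and since $\circ$ distributes over the Pontryagin product, this reduces by induction on $k$ to a Cartan-type identity describing $Q^s(x \circ y)$ in terms of iterated $Q$'s and $\circ$-products. Such an identity records the compatibility between the two $E_\infty$-structures on $\bigsqcup_n B\Sigma_n$, and is precisely the information furnished by Turner's analysis of $H_*(QS^0)$.

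Combining the two steps, the claimed $\circ$-monomials are obtained from the Dyer--Lashof generators of Theorem~\ref{thm:dldesc} by a unitriangular change of basis on the space of indecomposables (with respect to the filtration by Pontryagin-decomposability), so they themselves form a system of polynomial generators, giving the claimed presentation. The main technical obstacle is step two: isolating the correct leading Dyer--Lashof term of the iterated $\circ$-product. This is the content we borrow from \cite{Turner}; steps one and three are elementary combinatorics and standard filtration arguments, respectively.
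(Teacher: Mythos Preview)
Your approach differs from the paper's. The paper does not attempt any direct comparison of the $\circ$-monomials with the Dyer--Lashof monomials inside $H_\ast(\bigsqcup_n B\Sigma_n)$. Instead it passes to $H_\ast(QS^0)$ via the group-completion map $i$, which is injective on homology because multiplication by $[1]$ is injective. After checking that $i_\ast$ sends each $E_{i_1}\circ\cdots\circ E_{2^{k-1}i_k}$ to the element of the same name in $H_\ast(QS^0)$ (using \cite[Theorem~3.10]{MadsenMilgram} to see that $i_\ast$ preserves $\circ$), the paper invokes \cite[Theorem~4.15]{Turner} to conclude that these elements are algebraically independent in $H_\ast(QS^0)$, hence also in $H_\ast(\bigsqcup_n B\Sigma_n)$. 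Generation then follows from your bijection in step one by a bigraded Poincar\'e-series count against Theorem~\ref{thm:dldesc}. The virtue of this route is that it needs no explicit formula relating $\circ$ to Dyer--Lashof operations: independence in a larger ring plus a dimension count suffices.

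Your step two is where the real content lies, and as written it has a gap. The identity you name---a Cartan-type formula for $Q^s(x\circ y)$---is not the one you need: you are not applying $Q^s$ to a $\circ$-product, you are $\circ$-multiplying by $E_s=Q^s[1]$, which is a different operation. What your induction actually requires is a formula for $E_s\circ x$ in terms of Dyer--Lashof operations on $x$ modulo Pontryagin decomposables (such formulas exist via the Hopf-ring distributive law and mixed Cartan relations for $E_\infty$ ring spaces), together with an argument that the indecomposable part of the iterated $\circ$-product is \emph{exactly} the single admissible monomial $Q^{s_1}\cdots Q^{s_k}[1]$ and not some nontrivial combination of admissible monomials of the same bidegree. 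You have not carried this out, and it is not what Turner proves either: the result the paper cites from \cite{Turner} is the algebraic independence itself, not your triangularity statement. Your strategy is plausible and could likely be completed, but filling this step is more work than you indicate; the paper's argument sidesteps it entirely.
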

Notice that the element 
$E_{i_1} \circ E_{2i_2}\circ \cdots \circ E_{2^{k-1}i_k}$
has degree 
$i_1 + 2i_2+\cdots + 2^{k-1}i_k$ 
and weight $2^k$.
\begin{proof}[Proof of Theorem \ref{thm:turnerdesc}]
Let 
$i\colon \bigsqcup_{n\geq 0} B\Sigma_n \to QS^0$
be the group completion map afforded by the Barratt--Quillen--Priddy
theorem, so that 
\[
	i_\ast
	\colon 
	H_\ast\Big(\bigsqcup_{n\geq 0} B\Sigma_n\Big)
	\longto
	H_\ast(QS^0)
\]
is a ring map identifying $H_\ast(QS^0)$ 
with the localization 
$H_\ast\big(\bigsqcup_{n\geq 0} B\Sigma_n\big)\big[[1]^{-1}\big]$.
Since by Theorem~\ref{thm:dldesc} 
multiplication by $[1]$ is injective in 
$H_\ast(\bigsqcup_{n\geq 0} B\Sigma_n)$,
the map $i_\ast$ is injective. 
The map $i_\ast$ sends each element
$E_i \in H_\ast(B\Sigma_2)$
to the element of $H_\ast(QS^0)$ of the same name
defined on p.\ 213 in \cite{Turner}; to see this,
use the factorization of the transfer 
map $\mathrm{tr^{(n)}}$
given on p.\ 212 of \cite{Turner}. Moreover, 
\cite[Theorem 3.10]{MadsenMilgram} implies that
the map $i_\ast$ preserves the $\circ$-product. 
Thus $i_\ast$ takes each element 
\begin{equation}
\label{eq:elts} 
	E_{i_1} \circ E_{2i_2} \circ \cdots \circ E_{2^{k-1}i_k} 
		\in 
	H_\ast\Big(\bigsqcup_{n\geq 0} B\Sigma_n\Big),
	\quad
	1\leq i_1 \leq \cdots \leq i_k
\end{equation}
to the element of the same name in $H_\ast(QS^0)$.
By \cite[Theorem 4.15]{Turner}, these
elements of $H_\ast(QS^0)$ are algebraically 
independent, whence so are the elements \eqref{eq:elts}.
It remains to show that the elements \eqref{eq:elts} generate 
all of $H_\ast(\bigsqcup_{n\geq 0} B\Sigma_n)$.
But this follows from the description of 
$H_\ast(\bigsqcup_{n\geq 0} B\Sigma_n)$ 
given in Theorem~\ref{thm:dldesc}
by comparison of bigraded Poincar\'{e} series
using the observation  that for each $k$,
setting $i_t = s_t - \sum_{j={t+1}}^k s_j$ for $1\leq t \leq k$ 
defines a bijection between
\[
	\{I=(s_1,\ldots,s_k)\,|\, I \text{ admissible}, e(I)>0\}
	\quad
	\text{and}
	\quad
	\{(i_1,\ldots,i_k)\,|\, 1\leq i_1 \leq \cdots \leq i_k \}
\]
with $s_1+\cdots + s_k = i_1 + 2 i_2 + \cdots + 2^{k-1}i_k$.
\end{proof}

\begin{theorem}
\label{thm:vanishing}
Let $G$ be a positive-dimensional compact Lie group or a 
finite group of even order.
Then the map 
\[
	\Phi^G(S_n/B\Sigma_n)^\sharp(a)
	 \colon 
	H_\ast(BG) 
	\longto
	H_{\ast + |a| + \dim(G)(n-1)} (BG)
\]
is zero whenever $a \in H_\ast(B\Sigma_n)$ is decomposable in 
the ring $H_\ast(\bigsqcup_{n\geq0} B\Sigma_n)$.
\end{theorem}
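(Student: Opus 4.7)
The plan is to exploit the $E_\infty$ ring structure of $H_\ast(\bigsqcup_{n\geq 0} B\Sigma_n)$ (coming from Young subgroup inclusions) together with the HHGFT axioms to decompose $\Phi^G(S_n/B\Sigma_n)^\sharp(a)$ into a composite containing a ``circle'' factor that vanishes by hypothesis on $G$.

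By bilinearity, it suffices to consider $a=\iota_\ast(b\times c)$, where $\iota\colon B\Sigma_p\times B\Sigma_q\to B\Sigma_{p+q}$ is induced by the Young subgroup inclusion $\Sigma_p\times\Sigma_q\hookrightarrow\Sigma_{p+q}$ with $p,q\geq 1$ and $p+q=n$, and where $b\in H_\ast(B\Sigma_p)$ and $c\in H_\ast(B\Sigma_q)$. The base-change axiom of the HHGFT $\Phi^G$ (\cite[subsection 3.1]{HL}) then yields
\[
\Phi^G(S_n/B\Sigma_n)^\sharp(a) \;=\; \Phi^G\bigl(\iota^\ast S_n/(B\Sigma_p\times B\Sigma_q)\bigr)^\sharp(b\times c),
\]
so it suffices to study the pulled-back family $\iota^\ast S_n=E(\Sigma_p\times\Sigma_q)\times_{\Sigma_p\times\Sigma_q}\hat S_{p+q}$, on which $\Sigma_p\times\Sigma_q$ acts by permuting the first $p$ and the last $q$ strings separately.

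The key geometric observation is that, as a $\Sigma_p\times\Sigma_q$-space, $\hat S_{p+q}$ is obtained from $\hat S_p\sqcup\hat S_q$ by identifying the two source points with one another and the two target points with one another. This realises the pulled-back family as a composition in the HHGFT,
\[
\iota^\ast S_n \;\cong\; \foldmap_{\mathrm{out}}\ucirc(S_p/B\Sigma_p\usqcup S_q/B\Sigma_q)\ucirc\foldmap_{\mathrm{in}},
\]
where $\foldmap_{\mathrm{in}}\colon\pt\hto\pt\sqcup\pt$ and $\foldmap_{\mathrm{out}}\colon\pt\sqcup\pt\hto\pt$ are constant families of structural h-graph cobordisms modelled on contractible trees. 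Applying the composition and monoidality (disjoint-union) axioms of $\Phi^G$ then gives
\[
\Phi^G(S_n/B\Sigma_n)^\sharp(a) \;=\; \Phi^G(\foldmap_{\mathrm{out}})\circ\bigl(\Phi^G(S_p/B\Sigma_p)^\sharp(b)\otimes\Phi^G(S_q/B\Sigma_q)^\sharp(c)\bigr)\circ\Phi^G(\foldmap_{\mathrm{in}}).
\]

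The final step is to verify that this composite vanishes. The geometric input is that $\foldmap_{\mathrm{out}}\circ\foldmap_{\mathrm{in}}$ is homotopy equivalent, as a cobordism $\pt\hto\pt$, to the circle cobordism $\hat S_2$. Using the explicit description of the operations afforded by Proposition~\ref{prop:opdesc} and the umkehr-map formalism of \cite[section 7]{HL}, one identifies a factor in the underlying fibrewise-manifold umkehr map corresponding to fibre integration over this ``extra'' $S^1$: for $G$ finite of even order this factor is $|G|\pmod 2$, while for $G$ a positive-dimensional compact Lie group it is $\chi(G)\pmod 2$ (which vanishes because such a $G$ contains a positive-dimensional maximal torus). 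The main obstacle of the proof is making this extraction of the Euler-characteristic factor rigorous in the presence of the intermediate operations $\Phi^G(S_p/B\Sigma_p)^\sharp(b)\otimes\Phi^G(S_q/B\Sigma_q)^\sharp(c)$; this requires a careful projection-formula argument within the fibrewise-manifold umkehr framework of \cite[section 7]{HL}.
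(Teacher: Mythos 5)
Your structural decomposition is exactly the one the paper uses: after base change along $\iota = Bm_{p,q}$, the pulled-back family is, up to a $2$-cell that is a homeomorphism on base spaces, the composite $(\mu/\pt)\ucirc\bigl((S_p/B\Sigma_p)\usqcup(S_q/B\Sigma_q)\bigr)\ucirc(\delta/\pt)$, where your $\foldmap_{\mathrm{out}}$ and $\foldmap_{\mathrm{in}}$ are the cobordisms $\mu$ and $\delta$ of \eqref{eq:basic-cobs}. Up to that point you are on the same track as the paper.

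Where you diverge is the vanishing step, and there is a genuine gap. You observe that $\mu\circ\delta=\hat S_2$ is a circle and propose to ``extract'' a fibre-integration factor over this circle, acknowledging that the obstacle is that $\Phi^G(S_p/B\Sigma_p)^\sharp(b)\otimes\Phi^G(S_q/B\Sigma_q)^\sharp(c)$ sits between $\Phi^G(\mu)$ and $\Phi^G(\delta)$ in the composite. That obstacle is fatal to your plan as stated: $\mu$ and $\delta$ are never adjacent in the decomposition, the HHGFT axioms give no commutation rule allowing you to slide $\Phi^G(\mu)$ past the middle tensor factor, and geometrically $\mu\circ(\hat S_p\sqcup\hat S_q)\circ\delta=\hat S_{p+q}$ has no disjoint circle summand unless $p=0$ or $q=0$. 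So there is no ``extra $S^1$'' in the actual cobordism, and no obvious projection formula will conjure one.

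The paper's route avoids this entirely by proving the much simpler Proposition~\ref{prop:vanishing}: the single operation $\Phi^G(\mu/\pt)\colon H_{\ast-\dim G}(\pt)\otimes H_\ast(BG\times BG)\to H_\ast(BG)$ is identically zero. Once that is in hand, the vanishing of the whole composite is immediate because its outermost factor is zero; no commuting of factors is needed. That proposition is proved by two separate, elementary mechanisms: for positive-dimensional $G$, one decomposes $\mu\homot(I\sqcup(\varepsilon\circ\mu))\circ(\delta\sqcup I)$ and notes that $\Phi^G((\varepsilon\circ\mu)/\pt)$ has target $H_\ast(\pt)$ concentrated in degree $0$ but source in degrees $\ge\dim G>0$, so it vanishes for degree reasons (no appeal to $\chi(G)=0$ is made); for $G$ finite of even order, one uses Proposition~\ref{prop:opdesc} to factor the operation through the transfer $\Delta^!$ of the diagonal $\Delta\colon BG\to BG\times BG$, and shows $\Delta^!=0$ from $\Delta_\ast\Delta^!=|G|\cdot\id=0$ together with injectivity of $\Delta_\ast$. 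To repair your argument, prove this proposition first, and then your decomposition gives the theorem at once.
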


\begin{remark}
The case where $G$ is a finite group of odd order is 
uninteresting, since in that case the homology of $BG$
is concentrated in degree 0.
\end{remark}

Before proving Theorem~\ref{thm:vanishing}, we
need to establish an auxiliary vanishing result.
Let $I \colon \pt \to \pt$ and $\mu \colon \pt\sqcup\pt \to \pt$
and $\delta \colon \pt \to \pt \sqcup \pt$
and $\varepsilon \colon \pt \to \emptyset$
be the h-graph cobordisms depicted below.
\begin{equation}
\label{eq:basic-cobs}
I = 
\begin{tikzpicture}[scale=0.03,baseline=-2.5]
	\path[fill=black] (0,0) circle (2);
	\path[fill=black] (30,0) circle (2);
	\path[ARC] (0,0) -- (30,0);
\end{tikzpicture}
\qquad\qquad
\mu = 
\begin{tikzpicture}[scale=0.03,baseline=-2.5]
	\path[fill=black] (0,12) circle (2);
	\path[fill=black] (0,-12) circle (2);
	\path[fill=black] (40,0) circle (2);
	\path[ARC] (0,12) .. controls (20,12) and (22,5) .. (22,0);
	\path[ARC] (0,-12) .. controls (20,-12) and (22,-5) .. (22,0);
	\path[ARC] (22,0) -- (40,0);
\end{tikzpicture}
\qquad\qquad
\delta=
\begin{tikzpicture}[scale=0.03,baseline=-2.5, xscale=-1]
	\path[fill=black] (0,12) circle (2);
	\path[fill=black] (0,-12) circle (2);
	\path[fill=black] (40,0) circle (2);
	\path[ARC] (0,12) .. controls (20,12) and (22,5) .. (22,0);
	\path[ARC] (0,-12) .. controls (20,-12) and (22,-5) .. (22,0);
	\path[ARC] (22,0) -- (40,0);
\end{tikzpicture}
\qquad\qquad
\varepsilon=
\begin{tikzpicture}[scale=0.03,baseline=-2.5]
	\path[fill=black] (0,0) circle (2);
	\path[ARC] (0,0) -- (10,0);
\end{tikzpicture}
\end{equation}
While in the context of the present discussion our interest in 
the following result is motivated by the role it plays in the
proof of Theorem~\ref{thm:vanishing}, the result is 
of some interest of its own,
since it implies that the product on $H_\ast(BG)$ induced by 
$\mu$ is uninteresting.

\begin{proposition}
\label{prop:vanishing}
Let $G$ be a compact Lie group of positive dimension or a finite
group of even order. Then the operation 
\[
	\Phi^G(\mu/\pt)
	\colon 
	H_{\ast-\dim(G)}(\pt)\tensor H_\ast(BG\times BG) 
		\longto 
	H_\ast(BG)
\]
is zero. 
\end{proposition}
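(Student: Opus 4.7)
The plan is to identify $\Phi^G(\mu/\pt)$ with the umkehr of the diagonal $\Delta \colon BG \to BG \times BG$, and then to prove vanishing of this umkehr via a Frobenius (projection formula) argument that exploits the surjectivity of the cup product $\Delta^\ast$.

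First, I apply Proposition~\ref{prop:opdesc} with basepoints $P = X = \{p_1, p_2\}$ and $Q = Y = \{q\}$. Since the Y-graph $\mu$ is simply connected, $\Pi_1(\mu, P)$ is the walking-isomorphism groupoid on two objects, and a direct inspection of $G^{\Pi_1(\mu, P)}$ shows that it has a single isomorphism class whose automorphism group is the diagonal subgroup $\Delta G \subset G \times G$. Hence the map of fibrewise manifolds in~\eqref{map:umkehr-data} is (up to the zigzag~\eqref{eq:zigzag}) the diagonal $\Delta \colon BG \to BG \times BG$. A similar analysis shows the basepoint-change maps (a) and (b) of Proposition~\ref{prop:opdesc} are equivalences: $G^{\Pi_1(\mu, P \sqcup Q)}$ also has automorphism group $\Delta G$, now sitting inside $G^3$, and both the forgetful map to $G^{\Pi_1(\mu, P)}$ and the evaluation map to $G^{\Pi_1(Y, Q)} = G$ restrict to isomorphisms on this diagonal. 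Consequently $\Phi^G(\mu/\pt)$ reduces, up to canonical identifications, to the umkehr
\[
\Delta^{!} \colon H_\ast(BG \times BG) \longto H_{\ast + d}(BG), \qquad d = \dim G.
\]

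Next, I interpret $\Delta^{!}$ classically. Because $\Delta G \subset G \times G$ has compact quotient isomorphic to the closed $d$-manifold $G$, the diagonal fits into a homotopy fibration $G \to BG \xrightarrow{\Delta} BG \times BG$. In this situation the HHGFT umkehr of \cite[section 7.2]{HL} coincides with the $\F_2$-linear dual of the classical fibre integration $\Delta_{!} \colon H^\ast(BG) \to H^{\ast - d}(BG \times BG)$, so it suffices to prove $\Delta_{!} = 0$.

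Finally, three observations close the argument. The projection formula reads $\Delta_{!}(\Delta^\ast(y) \cdot x) = y \cdot \Delta_{!}(x)$ for $y \in H^\ast(BG \times BG)$ and $x \in H^\ast(BG)$; specializing to $x = 1$ gives $\Delta_{!}(\Delta^\ast(y)) = y \cdot \Delta_{!}(1)$. The class $\Delta_{!}(1) \in H^{-d}(BG \times BG)$ vanishes in both cases of the hypothesis: if $d > 0$ it lies in a zero group on degree grounds, and if $G$ is finite of even order it equals $\chi(G) = |G| = 0 \in \F_2$. Moreover, under the K\"unneth identification $\Delta^\ast$ is the cup product $H^\ast(BG) \otimes H^\ast(BG) \to H^\ast(BG)$ and is therefore surjective, since $\Delta^\ast(1 \otimes z) = z$ for every $z$. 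Combining these, $\Delta_{!}$ vanishes on all of $H^\ast(BG) = \im(\Delta^\ast)$, and dualizing gives $\Phi^G(\mu/\pt) = \Delta^{!} = 0$. The main obstacle to anticipate is the careful identification in the second step of the HHGFT umkehr of \cite{HL} with the classical fibre integration: this requires unpacking the Pontryagin--Thom/Becker--Gottlieb machinery underlying the umkehr construction, though it should be a matter of interpretation rather than a serious calculation.
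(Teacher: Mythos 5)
Your proposal is correct, but it takes a genuinely different route from the paper. The paper handles the two hypotheses on $G$ by two distinct arguments: for positive-dimensional $G$ it uses the cobordism decomposition $\mu \simeq (I \sqcup (\varepsilon\circ\mu))\circ(\delta\sqcup I)$ and observes that $\Phi^G((\varepsilon\circ\mu)/\pt)$ dies for degree reasons, while for finite $G$ of even order it identifies the umkehr with the transfer for $\Delta\colon BG\to BG\times BG$ and kills it via $\Delta_*\Delta^! = |G| = 0$ together with the injectivity of $\Delta_*$ (which has $(\pr_1)_*$ as a left inverse). You instead reduce both cases at once to showing the cohomological pushforward $\Delta_!$ vanishes, via the projection formula $\Delta_!(\Delta^*y) = y\cdot\Delta_!(1)$, the vanishing of $\Delta_!(1)\in H^{-d}$ (degree reasons when $d>0$, and $\Delta_!(1) = |G| = 0 \bmod 2$ when $d=0$), and surjectivity of $\Delta^*$ as the cup product. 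This is a nice unification: the projection-formula argument handles both cases through a single mechanism, where the paper needs a cobordism trick in one case and a transfer computation in the other. Your identification of $\Phi^G(\mu/\pt)$ with $\Delta^!$ matches the paper's (and is in fact slightly stronger, since you also note that the basepoint-change maps are equivalences, whereas the paper only needs that the operation factors through the umkehr). The one step you flag as needing care — identifying the abstract HHGFT umkehr with classical fibre integration or transfer — is exactly what \cite[Lemmas~8.4 and~8.6]{HL} supply; once those are in hand your argument closes cleanly.
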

\begin{proof}
The h-graph cobordism $\mu$ decomposes as
$\mu \homot (I \sqcup (\varepsilon \circ\mu) )\circ (\delta \sqcup I)$
as illustrated below. 
\[
\begin{tikzpicture}[scale=0.03,baseline=-2.5]
	\path[fill=black] (0,12) circle (2);
	\path[fill=black] (0,-12) circle (2);
	\path[fill=black] (40,0) circle (2);
	\path[ARC] (0,12) .. controls (20,12) and (22,5) .. (22,0);
	\path[ARC] (0,-12) .. controls (20,-12) and (22,-5) .. (22,0);
	\path[ARC] (22,0) -- (40,0);
\end{tikzpicture}
\quad
\homot
\quad
\begin{tikzpicture}[scale=0.03,baseline=-2.5]
	\path[fill=black] (0,12) circle (2);
	\path[fill=black] (0,-24) circle (2);
	\path[fill=black] (40,-24) circle (2);
	\path[fill=black] (40,0) circle (2);
	\path[fill=black] (40,24) circle (2);
	\path[fill=black] (80,-12) circle (2);
	\path[fill=black] (100,24) circle (2);
	\begin{scope}[shift={(40,12)},xscale=-1]
	\path[ARC] (0,12) .. controls (20,12) and (22,5) .. (22,0);
	\path[ARC] (0,-12) .. controls (20,-12) and (22,-5) .. (22,0);
	\path[ARC] (22,0) -- (40,0);
	\end{scope}
	\begin{scope}[shift={(40,-12)}]
	\path[ARC] (0,12) .. controls (20,12) and (22,5) .. (22,0);
	\path[ARC] (0,-12) .. controls (20,-12) and (22,-5) .. (22,0);
	\path[ARC] (22,0) -- (40,0);
	\end{scope}
	\path[ARC] (0,-24) -- (40,-24);
	\path[ARC] (40,24) -- (100,24);
	\path[ARC] (80,-12) -- (90,-12);
\end{tikzpicture}
\]
When $G$ is positive dimensional, the
claim now follows from the observation that the operation 
\[
	\Phi^G((\varepsilon\circ\mu)/\pt)
	\colon 
	H_{\ast-\dim(G)}(\pt) \tensor H_\ast(BG\times BG)
	\longto
	H_\ast(\pt)
\]
must vanish for degree reasons. 
Let us assume that $G$ is finite of even order.
Let $P$ denote the two incoming points in $\mu$.
Then by Proposition~\ref{prop:opdesc}, 
the operation $\Phi^G(\mu/\pt)$ factors through
the umkehr map associated with the map
\begin{equation}
\label{eq:proj} 
	B(G^{\Pi_1(\mu,P)}) \longto B(G^{\Pi_1(P,P)})
\end{equation}
of fibrewise manifolds 
(or, in this case, simply covering spaces)
over $BG^P$
induced by the inclusion $P\incl \mu$.
To prove the claim, it is therefore
enough to show that this umkehr map is zero.
By \cite[Lemma 8.6]{HL}, the 
umkehr map in this case 
is just the transfer map.
Through the zigzag \eqref{eq:zigzag},
the map \eqref{eq:proj} 
corresponds to the map $BG^\mu \to BG^P=BG\times BG$
which, up to a homotopy equivalence of the source,
is just the diagonal map $\Delta \colon BG \to BG \times BG$.
By standard properties of the transfer, the composite
\[\xymatrix{
	H_\ast(BG\times BG) 
	\ar[r]^-{\Delta^!} 
	&
	H_\ast(BG)
	\ar[r]^-{\Delta_\ast}
	&
	H_\ast(BG\times BG)
}\]
of the transfer map and the induced map associated with $\Delta$
is multiplication by the index $[G\times G : G] = |G|$,
and hence zero by the assumption that the order of $G$  is even.
Since $\Delta_\ast$ is injective, having a left inverse 
induced by the projection map $\pr_1\colon BG\times BG \to BG$,
it follows that the transfer map $\Delta^!$ is zero,
giving the claim.
\end{proof}

Theorem~\ref{thm:vanishing} now follows easily
from Proposition~\ref{prop:vanishing}.

\begin{proof}[Proof of Theorem~\ref{thm:vanishing}]
Suppose $a\in H_\ast(B\Sigma_n)$ decomposes as a product 
$a = bc$ for some 
$b \in H_\ast(B\Sigma_k)$
and
$c \in H_\ast(B\Sigma_l)$
where $k,l \geq 1$, $k+l = n$.
Then $a = (Bm_{k,l})_\ast(b\times c)$
where $m_{k,l} \colon \Sigma_k\times \Sigma_l \to \Sigma_n$ 
is the standard inclusion. By the base change axiom,
we have
\[
	\Phi^G(S_n/B\Sigma_n)^\sharp(a)
	 = 
	\Phi^G\big((Bm_{k,l})^\ast S_n/(B\Sigma_k \times B\Sigma_l)\big)^\sharp(b\times c).
\]
But $(Bm_{k,l})^\ast S_n/(B\Sigma_k \times B\Sigma_l)$
decomposes (up to a 2-cell \cite[Definition 2.10]{HL}
which is a homeomorphism on base spaces)
as the composite
\[
	(\mu/\pt) 
	\ucirc 
	\big((S_k/B\Sigma_k) \usqcup (S_l/B\Sigma_l)\big)
	\ucirc 
	(\delta/\pt).
\]
Thus the claim follows from Proposition~\ref{prop:vanishing}.
\end{proof}

Combining Theorem~\ref{thm:vanishing} with Theorem~\ref{thm:turnerdesc}
(or Theorem~\ref{thm:dldesc}), we obtain the following result.
\begin{corollary}
Let $G$ be a positive-dimensional compact Lie group or a 
finite group of even order.
Then the operation $\Phi^G(S_n/B\Sigma_n)$ vanishes
unless $n$ is a power of 2.\qed
\end{corollary}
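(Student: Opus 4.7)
The plan is to derive the corollary directly from Theorem~\ref{thm:vanishing} together with the polynomial description of $H_\ast(\bigsqcup_{m\geq 0} B\Sigma_m)$ given in Theorem~\ref{thm:turnerdesc} (Theorem~\ref{thm:dldesc} would serve equally well). Since $\Phi^G(S_n/B\Sigma_n)^\sharp$ is $\F_2$-linear, it suffices to show that, when $n$ is not a power of $2$, every element of $H_\ast(B\Sigma_n)$ lies in the $\F_2$-span of decomposable elements of the ambient ring; the conclusion then follows by applying Theorem~\ref{thm:vanishing} termwise.

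The heart of the argument is a weight bookkeeping. Each polynomial generator $E_{i_1}\circ E_{2i_2}\circ\cdots\circ E_{2^{k-1}i_k}$ appearing in Theorem~\ref{thm:turnerdesc} has weight exactly $2^k$, a power of $2$ (the case $k=0$ corresponds to the generator $[1]\in H_0(B\Sigma_1)$ of weight $1=2^0$). Under the ring product of $H_\ast(\bigsqcup_{m\geq 0} B\Sigma_m)$ coming from the standard inclusions $\Sigma_p\times\Sigma_q\to\Sigma_{p+q}$, weights are additive. Hence any monomial in the polynomial generators that lies in weight $n$ is a product of generators whose individual weights---all powers of $2$---add up to $n$. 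If $n$ is not a power of $2$, no such monomial can consist of a single factor, so every such monomial involves at least two positive-weight factors and is therefore decomposable in the sense required by Theorem~\ref{thm:vanishing}.

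Picking a monomial basis of $H_\ast(B\Sigma_n)$ of this form and invoking Theorem~\ref{thm:vanishing} on each basis element finishes the proof. I do not anticipate any genuine obstacle; the only minor point to verify is that the vanishing conclusion of Theorem~\ref{thm:vanishing}, stated for individual decomposable elements, propagates to arbitrary $\F_2$-linear combinations of such elements, which is immediate from the linearity of $\Phi^G(S_n/B\Sigma_n)^\sharp$ in its homology argument.
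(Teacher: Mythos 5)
Your argument is correct and is exactly the one the paper intends: the paper states the corollary as an immediate combination of Theorem~\ref{thm:vanishing} with Theorem~\ref{thm:turnerdesc}, and your weight bookkeeping (every polynomial generator has weight a power of $2$, weights are additive, hence any weight-$n$ monomial with $n$ not a power of $2$ is decomposable) is the obvious filling-in of that observation.
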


Theorem~\ref{thm:vanishing} and Theorem~\ref{thm:turnerdesc}
also imply that to understand $\Phi^G(S_n/B\Sigma_n)$
for $n=2^k$, it is enough to understand its behaviour on 
classes in the image of the map 
$H_\ast B\Sigma_2^k \to H_\ast B\Sigma_{2^k}$
induced by the map $\iota \colon\Sigma_2^k \to \Sigma_{2^k}$
(well defined up to conjugacy)
obtained by iterating the maps inducing the $\circ$-product
on $H_\ast(\bigsqcup_{n\geq 0} B\Sigma_n)$.
In the following sections, we will therefore focus on 
computing the maps $\alpha_k$ defined below.
Observing that $\Sigma_2^k$ is simply the $k$-dimensional
elementary abelian 2-group, we write $V_k$ for this group
and switch to additive notation for the group operation.
\begin{definition}
\label{def:alphagk}
We define $\alpha^G_k$ to be the composite map
\[\xymatrix@1@!0@C=6em{
	*!R{\alpha^G_k\colon H_{\ast-d(2^k-1)}(BV_k)\tensor H_\ast(BG)\,}
	\ar[r]^{(B\iota)_\ast \tensor 1}
	&
	*!L{\,H_{\ast-d(2^k-1)}(B\Sigma_{2^k}) \tensor H_\ast(BG)}
	\\
	*!R{\phantom{\alpha^G_k\colon H_{\ast-d(2^k-1)}(B\Sigma_2^k)\tensor H_\ast(BG)\,}}
	\ar[r]^{\Phi^G(S_{2^k}/B\Sigma_{2^k})}
	&
	*!L{\,H_\ast(BG)}
}\]
where $d=\dim(G)$. We write $(\alpha^G_k)^\sharp$
for the adjoint map
\[
	(\alpha^G_k)^\sharp 
	\colon 
	H_{\ast-d(2^k-1)}(BV_k)
	\longto 
	\Hom_\ast(H_\ast(BG),H_\ast(BG)).
\]
Notice that we may alternatively describe $\alpha^G_k$ as the operation
\[
	\alpha^G_k = \Phi^G((B\iota)^\ast S_{2^k}/BV_k).
\]
\end{definition}

The identity axiom of HHGFTs 
\cite[subsection~3.1]{HL}
implies the following computation of the map $\alpha^G_0$.
\begin{proposition}
For any compact Lie group $G$, the map $(\alpha_0^G)^\sharp$
sends the generator of $H_\ast(BV_0) \isom \F_2$ to 
the identity map of $H_\ast (BG)$. \qed
\end{proposition}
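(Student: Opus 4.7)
The plan is to unwind the definitions and observe that the statement is essentially a direct application of the identity axiom for the HHGFT $\Phi^G$. First I would note that $V_0$ is the trivial group, so $BV_0$ is a point and $H_\ast(BV_0) \isom \F_2$ is concentrated in degree $0$. The homomorphism $\iota \colon V_0 \to \Sigma_{2^0} = \Sigma_1$ is trivial (both groups are trivial), so $B\iota$ is a homeomorphism of one-point spaces.

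Next I would identify the family of h-graph cobordisms in question. The h-graph cobordism $\hat S_1 \colon \pt \hto \pt$ consists of a single string joining the two basepoints, which up to homotopy equivalence (in the sense of \cite[Definition~2.10]{HL}) is the identity h-graph cobordism $I \colon \pt \hto \pt$ depicted in \eqref{eq:basic-cobs}. Since $\Sigma_1$ is trivial, the Borel construction $S_1/B\Sigma_1$ is simply the constant family $I/\pt$, and therefore $(B\iota)^\ast S_1/BV_0$ is just $I/\pt$ as well.

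With these identifications in hand, the degree shift $\dim(G)(2^0-1) = 0$ vanishes, so $\alpha^G_0 = \Phi^G(I/\pt)$ is a map $\F_2 \tensor H_\ast(BG) \to H_\ast(BG)$. By the identity axiom of HHGFTs \cite[subsection~3.1]{HL}, the operation $\Phi^G(I/\pt)$ evaluated on the generator of $H_0(\pt)$ is the identity map of $H_\ast(BG^\pt) = H_\ast(BG)$, which is precisely the claim.

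The only step that requires any thought is justifying the replacement of $\hat S_1$ by $I$ in the formalism of \cite{HL}; this is an invocation of the homotopy-invariance of $\Phi^G$ under 2-cells that are homeomorphisms on base spaces. There is no genuine obstacle — the entire argument is a matter of matching notation against the identity axiom — so I expect the proof to be just a few lines.
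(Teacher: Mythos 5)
Your proof is correct and takes the same route the paper intends: the paper states the proposition as an immediate consequence of the identity axiom (indicating the proof with a bare \qed), and you simply unwind the definitions ($V_0$ and $\Sigma_1$ trivial, $\hat S_1 \homot I$, degree shift zero) to show why the identity axiom applies. Your note about invoking invariance under 2-cells is the right level of care, though in this case $\hat S_1$ is already the single-strand interval $I$ on the nose.
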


To conclude the section, let us elaborate
on the homology of $V_k$ and the map $\iota$.
Recall that the homology $H_\ast(BV_k)$ is  
isomorphic as a ring to 
the divided power algebra generated by $V_k$
where $V_k$ is identified with $H_1(BV_k)$.
We write $v^{[n]}$ for the $n$-th divided power
of $v\in H_{>0}(BV_k)$, and recall the formula
$v^{[n]} v^{[m]} = {{n+m}\choose {m}} v^{[n+m]}$
and the convention $v^{[0]}=1$.
Let us write $x_1,\ldots, x_k$ for the basis of 
$V_k$ corresponding to the product decomposition 
$V_k = \Sigma_2^k$. By the choice of $\iota$,
we then have the following 
pleasant formula for the map induced by $\iota$ 
on homology. This formula makes it easy to read off the 
operation $\Phi^G(S_{2^k}/B\Sigma_{2^k})$ from 
formulas describing the map $\alpha^G_k$,
and it is the main motivation for our preference of 
the description of 
$H_\ast\big(\bigsqcup_{n\geq 0} B\Sigma_n\big)$
given in Theorem~\ref{thm:turnerdesc}
over that given in Theorem~\ref{thm:dldesc}.
\begin{proposition}
\label{prop:iotaformula} 
The map $\iota_\ast$ is given by 
\begin{equation*}
\pushQED{\qed} 
	\iota_\ast
	\colon
	H_\ast(BV_k) \longto H_\ast(B\Sigma_{2^k}),
	\quad
	x_{{1}}^{[n_1]} \cdots x_k^{[n_k]} 
	\longmapsto
	E_{n_1} \circ \cdots \circ E_{n_k}.\qedhere
\popQED
\end{equation*}
\end{proposition}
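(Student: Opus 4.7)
The plan is to proceed by induction on $k$. For the base case $k=1$, the group $V_1$ agrees with $\Sigma_2$ and $\iota_1$ is (the empty iteration, hence) the identity. Since $H_n(B\Sigma_2)\isom \F_2$ is one-dimensional for every $n\geq 0$, the divided power $x_1^{[n]} \in H_n(BV_1)$ must coincide with the unique non-zero class $E_n \in H_n(B\Sigma_2)$.

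For the inductive step, I would exploit the freedom in the choice of $\iota$ (different iterations of the maps $\Sigma_a \times \Sigma_b \to \Sigma_{ab}$ yield conjugate homomorphisms, hence the same map on homology) to factor $\iota\colon V_k \to \Sigma_{2^k}$ as
\[
	V_k \isom V_1 \times V_{k-1}
	\xto{\iota_1 \times \iota_{k-1}}
	\Sigma_2 \times \Sigma_{2^{k-1}}
	\xto{m}
	\Sigma_{2^k},
\]
where $m$ is the inclusion induced by a bijection $\{1,2\}\times\{1,\ldots,2^{k-1}\} \to \{1,\ldots,2^k\}$ and is therefore one of the maps inducing the $\circ$-product. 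Under the Künneth isomorphism $H_\ast(BV_k) \isom H_\ast(BV_1) \tensor H_\ast(BV_{k-1})$, the divided power element $x_1^{[n_1]}\cdots x_k^{[n_k]}$ corresponds to $x_1^{[n_1]} \tensor (x_2^{[n_2]} \cdots x_k^{[n_k]})$; this is the standard identification of $H_\ast(BV;\F_2)$ for elementary abelian $V$ with the divided power algebra on $H_1(BV;\F_2)$, compatible with the Künneth splitting induced by $V_k = V_1 \times V_{k-1}$.

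Combining these, naturality of the cross product together with the base case and the inductive hypothesis yield
\[
	\iota_\ast\bigl(x_1^{[n_1]}\cdots x_k^{[n_k]}\bigr)
	= m_\ast\bigl(E_{n_1} \times (E_{n_2} \circ \cdots \circ E_{n_k})\bigr)
	= E_{n_1} \circ E_{n_2} \circ \cdots \circ E_{n_k},
\]
the last equality being the definition of the $\circ$-product. There is no substantial obstacle here; the proof is essentially a matter of unwinding definitions, and the only fine point is verifying the compatibility between the left-associated iteration used to define $\iota$ and the left-associated expression $E_{n_1}\circ\cdots\circ E_{n_k}$, which is immediate from associativity of $\circ$.
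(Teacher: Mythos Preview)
Your proof is correct and is precisely the direct inductive unwinding that the paper takes as immediate from the definitions: the paper states the proposition with a \qed\ and no explicit argument, noting only that the formula follows ``by the choice of $\iota$'' as an iterate of the maps defining the $\circ$-product. Your induction on $k$, using the factorization $V_k = V_1 \times V_{k-1}$ and the K\"unneth compatibility of the divided power basis, is exactly the verification the paper leaves to the reader.
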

 
\begin{remark}
Observe that the map $\iota$ can alternatively be
understood as the Cayley embedding obtained from some
identification of $V_k$ with the set $\{1,2,3,\ldots,2^k\}$.
Since the composite of a Cayley embedding
$\Gamma \to \Sigma_{|\Gamma|}$  of a group $\Gamma$ 
with an automorphism of $\Gamma$ is conjugate to the 
original embedding, we see that the map $\iota_\ast$
factors through $H_\ast(BV_k)_{GL_k(\F_2)}$.
Thus the formula 
for $\iota_\ast$ 
given in Proposition~\ref{prop:iotaformula}
is in fact valid for \emph{any}
basis $x_1,\ldots,x_k$ of $V_k$.
It can be shown that the map 
$H_\ast(BV_k)_{GL_k(\F_2)} \to H_\ast(B\Sigma_{2^k})$
induced by $\iota_\ast$ 
is injective. See eg.\ \cite[p.\ 61]{MadsenMilgram}.
\end{remark}

\section{Preliminary calculations}
\label{sec:preliminary-calculations}

The main purpose of this section is to develop 
a description of the maps $\alpha^G_k$
amenable to making calculations.
The main results are 
Propositions~\ref{prop:alphagk-comp}
and \ref{prop:compsum}.

For $\Gamma$ a topological group and $X$ 
a space with a $\Gamma$-action, we write
$X \sslash \Gamma$ for the Borel construction $E\Gamma \times_\Gamma X$.
Equivalently, $X \sslash \Gamma$
can be described as the bar construction $B(\pt,\Gamma,X)$
or the classifying space of the action groupoid
of the $\Gamma$-action on $X$.
Recall from \cite[proof of Proposition 7.34]{HL}
that if  $(T,R)$ is an
h-graph with basepoints, then for any compact Lie group $G$
there is a natural homeomorphism 
\begin{equation}
\label{iso:borel-interpretation} 
	B(G^{\Pi_1(T,R)}) \isom \fun(\Pi_1(T,R),G) \sslash G^R
\end{equation}
where $\fun(\Pi_1(T,R),G)$ denotes the space of 
functors from $\Pi_1(T,R)$ to $G$ (with $G$
interpreted
as a one-object topological category) and where the action of $G^R$
on the space $\fun(\Pi_1(T,R),G)$ is given by the formula
\[
	(\delta\cdot f)(\alpha)
	=
	\delta(y)\cdot f(\alpha)\cdot\delta(x)^{-1}
\]
for $f\in \fun(\Pi_1(T,R),G)$, $\delta\in G^R$, and $\alpha\colon x\to y$
a morphism of $\Pi_1(T,R)$. 
If a discrete group $\Gamma$ acts on $T$ in such a fashion that the image of 
the basepoint map $R \to T$ is kept pointwise fixed, we obtain a 
family of h-graphs with basepoints $(E\Gamma\times_\Gamma T, R)$
over $B\Gamma$, and have natural isomorphisms
\begin{multline}
\label{isos:borel-interpretation2}
	B(G^{\Pi_1(E\Gamma\times_\Gamma T, R)}) 
	\isom 
	E\Gamma\times_\Gamma B(G^{\Pi_1(T, R)}) 
	\isom
	E\Gamma\times_\Gamma (\fun(\Pi_1(T,R),G)\sslash G^R)
	\\
	\isom
	\fun(\Pi_1(T,R),G)\sslash \Gamma \times G^R
\end{multline}
of fibrewise manifolds over $B\Gamma \times BG^R$
where the action of $\Gamma \times G^R$ on 
$\fun(\Pi_1(T,R),G)$ is given by
\[
	((\gamma,\delta)\cdot f)(\alpha)
	=
	\delta(y)\cdot f(\gamma_\ast^{-1}(\alpha))\cdot\delta(x)^{-1}
\]
for $f\in \fun(\Pi_1(T,R),G)$, $\gamma \in \Gamma$, $\delta\in G^R$,
and $\alpha\colon x\to y$ a morphism of $\Pi_1(T,R)$. 

\begin{definition}
\label{def:families-from-borel-construction}
Suppose $\hat S\colon X \hto Y$ is an h-graph cobordism 
equipped with an action of a discrete group $\Gamma$ in which  
$X$ and $Y$ are kept pointwise fixed. 
We say that a family of h-graph cobordisms $S/B\Gamma \colon X\hto Y$
\emph{is obtained from} $\hat S$ \emph{by the Borel construction}
if $S/B\Gamma$ is equipped with a homotopy equivalence 
$S \to E\Gamma \times_\Gamma \hat S$ which is a map
over $B\Gamma$ and under $(X \sqcup Y) \times B\Gamma$.
Here $E\Gamma \times_\Gamma \hat S$ is made into a 
space under $(X \sqcup Y) \times B\Gamma$ by the map 
$(X \sqcup Y) \times B\Gamma \isom E\Gamma \times_\Gamma (X\sqcup Y)
\to E\Gamma \times_\Gamma \hat S$.
\end{definition}

\begin{remark}
\label{rk:families-from-borel-construction}
The motivation for Definition~\ref{def:families-from-borel-construction}
comes from the fact that it is not always clear that the map 
$(X \sqcup Y) \times B\Gamma \to E\Gamma \times_\Gamma \hat S$
is a closed fibrewise cofibration, as would be required for
$E\Gamma \times_\Gamma \hat S$ itself to give a family
of h-graph cobordisms from $X$ to $Y$ over $B\Gamma$.
However, this problem can always be fixed simply by replacing 
$E\Gamma \times_\Gamma \hat S$ by the mapping cylinder
$(E\Gamma \times_\Gamma \hat S)'$ of the 
map $(X \sqcup Y) \times B\Gamma \to E\Gamma \times_\Gamma \hat S$.
Then $(E\Gamma \times_\Gamma \hat S)'/B\Gamma\colon X \hto Y$ 
is obtained from 
$\hat S$ by the Borel construction in the above sense.
(The map $(E\Gamma \times_\Gamma \hat S)'\to B\Gamma$
is a fibration by \cite[Proposition 1.3]{Clapp}.)
Moreover, it is easy to show that any family 
$S/B\Gamma \colon X \hto Y$
obtained from $\hat S$ by the Borel construction
is homotopy equivalent over $B\Gamma$ and under 
$(X \sqcup Y) \times B\Gamma$ to 
$(E\Gamma \times_\Gamma \hat S)'$.

In the case of the families
$S_n = E\Sigma_n \times_{\Sigma_n} \hat S_n$ over $B\Sigma_n$,
the map
$(\pt \sqcup \pt) \times B\Sigma_n 
\to E\Sigma_n \times_{\Sigma_n} \hat S_n$
\emph{is} a closed fibrewise cofibration over $B\Sigma_n$,
as can be seen using the criterion 
\cite[Lemma~5.2.4]{MaySigurdsson}. 
Thus there is no need to replace 
$E\Sigma_n \times_{\Sigma_n} \hat S_n$ 
by a mapping cylinder in this case.
\end{remark}

Using the isomorphisms \eqref{isos:borel-interpretation2},
we obtain the following reinterpretation of 
Proposition~\ref{prop:opdesc} in the case where the 
family of h-graph cobordisms $S/B$ is obtained by 
the Borel construction from 
an h-graph cobordism equipped with a group action.

\begin{proposition}
\label{prop:borel-opdesc}
Suppose a discrete group $\Gamma$ acts 
on a positive h-graph cobordism $\hat S \colon X \hto Y$
in such a way that $X$ and $Y$ are kept
pointwise fixed,
and suppose $S/B\Gamma \colon X\hto Y$ is obtained
from $\hat S$ by the Borel construction.
Choose basepoints $P\to X$ and $Q\to Y$
for $X$ and $Y$, respectively. Then for any compact Lie group $G$,
the operation $\Phi^G(S/B\Gamma)$ agrees with the composite
\begin{equation*}
\vcenter{\xymatrix@1@!0@C=3.4em{
	*!L{H_{\ast+\dim(G)\chi(\hat S,X)} (B\Gamma) \tensor H_\ast(BG^X)}
	\\
	&
	\ar[rr]^{1\tensor (\tilde\eta_{X,P}^G)^{-1}}_\isom
	&&
	*!L{\;H_{\ast+\dim(G)\chi(\hat S,X)} (\pt\sslash\Gamma) \tensor H_\ast (\fun(\Pi_1(X,P),G)\sslash G^P)}
	\\
	&
	\ar[rr]^\times_\isom
	&&
	*!L{\;H_{\ast+\dim(G)\chi(\hat S,X)} \big((\pt\sslash\Gamma) \times (\fun(\Pi_1(X,P),G)\sslash G^P)\big) }
	\\
	&
	\ar[rr]^{(a)}_\isom
	&&
	*!L{\;H_{\ast+\dim(G)\chi(\hat S,X)} (\fun(\Pi_1(X,P),G)\sslash \Gamma \times G^P) }
	\\
	&
	\ar[rr]^{!}
	&&
	*!L{\;H_\ast ( \fun(\Pi_1(\hat S,P),G)\sslash \Gamma \times G^P) }
	\\
	&
	\ar[rr]^{(b)}_\isom
	&&
	*!L{\;H_\ast ( \fun(\Pi_1(\hat S,P\sqcup Q),G)\sslash \Gamma \times G^{P\sqcup Q}) }
	\\
	&
	\ar[rr]^{(c)}
	&&
	*!L{\;H_\ast ( \fun(\Pi_1(Y,Q),G)\sslash G^{Q}) }
	\\
	&
	\ar[rr]^{\tilde\eta_{Y,Q}^G}_\isom
	&&
	*!L{\;H_\ast ( BG^Y )}
}}
\end{equation*}
where 
$\tilde\eta_{X,P}^G$ and $\tilde\eta_{Y,Q}^G$ 
are obtained from the maps
$\eta_{X,P}^G$ and $\eta_{Y,Q}^G$ 
of section~\ref{sec:product-thm}
and the homeomorphism \eqref{iso:borel-interpretation};
where the map \textup{(a)} is induced by the homeomorphism
\[
	(\pt\sslash\Gamma) \times (\fun(\Pi_1(X,P),G)\sslash G^P)
 	\isom 
	\fun(\Pi_1(X,P),G)\sslash \Gamma \times G^P;
\]
where the map labeled by \textup{$!$}
is the umkehr map \cite[section 7.2]{HL} associated with the map 
\begin{equation}
\label{map:to-umkehr}
\vcenter{\xymatrix@C-2em{
	\fun(\Pi_1(X,P),G)\sslash \Gamma \times G^P
	\ar[dr]
	&&
	\fun(\Pi_1(\hat S,P),G)\sslash \Gamma \times G^P
	\ar[ll]
	\ar[dl]
	\\
	&
	\pt \sslash \Gamma \times G^P
}}
\end{equation}
of fibrewise manifolds induced by the inclusion $X \incl \hat S$;
where the map \textup{(b)} is induced by the inverse of the 
homotopy equivalence
\begin{equation}
\label{map:pqeq}
	\fun(\Pi_1(\hat S,P\sqcup Q),G) \sslash \Gamma \times G^{P\sqcup Q} 
	\xto{\ \homot\ } 
	\fun(\Pi_1(\hat S,P),G) \sslash \Gamma \times G^P
\end{equation}
induced by the inclusion $P\incl P\sqcup Q$;
and where the map \textup{(c)}
is induced by the map $(Y,Q) \to (\hat S, P\sqcup Q)$
of h-graphs with basepoints and the 
projection $\Gamma \times G^{P\sqcup Q}\to G^Q$.
\qed
\end{proposition}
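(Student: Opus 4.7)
The plan is to deduce the proposition directly from Proposition~\ref{prop:opdesc} by systematically translating each term and each map of the composite given there via the natural identifications \eqref{iso:borel-interpretation} and \eqref{isos:borel-interpretation2}. First I would apply Proposition~\ref{prop:opdesc} to the family $S/B\Gamma \colon X \hto Y$, noting that since $S$ is obtained from $\hat S$ by the Borel construction, we have $\chi(S,X) = \chi(\hat S, X)$ as a locally constant function on $B\Gamma$ (which, upon restricting to a connected component, is just an integer), and the basepoint fibrations extend to $\Gamma$-equivariant data on $\hat S$.

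Next I would translate each space in the composite. For $(X,P)$, the action of $\Gamma$ is trivial, so \eqref{isos:borel-interpretation2} collapses to give
\[
	B(G^{\Pi_1(X\times B\Gamma, P)})
	\isom
	B\Gamma \times \bigl(\fun(\Pi_1(X,P),G) \sslash G^P\bigr),
\]
which accounts for the factorization of $\eta^G_{X,P}$ as $(1\times\tilde\eta^G_{X,P})$ composed with the cross product $\times$ and the homeomorphism (a). For $(\hat S,P)$ and $(\hat S, P\sqcup Q)$, the full force of \eqref{isos:borel-interpretation2} gives the identification of $B(G^{\Pi_1(S, P)})$ with $\fun(\Pi_1(\hat S,P),G) \sslash \Gamma\times G^P$ and similarly for $P\sqcup Q$. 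The naturality of \eqref{iso:borel-interpretation} with respect to maps of h-graphs with basepoints then ensures that the map (b) of the current statement corresponds to the map (a) of Proposition~\ref{prop:opdesc}, and likewise the map (c) here corresponds to the map (b) there, since both are induced by the same morphism $(Y,Q) \to (\hat S, P\sqcup Q)$ of h-graphs with basepoints combined with the appropriate group projection.

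The main step, and the only one requiring genuine care, is checking that the umkehr map is correctly translated. In Proposition~\ref{prop:opdesc} it is the umkehr map associated with the map \eqref{map:umkehr-data} of fibrewise manifolds over $B\Gamma \times BG^P$. Under the identifications \eqref{isos:borel-interpretation2}, the span \eqref{map:umkehr-data} transforms into the span \eqref{map:to-umkehr} over $\pt\sslash\Gamma \times G^P \homeom B\Gamma \times BG^P$. Since the umkehr map depends only on the underlying span of fibrewise manifolds (up to the natural equivalences of \cite[section 7.2]{HL}), and since \eqref{isos:borel-interpretation2} is a homeomorphism of fibrewise manifolds that is compatible with the inclusion $X \incl \hat S$, the two umkehr maps agree under the identifications.

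Once all spaces and maps are matched, the composite provided by Proposition~\ref{prop:opdesc} becomes, term by term, the composite in the present statement, and the result follows. I do not anticipate any real obstacle beyond bookkeeping; the only point to be careful about is that the homeomorphism (a) of the present statement matches the implicit rearrangement of factors in passing from $(B\Gamma) \times (\fun(\Pi_1(X,P),G)\sslash G^P)$ to $\fun(\Pi_1(X,P),G)\sslash \Gamma \times G^P$, which is immediate from the definition of the Borel construction.
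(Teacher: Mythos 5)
Your proposal is correct and follows essentially the same route the paper intends: Proposition~\ref{prop:borel-opdesc} is stated without a written proof precisely because it is a term-by-term reinterpretation of Proposition~\ref{prop:opdesc} via the identifications \eqref{iso:borel-interpretation} and \eqref{isos:borel-interpretation2}, and that is exactly what you carry out. The translation of the umkehr step via the naturality of the Borel identification with the span of fibrewise manifolds is the only point requiring real care, and you handle it correctly.
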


\begin{remark}
\label{rk:stabilizer}
For any $f \in \fun(\Pi_1(\hat S,P\sqcup Q),G)$,
the projection map
$\Gamma \times G^{P\sqcup Q} \to \Gamma \times G^P$
maps the stabilizer of $f$ in the 
$\Gamma \times G^{P\sqcup Q}$-action
injectively into $\Gamma \times G^P$.
One way to see this is as follows.
Choose for each $q \in Q$ 
a morphism $\alpha_q$ in $\Pi_1(\hat S, P\sqcup Q)$
from an element of $P$ to $q$, and observe
that a functor $\Pi_1(\hat S,P\sqcup Q) \to G$
is then precisely the same data as a functor 
$\Pi_1(\hat S,P) \to G$ and the assignment of an
element of $G$ to each $\alpha_q$, $q\in Q$.
Thus we get a homeomorphism
\begin{equation}
\label{isom:qsplit}
	\fun(\Pi_1(\hat S,P\sqcup Q),G) \isom \fun(\Pi_1(\hat S,P),G)\times G^Q.
\end{equation}
Under this homeomorphism, the restriction of the 
$\Gamma \times G^{P\sqcup Q}$-action to $G^Q$ on 
the left hand side 
corresponds to multiplication action on the
$G^Q$-factor on the right hand side, showing that the 
$G^Q$-action on $\fun(\Pi_1(\hat S,P\sqcup Q),G)$
is free.
\end{remark}

Let us now specialize to the computation of 
the operations
$\alpha^G_k$ of Definition~\ref{def:alphagk}.
The family of h-graph cobordisms 
$(B\iota)^\ast S_{2^k}/BV_k \colon \pt \hto \pt$
inducing the map $\alpha^G_k$ is 
isomorphic to the family
$EV_k \times_{V_k} \hat S_{2^k}/BV_k \colon \pt \hto \pt$
obtained by the Borel construction from 
the h-graph cobordism $\hat S_{2^k}\colon \pt \hto \pt$;
the action of $V_k$ on $\hat S_{2^k}$ is the one
obtained by restricting the $\Sigma_{2^k}$-action 
on $\hat S_{2^k}$ along the inclusion
$\iota\colon V_k \to \Sigma_{2^k}$.
Let $p$ and $q$ denote the incoming and 
outgoing points of $\hat S_{2^k}$, 
respectively, and recall that we may 
interpret $\iota \colon V_k\to \Sigma_{2^k}$
as the Cayley embedding associated to
some bijection between $V_k$ and $\{1,2,3,\ldots,2^k\}$.
For $v\in V_k$, let $s_v$ be the path from $q$ to $p$ that traces
the string of $\hat S_{2^k}$ corresponding to $v$ under this bijection.
Then the homotopy classes of the paths $s_v$ determine a
basis for the finite free groupoid $\Pi_1(\hat S_{2^k},\{p,q\})$,
and we obtain a homeomorphism
\begin{equation}
\label{isom:fungv}
	\fun(\Pi_1(\hat S_{2^k},\{p,q\}),G) \xto{\ \isom\ } G^{V_k},
	\quad 
	f \longmapsto \big(f([s_v])\big)_{v\in V_k}
 \end{equation}
under which the $V_k \times G^{\{p,q\}}$-action on 
$\fun(\Pi_1(\hat S_{2^k},\{p,q\}),G)$ corresponds to the 
$V_k \times G^{\{p,q\}}$-action on $G^{V_k}$
given by 
\begin{equation}
\label{eq:action}
	(u,g_p,g_q) \cdot (g_v)_{v\in V_k}  
	= 
	(g_p^{} g_{u+v}^{} g_q^{-1})_{v\in V_k}.
\end{equation}
From the homeomorphism \eqref{isom:qsplit} of 
Remark~\ref{rk:stabilizer}, we can further deduce that 
\eqref{isom:fungv} induces a homeomorphism
\[
	\fun(\Pi_1(\hat S_{2^k},\{p\}),G) \xto{\ \isom\ } G^{V_k}/\Delta G
\]
where $G^{V_k}/\Delta G$ denotes the space of left cosets of 
the diagonal subgroup $\Delta G$ of the product group $G^{V_k}$.
Under this homeomorphism, the 
the $V_k \times G^{\{p\}}$-action on the space
$\fun(\Pi_1(\hat S_{2^k},\{p\}),G)$ corresponds to the 
$V_k \times G^{\{p\}}$-action on $G^{V_k}/\Delta G$
given by 
\begin{equation}
\label{eq:action2}
	(u,g_p) \cdot (g_v)_{v\in V_k} \Delta G 
	= 
	(g_p g_{u+v})_{v\in V_k} \Delta G.
\end{equation}
Proposition~\ref{prop:borel-opdesc} now implies that 
we may compute 
$\alpha^G_k$ by a push-pull construction
in the diagram
\begin{equation}
\label{diag:push-pull-for-alphagk}
\vcenter{\xymatrix@!0@C=3.5em@R=8ex{
	&
	G^{V_k}/\Delta G \sslash V_k \times G^{\{p\}}
	\ar[dl]_{!}
	&&&
	G^{V_k}\sslash V_k \times G^{\{p,q\}}
	\ar[lll]_-\homot
	\ar[dr]
	\\
	\pt \sslash V_k \times G^{\{p\}}
	&&&&&
	\pt \sslash G^{\{q\}}
}} 
\end{equation}
where the arrows are induced by the evident
quotient maps of spaces and projection homomorphisms
of groups. More precisely, we have the following 
result.
\begin{proposition}
\label{prop:alphagk-comp}
Let $G$ be a compact Lie group. Then the map $\alpha^G_k$ 
equals the composite
\newcommand{\entry}{H_\ast(BV_k) \tensor H_\ast(BG)\;}
\begin{equation*}
\vcenter{\xymatrix@1@!0@C=4em{
	*!R{\entry}
	\ar[r]^\times_\isom
	&
	*!L{\;H_\ast (BV_k \times BG) }
	\\
	*!R{\phantom{\entry}}
	\ar[r]^{(a)}_\isom
	&
	*!L{\;H_\ast (\pt \sslash V_k \times G^{\{p\}} ) }
	\\
	*!R{\phantom{\entry}}
	\ar[r]^{!}
	&
	*!L{\;H_\ast (G^{V_k}/\Delta G \sslash V_k \times G^{\{p\}}) }
	\\
	*!R{\phantom{\entry}}
	\ar[r]^{(b)}_\isom
	&
	*!L{\;H_\ast (G^{V_k} \sslash V_k \times G^{\{p,q\}}) }
	\\
	*!R{\phantom{\entry}}
	\ar[r]^{(c)}
	&
	*!L{\;H_\ast ( \pt \sslash G^{\{q\}}) }
	\\
	*!R{\phantom{\entry}}
	\ar[r]^{(d)}_\isom
	&
	*!L{\;H_\ast ( BG ) }
}}
\end{equation*}
where \textup{(a)} and \textup{(d)} are
induced by the evident homeomorphisms
\[
	BV_k\times BG \isom \pt \sslash V_k\times G^{\{p\}}
	\qquad\text{and}\qquad 
	\pt \sslash G^{\{q\}} \isom BG,
\]
respectively; 
where the map labeled by \textup{$!$} is the umkehr
map \cite[section 7.2]{HL} associated to the map
\textup{$!$} of diagram \eqref{diag:push-pull-for-alphagk}
considered as a map of fibrewise manifolds over 
$\pt\sslash V_k \times G^{\{p\}}$;
where the map \textup{(b)} is induced by 
the homotopy inverse of the horizontal map in 
\eqref{diag:push-pull-for-alphagk};
and where the map \textup{(c)} is 
induced by the right-hand diagonal map 
in \eqref{diag:push-pull-for-alphagk}.
\qed
\end{proposition}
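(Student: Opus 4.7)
The proposition is established by specializing Proposition~\ref{prop:borel-opdesc} to the family $(B\iota)^*S_{2^k}/BV_k$. As noted before the statement, this family is isomorphic to the Borel construction $EV_k \times_{V_k} \hat S_{2^k}/BV_k$ of the $V_k$-action on $\hat S_{2^k}$ obtained by restricting the standard $\Sigma_{2^k}$-action along $\iota$. I take basepoints $P=\{p\}$ and $Q=\{q\}$ and observe that $\chi(\hat S_{2^k}, \pt) = 1 - 2^k$, which matches the degree shift in $\alpha^G_k$.

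The core of the argument is to substitute the homeomorphism~\eqref{isom:fungv} and its corollary into the description of $\Phi^G$ given by Proposition~\ref{prop:borel-opdesc}, so that $\fun(\Pi_1(\hat S_{2^k},\{p,q\}),G)$ becomes $G^{V_k}$ with the action~\eqref{eq:action} and $\fun(\Pi_1(\hat S_{2^k},\{p\}),G)$ becomes $G^{V_k}/\Delta G$ with the action~\eqref{eq:action2}. Because $X = Y = \pt$ with single basepoints, the groupoids $\Pi_1(X,P)$ and $\Pi_1(Y,Q)$ are terminal, so $\fun(\Pi_1(X,P),G)$ and $\fun(\Pi_1(Y,Q),G)$ are single points, the zigzag~\eqref{eq:zigzag} collapses to canonical homeomorphisms, and the maps $\tilde\eta^G_{X,P}$ and $\tilde\eta^G_{Y,Q}$ reduce to the identifications supplied by $(a)$ and $(d)$ in the present statement.

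With these substitutions in hand, each remaining arrow of Proposition~\ref{prop:borel-opdesc} matches an arrow of the present statement. The umkehr map is induced by the inclusion $\pt \hookrightarrow \hat S_{2^k}$ in both descriptions and becomes the map $!$ in~\eqref{diag:push-pull-for-alphagk}. The map $(b)$ from Proposition~\ref{prop:borel-opdesc}, induced by $\{p\} \hookrightarrow \{p,q\}$, corresponds under~\eqref{isom:fungv} to the horizontal equivalence in~\eqref{diag:push-pull-for-alphagk}, hence yields the map $(b)$ of the present statement. Finally, the composite of $(c)$ of Proposition~\ref{prop:borel-opdesc} with $\tilde\eta^G_{Y,Q}$ translates into the map $(c)$ followed by $(d)$, since $(Y,Q) \to (\hat S_{2^k}, \{p,q\})$ selects the basepoint $q$ and the projection $V_k\times G^{\{p,q\}}\to G^{\{q\}}$ becomes the right-hand diagonal map in~\eqref{diag:push-pull-for-alphagk}.

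The main obstacle is not any single deep computation but the careful bookkeeping needed to verify that each of these identifications is compatible with the homeomorphism~\eqref{isom:fungv} and the action formulas~\eqref{eq:action} and~\eqref{eq:action2}. The most delicate point is to check that the inverse of the equivalence~\eqref{map:pqeq} really matches the horizontal equivalence in~\eqref{diag:push-pull-for-alphagk}; this is where the product decomposition~\eqref{isom:qsplit} and the freeness statement of Remark~\ref{rk:stabilizer} will be used.
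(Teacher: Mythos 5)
Your proposal is correct and takes essentially the same route as the paper: the paper states this proposition with a \qed, treating it as an immediate consequence of the paragraph preceding it, which performs exactly the specialization of Proposition~\ref{prop:borel-opdesc} you describe — identifying $(B\iota)^\ast S_{2^k}/BV_k$ with the Borel construction of $\hat S_{2^k}$ under the restricted $\Sigma_{2^k}$-action, and substituting the homeomorphisms~\eqref{isom:fungv} and $\fun(\Pi_1(\hat S_{2^k},\{p\}),G)\isom G^{V_k}/\Delta G$ with action formulas~\eqref{eq:action} and~\eqref{eq:action2}. Your bookkeeping of the individual arrows, including the Euler characteristic check and the role of~\eqref{isom:qsplit} and Remark~\ref{rk:stabilizer} in matching the horizontal equivalence of~\eqref{diag:push-pull-for-alphagk} to the inverse of~\eqref{map:pqeq}, correctly reproduces the paper's argument.
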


In the case of a finite group $G$, 
we may compute the composite of the maps $!$, (b) and (c)
of  Proposition~\ref{prop:alphagk-comp} by decomposing
$G^{V_k}$ into $V_k\times G^{\{p,q\}}$-orbits.
\begin{proposition}
\label{prop:compsum}
Let $G$ be a finite group, and let $O \subset G^{V_k}$
be a set of orbit representatives
for the $V_k\times G^{\{p,q\}}$-action on $G^{V_k}$.
Then the composite of the maps~\textup{$!$}, \textup{(b)} and \textup{(c)}
in Proposition~\ref{prop:alphagk-comp}
is equal to the sum over all
$\bar g \in O$ of the composite maps
\[\xymatrix{
	H_\ast (\pt \sslash V_k \times G^{\{p\}})
	\ar[r]^-{!}
	&
	H_\ast (\pt \sslash (V_k \times G^{\{p,q\}})_{\bar g}) 
	\ar[r]
	&
	H_\ast ( \pt \sslash G^{\{q\}})
}\]
where the first map is the transfer map associated with the map
\[
	\pt \sslash (V_k \times G^{\{p,q\}})_{\bar g} 
	\longto  
	\pt \sslash V_k \times G^{\{p\}}
\]
induced by the inclusion of the stabilizer 
$(V_k \times G^{\{p,q\}})_{\bar g}$ into 
$V_k \times G^{\{p,q\}}$ and the projection  
$V_k \times G^{\{p,q\}} \to V_k \times G^{\{p\}}$;
and where the second map is induced by the 
map
\[
	\pt \sslash (V_k \times G^{\{p,q\}})_{\bar g} 
	\longto  
	\pt \sslash G^{\{q\}}
\]
given by the composite of the inclusion  
$(V_k \times G^{\{p,q\}})_{\bar g}\incl V_k \times G^{\{p,q\}}$ 
and the projection 
$V_k \times G^{\{p,q\}} \to G^{\{q\}}$.
\end{proposition}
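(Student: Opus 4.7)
The plan is to exploit that $G$ is finite to reduce the umkehr map $!$ to a classical transfer for a finite cover, and then to decompose $G^{V_k}$ into its $V_k \times G^{\{p,q\}}$-orbits so that the composite $(c)\circ (b)\circ\,!$ splits as a sum of contributions, one for each orbit in $O$.

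First I would invoke \cite[Lemma~8.6]{HL}, as in the proof of Proposition~\ref{prop:vanishing}, to identify the umkehr map $!$ of Proposition~\ref{prop:alphagk-comp} with the classical transfer for the finite covering map $G^{V_k}/\Delta G \sslash V_k \times G^{\{p\}} \to \pt \sslash V_k \times G^{\{p\}}$; since $G$ is finite, the spaces $G^{V_k}$ and $G^{V_k}/\Delta G$ are discrete, so this really is a finite cover.

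Next, using the orbit decomposition $G^{V_k} = \bigsqcup_{\bar g \in O} (V_k \times G^{\{p,q\}}) \cdot \bar g$, I would obtain a homeomorphism
\[
	G^{V_k} \sslash V_k \times G^{\{p,q\}}
	\isom
	\bigsqcup_{\bar g \in O} \pt \sslash (V_k \times G^{\{p,q\}})_{\bar g}.
\]
By Remark~\ref{rk:stabilizer} the subgroup $G^{\{q\}}$ acts freely on $G^{V_k}$, so the $V_k \times G^{\{p\}}$-orbits on $G^{V_k}/\Delta G$ are in natural bijection with $O$, and the stabilizer in $V_k \times G^{\{p\}}$ of the class $[\bar g] \in G^{V_k}/\Delta G$ is the image of $(V_k \times G^{\{p,q\}})_{\bar g}$ under the projection $V_k \times G^{\{p,q\}} \to V_k \times G^{\{p\}}$, which injects again by Remark~\ref{rk:stabilizer}. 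This yields a parallel decomposition
\[
	G^{V_k}/\Delta G \sslash V_k \times G^{\{p\}}
	\homot
	\bigsqcup_{\bar g \in O} \pt \sslash (V_k \times G^{\{p,q\}})_{\bar g}
\]
which is compatible with the homotopy equivalence (b).

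With these two parallel decompositions in place, I would finish by invoking the additivity of the transfer across the connected components of a finite cover: the map $!$ decomposes as the sum over $\bar g \in O$ of the transfers associated to the covering maps $\pt \sslash (V_k \times G^{\{p,q\}})_{\bar g} \to \pt \sslash V_k \times G^{\{p\}}$, which is precisely the first map of the claimed composite. Post-composing with (c), which is induced by the projection $V_k \times G^{\{p,q\}} \to G^{\{q\}}$, then gives on each $\bar g$-summand precisely the second map of the claimed composite. The main obstacle will be verifying carefully that the two orbit decompositions are compatible with the homotopy equivalence (b) and that the identifications respect the functoriality of the transfer; all the required compatibility should follow cleanly from the freeness of the $G^{\{q\}}$-action together with Remark~\ref{rk:stabilizer}.
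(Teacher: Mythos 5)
Your approach is correct and essentially the paper's, but the paper short-circuits the step you flag as ``the main obstacle.'' Rather than orbit-decomposing both $G^{V_k}/\Delta G \sslash V_k \times G^{\{p\}}$ and $G^{V_k} \sslash V_k \times G^{\{p,q\}}$ and then verifying compatibility of the two decompositions with (b), the paper first invokes the homotopy invariance of transfers: since (b) is induced by the inverse of a homotopy equivalence, the composite of $!$ and (b) is itself just the transfer for the single covering
\[
	G^{V_k}\sslash V_k \times G^{\{p,q\}} \longto \pt\sslash V_k \times G^{\{p\}}
\]
obtained by composing the left-hand diagonal map and the horizontal map of diagram~\eqref{diag:push-pull-for-alphagk}. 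One then only needs the orbit decomposition of $G^{V_k}$ under $V_k\times G^{\{p,q\}}$, and the compatibility check disappears. Your version does work — the $V_k \times G^{\{p\}}$-orbits on $G^{V_k}/\Delta G$ biject with the $V_k \times G^{\{p,q\}}$-orbits on $G^{V_k}$ because $G^{\{q\}}$ is a direct factor, and the stabilizer identification follows from the injectivity noted in Remark~\ref{rk:stabilizer} — but it costs an extra verification that the paper avoids. One minor imprecision: the identification $G^{V_k} \sslash V_k \times G^{\{p,q\}}$ with $\bigsqcup_{\bar g\in O}\pt\sslash(V_k\times G^{\{p,q\}})_{\bar g}$ is a homotopy equivalence of Borel constructions, not a homeomorphism; this is harmless since transfers are homotopy invariant.
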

\begin{remark}
\label{rk:injectivity}
From Remark~\ref{rk:stabilizer} or a direct calculation
using \eqref{eq:action}, we know that the 
composite map
\[
	(V_k \times G^{\{p,q\}})_{\bar g}
	\longincl 
	V_k \times G^{\{p,q\}}
	\xto{\ \pr\ }
	V_k \times G^{\{p\}}
\]
is injective for every $\bar g \in G^{V_k}$,
and hence admits a transfer map.
\end{remark}
\begin{proof}[Proof of Proposition~\ref{prop:compsum}]
When $G$ is finite,
the map labeled by \textup{$!$} in 
Proposition~\ref{prop:alphagk-comp}
is just the transfer map; see \cite[Lemma 8.6]{HL}.
By the homotopy invariance of transfers, in this case
the composite of the maps \textup{$!$} and (b) in 
Proposition~\ref{prop:alphagk-comp} is 
simply the transfer map associated with the composite
\[
	G^{V_k} \sslash V_k \times G^{\{p,q\}}
	\longto 
	\pt\sslash V_k \times G^{\{p\}}
\]
of the left-hand diagonal map and the horizontal map 
of \eqref{diag:push-pull-for-alphagk}.
Now observe that for any group $\Gamma$ and transitive 
$\Gamma$-set $X$ and element $x\in X$, the 
map 
\[
	\pt\sslash \Gamma_x \longto X \sslash \Gamma
\]
given by the map $\pt \mapsto x$ and the inclusion 
$\Gamma_x \incl \Gamma$ is a homotopy equivalence.
Indeed, it suffices to consider the case where
$X = \Gamma/H$ for a subgroup $H \leq \Gamma$ and 
$x = eH \in \Gamma/H$. It follows that we have a homotopy
equivalence
\[
	\bigsqcup_{\bar g \in O} \pt \sslash (V_k\times G^{\{p,q\}})_{\bar g}
	\xto{\ \homot\ }
	G^{V_k} \sslash	V_k\times G^{\{p,q\}}
\]
which on the summand corresponding to $\bar g\in O$ is 
given by the map $\pt \mapsto \bar g$ and 
the inclusion  
$(V_k\times G^{\{p,q\}})_{\bar g} \incl V_k\times G^{\{p,q\}}$.
The claim now follows from standard properties of transfer maps.
\end{proof}

We conclude the section with the following simple observation 
which dramatically reduces the number of orbits we need to take into
account in our applications of Proposition~\ref{prop:compsum}.

\begin{lemma}
\label{lm:trivial-transfers}
Let $\Gamma$ be a group and let $H \leq \Gamma$ be an 
even-index subgroup such that the 
map $i_\ast \colon H_\ast(\pt\sslash H) \to H_\ast(\pt\sslash\Gamma)$
induced by the inclusion $i\colon H\incl \Gamma$ 
is injective. Then the transfer map
$i^! \colon H_\ast(\pt\sslash \Gamma) \to H_\ast(\pt\sslash H)$
is zero.
\end{lemma}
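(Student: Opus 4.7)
The plan is to exploit the standard identity relating the transfer map to the index of the subgroup, exactly as was done in the proof of Proposition~\ref{prop:vanishing} for the diagonal inclusion $G \incl G \times G$. Specifically, for any finite-index inclusion $i \colon H \incl \Gamma$ of groups, the composite
\[
i_\ast \circ i^! \colon H_\ast(\pt\sslash\Gamma) \longto H_\ast(\pt\sslash H) \longto H_\ast(\pt\sslash\Gamma)
\]
is multiplication by the index $[\Gamma:H]$. This is a standard property of the transfer map, and it is the same fact that was invoked at the end of the proof of Proposition~\ref{prop:vanishing}.

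Granted this identity, the proof is immediate. Since we are working with $\F_2$-coefficients and the index $[\Gamma:H]$ is assumed to be even, the composite $i_\ast \circ i^!$ is multiplication by an even integer, hence zero. The hypothesis that $i_\ast$ is injective then forces $i^! = 0$, which is the claim.

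The only step requiring any care is citing or recalling the transfer identity $i_\ast \circ i^! = [\Gamma:H]$. In the present setup, transfers are built from the umkehr map formalism of \cite[section~7.2]{HL} rather than from a classical construction, so one should briefly indicate why this classical identity still holds. However, this is exactly the content of \cite[Lemma~8.6]{HL} (invoked in Proposition~\ref{prop:vanishing}), which identifies the umkehr map for a finite covering with the ordinary transfer, together with the classical computation of $i_\ast \circ i^!$. Thus there is no real obstacle, and the proof is just a one-line argument once this identification is in place.
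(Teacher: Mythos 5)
Your proof is correct and follows exactly the same route as the paper's: invoke the standard identity $i_\ast \circ i^! = [\Gamma:H]$, observe this is zero mod $2$ since the index is even, and conclude from injectivity of $i_\ast$. The extra remarks tying the umkehr map to the classical transfer via \cite[Lemma~8.6]{HL} are a helpful but inessential gloss; the paper simply states the identity as a standard property.
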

\begin{proof}
By standard properties of the transfer map, the composite
$i_\ast i^!$ is multiplication by the index of $H$ in $\Gamma$,
and hence zero as we are working with $\F_2$ coefficients. 
The claim now follows from the injectivity of $i_\ast$.
\end{proof}

\section{Computations for elementary abelian 2-groups and dihedral groups}
\label{sec:e-and-dcalc}

The purpose of this section is to compute the maps $\alpha_k^G$
when $G$ is an elementary abelian 2-group $E$ or a dihedral group 
\[
	D_{4n+2} = \langle r,s \,|\, r^{2n+1}= s^2 = e, sr = r^{-1}s\rangle.
\]
We allow the case $n=0$, in which case $D_{4n+2}$ reduces to a 
copy of $\Z/2$.
Recall that the quotient homomorphism
$D_{4n+2} \to D_{4n+2}/\langle r \rangle \isom \Z/2$
induces an isomorphism on mod 2 homology,
as can be seen for example by considering the 
Hochschild--Serre spectral sequence of the 
short exact sequence
\[
	1 \longto \langle r \rangle \longto D_{4n+2} \longto \Z/2 \longto 1.
\]
It follows that the inclusion $\langle s \rangle \incl D_{4n+2}$
also induces an isomorphism on mod 2 homology,
since the composite $\Z/2 \isom \langle s \rangle \incl D_{4n+2}$
is a right inverse to the quotient map $D_{4n+2} \to \Z/2$.
Our aim is to prove the following two theorems, 
the second one of which reduces the computation of $\alpha_k^{D_{4n+2}}$
to the first.

\begin{theorem}
\label{thm:ecalc}
Let $E$ be an elementary abelian 2-group. Then the map 
\[
	\alpha^{E}_k \colon H_\ast(BV_k) \tensor H_\ast(BE)
	\longto 
	H_\ast(BE)
\]
is given by 
\[
	a \tensor b \longmapsto \sum_{K} K_\ast(a)b
\]
where the sum is over all homomorphisms $K\colon V_k \to E$.
\end{theorem}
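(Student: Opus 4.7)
The plan is to apply Propositions~\ref{prop:alphagk-comp} and~\ref{prop:compsum}, and then use Lemma~\ref{lm:trivial-transfers} to discard most of the orbit contributions. Since $E$ is abelian of exponent $2$, the action \eqref{eq:action} of $V_k \times E^{\{p,q\}}$ on $E^{V_k}$ becomes, in additive notation, $(u, g_p, g_q) \cdot \phi = \bigl(v \mapsto \phi(u+v) + g_p + g_q\bigr)$. The stabilizer of $\phi \in E^{V_k}$ therefore consists of those $(u, g_p, g_q)$ for which the function $v \mapsto \phi(u+v) + \phi(v)$ is the constant function with value $g_p + g_q$. Setting $U_\phi = \{u \in V_k : v \mapsto \phi(u+v) + \phi(v) \text{ is constant}\}$, one checks that $U_\phi$ is a subgroup of $V_k$ and that $u \mapsto \phi(u+v) + \phi(v)$ defines a homomorphism $h_\phi \colon U_\phi \to E$. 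The stabilizer is then $\{(u, g, g + h_\phi(u)) : u \in U_\phi,\ g \in E\}$, which projects isomorphically onto $U_\phi \times E^{\{p\}} \subset V_k \times E^{\{p\}}$ under the composite of inclusion and projection of Remark~\ref{rk:injectivity}.

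Next, I would note that the inclusion $U_\phi \times E^{\{p\}} \hookrightarrow V_k \times E^{\{p\}}$ is a split inclusion of elementary abelian $2$-groups (pick a complement of $U_\phi$ in $V_k$), so it induces a split injection on $H_\ast$, and its index equals $|V_k|/|U_\phi|$, a power of~$2$. By Lemma~\ref{lm:trivial-transfers}, the transfer appearing in Proposition~\ref{prop:compsum} vanishes unless $U_\phi = V_k$; but $U_\phi = V_k$ is exactly the condition that $\phi$ be \emph{affine}, i.e., $\phi(v) = K(v) + c$ for some unique linear map $K \colon V_k \to E$ and some $c \in E$. Since translating $\phi$ by any constant lies in its orbit (act by $(0, c, 0)$), and a short calculation shows that two affine functions with the same linear part lie in a common orbit while ones with distinct linear parts do not, the orbits of affine functions are canonically in bijection with $\Hom(V_k, E)$.

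Finally, for the orbit of a linear $K$ (represented by $\phi = K$), the stabilizer $\{(u, g, g + K(u)) : u \in V_k,\ g \in E\}$ is identified with $V_k \times E$ via $(u, g, g + K(u)) \leftrightarrow (u, g)$. Under this identification the projection to $V_k \times E^{\{p\}}$ is the identity (so its transfer is the identity), and the projection to $E^{\{q\}}$ becomes the group homomorphism $V_k \times E \to E$, $(u, g) \mapsto K(u) + g$. Since this homomorphism is the sum (in the abelian group $E$) of $K \circ \pr_1$ and $\pr_2$, the induced map on classifying spaces realizes, via the cross product $H_\ast(BV_k) \tensor H_\ast(BE) \isom H_\ast(BV_k \times BE)$, the Pontryagin product $a \tensor b \mapsto K_\ast(a) \cdot b$ in $H_\ast(BE)$. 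Summing the contributions over all $K \in \Hom(V_k, E)$ yields the claimed formula. The main obstacle is the stabilizer/orbit bookkeeping in the first two paragraphs together with the identification of the final induced map as a Pontryagin product; neither step is deep, but both require care.
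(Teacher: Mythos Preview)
Your argument is correct. The stabilizer and orbit bookkeeping checks out: for an elementary abelian $2$-group $E$ the action simplifies exactly as you say, $U_\phi=V_k$ characterises affine $\phi$, orbits of affine functions are parametrised by their linear part, and the final projection to $E^{\{q\}}$ does induce $a\otimes b\mapsto K_\ast(a)\cdot b$ via the Pontryagin product.

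Your route differs from the paper's in a genuine way. The paper does not carry out the orbit analysis directly for a general elementary abelian $E$; instead it runs the analysis for the dihedral groups $D_{4n+2}$ (Lemmas~\ref{lm:subgroup-inj}--\ref{lm:orbit-reps}), specialises to $n=0$ to obtain the case $E=\Z/2$, and then invokes the product theorem (Theorem~\ref{thm:product-thm}) to deduce the formula for arbitrary $E$ from the rank-one case. Your approach bypasses both the dihedral detour and the product theorem, proving the general-$E$ statement in one pass; this is cleaner for Theorem~\ref{thm:ecalc} on its own. What the paper's approach buys is that the same orbit analysis simultaneously yields Theorem~\ref{thm:dcalc} (the reduction of $\alpha_k^{D_{4n+2}}$ to $\alpha_k^{\langle s\rangle}$), which your direct argument does not address. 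So the two proofs trade off: yours is self-contained for elementary abelian groups, while the paper's packages the elementary abelian and dihedral computations together at the cost of an extra appeal to Theorem~\ref{thm:product-thm}.
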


\begin{theorem}
\label{thm:dcalc}
The diagram
\[\xymatrix@C+2em{
	H_\ast(BV_k) \tensor H_\ast (BD_{4n+2})
	\ar[r]^-{\alpha^{D_{4n+2}}_k}
	&
	H_\ast (BD_{4n+2})
	\\
	H_\ast(BV_k) \tensor H_\ast (B\langle s \rangle)
	\ar[r]^-{\alpha^{\langle s \rangle }_k }
	\ar[u]^\isom
	&
	H_\ast (B\langle s \rangle)
	\ar[u]_\isom
}\]
where the vertical maps are induced by the 
inclusion $\langle s\rangle \incl D_{4n+2}$ commutes
for all $k\geq 1$.
\end{theorem}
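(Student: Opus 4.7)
Write $G = D_{4n+2}$, $K = \langle s\rangle$, and $R = \langle r\rangle$, so that $G = R\rtimes K$ with $|R| = 2n+1$ odd. The plan is to apply Proposition~\ref{prop:alphagk-comp} to express both $\alpha_k^G$ and $\alpha_k^K$ via push-pull diagrams, then invoke Proposition~\ref{prop:compsum} to rewrite each as a sum of transfer-induction maps indexed by $(V_k\times G^{\{p,q\}})$-orbit representatives on $G^{V_k}$ (respectively $(V_k\times K^{\{p,q\}})$-orbit representatives on $K^{V_k}$), and compare the two sums term by term.

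First I would partition a set $O$ of orbit representatives for the $(V_k\times G^{\{p,q\}})$-action on $G^{V_k}$ into \emph{meeting orbits}, those containing a tuple in $K^{V_k}$, and \emph{missing orbits}, those disjoint from $K^{V_k}$. Since $K$ is a transversal for the cosets of $R$ in $G$ and $|R|$ is odd, the equivariant quotient $G^{V_k}\to (\Z/2)^{V_k}$ restricts to a bijection on $K^{V_k}$, and a short check shows that any two elements of $K^{V_k}$ lying in a common $(V_k\times G^{\{p,q\}})$-orbit already lie in a common $(V_k\times K^{\{p,q\}})$-orbit. This yields a canonical bijection between meeting orbits and $(V_k\times K^{\{p,q\}})$-orbits on $K^{V_k}$. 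Moreover, for $\bar g\in K^{V_k}$, the inclusion $(V_k\times K^{\{p,q\}})_{\bar g}\incl (V_k\times G^{\{p,q\}})_{\bar g}$ has odd index (a quotient of $R^{\{p,q\}}$), so that restriction is an isomorphism on $\F_2$-homology, and the meeting-orbit contribution to $\alpha_k^G$ agrees with the corresponding contribution to $\alpha_k^K$ after post-composition with the inclusion-induced isomorphism $H_\ast(BK)\xrightarrow{\isom} H_\ast(BG)$.

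The remaining task is to show each missing orbit contributes zero. For a missing-orbit representative $\bar g$, let $H\leq V_k\times G^{\{p\}}$ be the image of $(V_k\times G^{\{p,q\}})_{\bar g}$ under the projection of Remark~\ref{rk:injectivity}, which is injective. I would then apply Lemma~\ref{lm:trivial-transfers}, which demands that the index $[V_k\times G^{\{p\}}:H] = |\text{orbit of }\bar g|/|G|$ be even and that the inclusion $BH\to B(V_k\times G)$ be injective on $\F_2$-homology. The even-index claim is a counting argument: missing orbits must have strictly larger 2-adic orbit size than meeting orbits, because their images in $(\Z/2)^{V_k}$ are covered more than once by $G^{V_k}$ (each meeting orbit surjects $|R|^{|V_k|-1}$-to-one onto its image, whereas a missing orbit acquires at least one further factor of~$2$).

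The main obstacle is the injectivity-on-$\F_2$-homology step. I would address it by identifying $H$ explicitly as a subgroup of $V_k\times G$ whose intersection with $\{0\}\times R$ has odd index in $H\cap (V_k\times R)$, and then using the Hochschild--Serre spectral sequence for the extension $1\to V_k\times R\to V_k\times G\to V_k\times K\to 1$: because $|R|$ is odd, this spectral sequence collapses onto its bottom row and reduces the injectivity question to the analogous question for the image $\bar H$ of $H$ in $V_k\times K\isom V_k\times\Z/2$, an elementary abelian $2$-group for which injectivity is a direct check on polynomial generators. (In the toy case $k=1$, $G=D_6$, the sole missing orbit is that of $(e,r)$, whose stabilizer image in $V_1\times D_6$ is a twisted copy of $D_6$ sending the degree $1$ generator to $t_1+t_2$, and injectivity is manifest.)
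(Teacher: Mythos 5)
Your plan follows the same skeleton as the paper's proof: express both sides via Propositions~\ref{prop:alphagk-comp} and \ref{prop:compsum} as sums of transfer-induction composites over orbits, and use Lemma~\ref{lm:trivial-transfers} to kill terms. The main difference is that you sort orbits into meeting versus missing, while the paper (via Lemma~\ref{lm:orbit-reps}) directly parametrizes the odd-index-stabilizer orbits by homomorphisms $V_k\to\langle s\rangle$. This change introduces two gaps.

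The first and most serious gap is your justification that missing orbits contribute zero. The claim that each meeting orbit maps $|R|^{|V_k|-1}$-to-one onto its image in $(\Z/2)^{V_k}$ is not correct and the multiplicity is not even constant across meeting orbits: the orbit of the constant tuple $(e,\ldots,e)$ has size $|G|=2(2n+1)$ and its image is a single point, giving multiplicity $2(2n+1)\neq(2n+1)^{2^k-1}$, while for $(K(v))_v$ with $K$ nontrivial and $k=1$ the multiplicity is $(2n+1)^2=|R|^{|V_k|}$. Worse, the meeting/missing dichotomy does not line up with odd/even stabilizer-image index: for $k=2$ the tuple $(e,s,s,s)\in\langle s\rangle^{V_2}$ has a stabilizer of order $2$, hence image of even index $2(4n+2)$, so this meeting orbit also contributes zero. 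What actually works (and is the content of the paper's Lemma~\ref{lm:orbit-reps}) is a structural argument: if the stabilizer image has odd index in $V_k\times G^{\{p\}}$ then it contains $V_k\times\{e\}$, and then the graph form $\{(u,g_p,\varphi(u,g_p))\}$ of the stabilizer from Remark~\ref{rk:injectivity} forces the orbit to contain a tuple $(\gamma\varphi(v,e)\gamma^{-1})_v\in\langle s\rangle^{V_k}$, i.e.\ a homomorphism tuple. You cannot get this from 2-adic orbit-size bookkeeping alone.

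The second gap is the assertion that each meeting-orbit contribution to $\alpha^{D_{4n+2}}_k$ "agrees with" the corresponding $\alpha^{\langle s\rangle}_k$ contribution after composing with inclusion-induced maps. This is a commutativity claim for a square whose horizontal arrows are transfers and whose vertical arrows are inclusion-induced maps going the wrong way; such squares do not commute in general. Here it is true because all stabilizer-image inclusions are injective on $\F_2$-homology: for odd-index stabilizer images the transfer is then the \emph{inverse} of the inclusion-induced isomorphism (this is exactly how the paper uses the transfer), so the transfer square is the inverse of the evidently commuting square of inclusion-induced maps; for even-index images both sides vanish by Lemma~\ref{lm:trivial-transfers}, and you should note that the indices on the $G$-side and $\langle s\rangle$-side have the same parity because $[V_k\times G^{\{p\}}:V_k\times\langle s\rangle^{\{p\}}]$ and $[H_G:H_{\langle s\rangle}]$ are both odd. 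Two smaller points: the extension you invoke should be $1\to\{0\}\times R\to V_k\times G\to V_k\times\langle s\rangle\to 1$ (the kernel $V_k\times R$ gives quotient $\langle s\rangle$, not $V_k\times\langle s\rangle$), and the index of the small stabilizer in the large one is a divisor of $|R|^2$ rather than a "quotient of $R^{\{p,q\}}$" — though your conclusion that it is odd is correct.
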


To prove Theorems~\ref{thm:ecalc} and \ref{thm:dcalc},
we will use Propositions~\ref{prop:alphagk-comp}
and \ref{prop:compsum},
making use of Lemma~\ref{lm:trivial-transfers}
to limit the number of orbits we need to
take into account when applying 
Proposition~\ref{prop:compsum}.
When computing $\alpha_k^{D_{4n+2}}$,
Lemma~\ref{lm:trivial-transfers} 
together with the following lemma show that 
we may restrict attention to the orbits
of those elements $\bar g \in D_{4n+2}^{V_k}$
with the property that the image of 
the stabilizer $(V_k \times D_{4n+2}^{\{p,q\}})_{\bar g}$
in $V_k \times D_{4n+2}^{\{p\}}$ is an 
odd-index subgroup.

\begin{lemma}
\label{lm:subgroup-inj}
Let $H$  be a subgroup of $V_k \times D_{4n+2}$.
Then the inclusion of $H$ into $V_k\times D_{4n+2}$
induces an injection on homology.
\end{lemma}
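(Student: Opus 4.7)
The plan is to reduce the problem to the case of subgroups of an elementary abelian $2$-group, where the statement is essentially immediate from the Künneth formula.

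First I would observe that $R := \{e\} \times \langle r \rangle$ is a normal subgroup of $V_k \times D_{4n+2}$ of odd order $2n+1$, with quotient $V_k \times (D_{4n+2}/\langle r \rangle) \isom V_k \times \langle s \rangle$. Since mod 2 homology of a classifying space of a finite group of odd order vanishes in positive degrees (by the transfer argument), the Hochschild--Serre spectral sequence implies that the quotient map $V_k \times D_{4n+2} \longtwoheadto V_k \times \langle s \rangle$ induces an isomorphism on mod 2 homology. Given a subgroup $H \leq V_k \times D_{4n+2}$, set $N = H \cap R$; this is a normal subgroup of $H$ of odd order (as it is contained in $R$), and the image of $H$ in $V_k \times \langle s \rangle$ is isomorphic to $H/N$. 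By the same Hochschild--Serre argument applied to $1 \to N \to H \to H/N \to 1$, the quotient $H \to H/N$ also induces an isomorphism on mod 2 homology.

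Next I would assemble the commutative square
\[\xymatrix@C+2em{
    H_\ast(BH) \ar[r] \ar[d]^-\isom
    &
    H_\ast(B(V_k \times D_{4n+2})) \ar[d]^-\isom
    \\
    H_\ast(B(H/N)) \ar[r]
    &
    H_\ast(B(V_k \times \langle s \rangle))
}\]
in which the vertical maps are isomorphisms by the previous step. It then suffices to show that the bottom horizontal map is injective, i.e.\ that for any subgroup $E' \leq E$ of an elementary abelian $2$-group $E$, the inclusion $E' \incl E$ induces an injection on mod 2 homology.

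For the last step, I would use that $E$ is an $\F_2$-vector space, so $E'$ admits a vector space complement $E''$, giving an isomorphism of groups $E \isom E' \times E''$ under which the inclusion $E' \incl E$ becomes the inclusion of the first factor. By the Künneth formula, the induced map
\[
    H_\ast(BE') \longto H_\ast(BE') \tensor H_\ast(BE'') \isom H_\ast(BE)
\]
is $x \mapsto x \tensor 1$, which is injective. There is no real obstacle in this argument; the only point requiring care is the identification of the image of $H$ in the quotient as $H/N$, which is standard.
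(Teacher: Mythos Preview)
Your proof is correct and takes a genuinely different route from the paper's.

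The paper intersects $H$ with the factor $V_k$, setting $W = H \cap V_k$ and $Q = H/W$, and then compares the Hochschild--Serre spectral sequences of the rows of
\[
\xymatrix{
1 \ar[r] & W \ar[r] \ar[d] & H \ar[r] \ar[d] & Q \ar[r] \ar[d] & 1 \\
1 \ar[r] & V_k \ar[r] & V_k \times D_{4n+2} \ar[r] & D_{4n+2} \ar[r] & 1.
}
\]
To show injectivity on the $E^2$-page the paper must verify that every subgroup of $D_{4n+2}$ injects on mod~$2$ homology, which requires a small case analysis (cyclic subgroups $\langle r^d\rangle$ versus dihedral subgroups $\langle r^d, r^i s\rangle$); it then uses collapse of the target spectral sequence to pass from $E^2$ to $E^\infty$.

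You instead intersect $H$ with the odd normal subgroup $R = \langle r\rangle$ and use that quotienting by an odd-order normal subgroup is invisible to mod~$2$ homology. This collapses the problem in one step to a subgroup of the elementary abelian $2$-group $V_k \times \langle s\rangle$, where injectivity is immediate from a vector-space splitting. Your argument is shorter, avoids the subgroup classification in $D_{4n+2}$, and sidesteps the spectral-sequence comparison entirely. The paper's approach, by contrast, would adapt more readily to situations where the ``odd part'' is not normal, but here that extra flexibility is not needed.
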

\begin{proof}
Let $W = H\cap V_k$, and let $Q=H/W$.
We then have the commutative diagram 
\[\xymatrix{
	1
	\ar[r]
	&
	W
	\ar[r]
	\ar[d]_{i'}
	&
	H
	\ar[r]
	\ar[d]_{i}
	&
	Q
	\ar[r]
	\ar[d]^{i''}
	&
	1
	\\
	1
	\ar[r]
	&
	V_k
	\ar[r]
	&
	V_k \times D_{4n+2}
	\ar[r]
	&
	D_{4n+2}
	\ar[r]
	&
	1
}\]
where the rows are exact and 
$i$ and $i'$ are the inclusions and
$i''$ is the map induced by $i$.
Notice that $i''$ is also 
injective. There results a map between the
Hochschild--Serre spectral sequences associated 
with the two rows. On the $E^2$-page, this map is 
given by the tensor product
\[
	i''_\ast \tensor i'_\ast 
	\colon 
	H_\ast(BQ) \tensor H_\ast(BW)
	\longto
	H_\ast(BD_{4n+2}) \tensor H_\ast(BV_k).
\]
Recalling every subgroup of $D_{4n+2}$ is 
cyclic of the form $\langle r^d\rangle $
for some $d|(2n+1)$ or dihedral of the form
$\langle r^d, r^i s \rangle$ for some 
$d | (2n+1)$, $0\leq i < 2n+1$,
and observing that in each case the
map induced by the inclusion of the 
subgroup into $D_{4n+2}$ is injective
on homology, we see that 
the map 
$i''_\ast \colon H_\ast(BQ) \to H_\ast(BD_{4n+2})$
is injective.
Since the map $i'$ admits a left inverse,
the map $i'_\ast \colon H_\ast(BW) \to H_\ast(BV_k)$
is injective as well. 
Thus the map between the two
spectral sequences 
is injective on the $E^2$-page.
Since the spectral sequence of the 
second row collapses on the $E^2$-page,
it follows that  the spectral sequence
of the first row also does. Thus
the map between spectral sequences
is also injective on the $E^\infty$-page,
which suffices to imply the claim.
\end{proof}

The next two lemmas provide an analysis of the 
orbits containing an element
$\bar g \in D_{4n+2}^{V_k}$ with the 
aforementioned property as well as the
associated stabilizers.

\begin{lemma}
\label{lm:homom-stabs}
Suppose $K\colon V_k \to \langle s \rangle$ 
is a homomorphism. Then the stabilizer of the
element $(K(v))_{v\in V_k} \in D_{4n+2}^{V_k}$
in the $V_k \times D_{4n+2}^{\{p,q\}}$-action on $D_{4n+2}^{V_k}$
is
\[
	\big\{(u,g_p,g_q)\in V_k \times D_{4n+2}^{\{p,q\}}
		\,\big|\, g_q=g_p \big\} 
\]
if $K$ is the trivial homomorphism and 
\[
	\big\{(u,g_p,g_q)\in V_k \times D_{4n+2}^{\{p,q\}}
		\,\big|\, g_p \in \langle s\rangle, \,g_q=g_pK(u)\big\}
\]
otherwise.
\end{lemma}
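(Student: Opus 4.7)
The plan is to unpack the stabilizer condition directly from the action formula \eqref{eq:action}, and reduce the problem to computing the centralizer of $s$ in $D_{4n+2}$.

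Substituting $g_v = K(v)$ into \eqref{eq:action} and using that $K$ is a homomorphism, the condition $(u, g_p, g_q) \cdot (K(v))_{v} = (K(v))_{v}$ unwinds to
\[
    g_p K(u) K(v) g_q^{-1} = K(v), \qquad \forall v \in V_k,
\]
or equivalently
\begin{equation}
\label{eq:stab-cond-plan}
    g_p K(u) = K(v) g_q K(v)^{-1}, \qquad \forall v \in V_k.
\end{equation}
If $K$ is trivial, then \eqref{eq:stab-cond-plan} just reads $g_p = g_q$ with no condition on $u$, which immediately gives the first claim.

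For the nontrivial case, I would first take $v = 0$ in \eqref{eq:stab-cond-plan} to obtain $g_q = g_p K(u)$, and then feed this back to see that the remaining conditions amount to $g_q$ commuting with $K(v)$ for every $v$. Since $K$ is a surjection $V_k \twoheadrightarrow \langle s \rangle$, this is equivalent to $g_q \in C_{D_{4n+2}}(s)$.

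The only nontrivial computation is then $C_{D_{4n+2}}(s) = \langle s \rangle$, which I would verify by the standard case check: for $r^a$, $s r^a s^{-1} = r^{-a}$, so $r^a$ commutes with $s$ iff $r^{2a} = e$, which forces $r^a = e$ since $2n+1$ is odd; for $s r^a$, a similar computation shows $a = 0$. Given this, $g_q \in \langle s \rangle$, and since $K(u) \in \langle s \rangle$ we deduce $g_p = g_q K(u)^{-1} \in \langle s \rangle$ as well. The converse inclusion is immediate because $\langle s \rangle$ is abelian and contains the image of $K$, so any $(u, g_p, g_p K(u))$ with $g_p \in \langle s \rangle$ satisfies \eqref{eq:stab-cond-plan}. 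Nothing here is really an obstacle; the only point requiring any care is the centralizer calculation, which relies essentially on $|r|$ being odd.
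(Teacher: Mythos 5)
Your proof is correct and takes essentially the same approach as the paper: unwind the stabilizer condition $g_p K(u+v) g_q^{-1} = K(v)$, set $v=0$ to get $g_q = g_p K(u)$, substitute back to reduce to a centralizer condition, and identify $C_{D_{4n+2}}(s)=\langle s\rangle$. The only cosmetic difference is that you phrase the centralizer condition in terms of $g_q$ while the paper uses $g_p$; the two are equivalent because $g_q = g_p K(u)$ with $K(u)\in\langle s\rangle$, which is abelian and contains every $K(v)$.
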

\begin{proof}
In both cases, it is trivial to check that the 
subgroup given is contained in the stabilizer.
To prove the reverse containment, 
suppose $(u,g_p,g_q)$ is in the 
stabilizer of $(K(v))_{v\in V_k}$.
Then we have
\begin{equation}
\label{eq:stabcond}
	g_p K(u+v) g_q^{-1} = K(v) 
\end{equation}
for all $v$. Setting $v=0$, we see that $g_q = g_p K(u)$,
which in the case of trivial $K$ reduces to 
the equation $g_q = g_q$.
Substituting $g_q = g_p K(u)$ back to \eqref{eq:stabcond},
we see that we must in addition have $g_p K(v) g_p^{-1} = K(v)$
for all $v$. If $K$ is trivial, this condition is 
satisfied for all $g_p \in D_{4n+2}$. On the other hand,
if $K$ is non-trivial, that is, $s$ is in the image of $K$,
then $g_p$ must belong to the centralizer of $s$ in $D_{4n+2}$,
which is $\langle s\rangle$. The claim follows.
\end{proof}

\begin{lemma}
\label{lm:orbit-reps}
The map
\begin{equation}
\label{map:k-to-orbit}
	(K\colon V_k\to \langle s \rangle)
	\longmapsto
	\textup{the orbit of $(K(v))_{v \in V_k} \in D_{4n+2}^{V_k}$}
\end{equation}
gives a bijection from the set of homomorphisms 
$K\colon V_k \to \langle s \rangle$
onto the set of orbits of the $(V_k\times D_{4n+2}^{\{p,q\}})$-action
on $D_{4n+2}^{V_k}$ having the property that
for some (and hence every) element $\bar g$ in the orbit, 
the image of the stabilizer $(V_k\times D_{4n+2}^{\{p,q\}})_{\bar g}$
under the projection 
$V_k\times D_{4n+2}^{\{p,q\}} \to V_k\times D_{4n+2}^{\{p\}}$
is an odd-index subgroup of $V_k\times D_{4n+2}^{\{p\}}$.
\end{lemma}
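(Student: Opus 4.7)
The plan is to verify three properties of the map \eqref{map:k-to-orbit}: well-definedness (the orbit associated to any homomorphism $K\colon V_k \to \langle s \rangle$ satisfies the odd-index condition), injectivity, and surjectivity. Along the way I will use that the ``for some (and hence every)'' qualifier in the statement is justified, since stabilizers of elements in one orbit are conjugate in $V_k \times D_{4n+2}^{\{p,q\}}$, so their images in $V_k \times D_{4n+2}^{\{p\}}$ are conjugate, and conjugate subgroups share the same index. Well-definedness is then immediate from Lemma~\ref{lm:homom-stabs}: for trivial $K$ the stabilizer projects onto all of $V_k \times D_{4n+2}^{\{p\}}$ (index~$1$), while for non-trivial $K$ it projects onto $V_k \times \langle s\rangle$, of index $2n+1$; both are odd.

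For injectivity, I would suppose $(u, g_p, g_q) \in V_k \times D_{4n+2}^{\{p,q\}}$ witnesses that $(K(v))$ and $(K'(v))$ lie in the same orbit, so $g_p K'(u+v) g_q^{-1} = K(v)$ for all $v$. Setting $v = 0$ forces $g_q = g_p K'(u)$; substituting back and using $K'(u+v) K'(u)^{-1} = K'(v)$ (abelianness of $\langle s\rangle$) yields $K(v) = g_p K'(v) g_p^{-1}$ for all $v$. If $K'$ is trivial then so is $K$, and vice versa; otherwise one picks $v_0$ with $K'(v_0) = s$, deduces $K(v_0) = g_p s g_p^{-1} = s$ (it cannot equal $e$), and hence that $g_p$ lies in the centralizer of $s$, which is $\langle s\rangle$ (the case $n = 0$ being trivial since $D_{4n+2} = \langle s\rangle$). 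Then $g_p \in \langle s\rangle$ commutes with all $K'(v) \in \langle s\rangle$, giving $K = K'$.

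For surjectivity, let $\bar g = (g_v)$ represent an orbit with the odd-index property, and write $\bar S \leq V_k \times D_{4n+2}^{\{p\}}$ for the image of its stabilizer. Since $|D_{4n+2}|_2 = 2$, $\bar S$ must contain a Sylow $2$-subgroup of $V_k \times D_{4n+2}$, necessarily of the form $V_k \times \langle t \rangle$ for some reflection $t \in D_{4n+2}$. Unpacking via \eqref{eq:action} what it means for $(u, e)$ to lie in $\bar S$ for every $u \in V_k$, one finds that the map $K\colon V_k \to D_{4n+2}$ defined by $K(v) = g_0^{-1} g_v$ is a homomorphism, and that $(K(v))_{v \in V_k}$ lies in the orbit of $\bar g$ via the action of $(0, g_0^{-1}, e)$. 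Similarly, $(0, t) \in \bar S$ translates into the condition that the conjugate $g_0^{-1} t g_0$ commutes with every $K(v)$. For $n \geq 1$ the centralizer of any reflection in $D_{4n+2}$ is cyclic of order $2$, so the image of $K$ lies in some $\langle t' \rangle$; since all reflections in $D_{4n+2}$ are conjugate, there exists $h \in D_{4n+2}$ with $h t' h^{-1} = s$, and acting by $(0, h, h)$ produces a representative of the form $(h K(v) h^{-1})_{v \in V_k}$ arising from a homomorphism $V_k \to \langle s\rangle$, as required.

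The main obstacle is surjectivity, which requires a two-stage normalization of the representative (first placing $g_0$ at the identity, then conjugating the image into the specific subgroup $\langle s\rangle$) together with Sylow theory to extract the homomorphism property from the mere odd-index hypothesis. The degenerate case $n = 0$ needs a separate but simple check: since $D_{4n+2} = \langle s\rangle$ is already a $2$-group, the odd-index condition forces $\bar S = V_k \times D_{4n+2}^{\{p\}}$, the centralizer constraint is vacuous, and the argument above collapses to the observation that $K(v) = g_0^{-1} g_v$ is already a homomorphism to $\langle s\rangle$ with $h = e$.
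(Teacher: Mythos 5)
Your argument is correct and follows essentially the same route as the paper's proof: well-definedness from Lemma~\ref{lm:homom-stabs}, injectivity by the same substitution $g_q = g_p K'(u)$ followed by the observation that $K(v)$ and $K'(v)$ are conjugate and hence equal in $\langle s\rangle$, and surjectivity by normalizing the representative to $(\gamma K(v)\gamma^{-1})_{v\in V_k}$ with $\gamma = h g_0^{-1}$. The one inessential detour is in surjectivity, where you extract the inclusion $K(V_k)\subseteq\langle g_0^{-1}tg_0\rangle$ from the element $(0,t)$ of a Sylow $2$-subgroup together with a centralizer computation; the paper instead observes directly, via Remark~\ref{rk:injectivity}, that $K=\varphi|_{V_k}$ is a homomorphism out of a $2$-group, so that $K(V_k)$ is automatically a $2$-subgroup of $D_{4n+2}$ and hence conjugate into $\langle s\rangle$, which sidesteps both the Sylow analysis of $V_k\times D_{4n+2}$ and your separate treatment of $n=0$.
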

\begin{proof}
Lemma~\ref{lm:homom-stabs} implies that the map 
\eqref{map:k-to-orbit} takes values in the 
claimed subset of orbits.
To see that the map is injective,
suppose $K_1,K_2\colon V_k \to \langle s \rangle$
are two homomorphisms such that 
$(u,g_p,g_q)\cdot (K_1(v))_{v\in V_k} = (K_2(v))_{v\in V_k}$
for some $(u,g_p,g_q)\in V_k\times D_{4n+2}^{\{p,q\}}$.
Then 
\begin{equation}
\label{eq:k1k2}
	g_p K_1(u+v) g_q^{-1} = K_2(v)
\end{equation}
for all $v\in V_k$. Taking $v=0$ gives $g_q = g_p K_1(u)$,
and substituting this back to \eqref{eq:k1k2}, we see that
$K_2(v) = g_p K_1(v) g_p^{-1}$ for all $v\in V_k$. 
In particular, $K_1(v)$ and $K_2(v)$ are non-trivial
for precisely same $v \in V_k$. Since $K_1$ and $K_2$ 
both take values in the group $\langle s \rangle$ which has
only one non-trivial element, it follows that $K_1 = K_2$.

It remains to show that the map \eqref{map:k-to-orbit} 
is surjective.
Suppose $\bar g = (g_v)_{v\in V_k} \in D_{4n+2}^{V_k}$ 
is an element such that the image of 
the stabilizer of $\bar g$ is an odd-index subgroup 
$H$ of $V_k \times D_{4n+2}^{\{p\}}$.
By Remark~\ref{rk:injectivity},
the stabilizer of $\bar g$ is of the form
\[
	\big\{ (u,g_p,\varphi(u,g_p)) \in V_k \times D_{4n+2}^{\{p,q\}}
		 \,\big|\, (u,g_p) \in H \big\}
\]
for some homomorphism $\varphi \colon H \to D_{4n+2}^{\{q\}}$.
Since $H$ has odd index in $V_k \times D_{4n+2}^{\{p\}}$,
we must have $V_k \subset H$. Thus for every $u\in V_k$,
the element $(u,e,\varphi(u,e))$ is in the stabilizer of 
$\bar g$, so that $g_{u+v} \varphi(u,e)^{-1} = g_v$
for all $v\in V_k$. Taking $v=0$, we see that
\[
	g_u = g_0 \varphi(u,e)
\]
for all $u\in V_k$. The image of $V_k$ under the homomorphism
$\varphi$ is a 2-subgroup of $D_{4n+2}$,
and hence conjugate by some $\gamma \in D_{4n+2}$ 
to a subgroup of $\langle s \rangle$. Now
\[
	(0,\gamma g_0^{-1},\gamma) \cdot \bar g 
	= 
	(\gamma g_0^{-1} g^{}_v \gamma^{-1})_{v \in V_k}
	= 
	(\gamma g_0^{-1} g^{}_0\varphi(v,e) \gamma^{-1})_{v \in V_k}
	=
	(\gamma \varphi(v,e)\gamma^{-1})_{v \in V_k}	
\]
is a representative of the desired form for the orbit of $\bar g$.
\end{proof}

We are now ready to prove Theorems~\ref{thm:ecalc} and \ref{thm:dcalc}.

\begin{proof}[Proof of Theorems~\ref{thm:ecalc} and \ref{thm:dcalc}]
Our strategy is to prove Theorem~\ref{thm:dcalc}
simultaneously with the case $E=\Z/2$ of Theorem~\ref{thm:ecalc},
and then use Theorem~\ref{thm:product-thm} to complete
the proof of Theorem~\ref{thm:ecalc}.

By Proposition~\ref{prop:alphagk-comp}
together with Proposition~\ref{prop:compsum}
and Lemmas~\ref{lm:trivial-transfers} and 
\ref{lm:subgroup-inj} and \ref{lm:orbit-reps},
the map $\alpha^{D_{4n+2}}_k$ equals the composite
\newcommand{\entry}{H_\ast(BV_k) \tensor H_\ast(BD_{4n+2})\;}
\[\vcenter{\xymatrix@!0@C=4em{
	*!R{\entry}
	\ar[r]^\times_\isom
	&
	*!L{\;H_\ast (BV_k \times BD_{4n+2}) }
	\\
	*!R{\phantom{\entry}}
	\ar[r]^{(a)}_\isom
	&
	*!L{\;H_\ast (\pt \sslash V_k \times D_{4n+2}^{\{p\}} ) }
	\\
	*!R{\phantom{\entry}}
	\ar[r]^{(b)}
	&
	*!L{\;H_\ast ( \pt \sslash D_{4n+2}^{\{q\}}) }
	\\
	*!R{\phantom{\entry}}
	\ar[r]^{(c)}_\isom
	&
	*!L{\;H_\ast ( BD_{4n+2} ) }
}}\]
where (a) and (c) are induced by the evident homeomorphisms
and where (b) is the sum of the composite maps
\begin{equation}
\label{eq:dcomp-composite}
\xymatrix@C+1em{
	H_\ast (\pt \sslash V_k \times D_{4n+2}^{\{p\}})
	\ar[r]^-{i_K^!}
	&
	H_\ast (\pt \sslash H_K ) 
	\ar[r]^-{(p_K)_\ast}
	&
	H_\ast ( \pt \sslash D_{4n+2}^{\{q\}}) 
} 
\end{equation}
as $K$ runs through all homomorphisms $V_k \to \langle s \rangle$.
Here $H_K \leq V_k \times D_{4n+2}^{\{p,q\}}$ 
is the stabilizer of
$(K(v))_{v \in V_k} \in D_{4n+2}^{V_k}$ computed in 
Lemma~\ref{lm:homom-stabs},
the first map is the transfer map associated to the 
injective homomorphism 
\[
	i_K\colon H_K \longto V_k \times D_{4n+2}^{\{p\}}, 
	\quad 
	(v,\varepsilon_p,\varepsilon_q) \longmapsto (v,\varepsilon_p),
\]
and the second map is induced by the map
\[
	p_K \colon H_K \longto D_{4n+2}^{\{q\}},
	\quad
	(v,\varepsilon_p,\varepsilon_q) \longmapsto \varepsilon_q.
\]
For every $K\colon V_k \to \langle s \rangle$,
the maps \eqref{eq:dcomp-composite} fit into a commutative diagram
\begin{equation}
\label{diag:pf-of-dcalc}
\vcenter{\xymatrix@+1em{
	H_\ast (\pt \sslash V_k \times D_{4n+2}^{\{p\}})
	\ar[r]^-{i^!_K}
	&
	H_\ast (\pt \sslash H_K ) 
	\ar[r]^-{(p_K)_\ast}
	&
	H_\ast ( \pt \sslash D_{4n+2}^{\{q\}}) 
	\\
	H_\ast(\pt \sslash V_k\times \langle s \rangle)
	\ar[rr]^{(q_K)_\ast}
	\ar[u]^{(1\times j)_\ast}_\isom
	\ar[ur]_{d_\ast}
	&
	&
	H_\ast(\pt\sslash \langle s\rangle)
	\ar[u]_{j_\ast}^\isom
}}
\end{equation}
where the vertical maps are induced by the inclusion 
$j\colon \langle s \rangle \incl D_{4n+2}$,
the diagonal arrow is induced by the map
\[
	d\colon V_K\times \langle s \rangle \longto H_K,
	\quad 
	(v,g) \longmapsto (v,g,gK(v)),
\]
and the bottom horizontal arrow is induced by the map
\[
	q_K\colon V_K\times \langle s \rangle \longto \langle s \rangle,
	\quad 
	(v,g) \longmapsto gK(v).
\]
Indeed, the trapezoid in \eqref{diag:pf-of-dcalc}
commutes since $p_K d = j q_K$ on the level of 
homomorphisms. Furthermore, the maps $i_K^!$ and 
\[
	(i_K)_\ast 
	\colon 
	H_\ast(\pt \sslash H_K)
	\longto
	H_\ast(\pt \sslash V_k \times D_{4n+2}^{\{p\}})
\]
are inverse isomorphisms, as follows from
the observation that the composite map 
$(i_K)_\ast \circ i_K^!$ is the identity, being equal
to multiplication by the index of $i_K H_K$ in 
$V_k \times D_{4n+2}^{\{p\}}$, and 
Lemma~\ref{lm:subgroup-inj}, which implies that 
the map $(i_K)_\ast$ is injective.
Thus the commutativity of the triangle in
\eqref{diag:pf-of-dcalc} follows from the 
identity $i_K \circ d = 1\times j$.
We conclude that the map $\alpha_k^{D_{4n+2}}$
fits into the commutative diagram
\begin{equation}
\label{diag:alphakdred}
\vcenter{\xymatrix@C+2em{
	H_\ast(BV_k) \tensor H_\ast (BD_{4n+2})
	\ar[r]^-{\alpha^{D_{4n+2}}_k}
	&
	H_\ast (BD_{4n+2})
	\\
	H_\ast(BV_k) \tensor H_\ast (B\langle s \rangle)
	\ar[r]
	\ar[u]^{1\tensor j_\ast}_\isom
	&
	H_\ast (B\langle s \rangle)
	\ar[u]_{j_\ast}^\isom
}}
\end{equation}
where the bottom horizontal map is given by 
$a\tensor b \mapsto \sum_K K_\ast(a)b$,
where the sum is over all homomorphisms 
$K\colon V_k \to \langle s \rangle$.

Specializing to the case $n=0$, we 
have obtained a proof of Theorem~\ref{thm:ecalc}
in the special case $E=\Z/2$.
In particular, the bottom horizontal arrow in
\eqref{diag:alphakdred}
is equal to $\alpha_k^{\langle s\rangle}$,
proving Theorem~\ref{thm:dcalc}.
Finally, Theorem~\ref{thm:ecalc} 
in the general case follows at once from 
the special case $E=\Z/2$ and 
Theorem~\ref{thm:product-thm}.
\end{proof}

In practice, it is useful to have a more explicit formula 
for the maps $\alpha^E_k$
than the one given in Theorem~\ref{thm:ecalc}.
Let $x_1,\ldots,x_k$ be a basis for
$V_k$ and let $x$ be the generator of $\Z/2$.
Then we have
\begin{equation}
\label{eq:alphaek-formula}
\begin{aligned}
   	\alpha^{E}_k \big((x_1^{[n_1]} \cdots {}& x_k^{[n_k]}) \tensor b\big)
	\\
	=&\, 
	\sum_K K_\ast \big(x_1^{[n_1]} \cdots x_k^{[n_k]}\big)b
	\\
	=&\,
	\sum_K K_\ast \big(x_1^{[n_1]}\big) \cdots K_\ast\big(x_k^{[n_k]}\big)b
	\\
	=&\,
	\sum_K K_\ast(i_1)_\ast \big(x^{[n_1]}\big) 
		\cdots K_\ast(i_k)_\ast \big(x^{[n_k]}\big) b
	\\
	=&\,
	\bigg(\sum_{L\colon \Z/2 \to E} L_\ast\big(x^{[n_1]}\big)\bigg) 
		\cdots
	\bigg(\sum_{L\colon \Z/2 \to E} L_\ast\big(x^{[n_k]}\big)\bigg) b
	\\
	=&\, 	\begin{cases}
 			\big(\sum_{\varepsilon \in E} \varepsilon^{[n_1]}\big)
			 \cdots 
			\big(\sum_{\varepsilon \in E} \varepsilon^{[n_k]}\big) b
			&
			\text{if $n_1,\ldots, n_k >0$}
			\\
			0 & \text{otherwise}
		\end{cases}
\end{aligned}
\end{equation}
Here $i_j \colon \Z/2 \to V_k$ is the map sending $x$ to $x_j$,
and the indices $L$ run over all homomorphisms $\Z/2 \to E$.
In particular, in the case $E=\Z/2$ we get the following 
formula.
\[
	\alpha^{\Z/2}_k \big((x_1^{[n_1]} \cdots x_k^{[n_k]}) \tensor b\big)
	= 
	\begin{cases}
		x^{[n_1]} \cdots x^{[n_k]} b
		&
		\text{if $n_1,\ldots, n_k >0$}
		\\
		0 & \text{otherwise}
	\end{cases}
\]
Recalling that $x^{[n]} x^{[m]} = {{n+m}\choose {m}} x^{[n+m]}$
in $H_\ast B(\Z/2)$ and that 
${{n+m}\choose {m}}$ 
is odd if the binary expansions of $n$ and $m$ have no 1's in common
and even otherwise, we obtain the following result.
\begin{proposition}
\label{prop:z2-nontrivial-ops}
The operation 
\[
	\big(\alpha^{\Z/2}_k\big)^\sharp \big( x_1^{[n_1]} \cdots x_k^{[n_k]}\big)
	\colon 
	H_{\ast} B(\Z/2) 
	\longto 
	H_{\ast+n_1+\cdots+n_k} B(\Z/2)
\]
is non-trivial precisely when the numbers $n_1,\ldots,n_k$ are positive
and no two of them have a 1 in common in their binary expansions.
Moreover, in this case the operation amounts to multiplication by 
$x^{[n_1+\cdots+n_k]}$.\qed
\end{proposition}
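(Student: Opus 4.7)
The plan is to leverage the explicit formula for $\alpha^{\Z/2}_k$ derived immediately above the statement, namely
\[
	\alpha^{\Z/2}_k \big((x_1^{[n_1]} \cdots x_k^{[n_k]}) \tensor b\big)
	=
	\begin{cases}
		x^{[n_1]} \cdots x^{[n_k]} \, b & \text{if $n_1,\ldots,n_k > 0$,} \\
		0 & \text{otherwise.}
	\end{cases}
\]
Thus $\big(\alpha^{\Z/2}_k\big)^\sharp\big(x_1^{[n_1]}\cdots x_k^{[n_k]}\big)$ is literally multiplication by the element $y := x^{[n_1]} \cdots x^{[n_k]} \in H_\ast B(\Z/2)$ when all $n_i$ are positive, and is the zero map otherwise. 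So the remaining task is to compute $y$ and to verify that multiplication by it is non-zero exactly when $y \neq 0$.

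To evaluate $y$, I would induct on $k$, iterating the divided-power product $x^{[n]} x^{[m]} = \binom{n+m}{m} x^{[n+m]}$ and invoking the mod $2$ version of Kummer's theorem: $\binom{n+m}{m}$ is odd precisely when the binary expansions of $n$ and $m$ have no common $1$. If no two of the $n_i$ share a $1$-bit, then inductively $n_1 + \cdots + n_{k-1}$ has binary expansion equal to the disjoint union of those of $n_1,\ldots,n_{k-1}$, hence also disjoint from that of $n_k$, so each binomial coefficient appearing in the iterated product is odd, yielding $y = x^{[n_1+\cdots+n_k]}$. Conversely, if some pair $n_i,n_j$ shares a $1$-bit, commutativity of the algebra $H_\ast B(\Z/2)$ lets me isolate the factor $x^{[n_i]} x^{[n_j]} = \binom{n_i+n_j}{n_j}\, x^{[n_i+n_j]} = 0$, and hence $y = 0$.

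Finally, non-triviality in the remaining case is immediate: writing $N = n_1+\cdots+n_k$, multiplication by $x^{[N]}$ sends $1 \in H_0 B(\Z/2)$ to $x^{[N]}$, which is a non-zero basis element of $H_\ast B(\Z/2)$. Every step is either a restatement of the preceding formula \eqref{eq:alphaek-formula} or an elementary fact about parity of binomial coefficients, so no serious obstacle arises; the only point deserving care is the commutativity-based rearrangement in the vanishing case, but this is routine in the graded-commutative $\F_2$-algebra $H_\ast B(\Z/2)$.
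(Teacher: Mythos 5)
Your proof is correct and takes essentially the same route as the paper's (which consists of the paragraph preceding the proposition, citing the formula for $\alpha^{\Z/2}_k$, the divided-power rule $x^{[n]}x^{[m]} = \binom{n+m}{m}x^{[n+m]}$, and the parity criterion for binomial coefficients); you have merely spelled out the induction and the zero-factor argument that the paper leaves implicit.
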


Let us now consider the case of a general elementary abelian 2-group $E$
of dimension $l\geq 1$.
Choose a basis $t_1,\ldots,t_l$ for $E$. Then 
\begin{equation}
\label{eq:divpowsum}
	\sum_{\varepsilon \in E} \varepsilon^{[n]}
	=
	\sum_{\substack{
			i_1,\ldots, i_l \geq 1 
			\\
			i_1 + \cdots + i_l = n}}
		t_1^{[i_1]} \cdots t_l^{[i_l]}
\end{equation}
for every $n>0$ as follows by induction 
from the equation
\[
	\sum_{\varepsilon \in E} \varepsilon^{[n]} 
	=
	\sum_{\varepsilon \in \langle t_1,\ldots,t_{l-1}\rangle }
		\big(\varepsilon^{[n]}+ (\varepsilon+t_l)^{[n]}\big)
	=
	\sum_{\varepsilon \in \langle t_1,\ldots,t_{l-1}\rangle }
		\sum_{i=1}^{n-1} \varepsilon^{[n-i]}_{\phantom{l}}t_l^{[i]}
\]	
where $\langle t_1,\ldots,t_{l-1}\rangle$ denotes the span
of $t_1,\ldots,t_l$ and we have assumed that $l \geq 2$.
Substituting \eqref{eq:divpowsum} into 
\eqref{eq:alphaek-formula}, we see
that for $n_1,\ldots,n_k >0$, we have
\[
\begin{aligned}
   	\alpha^{E}_k \big((x_1^{[n_1]} \cdots {}& x_k^{[n_k]}) \tensor b\big)
	\\
	&=
	\Big(\sum_{\substack{
			i_{11},\ldots, i_{1l} \geq 1 
			\\
			i_{11} + \cdots + i_{1l} = n_1}}
		t_1^{[i_{11}]} \cdots t_l^{[i_{1l}]}
	\Big) 
	\cdots
	\Big(\sum_{\substack{
			i_{k1},\ldots, i_{kl} \geq 1 
			\\
			i_{k1} + \cdots + i_{kl} = n_k}}
		t_1^{[i_{k1}]} \cdots t_l^{[i_{kl}]}
	\Big) b
	\\
	&= 
	\sum_I \big(t_1^{[i_{11}]}\cdots t_1^{[i_{k1}]}\big) 
			\cdots
		   \big(t_l^{[i_{1l}]}\cdots t_l^{[i_{kl}]}\big) b
	\\
	&= 
	\sum_{e_1,\ldots,e_l} A(n_1,\ldots,n_k; e_1,\ldots,e_l) 
			t_1^{[e_1]}\cdots t_l^{[e_l]} b.
\end{aligned}
\]
Here the index $I$ runs over all $(k\times l)$-matrices 
$I=(i_{cd})$
of positive integers with row sums $n_1,\ldots,n_k$, and 
$A(n_1,\ldots,n_k; e_1,\ldots,e_l)$ denotes the 
number of such matrices $I$ having column sums $e_1,\ldots, e_l$
and satisfying the additional property that no two numbers 
in the same column have a common 1 in their binary expansions.
It is useful to think of 
$A(n_1,\ldots,n_k; e_1,\ldots,e_l)$ as counting the 
number of ways to distribute the powers of 2 occurring 
in the binary expansions of $e_1,\ldots,e_l$ among the 
rows of a $(k\times l)$-matrix in such a way that the
result has the prescribed row sums and every entry of the
matrix is positive.

For the operation 
\[
	\big(\alpha^E_k\big)^\sharp\big(x_1^{[n_1]} \cdots  x_k^{[n_k]}\big)
	\colon
	H_\ast(BE)
	\longto
	H_{\ast + n_1+\cdots +n_k} (BE)
\]
to be non-trivial, we must clearly have $n_i \geq l$ for 
all $i$.  Moreover, all the $n_i$'s have to be different,
since the square of any positive-degree element in 
$H_\ast (BE)$ is trivial. 
The problem of determining precisely when the operation
$\big(\alpha^E_k\big)^\sharp \big(x_1^{[n_1]} \cdots  x_k^{[n_k]}\big)$
is non-zero appears complicated, and we will not 
attempt a complete solution here. Instead, we 
content ourselves with observing that it is
easy to find examples where the operation
is non-zero. 
Coupled with the observation that 
$\big(\alpha_1\big)^\sharp \big(x_1^{[n]}\big)$ 
is non-trivial for all $n\geq l$,
the following result suffices to 
construct infinitely many examples of non-trivial
operations for every $k\geq 1$.

\begin{proposition}
\label{prop:non-trivial-ops-e}
Suppose $\big(\alpha^E_k\big)^\sharp\big(x_1^{[n_1]} \cdots  x_k^{[n_k]}\big)$
is non-trivial and let $n_{k+1}$ be a number of the form
$n_{k+1}=2^s r$ where $r\geq l$ and 
$2^s > n_1 +\cdots +n_k$.
Then the operation
$\big(\alpha^E_{k+1}\big)^\sharp \big(x_1^{[n_1]}
	 \cdots  
x_{k+1}^{[n_{k+1}]}\big)$
is non-trivial. 
\end{proposition}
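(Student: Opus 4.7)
\emph{Proof plan.} The starting point is formula \eqref{eq:alphaek-formula}, which tells us that for positive $n_1, \ldots, n_k$ the map $(\alpha^E_k)^\sharp(x_1^{[n_1]} \cdots x_k^{[n_k]})$ is simply multiplication in $H_\ast(BE)$ by $\xi := \sigma_{n_1} \cdots \sigma_{n_k}$, where $\sigma_n := \sum_{\varepsilon \in E} \varepsilon^{[n]}$. Evaluating at $1 \in H_0(BE)$ shows that this multiplication map is nontrivial if and only if $\xi \neq 0$, so the hypothesis is precisely $\xi \neq 0$; by the same token, to prove the proposition it is enough to show that $\xi \cdot \sigma_{n_{k+1}} \neq 0$ in the divided power algebra $H_\ast(BE) = \Gamma[t_1, \ldots, t_l]$. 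The whole argument is thus reduced to a combinatorial identity.

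Fix a basis $t_1, \ldots, t_l$ of $E$ and expand $\xi = \sum_{\alpha} c_\alpha\, t_1^{[a_1]} \cdots t_l^{[a_l]}$ with $c_\alpha \in \F_2$, the sum ranging over multi-indices $\alpha = (a_1, \ldots, a_l)$ of total weight $|\alpha| = N := n_1 + \cdots + n_k$. The support $\Lambda := \{\alpha : c_\alpha = 1\}$ is nonempty by hypothesis, and the condition $2^s > N$ forces every coordinate $a_i$ of every $\alpha \in \Lambda$ to lie strictly below $2^s$, hence to occupy only bits $<s$ in its binary expansion. Introduce the partial order $\alpha \preceq \alpha'$ meaning that $a_i$ is bitwise contained in $a'_i$ for all $i$, and choose a $\preceq$-minimal element $\alpha^* \in \Lambda$. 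Since $r \geq l$, we may pick positive integers $d_1, \ldots, d_l \geq 1$ with $d_1 + \cdots + d_l = r$, and set $\gamma_i := a^*_i + 2^s d_i$.

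Next I would isolate the coefficient of $t_1^{[\gamma_1]} \cdots t_l^{[\gamma_l]}$ in $\xi \cdot \sigma_{n_{k+1}}$. Using the expansion of $\sigma_{n_{k+1}}$ from \eqref{eq:divpowsum} and the identity $t_i^{[a]} t_i^{[b]} = \binom{a+b}{a} t_i^{[a+b]}$, that coefficient equals
\[
	\sum_{\alpha \in \Lambda} \prod_{i=1}^l \binom{\gamma_i}{a_i} \pmod 2,
\]
with the positivity condition $b_i := \gamma_i - a_i \geq 1$ to be checked only for surviving terms. By construction $\gamma_i$ has a disjoint bit decomposition into a low block encoding $a^*_i$ and a high block encoding $d_i$ (separated by the gap at bit $s$), while every $a_i$ with $\alpha \in \Lambda$ lives entirely in low bits. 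Lucas's theorem therefore gives that $\binom{\gamma_i}{a_i}$ is odd if and only if $a_i$ is bitwise contained in $a^*_i$, i.e.\ $\alpha \preceq \alpha^*$; by the minimality of $\alpha^*$, the only surviving term is $\alpha = \alpha^*$, for which every binomial factor equals $1$ and $b_i = 2^s d_i \geq 2^s \geq 1$. Hence the coefficient of $t^{[\gamma]}$ in $\xi \cdot \sigma_{n_{k+1}}$ equals $1$, so the product is nonzero and the proposition follows.

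The main obstacle—and the reason for the curious bound $2^s > n_1 + \cdots + n_k$ and the divisibility $2^s \mid n_{k+1}$—is arranging that a single monomial $t^{[\gamma]}$ can detect nontriviality through a clean application of Lucas. The placement of the shift $d_i$ strictly above bit $s$, combined with the choice of a $\preceq$-minimal $\alpha^*$, is exactly what forces the potential cancellations in the binomial sum to disappear.
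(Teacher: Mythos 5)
Your proof is correct. Let me compare it with the paper's.

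The paper reformulates the claim in terms of the matrix count $A(n_1,\ldots,n_k;e_1,\ldots,e_l)$: it chooses $(e_1,\ldots,e_l)$ with $A(n_1,\ldots,n_k;e_1,\ldots,e_l)$ odd, writes $n_{k+1}=r_1+\cdots+r_l$ with each $r_j$ a positive multiple of $2^s$, and then argues that ``forget the last row'' is a bijection from matrices counted by $A(n_1,\ldots,n_{k+1};e_1+r_1,\ldots,e_l+r_l)$ onto matrices counted by $A(n_1,\ldots,n_k;e_1,\ldots,e_l)$. Both proofs hinge on the identical observation that the hypothesis $2^s > n_1+\cdots+n_k$ opens a gap in the binary expansions: all data coming from $(n_1,\ldots,n_k)$ lives strictly below bit $s$, and all data coming from $n_{k+1}$ lives at or above bit $s$. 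Where they differ is purely in the technical tool used to cash in that observation. You work directly in the divided power algebra and invoke Lucas's theorem to show that a single carefully chosen coefficient of $\xi\cdot\sigma_{n_{k+1}}$ collapses to one surviving term; the paper argues at the level of the combinatorial count $A$ by exhibiting an explicit row-forgetting bijection, never mentioning Lucas explicitly (though the ``no two entries in a column share a bit'' condition encodes the same parity phenomenon). The two routes are equivalent in strength; yours is somewhat more algebraic and self-contained once one accepts formula \eqref{eq:alphaek-formula}, while the paper's is somewhat cleaner to state because it never leaves the $A$-formalism already set up for the computation of $\alpha^E_k$.

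One small remark: the $\preceq$-minimality of $\alpha^*$ is not actually needed. Since every $\alpha\in\Lambda$ has total weight $N$, and $\alpha\preceq\alpha^*$ componentwise forces $a_i\leq a^*_i$ numerically, the equality $\sum a_i = \sum a^*_i$ already implies $\alpha=\alpha^*$. So any element of the support would serve as $\alpha^*$; the minimal choice is harmless but superfluous.
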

\begin{proof}
Since $\big(\alpha^E_k\big)^\sharp_k\big(x_1^{[n_1]} \cdots  x_k^{[n_k]}\big)$
is non-trivial, we can find exponents $e_1,\ldots, e_l$
such that $A(n_1,\ldots,n_k; e_1,\ldots,e_l)$ is odd.
By the assumption on $n_{k+1}$, we can write it as
a sum $n_{k+1} = r_1 +\cdots + r_l$ where
each $r_j$ is some positive multiple of $2^s$.
Let $I$ be a matrix of the type counted by 
$A(n_1,\ldots,n_{k+1}; e_1+r_1,\ldots,e_l+r_l)$.
Since $r_j$ is divisible by $2^s$ and 
\[
	2^{s} > n_1+\cdots+n_k = e_1+\cdots+e_l \geq e_j
\]
for all $j$, we see that for each $j$,
the set of powers of 2 occurring in the 
binary expansion of 
$e_j + r_j$ 
is the disjoint union of 
those occurring in the binary expansions of 
$e_j$ and $r_j$.
Since the powers of 2 coming from $r_j$ 
are all greater or equal to $2^s$ and $2^s > n_1+\cdots +n_k \geq n_i$
for $1\leq i \leq k$, the powers of 2 coming 
from the $r_j$'s
must all have been assigned to the last row of $I$,
for otherwise the row sum of some other row of $I$ 
would be too large.
On the other hand, no powers of 2 occurring 
in the binary expansions of the $e_j$'s 
may have been assigned to the last row,
for then the row sum of the last row of $I$ 
would exceed $n_{k+1} = r_1 + \cdots + r_l$. 
Thus forgetting the last row provides a bijection
from the set of matrices counted by 
$A(n_1,\ldots,n_{k+1}; e_1+r_1,\ldots,e_l+r_l)$
to the set of matrices counted by 
$A(n_1,\ldots,n_k; e_1,\ldots,e_l)$.
Therefore
$A(n_1,\ldots,n_{k+1}; e_1+r_1,\ldots,e_l+r_l) 
=
A(n_1,\ldots,n_k; e_1,\ldots,e_l)$,
and the claim follows.
\end{proof}

We also have the following result, which is 
an immediate corollary of the observation that
$A(n_1,\ldots,n_k; e_1,\ldots,e_l) 
	= 
A(2n_1,\ldots,2n_k; 2e_1,\ldots,2e_l)$
for all $n_1,\ldots,n_k$ and $e_1,\ldots,e_l$.	
\begin{proposition}
If the operation
$\big(\alpha^E_k\big)^\sharp\big(x_1^{[n_1]} \cdots  x_k^{[n_k]}\big)$
is non-trivial, so is the operation
$\big(\alpha^E_k\big)^\sharp\big(x_1^{[2n_1]} \cdots  x_k^{[2n_k]}\big)$. 
\qed
\end{proposition}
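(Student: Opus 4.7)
The plan is to reduce the proposition, by appeal to the formula
\[
	\alpha^E_k \big((x_1^{[n_1]} \cdots x_k^{[n_k]}) \tensor b\big)
	=
	\sum_{e_1,\ldots,e_l} A(n_1,\ldots,n_k;\,e_1,\ldots,e_l)\,
		t_1^{[e_1]}\cdots t_l^{[e_l]}\, b
\]
derived earlier in the section, to the combinatorial identity
\[
	A(n_1,\ldots,n_k;\,e_1,\ldots,e_l)
	=
	A(2n_1,\ldots,2n_k;\,2e_1,\ldots,2e_l),
\]
which I plan to establish by exhibiting an explicit bijection between the two sets of matrices. Once the identity is in hand, the non-triviality of the original operation provides some tuple $(e_1,\ldots,e_l)$ with $A(n_1,\ldots,n_k;\,e_1,\ldots,e_l)$ odd, and the identity then yields $A(2n_1,\ldots,2n_k;\,2e_1,\ldots,2e_l)$ odd as well, which is exactly what is needed to witness non-triviality of the doubled operation.

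For the bijection, I would use entry-wise multiplication by $2$. Given a $(k\times l)$-matrix $I=(i_{cd})$ of positive integers counted by $A(n_1,\ldots,n_k;\,e_1,\ldots,e_l)$, doubling every entry clearly doubles the row and column sums; moreover, the binary expansion of $2i_{cd}$ is obtained from that of $i_{cd}$ by shifting every bit one position to the left, so the condition that no two entries of a given column share a $1$ in their binary expansions is preserved. Hence $2I$ is counted by $A(2n_1,\ldots,2n_k;\,2e_1,\ldots,2e_l)$.

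The main point to verify is that this map is surjective, i.e.\ that every matrix $J=(j_{cd})$ counted by the right-hand side has all entries even. Fix a column index $d$ and consider the $2^0$-bits of the entries $j_{1d},\ldots,j_{kd}$. The no-shared-$1$'s condition in the column implies that at most one of these bits can equal $1$. On the other hand, their sum modulo $2$ equals the $2^0$-bit of $\sum_c j_{cd} = 2e_d$, which is $0$. Hence none of the $2^0$-bits equal $1$, so every entry in the column is even. Applying this to every column shows $J = 2I'$ for some matrix $I'$ of positive integers (positivity is retained since $J$ has positive entries), and it is then immediate that $I'$ is counted by $A(n_1,\ldots,n_k;\,e_1,\ldots,e_l)$. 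This parity argument, though short, is the only non-formal step; everything else is a direct check.
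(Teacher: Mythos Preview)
Your proof is correct and follows exactly the paper's approach: the paper states the result as an immediate corollary of the identity $A(n_1,\ldots,n_k;\,e_1,\ldots,e_l) = A(2n_1,\ldots,2n_k;\,2e_1,\ldots,2e_l)$, and you supply precisely the bijection (entry-wise doubling) and the parity argument for surjectivity that justify this identity.
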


\section{Computations for tori}
\label{sec:tcalc}

The purpose of this section is to compute the 
operations $\alpha^G_1$ and $\alpha^G_2$
when $G$ is a torus $\T^l = (S^1)^l$.
Recall that $H_\ast(B\T^1) \isom H_\ast (\C P^\infty)$ is a 
ring, the divided power algebra on a single generator of
degree 2. The inclusion $\beta$ of $V_1$ into $\T^1$ as $\pm 1$ 
induces a ring homomorphism 
$\beta_\ast \colon H_\ast BV_1 \to H_\ast B\T^1$
which is zero in odd degrees and an isomorphism
in even degrees, as can be seen for example by considering
the Serre spectral sequence of the fibre sequence 
$\T^1 \to BV_1 \to B\T^1$. 
Write $x$ for the generator of $V_1$, and 
let $x_1$ and $x_2$ form a basis of $V_2$.
We will prove the following results.
\begin{theorem}
\label{thm:tcalc}
The map 
\[
	\alpha^{\T^1}_1 
	\colon
	H_{\ast-1}( BV_1) \tensor H_\ast(B\T^1) \longto H_\ast (B\T^1)
\]
is given by 
\[
	x^{[n]} \tensor b \longmapsto \beta_\ast(x^{[n+1]}) b.
\]
\end{theorem}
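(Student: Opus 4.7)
The plan is to apply Proposition~\ref{prop:alphagk-comp} with $G = \T^1$ and $k = 1$. Under the isomorphism $(\T^1)^{V_1}/\Delta \T^1 \xrightarrow{\ \cong\ } \T^1$ sending $(g_0, g_x)\Delta \T^1 \mapsto g_x - g_0$, the $V_1 \times \T^{\{p\}}$-action becomes: $V_1$ acts on $\T^1$ by inversion and $\T^{\{p\}}$ acts trivially. Thus the source of the umkehr map in Proposition~\ref{prop:alphagk-comp} is $(\T^1 \sslash V_1) \times B\T^{\{p\}}$, a product $S^1$-bundle over $BV_1 \times B\T^{\{p\}}$, and the umkehr map factors as $\pi_1^! \tensor \mathrm{id}$, where $\pi_1\colon \T^1 \sslash V_1 \to BV_1$ is the projection from the Borel construction.

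The main work is to identify the composite map
\[
	(\T^1 \sslash V_1) \times B\T^{\{p\}} \xrightarrow{\ \simeq\ } (\T^1)^{V_1} \sslash V_1 \times \T^{\{p, q\}} \xrightarrow{\ \pi_q\ } B\T^{\{q\}}.
\]
Since the $\T^{\{p, q\}}$-action on $(\T^1)^{V_1}$ factors through the surjection $(g_p, g_q) \mapsto g_p - g_q$ with diagonal subgroup $\Delta^{\{p, q\}}$ acting trivially, this composite takes the form of an H-space map $(\xi, g) \mapsto \phi(\xi) \cdot g$, where $\cdot$ denotes multiplication in the topological abelian group $B\T^1$ and $\phi\colon \T^1 \sslash V_1 \to B\T^{\{q\}}$ is a specific map. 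Observing that the $V_1$-action on $\T^1$ has two fixed points $\{0, \tfrac{1}{2}\}$, the space $\T^1 \sslash V_1$ has the homotopy type $BV_1 \vee BV_1$. A stabilizer computation at each fixed point then identifies $\phi$: at the fixed point $0$ the stabilizer in $V_1 \times \T^{\{p, q\}}$ is $V_1 \times \Delta^{\{p, q\}}$ and one finds $\phi$ to be trivial on the corresponding wedge summand, while at $\tfrac{1}{2}$ a nontrivial $V_1$-twist appears and $\phi$ equals $B\beta$ on the corresponding summand.

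Finally, one computes $\pi_1^!(x^{[n]}) \in H_{n+1}(\T^1 \sslash V_1)$ via the Serre spectral sequence for $\pi_1$. This spectral sequence has trivially twisted $\F_2$-coefficients and $d_2 = 0$ by a dimension count against $H^\ast(BV_1 \vee BV_1)$; however, the fibre-class relation in the cohomology ring $H^\ast(\T^1 \sslash V_1)$ is $e^2 = we$ (where $e$ and $w + e$ are the generators of the two wedge summands), not $e^2 = 0$. Using this relation, a direct computation gives $\pi_{1!}(w_i^{n+1}) = w^n$ for both $i = 1, 2$, and hence by duality $\pi_1^!(x^{[n]}) = (w_1^{n+1})^{\ast} + (w_2^{n+1})^{\ast}$ in the wedge basis. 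Applying $\phi_\ast$ kills the first summand and sends the second to $\beta_\ast(x^{[n+1]})$. Combining with the H-space multiplicativity established above then gives $\alpha_1^{\T^1}(x^{[n]} \tensor b) = \beta_\ast(x^{[n+1]}) \cdot b$, as desired. The main obstacle will be verifying the H-space decomposition of the composite in the second paragraph, which requires careful tracking of how the stabilizer inclusions at the two fixed points interact with the projection $\pi_q$.
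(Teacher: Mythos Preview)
Your proposal is correct and follows essentially the same route as the paper: both reduce the computation to the two $V_1$-fixed points of $\T^1$, compute the umkehr map $(\pi_1')^!$ via the Serre spectral sequence of $\T^1\sslash V_1 \to BV_1$, and then evaluate the push map on the images of the two sections. The differences are purely presentational. You identify $\T^1\sslash V_1 \simeq BV_1 \vee BV_1$ explicitly and compute $(\pi_1')^!$ by dualizing the cohomological integration-over-the-fibre map using the ring relation $e^2 = we$; the paper instead uses Mayer--Vietoris to show that $(s_1)_\ast + (s_{-1})_\ast$ is an isomorphism in positive degrees and reads off $(\pi_1')^!(x^{[n]}) = (x^{[n+1]},x^{[n+1]})$ from the short exact sequence coming from the collapsing spectral sequence. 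Both computations are equivalent and yield the same diagonal answer in the two-summand decomposition.

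One remark on what you flag as the ``main obstacle'': the global H-space decomposition $\psi(\xi,g) = \phi(\xi)\cdot g$ is not actually needed, and your justification for it (that $\Delta^{\{p,q\}}$ acts trivially) does not by itself establish it. The paper bypasses this entirely: it simply constructs explicit lifts $\tilde s_{\pm 1}$ of $s_{\pm 1}\times 1$ through $\eta$ and computes the composites $\pi_2\circ\tilde s_{\pm 1}$ directly as maps induced by the group homomorphisms $(\varepsilon,z_p)\mapsto z_p$ and $(\varepsilon,z_p)\mapsto \beta(\varepsilon)z_p$. Since the umkehr image already lies in the span of $(s_1)_\ast$ and $(s_{-1})_\ast$, this suffices. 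Your stabilizer computations at the two fixed points amount to exactly the same thing, so you can drop the H-space claim and go straight from the stabilizer analysis to the answer.
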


\begin{theorem}
\label{thm:tcalc2}
The map 
\[
	\alpha^{\T^1}_2
	\colon
	H_{\ast-3}( BV_2) \tensor H_\ast(B\T^1) \longto H_\ast (B\T^1)
\]
is given by 
\[
	x_1^{[n_1]}x_2^{[n_2]} \tensor b 
	\longmapsto 
	\left(1+{{n_1+n_2+2}\choose{n_1+1}}\right)
	\beta_\ast(x^{[n_1+n_2+3]}) b.
\]
\end{theorem}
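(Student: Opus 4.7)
My plan is to apply Proposition \ref{prop:alphagk-comp} to $G = T = \T^1$ and $k = 2$, exploiting the commutativity of $T$ to reduce the composite to an explicit form. Let $f \colon T^3 \sslash V_2 \to BT$ denote the classifying map of the principal $T$-bundle $T^{V_2} \sslash V_2 \to T^{V_2}/\Delta T \sslash V_2$, and let $!_0 \colon H_\ast(BV_2) \to H_{\ast+3}(T^3 \sslash V_2)$ denote the umkehr (Gysin) map for the fibration $T^3 \to T^3 \sslash V_2 \to BV_2$, where the fibre $T^3 = T^{V_2}/\Delta T$ carries the $V_2$-action induced by translation via the Cayley embedding.

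First, I would unwind the composite in Proposition \ref{prop:alphagk-comp}. Using the group-level splitting $V_2 \times T^{\{p,q\}} \cong V_2 \times T^{\{p\}} \times \Delta T$ (in which $\Delta T$ acts trivially on $T^{V_2}$ while $V_2 \times T^{\{p\}}$ acts via $V_2$-permutation together with free diagonal translation), one identifies the two middle spaces of \eqref{diag:push-pull-for-alphagk} as $(T^3 \sslash V_2) \times B\Delta T$ and $(T^3 \sslash V_2) \times BT^{\{p\}}$. Under these identifications the horizontal homotopy equivalence of \eqref{diag:push-pull-for-alphagk} takes the form $(x, y) \mapsto (x, f(x) \cdot y)$, where $\cdot$ is the Pontryagin product on $BT$. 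Using the counit identity $\sum \epsilon(c^{(1)}) c^{(2)} = c$ for the diagonal coproduct on $H_\ast(T^3 \sslash V_2)$ and the triviality of inversion on $H_\ast(BT; \F_2)$, chasing through Proposition \ref{prop:alphagk-comp} then reduces the operation to
\[
\alpha^T_2(a \otimes b) \;=\; f_\ast\bigl(!_0(a)\bigr) \cdot b.
\]

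Next, I would compute $f_\ast !_0(x_1^{[n_1]} x_2^{[n_2]}) \in H_{n_1+n_2+3}(BT)$ via pairings with powers of $\tau \in H^2(BT; \F_2)$. By integration along the fibre,
\[
\bigl\langle \tau^k,\; f_\ast !_0(x_1^{[n_1]} x_2^{[n_2]}) \bigr\rangle \;=\; \bigl\langle \pi_!\bigl( (f^\ast\tau)^k \bigr),\; x_1^{[n_1]} x_2^{[n_2]} \bigr\rangle,
\]
where $\pi_! \colon H^\ast(T^3 \sslash V_2) \to H^{\ast-3}(BV_2)$ is the fibre integration. The Chern class $f^\ast\tau \in H^2(T^3 \sslash V_2; \F_2)$ can be pinned down by restriction to the four $V_2$-fixed points of $T^3$, which are indexed by homomorphisms $\beta \colon V_2 \to V_1 \subset T$; at each fixed point the principal bundle restricts to the $V_2$-equivariant $T$-bundle over a point classified by $\beta$. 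Expanding $(f^\ast\tau)^k$ in the ring $H^\ast(T^3 \sslash V_2; \F_2)$ and applying $\pi_!$ produces a polynomial in $y_1, y_2 \in H^1(BV_2)$ whose coefficient on $y_1^{n_1} y_2^{n_2}$ computes the scalar in question.

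The main obstacle will be the spectral-sequence bookkeeping: the ring structure on $H^\ast(T^3 \sslash V_2; \F_2)$ involves nontrivial multiplicative extensions in the Serre spectral sequence for $T^3 \to T^3 \sslash V_2 \to BV_2$, with the $V_2$-action on $H^\ast(T^3) = \Lambda(I)$ given by the augmentation ideal $I \subset \F_2[V_2]$ of the regular representation. I expect the binomial coefficient $\binom{n_1+n_2+2}{n_1+1}$ to emerge from the divided-power multiplication $x^{[n_1+1]} x^{[n_2+1]} = \binom{n_1+n_2+2}{n_1+1}\, x^{[n_1+n_2+2]}$ in $H_\ast(BV_1)$, which naturally enters the fibre integration of products of generators of $H^\ast(T^3)$, while the extra summand $1$ should arise from a distinguished contribution associated to a $V_2$-invariant class on the fibre.
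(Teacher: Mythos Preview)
Your reduction $\alpha^{\T^1}_2(a\otimes b) = f_\ast\bigl(!_0(a)\bigr)\cdot b$ is correct, and the paper arrives at an equivalent formula: after splitting off the $T^{\{p\}}$-factor (your diagram \eqref{diag:pi1decomp} analogue), the paper expresses the umkehr image via the four sections $s_\lambda$ coming from the $V_2$-fixed points of $T^3$, and then checks that $\pi_2\circ\tilde s_\lambda$ is induced by $\beta\circ\lambda$, which is exactly $f\circ s_\lambda$ in your language. So the two routes agree up to this point.

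The genuine divergence is in how to compute $!_0$, and this is where your proposal stops short. You propose to dualize and evaluate $\pi_!\bigl((f^\ast\tau)^k\bigr)$, acknowledging that the ring $H^\ast(T^3\sslash V_2;\F_2)$ has nontrivial multiplicative extensions in its Serre spectral sequence. You do not resolve these extensions, nor do you pin down $f^\ast\tau$ beyond its restrictions to the four sections; that restriction map need not be injective on $H^2$, so more input is required. Your expectation that the binomial arises from $x^{[n_1+1]}x^{[n_2+1]}$ is plausible but unverified, and in fact the paper's derivation produces the coefficient via a different telescoping identity $\sum_{i=1}^{n_1+1}\binom{n_1+n_2+3}{i} = 1+\binom{n_1+n_2+2}{n_1+1}$.

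The paper sidesteps the extension problem entirely by working in \emph{homology at the chain level}. It writes down an explicit $V_2$-CW structure on $T^3$ (via the isomorphism $T^3\cong \prod_{\alpha\neq 1}\R(\alpha)/L$ coming from the decomposition of the regular representation), forms the double complex $F_\ast\otimes_{V_2}C_\ast(T^3)$ with $F_\ast$ the divided-power resolution of $\F_2$ over $\F_2[V_2]$, and then exhibits concrete chains $c_1,c_2,c_3$ whose boundary shows that the cycle representing $(\pi'_1)^!(x_1^{[n_1]}x_2^{[n_2]})$ is homologous to $\sum_\lambda (s_\lambda)_\ast\bigl(\sum_{i=0}^{n_1+1} x_1^{[i]}x_2^{[n_1+n_2+3-i]}\bigr)$. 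This is Lemma~\ref{lm:tcalc2}, and it is the heart of the argument. Your cohomological route would need an equally explicit determination of $H^\ast(T^3\sslash V_2)$ as a ring together with $f^\ast\tau$; until you supply that, the proposal is a reasonable outline but not a proof.
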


Remembering that the maps
$H_\ast(BV_1) \to H_\ast(BV_1 \times BV_1)$
and 
$H_\ast(BV_2) \to H_\ast(BV_2 \times BV_2)$
induced by the diagonal maps are given by 
\[
	x^{[n]} \longmapsto \sum_{i=0}^n (x^{[i]} \times x^{[n-i]})
	\quad
	\text{and}
	\quad
	x_1^{[n_1]} x_2^{[n_2]} 
	\longmapsto 
	\sum_{i=0}^{n_1} \sum_{j=0}^{n_2}
	(x_1^{[i]}x_2^{[j]} \times x_1^{[n_1-i]}x_2^{[n_2-j]}),
\]
respectively,
Theorem~\ref{thm:product-thm} allows us to 
read off the following following formulas for $\alpha^{\T^l}_1$
and $\alpha^{\T^l}_2$
from Theorems~\ref{thm:tcalc} and \ref{thm:tcalc2}.
Observe that $H_\ast (B\T^l) \isom H_\ast (B\T^1)^{\tensor l}$ 
is also a ring,
the divided power algebra on $l$ generators of degree 2.
\begin{theorem}
The map 
\[
	\alpha^{\T^l}_1 
	\colon
	H_{\ast-l}( BV_1) \tensor H_\ast(B\T^l) \longto H_\ast (B\T^l)
\]
is given by 
\[
	x^{[n]} \tensor b
	\longmapsto\;
	\sum_{\mathclap{
		\substack{i_1,\ldots,i_l \geq 0 \\ i_1+\cdots+i_l = n} }}
		\;\big(\beta_\ast(x^{[i_1+1]})
			\times \cdots \times
		 \beta_\ast(x^{[i_l+1]}) \big)b
\]
while the map 
\[
	\alpha^{\T^l}_2 
	\colon
	H_{\ast-3l}( BV_2) \tensor H_\ast(B\T^l) \longto H_\ast (B\T^l)
\]
is given by 
\[
	x_1^{[n_1]}x_2^{[n_2]} \tensor b
	\mapsto 
	\sum\prod_{a=1}^l \left(1+{{i_a+j_a+2}\choose {i_a+1}}\right)
		\big(\beta_\ast(x^{[i_1+j_1+3]}) 
			\times \cdots \times
		 \beta_\ast(x^{[i_l+j_l+3]}) \big)b
\]
where the sum is over all non-negative integers
$i_1,\ldots,i_l$ and $j_1,\ldots,j_l$
such that $i_1+\cdots+i_l = n_1$ and $j_1+\cdots+j_l = n_2$.
\qed
\end{theorem}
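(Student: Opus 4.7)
The plan is to deduce the theorem as a direct corollary of the Product Theorem (Theorem~\ref{thm:product-thm}) applied to the family $(B\iota)^\ast S_{2^k}/BV_k$ inducing $\alpha^{\T^l}_k$ (see Definition~\ref{def:alphagk}), combined with the computations of $\alpha^{\T^1}_1$ and $\alpha^{\T^1}_2$ provided by Theorems~\ref{thm:tcalc} and \ref{thm:tcalc2}. The case $l=1$ is exactly those two theorems and serves as the base of an induction on $l$.

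First I would specialize Theorem~\ref{thm:product-thm} to $G_1 = \T^1$, $G_2 = \T^{l-1}$ and $S/B = (B\iota)^\ast S_{2^k}/BV_k$. Since the base $BV_k$ is the same on both tensor factors, unwinding the diagram of Theorem~\ref{thm:product-thm} and identifying the diagonal $\Delta_\ast$ with the coproduct on $H_\ast(BV_k)$ should yield a ``module-like'' identity
\[
	\alpha^{\T^l}_k\bigl(a \tensor (c \times b')\bigr)
	=
	\sum_{(a)} \alpha^{\T^1}_k(a_{(1)} \tensor c) \times \alpha^{\T^{l-1}}_k(a_{(2)} \tensor b')
\]
for $a \in H_\ast(BV_k)$, $c \in H_\ast(B\T^1)$ and $b' \in H_\ast(B\T^{l-1})$, where $\Delta_\ast(a) = \sum_{(a)} a_{(1)} \tensor a_{(2)}$. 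Iterating this $l-1$ times expresses $\alpha^{\T^l}_k$ in terms of $l$ copies of $\alpha^{\T^1}_k$ combined via the iterated coproduct $\Delta^{(l)}_\ast\colon H_\ast(BV_k) \to H_\ast(BV_k)^{\tensor l}$. The required coproduct formulas
\[
	\Delta^{(l)}_\ast(x^{[n]})
	=
	\sum_{\substack{i_1,\ldots,i_l\geq 0\\ i_1+\cdots+i_l=n}}
		x^{[i_1]} \times \cdots \times x^{[i_l]}
\]
and the analogous expansion of $x_1^{[n_1]} x_2^{[n_2]}$ over pairs of compositions follow by induction from the binary coproduct formulas recalled immediately above the theorem.

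Next I would substitute Theorems~\ref{thm:tcalc} and~\ref{thm:tcalc2} into the iterated expression. In each case the map $\alpha^{\T^1}_k(\,\cdot\, \tensor c)$ is of the form ``multiplication of $c$ in the ring $H_\ast(B\T^1)$ by a class depending only on the first argument'', so the cross product of these classes factors out cleanly and the remaining cross product $c \times b'$ can be replaced by a general element $b \in H_\ast(B\T^l)$ by linearity together with the ring isomorphism $H_\ast(B\T^l) \isom H_\ast(B\T^1)^{\tensor l}$. The two resulting formulas then coincide with those in the statement. I do not expect a serious obstacle; the argument is essentially bookkeeping once the Product Theorem has been specialized correctly. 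The only mildly delicate point will be verifying that the twist, diagonal, and Euler-characteristic degree shift appearing in Theorem~\ref{thm:product-thm} combine precisely to give the module-like identity above rather than some variant — this amounts to tracking gradings and noting that the total shift $l(2^k-1)$ splits as $(2^k-1) + (l-1)(2^k-1)$ in accordance with the factorization $\T^l = \T^1 \times \T^{l-1}$.
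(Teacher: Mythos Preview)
Your proposal is correct and follows essentially the same approach as the paper: the paper likewise deduces the formulas by applying the Product Theorem (Theorem~\ref{thm:product-thm}) together with the diagonal/coproduct formulas on $H_\ast(BV_k)$ and the computations of $\alpha^{\T^1}_1$ and $\alpha^{\T^1}_2$ from Theorems~\ref{thm:tcalc} and~\ref{thm:tcalc2}. Your inductive unwinding into the ``module-like'' identity is simply a more explicit version of what the paper summarizes in a single sentence.
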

\noindent
In particular, the operation 
$\big(\alpha^{\T^l}_1\big)^\sharp \big(x^{[n]}\big)$
is non-trivial precisely when $n=2k-l$ for some $k\geq l$.

\begin{proof}[Proof of Theorem~\ref{thm:tcalc}]
Let us interpret $V_1 = \{\pm 1\}$ and write the group 
operation multiplicatively.
From Proposition~\ref{prop:alphagk-comp} we deduce that
we may compute $\alpha^{\T^1}_1$ by a push-pull construction
in the diagram
\begin{equation}
\label{diag:push-pull-for-alphat1}
\vcenter{\xymatrix@!0@C=3.5em@R=9.3ex{
	&
	\T^1 \sslash V_1 \times (\T^1)^{\{p\}}
	\ar[dl]_{!}^{\pi_1}
	&&&
	(\T^1)^{V_1}\sslash V_1 \times (\T^1)^{\{p,q\}}
	\ar[lll]^-\homot_-\eta
	\ar[dr]_{\pi_2}
	\\
	\pt \sslash V_1 \times (\T^1)^{\{p\}}
	&&&&&
	\pt \sslash (\T^1)^{\{q\}}
}} 
\end{equation}
where the $V_1\times (\T^1)^{\{p\}}$-action on $\T^1$ is given by
\[
	(\varepsilon, z_p) \cdot z = z^\varepsilon,
\]
where the $V_1\times (\T^1)^{\{p,q\}}$-action on $(\T^1)^{V_1}$
is given by 
\[
	(\varepsilon,z_p,z_q) \cdot (z_{-1},z_{1}) 
	= 
	(z_p^{} z_q^{-1} z_{-\varepsilon}^{},
	 z_p^{} z_q^{-1} z_\varepsilon^{}),
\]
where $\eta$ is induced by the map
\[
	(\T^1)^{V_1} \longto \T^1,
	\quad
	(z_{-1},z_1) \longmapsto z_{-1}^{} z_1^{-1}
\]
and the projection
$V_1\times (\T^1)^{\{p,q\}} \to V_1\times (\T^1)^{\{p\}}$,
and where $\pi_1$ and $\pi_2$ are induced by the 
evident projections of spaces and groups.
Diagram~\eqref{diag:push-pull-for-alphat1}
is simply diagram~\eqref{diag:push-pull-for-alphagk}
with $G = \T^1$, $k=1$ and the space $(\T^1)^{V_1}/\Delta \T^1$
replaced with $\T^1$ using the homeomorphism
\[
	(\T^1)^{V_1}/\Delta \T^1 \xto{\ \isom\ } \T^1,
	\quad
	(z_{-1},z_1)\Delta \T^1 \longmapsto z_{-1}^{} z_1^{-1}.
\]

Observe that the source of $\pi_1$ splits as a product
\[
	\T^1 \sslash V_1 \times (\T^1)^{\{p\}} 
	\isom 
	(\T^1 \sslash V_1) \times (\pt \sslash (\T^1)^{\{p\}} )
\]
where the $V_1$-action on $\T^1$ is given by
$\varepsilon \cdot z = z^{\varepsilon}$,
while the target of $\pi_1$ splits as a product
\[
	\pt \sslash V_1 \times (\T^1)^{\{p\}}
	\isom 
	(\pt \sslash V_1) \times (\pt\sslash (\T^1)^{\{p\}}).
\]
Under these homeomorphisms, the map $\pi_1$
corresponds to the map
\[
	\pi_1'\times 1 
	\colon 
	(\T^1 \sslash V_1) \times (\pt \sslash (\T^1)^{\{p\}} )
	\longto
	(\pt \sslash V_1) \times (\pt\sslash (\T^1)^{\{p\}}),
\]
where $\pi_1'$ is induced by the projection $\T^1 \to \pt$.
By the compatibility of umkehr maps with direct products,
under the isomorphisms induced by the above homeomorphisms 
and homology cross products,
the umkehr map $\pi_1^!$ corresponds to the map 
\[
	(\pi_1')^! \tensor 1
	\colon 
	H_\ast(\pt\sslash V_1) \tensor H_\ast(\pt\sslash (\T^1)^{\{p\}})
	\longto
	H_{\ast+1}(\T^1\sslash V_1) \tensor H_\ast(\pt\sslash (\T^1)^{\{p\}}).
\]

By \cite[Lemma~8.4]{HL}, we may compute the map
$(\pi_1')^!$ using the Serre spectral sequence 
for the fibration $\pi_1' \colon \T^1\sslash V_1 \to \pt \sslash V_1$.
More precisely, $(\pi_1')^!$ is given by the composite
\begin{equation}
\label{comp:pidash-umkehr}
	H_n(\pt\sslash V_1) 
	\xto{\ \isom\ }
	H_n(\pt\sslash V_1;\,H_1 \T^1)
	=
	E^2_{n,1}
	\longtwoheadto
	E^\infty_{n,1}
	\longincl
	H_{n+1} (\T^1\sslash V_1)	
\end{equation}
where $E^2$ and $E^\infty$ refer to pages in the Serre
spectral sequence of $\pi_1'$, the first map 
is induced by the fundamental class of $\T^1$,
and the last two maps arise from the fact
that we are working with the top row of the
spectral sequence.

The assignments $\pt \mapsto \pm 1\in \T^1$ define sections
$s_{\pm 1} \colon \pt\sslash V_1 \to \T^1\sslash V_1$
of $\pi_1'$.
Since the spectral sequence of $\pi_1'$ has only two 
rows, the existence of the sections $s_{\pm 1}$
implies that the spectral sequence collapses on the 
$E^2$-page. Thus the short exact sequence
\begin{equation*}
\xymatrix{
	0
	\ar[r]
	&
	E^\infty_{n,1}
	\ar[r]
	&
	H_{n+1} (\T^1\sslash V_1)
	\ar[r]
	&
	E^\infty_{n+1,0}
	\ar[r]
	&
	0
}
\end{equation*}
together with the description
\eqref{comp:pidash-umkehr}
for $(\pi_1')^!$ give a short exact sequence
\begin{equation}
\label{ses:pidash}
\xymatrix{
	0
	\ar[r]
	&
	H_n(\pt\sslash V_1)
	\ar[r]^-{(\pi_1')^!}
	&
	H_{n+1} (\T^1\sslash V_1)
	\ar[r]^-{\pi'_\ast}
	&
	H_{n+1} (\pt \sslash V_1)
	\ar[r]
	&
	0.
}
\end{equation}

Let 
$U_{-1} = \T^1 \setminus \{1\}$ and 
$U_{1} = \T^1 \setminus \{-1\}$,
and observe that $V_1$ acts freely on the intersection 
$U_{-1} \cap U_1$ and that $U_1$ and $U_{-1}$ admit
$V_1$-equivariant deformation retractions onto $\{1\} \subset \T^1$
and $\{-1\} \subset \T^1$, respectively.
From the Mayer--Vietoris sequence for 
$U_{-1}\sslash V_1$ and $U_1\sslash V_1$ we now conclude that the map
\[
	(s_{-1})_\ast + (s_1)_\ast 
	\colon H_n (\pt \sslash V_1) \oplus H_n (\pt \sslash V_1)
	\longto
	H_n(\T^1 \sslash V_1)
\]
is an isomorphism for $n>0$.
Since the maps $s_{\pm 1}$ are sections of $\pi'$, from 
the short exact sequence \eqref{ses:pidash}
we can deduce that under this isomorphism,
the map $(\pi_1')^!$ corresponds to the map sending  
$x^{[n]} \in H_n(\pt\sslash V_1)$
to the diagonal element
$(x^{[n+1]},x^{[n+1]}) 
\in H_{n+1}(\pt\sslash V_1)\oplus H_{n+1}(\pt\sslash V_1)$.

Observe now that the two maps
\[
	\tilde s_1, \tilde s_{-1}
	\colon 
	\pt \sslash V_1 \times (\T^1)^{\{p\}}
	\longto 
	(\T^1)^{V_1}\sslash V_1 \times (\T^1)^{\{p,q\}}
\]
defined by the assignments 
$\pt \mapsto (1,1),(1,-1)\in (\T^1)^{V_1}$
and the homomorphisms
\[
	 V_1 \times (\T^1)^{\{p\}} \longto V_1 \times (\T^1)^{\{p,q\}},
	 \quad
	 (\varepsilon,z_p) \longmapsto (\varepsilon,z_p,z_p),
	 								(\varepsilon,z_p,\varepsilon z_p),
\]
provide lifts of the respective maps
\[
	s_1 \times 1, s_{-1} \times 1
	\colon
	(\pt \sslash V_1) \times (\pt \sslash (\T^1)^{\{p\}})
	\longto
	(\T^1 \sslash V_1) \times (\pt \sslash (\T^1)^{\{p\}})
\]
through $\eta$ in the sense that the diagram
\[\xymatrix{
	\pt\sslash V_1 \times \pt \sslash (\T^1)^{\{p\}}
	\ar[d]_{s_{\pm 1} \times 1}
	&
	\pt \sslash  V_1 \times (\T^1)^{\{p\}}
	\ar[l]_-{\isom}
	\ar[dr]^{\tilde s_{\pm 1}}
	\\
	\T^1 \sslash V_1 \times \pt \sslash (\T^1)^{\{p\}}
	&
	\T^1 \sslash  V_1 \times (\T^1)^{\{p\}}
	\ar[l]_-{\isom}
	&
	(\T^1)^{V_1}\sslash V_1 \times (\T^1)^{\{p,q\}}
	\ar[l]_-\eta^-\homot
}\]
commutes for both choices of the sign.
The composite 
\[
	\pi_2 \circ \tilde s_{\pm 1}
	\colon \
	\pt \sslash V_1 \times (\T^1)^{\{p\}}
	\longto
	\pt\sslash (\T^1)^{\{q\}}
\]
is the map induced by the projection 
$\pr\colon V_1 \times \T^1 \to \T^1$
in the case of $\tilde s_1$ and the map 
induced by the composite 
\[\xymatrix{
	V_1 \times \T^1
	\ar[r]^-{\beta \times 1}
	&
	\T^1 \times \T^1
	\ar[r]^-{\mu}
	&
	\T^1
}\]
in the case of $\tilde s_{-1}$.
Here $\mu$ is the multiplication map of $\T^1$.
From the above description of the umkehr map $(\pi_1')^!$
we now deduce that the map $\alpha^{\T^1}_1$ is 
the sum of the two maps
$H_\ast(BV_1) \tensor H_\ast(B\T^1)	
 \to
H_{\ast+1}(B\T^1)$
given by 
\[
	x^{[n]} \tensor b \longmapsto \pr_\ast( x^{[n+1]} \times b)
	\quad\text{and}\quad
	x^{[n]} \tensor b 
	\longmapsto 
	\mu_\ast(\beta\times 1)_\ast (x^{[n+1]} \times b).
\]
The claim follows.
\end{proof}

The rest of this section is dedicated to the proof of 
Theorem~\ref{thm:tcalc2}.
Again, from Proposition~\ref{prop:alphagk-comp} we deduce that
we may compute $\alpha^{\T^1}_2$ by a push-pull construction
in the following special case of 
diagram~\eqref{diag:push-pull-for-alphagk}:
\begin{equation}
\label{diag:push-pull-for-alphat1-2}
\vcenter{\xymatrix@!0@C=3.5em@R=9.3ex{
	&
	(\T^1)^{V_2}/\Delta \T^1 \sslash V_2 \times (\T^1)^{\{p\}}
	\ar[dl]_{!}^{\pi_1}
	&&&&
	(\T^1)^{V_2}\sslash V_2 \times (\T^1)^{\{p,q\}}
	\ar[llll]^-\homot_-\kappa
	\ar[dr]_{\pi_2}
	\\
	\pt \sslash V_2 \times (\T^1)^{\{p\}}
	&&&&&&
	\pt \sslash (\T^1)^{\{q\}}
}} 
\end{equation}
From now on, for the sake of brevity,
let us write $T^3$ for the space 
$(\T^1)^{V_2}/\Delta \T^1$ 
(which is a 3-dimensional torus).
The map $\pi_1$ fits into the commutative square
\begin{equation}
\label{diag:pi1decomp}
\vcenter{\xymatrix{
	T^3 \sslash V_2 \times (\T^1)^{\{p\}}
	\ar[r]^-\isom
	\ar[d]_{\pi_1}
	&
	(T^3 \sslash V_2) 
		\times 
	(\pt\sslash(\T^1)^{\{p\}})	
	\ar[d]^{\pi'_1\times 1}
	\\
	\pt \sslash V_2 \times (\T^1)^{\{p\}}
	\ar[r]^-\isom
	&
	(\pt \sslash V_2) \times (\pt\sslash (\T^1)^{\{p\}})
}}
\end{equation}
where the $V_2$-action on $T^3$
is given by the formula
\[
	u \cdot (z_v)_{v\in V_2} \Delta \T^1 
	= 
	(z_{u+v})_{v\in V_2} \Delta \T^1,
\]
the map 
$\pi'_1$ is induced by the projection $T^3\to \pt$,
and the horizontal maps are given by 
the evident homeomorphisms.
Thus to compute the umkehr map $\pi_1^!$, it suffices
to understand the umkehr map $(\pi'_1)^!$.

To understand the map $(\pi'_1)^!$, we will 
study in detail the $V_2$-space $T^3$.
For a homomorphism $\alpha\colon V_2 \to \Z^\times$,
write $\R(\alpha)$ for the associated 1-dimensional
real representation.
From representation theory we know that the 
maps 
\[
	\R[V_2] 
	\longto
	\;\prod_{\mathclap{\alpha\colon V_2 \to \Z^\times}}\;\R(\alpha),
	\quad
	v 
	\longmapsto 
	\,\sum_{\mathclap{\alpha\colon V_2 \to \Z^\times}}\; \alpha(v) e_\alpha
\]
and
\[
	\prod_{\mathclap{\alpha\colon V_2 \to \Z^\times}}\;\R(\alpha)
	\longto 
	\R[V_2],
	\quad 
	e_\alpha \longmapsto \frac{1}{4} \sum_{v\in V_2} \alpha(v)v
\]
are inverse isomorphisms of $V_2$-representations.
Here $e_\alpha$ denotes the unit basis vector
associated to the $\R(\alpha)$-factor in $\prod_\alpha \R(\alpha)$.
These isomorphisms induce a $V_2$-equivariant homeomorphism
\[
	\prod_{\alpha\neq 1} \R(\alpha) / L 
	\xto{\ \isom\ }
	(\R[V_2]/\Delta\R) / (\Z[V_2] / \Delta \Z),
	\quad
	e_\alpha + L 
	\mapsto 
	\left[\frac{1}{4} \sum_{v\in V_2} \alpha(v)v\right]
\]
where $1$ denotes the trivial homomorphism $V_2 \to \Z^\times$,
$L \subset \prod_{\alpha\neq 1} \R(\alpha)$ is the lattice
generated the vectors $\sum_{\alpha\neq 1} \alpha(v)e_\alpha$
for $v\in V_2$, $v\neq 0$, and $\Delta \R$ and $\Delta\Z$
denote the diagonal subgroups of $\R[V_2]$ and $\Z[V_2]$,
respectively. 
The target of the above is canonically $V_2$-homeomorphic to 
$(\R[V_2]/\Z[V_2]) / (\Delta \R /\Delta \Z)$,
which in turn is in an evident way $V_2$-homeomorphic
to the space $T^3 = (\T^1)^{V_2} / \Delta \T^1$.
Thus we obtain a $V_2$-equivariant homeomorphism
\begin{equation}
\label{iso:t3desc} 
	\prod_{\alpha\neq 1} \R(\alpha) / L 
	\xto{\ \isom\ }
	T^3.
\end{equation}


\newcommand{\emkpos}{0.83} 
\newcommand{\emklen}{4.7pt} 
\newcommand{\emkwid}{2.2pt} 
\newcommand{\emkgap}{1.05pt} 
\newcommand{\emkadjc}{0.7} 
\tikzset{
	-e1-/.style={
		decoration={
    		markings, 
			mark=at position \emkpos with {
				\draw[fill=black] (0,0) -- (-\emklen,-\emkwid) -- (-\emklen,\emkwid) -- cycle;
			}
		},
		postaction={decorate}
	},
	-e2-/.style={
		decoration={
    		markings, 
			mark=at position \emkpos with {
				\draw[fill=black] (0,0) -- (-\emklen,-\emkwid) -- (-\emklen,\emkwid) -- cycle;
				\draw (-\emklen-\emkgap-\emkadjc\pgflinewidth,-\emkwid-0.5\pgflinewidth) 
						-- +(0pt,2*\emkwid + \pgflinewidth);
			}
		},
		postaction={decorate}
	},
	-e3-/.style={
		decoration={
    		markings, 
			mark=at position \emkpos with {
				\draw[fill=black] (0,0) -- (-\emklen,-\emkwid) -- (-\emklen,\emkwid) -- cycle;
				\draw (-\emklen-\emkgap-\emkadjc\pgflinewidth,-\emkwid-0.5\pgflinewidth) 
						-- +(0pt,2*\emkwid + \pgflinewidth);
				\draw (-\emklen-2*\emkgap-2*\emkadjc\pgflinewidth,-\emkwid-0.5\pgflinewidth) 
						-- +(0pt,2*\emkwid + \pgflinewidth);
			}
		},
		postaction={decorate}
	},
	-e1'-/.style={
		decoration={
    		markings, 
			mark=at position \emkpos with {
				\draw (0,0) -- (-\emklen,-\emkwid) -- (-\emklen,\emkwid) -- cycle;
			}
		},
		postaction={decorate}
	},
	-e2'-/.style={
		decoration={
    		markings, 
			mark=at position \emkpos with {
				\draw (0,0) -- (-\emklen,-\emkwid) -- (-\emklen,\emkwid) -- cycle;
				\draw (-\emklen-\emkgap-\emkadjc\pgflinewidth,-\emkwid-0.5\pgflinewidth) 
						-- +(0pt,2*\emkwid + \pgflinewidth);
			}
		},
		postaction={decorate}
	},
	-e3'-/.style={
		decoration={
    		markings, 
			mark=at position \emkpos with {
				\draw (0,0) -- (-\emklen,-\emkwid) -- (-\emklen,\emkwid) -- cycle;
				\draw (-\emklen-\emkgap-\emkadjc\pgflinewidth,-\emkwid-0.5\pgflinewidth) 
						-- +(0pt,2*\emkwid + \pgflinewidth);
				\draw (-\emklen-2*\emkgap-2*\emkadjc\pgflinewidth,-\emkwid-0.5\pgflinewidth) 
						-- +(0pt,2*\emkwid + \pgflinewidth);
			}
		},
		postaction={decorate}
	},
}
\newcommand{\elbllen}{11pt}
\newcommand*{\lblei}
	{\begin{tikzpicture}\draw [-e1-] (0,0)--(\elbllen,0); \end{tikzpicture}}
\newcommand*{\lbleii}
	{\begin{tikzpicture}\draw [-e2-] (0,0)--(\elbllen,0); \end{tikzpicture}}
\newcommand*{\lbleiii}
	{\begin{tikzpicture}\draw [-e3-] (0,0)--(\elbllen,0); \end{tikzpicture}}
\newcommand*{\lbleip}
	{\begin{tikzpicture}\draw [-e1'-] (0,0)--(\elbllen,0); \end{tikzpicture}}
\newcommand*{\lbleiip}
	{\begin{tikzpicture}\draw [-e2'-] (0,0)--(\elbllen,0); \end{tikzpicture}}
\newcommand*{\lbleiiip}
	{\begin{tikzpicture}\draw [-e3'-] (0,0)--(\elbllen,0); \end{tikzpicture}}

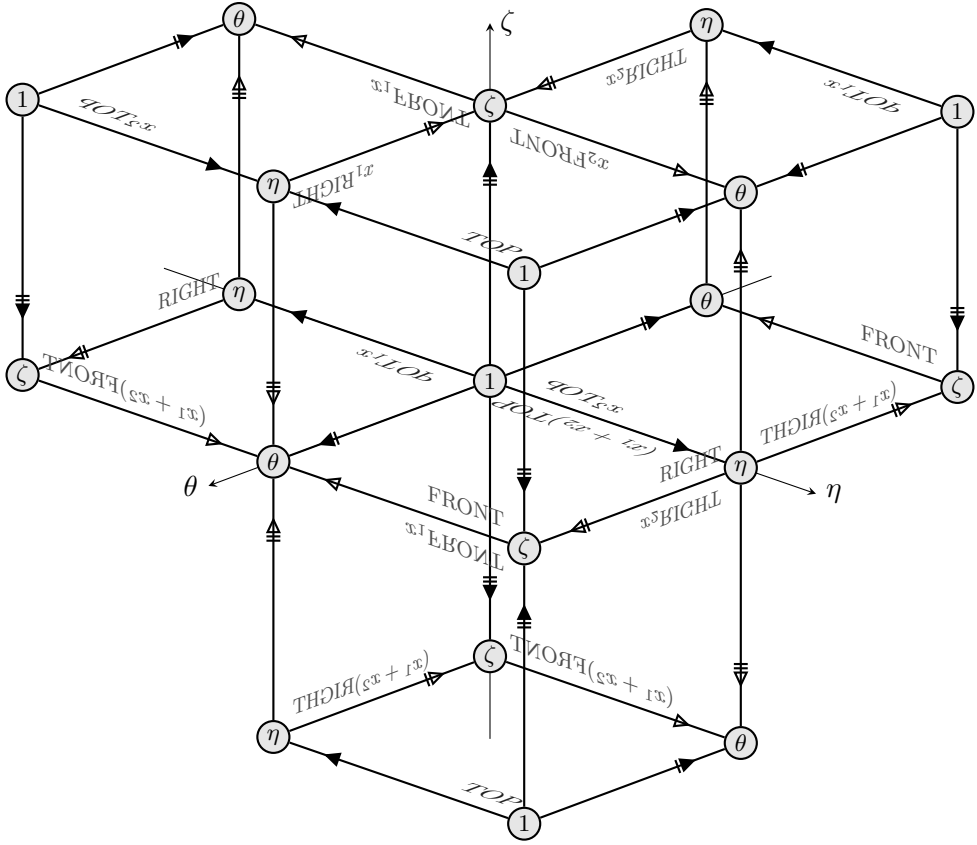
\begin{figure}
\begin{tikzpicture}
	[
		x={(3.3cm,-1.15cm)}, 
		y={(-2.85cm,-1.07cm)}, 
		z={(0cm,3.65cm)}, 		
	    axis/.style={-stealth},
	    vertex/.style={
	    	circle,
			thick,
			draw=black,
			fill=gray!20,
			inner sep=0.0pt,
	    	font=\fontsize{9pt}{9pt}\selectfont,
			minimum size=12pt
		},
		edge/.style={thick},
		topfacelabel/.style={
			font=\fontsize{10pt}{10pt}\selectfont,
			color=black!87
		},
		facelabel/.style={
			font=\fontsize{8pt}{8pt}\selectfont,
			color=black!70
		},		
	]
	
	\newcommand*{\scalefactor}{1.0} 
		
	\pgfresetboundingbox
	\pgfpathrectangle
		{\pgfpoint{-6.5cm*\scalefactor}{-6.2cm*\scalefactor}}
		{\pgfpoint{13cm*\scalefactor}{11.3cm*\scalefactor}}
	\pgfusepath{use as bounding box}
	
	\pgflowlevel{\pgftransformscale{\scalefactor}}	

    \draw[axis] (-1.3,0,0) -- (1.3,0,0) node [right] {$\eta$};
    \draw[axis] (0,-1.3,0) -- (0,1.3,0) node [left]	 {$\theta$};
    \draw[axis] (0,0,-1.3) -- (0,0,1.3) node [right] {$\zeta$};

    \node[vertex] (v000) at (0,0,0)		{$1$};
    \node[vertex] (vppp) at (1,1,1)		{$1$};
    \node[vertex] (vppm) at (1,1,-1)	{$1$};
    \node[vertex] (vpmp) at (1,-1,1)	{$1$};
    \node[vertex] (vmpp) at (-1,1,1)	{$1$};
    \node[vertex] (vp00) at (1,0,0) 	{$\eta$};
    \node[vertex] (vm00) at (-1,0,0)	{$\eta$};
	\node[vertex] (v0pp) at (0,1,1)		{$\eta$};
	\node[vertex] (v0pm) at (0,1,-1)	{$\eta$};
	\node[vertex] (v0mp) at (0,-1,1)	{$\eta$};
    \node[vertex] (v0p0) at (0,1,0)		{$\theta$};
    \node[vertex] (v0m0) at (0,-1,0)	{$\theta$};
    \node[vertex] (vp0p) at (1,0,1)		{$\theta$};
    \node[vertex] (vp0m) at (1,0,-1)	{$\theta$};
    \node[vertex] (vm0p) at (-1,0,1)	{$\theta$};
    \node[vertex] (v00p) at (0,0,1)		{$\zeta$};
    \node[vertex] (v00m) at (0,0,-1)	{$\zeta$};
    \node[vertex] (vpp0) at (1,1,0)		{$\zeta$};
    \node[vertex] (vpm0) at (1,-1,0)	{$\zeta$};
    \node[vertex] (vmp0) at (-1,1,0)	{$\zeta$};
	
	\pgflowlevelobj{
		\pgftransformtriangle
		{\pgfpointxyz{0}{-1}{0}}
		{\pgfpointxyz{1}{-1}{0}}
		{\pgfpointxyz{0}{-1}{1}}
		\pgftransformscale{0.01}
	}
	{\node [left,facelabel] at (96pt,8pt) {FRONT};}
		
	\pgflowlevelobj{
		\pgftransformtriangle
		{\pgfpointxyz{-1}{0}{1}}
		{\pgfpointxyz{0}{0}{1}}
		{\pgfpointxyz{-1}{0}{0}}
		\pgftransformscale{0.01}
	}
	{\node [left,facelabel] at (96pt,8pt) {$\smash{x_1}$FRONT};}
	
	\pgflowlevelobj{
		\pgftransformtriangle
		{\pgfpointxyz{1}{0}{1}}
		{\pgfpointxyz{0}{0}{1}}
		{\pgfpointxyz{1}{0}{0}}
		\pgftransformscale{0.01}
	}
	{\node [left,facelabel] at (96pt,8pt) {$\smash{x_2}$FRONT};}
	
	\pgflowlevelobj{
		\pgftransformtriangle
		{\pgfpointxyz{1}{0}{-1}}
		{\pgfpointxyz{0}{0}{-1}}
		{\pgfpointxyz{1}{0}{0}}
		\pgftransformscale{0.01}
	}
	{\node [left,facelabel] at (96pt,8pt) {$\smash{(x_1+x_2)}$FRONT};}
	
	\pgflowlevelobj{
		\pgftransformtriangle
		{\pgfpointxyz{0}{1}{-1}}
		{\pgfpointxyz{1}{1}{-1}}
		{\pgfpointxyz{0}{0}{-1}}
		\pgftransformscale{0.01}
	}
	{\node [left,topfacelabel] at (92pt,11.5pt) {TOP};}
	
	\pgflowlevelobj{
		\pgftransformtriangle
		{\pgfpointxyz{0}{-1}{1}}
		{\pgfpointxyz{1}{-1}{1}}
		{\pgfpointxyz{0}{0}{1}}
		\pgftransformscale{0.01}
	}
	{\node [left,topfacelabel] at (92pt,11.5pt) {$\smash{x_1}$TOP};}
	
	\pgflowlevelobj{
		\pgftransformtriangle
		{\pgfpointxyz{-1}{0}{0}}
		{\pgfpointxyz{0}{0}{0}}
		{\pgfpointxyz{-1}{1}{0}}
		\pgftransformscale{0.01}
	}
	{\node [left,topfacelabel] at (92pt,11.5pt) {$\smash{x_1}$TOP};}
	
	\pgflowlevelobj{
		\pgftransformtriangle
		{\pgfpointxyz{0}{1}{1}}
		{\pgfpointxyz{-1}{1}{1}}
		{\pgfpointxyz{0}{0}{1}}
		\pgftransformscale{0.01}
	}
	{\node [left,topfacelabel] at (92pt,11.5pt) {$\smash{x_2}$TOP};}
	
	\pgflowlevelobj{
		\pgftransformtriangle
		{\pgfpointxyz{1}{0}{0}}
		{\pgfpointxyz{0}{0}{0}}
		{\pgfpointxyz{1}{-1}{0}}
		\pgftransformscale{0.01}
	}
	{\node [left,topfacelabel] at (92pt,11.5pt) {$\smash{x_2}$TOP};}
		
	\pgflowlevelobj{
		\pgftransformtriangle
		{\pgfpointxyz{1}{0}{0}}
		{\pgfpointxyz{0}{0}{0}}
		{\pgfpointxyz{1}{1}{0}}
		\pgftransformscale{0.01}
	}
	{\node [left,topfacelabel] at (92pt,11.5pt) {$\smash{(x_1+x_2)}$TOP};}
	
	\pgflowlevelobj{
		\pgftransformtriangle
		{\pgfpointxyz{-1}{1}{0}}
		{\pgfpointxyz{-1}{0}{0}}
		{\pgfpointxyz{-1}{1}{1}}
		\pgftransformscale{0.01}
	}
	{\node [left,facelabel] at (95pt,8pt) {RIGHT};}	
	
	\pgflowlevelobj{
		\pgftransformtriangle
		{\pgfpointxyz{0}{0}{1}}
		{\pgfpointxyz{0}{1}{1}}
		{\pgfpointxyz{0}{0}{0}}
		\pgftransformscale{0.01}
	}
	{\node [left,facelabel] at (95pt,8pt) {$\smash{x_1}$RIGHT};}	
	
	\pgflowlevelobj{
		\pgftransformtriangle
		{\pgfpointxyz{1}{1}{0}}
		{\pgfpointxyz{1}{0}{0}}
		{\pgfpointxyz{1}{1}{-1}}
		\pgftransformscale{0.01}
	}
	{\node [left,facelabel] at (95pt,8pt) {$\smash{x_2}$RIGHT};}	
	
	\pgflowlevelobj{
		\pgftransformtriangle
		{\pgfpointxyz{0}{0}{1}}
		{\pgfpointxyz{0}{-1}{1}}
		{\pgfpointxyz{0}{0}{0}}
		\pgftransformscale{0.01}
	}
	{\node [left,facelabel] at (95pt,8pt) {$\smash{x_2}$RIGHT};}	
	
	\pgflowlevelobj{
		\pgftransformtriangle
		{\pgfpointxyz{1}{-1}{0}}
		{\pgfpointxyz{1}{0}{0}}
		{\pgfpointxyz{1}{-1}{1}}
		\pgftransformscale{0.01}
	}
	{\node [left,facelabel] at (95pt,8pt) {$\smash{(x_1+x_2)}$RIGHT};}	
	
	\pgflowlevelobj{
		\pgftransformtriangle
		{\pgfpointxyz{0}{0}{-1}}
		{\pgfpointxyz{0}{1}{-1}}
		{\pgfpointxyz{0}{0}{0}}
		\pgftransformscale{0.01}
	}
	{\node [left,facelabel] at (95pt,8pt) {$\smash{(x_1+x_2)}$RIGHT};}	

    \draw [edge,-e1-] (v000) -- (vp00);
    \draw [edge,-e1-] (v000) -- (vm00);
    \draw [edge,-e1-] (vppp) -- (v0pp);
    \draw [edge,-e1-] (vppm) -- (v0pm);
    \draw [edge,-e1-] (vpmp) -- (v0mp);
    \draw [edge,-e1-] (vmpp) -- (v0pp);
    
    \draw [edge,-e1'-] (v00p) -- (vp0p);
    \draw [edge,-e1'-] (v00p) -- (vm0p);
    \draw [edge,-e1'-] (vpp0) -- (v0p0);
    \draw [edge,-e1'-] (vmp0) -- (v0p0);
    \draw [edge,-e1'-] (vpm0) -- (v0m0);
    \draw [edge,-e1'-] (v00m) -- (vp0m);

	\draw [edge,-e2-] (v000) -- (v0p0);
	\draw [edge,-e2-] (v000) -- (v0m0);
	\draw [edge,-e2-] (vppp) -- (vp0p);
	\draw [edge,-e2-] (vpmp) -- (vp0p);
	\draw [edge,-e2-] (vmpp) -- (vm0p);
	\draw [edge,-e2-] (vppm) -- (vp0m);
	
	\draw [edge,-e2'-] (v0pp) -- (v00p);
	\draw [edge,-e2'-] (v0pm) -- (v00m);
	\draw [edge,-e2'-] (v0mp) -- (v00p);
	\draw [edge,-e2'-] (vp00) -- (vpp0);
	\draw [edge,-e2'-] (vp00) -- (vpm0);
	\draw [edge,-e2'-] (vm00) -- (vmp0);

	\draw [edge,-e3-] (v000) -- (v00p);
	\draw [edge,-e3-] (v000) -- (v00m);
	\draw [edge,-e3-] (vppp) -- (vpp0);
	\draw [edge,-e3-] (vppm) -- (vpp0);
	\draw [edge,-e3-] (vmpp) -- (vmp0);
	\draw [edge,-e3-] (vpmp) -- (vpm0);

	\draw [edge,-e3'-] (vp00) -- (vp0p);
	\draw [edge,-e3'-] (vp00) -- (vp0m);
	\draw [edge,-e3'-] (v0pp) -- (v0p0);
	\draw [edge,-e3'-] (v0pm) -- (v0p0);
	\draw [edge,-e3'-] (vm00) -- (vm0p);
	\draw [edge,-e3'-] (v0m0) -- (v0mp);

	\pgflowlevelobj{
		\pgftransformtriangle
		{\pgfpointxyz{0}{1}{1}}
		{\pgfpointxyz{1}{1}{1}}
		{\pgfpointxyz{0}{0}{1}}
		\pgftransformscale{0.01}
	}
	{\node [left,topfacelabel] at (92pt,11.5pt) {TOP};}
	
	\pgflowlevelobj{
		\pgftransformtriangle
		{\pgfpointxyz{1}{1}{0}}
		{\pgfpointxyz{1}{0}{0}}
		{\pgfpointxyz{1}{1}{1}}
		\pgftransformscale{0.01}
	}
	{\node [left,facelabel] at (95pt,8pt) {RIGHT};}	
	
	\pgflowlevelobj{
		\pgftransformtriangle
		{\pgfpointxyz{0}{1}{0}}
		{\pgfpointxyz{1}{1}{0}}
		{\pgfpointxyz{0}{1}{1}}
		\pgftransformscale{0.01}
	}
	{\node [left,facelabel] at (96pt,8pt) {FRONT};}

	\pgflowlevelobj{
		\pgftransformtriangle
		{\pgfpointxyz{0}{1}{0}}
		{\pgfpointxyz{1}{1}{0}}
		{\pgfpointxyz{0}{1}{-1}}
		\pgftransformscale{0.01}
	}
	{\node [left,facelabel] at (96pt,8pt) {$\smash{x_1}$FRONT};}

	\pgflowlevelobj{
		\pgftransformtriangle
		{\pgfpointxyz{0}{1}{0}}
		{\pgfpointxyz{-1}{1}{0}}
		{\pgfpointxyz{0}{1}{1}}
		\pgftransformscale{0.01}
	}
	{\node [left,facelabel] at (96pt,8pt) {$\smash{(x_1+x_2)}$FRONT};}	

\end{tikzpicture}
\caption{The $V_2$-CW structure on
$\prod_{\alpha\neq 1} \R(\alpha) / L \isom T^3$.
Acting by $x_1$, $x_2$ and $x_1+x_2$ sends the cube in the middle
to the cube on the left, on the right, and at the bottom, respectively.
Faces and vertices are identified as indicated. 
Edges with the same label and orientation are also identified.
Edges with the same label and opposite orientation are
\emph{not} identified, but belong to the same $V_2$-orbit.
Vertices labeled by $1$ correspond to elements in $L$, and
vertices labeled by $\eta$, $\theta$ and $\zeta$ correspond to 
the respective basis vectors of $\prod_{\alpha\neq 1} \R(\alpha)$.}
\label{fig:cubestr}
\end{figure}

Write $\eta$, $\theta$ and $\zeta$ for the unique surjective homomorphisms
$V_2 \to \Z^\times$ sending $x_1$, $x_2$ and $x_1+x_2$ to the 
identity element, respectively,
and continue to write
$1$ for the constant homomorphism $V_2 \to \Z^\times$.
The lattice
$\prod_{\alpha\neq 1} \Z(\alpha) \subset \prod_{\alpha\neq 1} \R(\alpha)$
induces a $V_2$-equivariant cubical complex structure on 
$\prod_{\alpha\neq 1}\R(\alpha)$, and this structure descends to the
quotient $\prod_{\alpha\neq 1} \R(\alpha) / L$
to give it and hence, via \eqref{iso:t3desc}, the space $T^3$ the
$V_2$-CW structure depicted in Figure~\ref{fig:cubestr}.
Thus we obtain the cellular chain complex
\begin{equation}
\label{eq:ct3} 
	C_\ast = C_\ast(T^3) = ( 0\to C_3 \xto{d_3} C_2 \xto{d_2} C_1 \xto{d_1} C_0 \to 0)
\end{equation}
for $T^3$
where 
\begin{align*}
	C_3 &= \F_2[V_2]e_\mathrm{cube}
	\\
	C_2 &= \F_2[V_2]\{e_\mathrm{top}, e_\mathrm{front}, e_\mathrm{right}\}
	\\
	C_1 &= \F_2[V_2/\langle x_1\rangle]\{e_{\lblei}, e_{\lbleip}\}
			\oplus
	       \F_2[V_2/\langle x_2\rangle]\{e_{\lbleii}, e_{\lbleiip}\}\\
	       &\qquad\qquad
			\oplus
	       \F_2[V_2/\langle x_1+x_2\rangle]\{e_{\lbleiii}, e_{\lbleiiip}\}
	\\
	C_0 &= \F_2\{e_1,e_\eta,e_\theta,e_\zeta\}
\end{align*}
and
\begin{equation}
\label{eq:bdrymap}
\begin{aligned}
	d_3 (e_\mathrm{cube}) &= (1+[x_1+x_2]) e_\mathrm{top}
								+(1+[x_2]) e_\mathrm{front}
							    +(1+[x_1]) e_\mathrm{right}
	\\			
	d_2 (e_\mathrm{top}) &= [x_2]e_{\lblei} + e_{\lbleip}
								 + [x_1] e_{\lbleii} + [x_1] e_{\lbleiip}
	\\
	d_2 (e_\mathrm{front}) &= [x_2]e_{\lblei} + [x_2]e_{\lbleip}
								 + [x_1] e_{\lbleiii} + [x_1] e_{\lbleiiip}
	\\
	d_2 (e_\mathrm{right}) &= [x_1]e_{\lbleii} + e_{\lbleiip}
								 + [x_1] e_{\lbleiii} +  e_{\lbleiiip}
	\\
	d_1 (e_{\lblei}) &= e_1 + e_\eta
	\\
	d_1 (e_{\lbleip}) &= e_\theta+e_\zeta
	\\
	d_1 (e_{\lbleii}) &= e_1 + e_\theta
	\\
	d_1 (e_{\lbleiip}) &= e_\eta+e_\zeta
	\\
	d_1 (e_{\lbleiii}) &= e_1 + e_\zeta
	\\
	d_1 (e_{\lbleiiip}) &= e_\eta+e_\theta
\end{aligned}
\end{equation}
Here $e_\mathrm{cube}$ refers to the middle cube in 
Figure~\ref{fig:cubestr},
$e_{\lblei}$ etc.\ refer to the edge with the corresponding label 
whose orientation agrees with that of a coordinate axis,
and $e_\mathrm{top}$, $e_\mathrm{front}$, $e_\mathrm{right}$ and
$e_1$, $e_\eta$, $e_\theta$, $e_\zeta$ refer to the 
faces and vertices with corresponding labels. 
For clarity we have written $[v]$ for the element of $\F_2[V_2]$
corresponding to $v\in V_2$.

The $0$-cells of $T^3$  induce sections 
\[
	s_1, s_\eta, s_\theta, s_\zeta \colon \pt\sslash V_2 \longto T^3\sslash V_2
\]
of $\pi'_1$; alternatively, $s_\lambda$ can be described as the map
defined by the assignment 
\[
	\pt 
	\longmapsto 
	(\lambda(v))_{v_\in V_2} \Delta \T^1
	\in T^3 
	= 
	(\T^1)^{V_2}/\Delta \T^1.
\]
where we interpret $\Z^\times =\{\pm 1\} \subset \T^1$.
The following lemma is the key ingredient in the proof of 
Theorem~\ref{thm:tcalc2}.
\begin{lemma}
\label{lm:tcalc2}
The umkehr map 
$(\pi'_1)^! \colon H_\ast (\pt\sslash V_2) \to H_{\ast+3}(T^3\sslash V_2)$
is given by 
\[
	(\pi'_1)^! (x_1^{[n_1]} x_2^{[n_2]}) 
	= 
	[(s_1)_\ast + (s_\eta)_\ast + (s_\theta)_\ast + (s_\zeta)_\ast]
	\left(\sum_{i=0}^{n_1+1} x_1^{[i]} x_2^{[n_1+n_2+3-i]}\right)
\]
for all $n_1,n_2\geq 0$.
\end{lemma}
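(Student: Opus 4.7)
The plan is to imitate the strategy from the proof of Theorem~\ref{thm:tcalc}. By \cite[Lemma~8.4]{HL}, the umkehr map $(\pi'_1)^!$ coincides with the composite
\[
H_n(\pt\sslash V_2) \xto{\ \isom\ } H_n(\pt\sslash V_2;\,H_3(T^3)) = E^2_{n,3} \longtwoheadto E^\infty_{n,3} \longincl H_{n+3}(T^3\sslash V_2)
\]
in the Serre spectral sequence of $\pi'_1\colon T^3 \sslash V_2 \to BV_2$, where the first map is cap product with the fundamental class of $H_3(T^3;\F_2)\cong\F_2$. The $V_2$-action on this coefficient module is automatically trivial since we work in characteristic~$2$, so no twisting appears.

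First I would establish collapse of this Serre spectral sequence at $E^2$. Any one of the four sections $s_\lambda$ splits the edge homomorphism $H_\ast(T^3\sslash V_2) \twoheadrightarrow H_\ast(BV_2)$ and thereby forces all differentials leaving the bottom row to vanish. By Poincar\'e--Lefschetz duality applied to the closed manifold fiber $T^3$ (equivalently, considering the dual cohomological Serre spectral sequence), a dual argument rules out differentials entering the top row. A Poincar\'e series comparison using the cellular complex \eqref{eq:ct3} in the middle degrees then gives full collapse.

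The heart of the argument is to identify the embedded top row $E^\infty_{n,3} \subseteq H_{n+3}(T^3\sslash V_2)$ in terms of the section images. Each individual $(s_\lambda)_\ast$ lands in the bottom filtration $E^\infty_{\ast,0}$, so \emph{a priori} the sum $\sum_\lambda (s_\lambda)_\ast$ does too; however, since $(\pi'_1)_\ast \circ \sum_\lambda (s_\lambda)_\ast = 4 \cdot \id = 0$ in characteristic~$2$, the sum in fact lies in the kernel of the edge map and hence in a higher filtration. A chain-level check in $C_\ast(EV_2) \tensor_{\F_2[V_2]} C_\ast(T^3)$ using \eqref{eq:ct3} and a standard free $\F_2[V_2]$-resolution of $\F_2$ whose generators pair with divided powers $x_1^{[a]} x_2^{[b]}$ shows that $\sum_\lambda (s_\lambda)_\ast$ in fact factors through the top filtration $E^\infty_{\ast-3,3} \cong H_{\ast-3}(BV_2)$, and the induced linear map $\sigma\colon H_{n+3}(BV_2) \to H_n(BV_2)$ can be computed explicitly. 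Comparing the formula for $(\pi'_1)^!$ obtained from the spectral sequence description with the formula coming from $\sum_\lambda (s_\lambda)_\ast \circ \sigma$ then yields the claimed identity.

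The main obstacle is this final chain-level computation, and in particular the asymmetric truncation $0 \leq i \leq n_1 + 1$ appearing in $\omega_{n_1,n_2}$. I expect this asymmetry to stem from the asymmetric boundary formula $d_3(e_\mathrm{cube}) = (1+[x_1+x_2])e_\mathrm{top} + (1+[x_2])e_\mathrm{front} + (1+[x_1])e_\mathrm{right}$ of \eqref{eq:bdrymap}: in the chosen free resolution, the interaction of this boundary with the resolution differential produces the precise mod-$2$ binomial bookkeeping that cuts off the sum at $i = n_1+1$ rather than running over all admissible indices. The bulk of the remaining work is thus in carefully tracking these binomial coefficients.
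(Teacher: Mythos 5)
Your proposal correctly identifies the right tools — the description of $(\pi'_1)^!$ from \cite[Lemma~8.4]{HL} via the top row of the Serre spectral sequence, the cellular chain complex \eqref{eq:ct3}, and a free $\F_2[V_2]$-resolution paired against divided powers — and these are indeed the ingredients the paper uses. However, the logical route you propose has genuine gaps, and the step that carries essentially all of the content of the lemma is left undone.

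The central problem is your claim that since $(\pi'_1)_\ast \circ \sum_\lambda (s_\lambda)_\ast = 0$, the map $\sum_\lambda (s_\lambda)_\ast$ ``factors through the top filtration $E^\infty_{\ast-3,3}$.'' Vanishing of the composite with the edge map only shows that the image lies in $F_{n-1}$, the kernel of $H_n \twoheadrightarrow E^\infty_{n,0}$; there are two intermediate filtration steps $E^\infty_{n-1,1}$ and $E^\infty_{n-2,2}$ that must also be killed before one can conclude membership in $F_{n-3} = E^\infty_{n-3,3}$. (Incidentally, the phrase ``$(s_\lambda)_\ast$ lands in the bottom filtration $E^\infty_{\ast,0}$'' is backwards: $E^\infty_{*,0}$ is the top quotient of the filtration, not a subgroup; the sections split the edge surjection rather than mapping into anything.) Even granting the factorization, you would obtain $\sum_\lambda (s_\lambda)_\ast = (\pi'_1)^! \circ \sigma$ for some $\sigma$, and converting this into the stated formula $(\pi'_1)^!(\xi) = \sum_\lambda (s_\lambda)_\ast(\omega(\xi))$ requires both injectivity of $(\pi'_1)^!$ and the identity $\sigma\circ\omega = \id$, neither of which you address. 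The collapse argument via ``Poincaré--Lefschetz duality and a Poincaré series comparison'' is also not a proof. Finally, you correctly observe that the asymmetric truncation at $i = n_1+1$ is the crux, but you defer the chain-level computation that would produce it — and this is precisely where the lemma lives.

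The paper sidesteps all of these issues by working directly in the double complex $F_\ast\tensor_{V_2} C_\ast$ where $F_\ast$ is the divided-power free resolution with $d(X_1^{[k_1]}X_2^{[k_2]}) = t_1 X_1^{[k_1-1]}X_2^{[k_2]} + t_2 X_1^{[k_1]}X_2^{[k_2-1]}$, $t_i=1+x_i$. It identifies the representative $X_1^{[n_1]}X_2^{[n_2]}\tensor_{V_2} t_1t_2 e_{\mathrm{cube}}$ of $(\pi'_1)^!(x_1^{[n_1]}x_2^{[n_2]})$, then exhibits explicit chains $c_1,c_2,c_3$ (built from $e_{\mathrm{cube}}$, $e_{\mathrm{top}}+e_{\mathrm{right}}$, and several edges) and verifies by a direct computation with \eqref{eq:bdrymap} that
\[
d(c_1+c_2+c_3) = X_1^{[n_1]}X_2^{[n_2]}\tensor_{V_2} t_1t_2 e_{\mathrm{cube}} + \sum_{i=0}^{n_1+1} X_1^{[i]}X_2^{[n_1+n_2+3-i]}\tensor_{V_2}(e_1+e_\eta+e_\theta+e_\zeta).
\]
This single identity delivers the homology relation, the truncation at $i=n_1+1$, and (as a byproduct) the fact that the section-sum class sits in the top row — without any prior collapse or filtration analysis. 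If you want to salvage your approach, replace the filtration argument with this kind of explicit chain homotopy; once you have it, the detour through $\sigma$ becomes unnecessary.
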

\begin{proof}
Let $F_\ast$ be the divided power algebra over the ring $\F_2[V_2]$
generated by variables $X_1$ and $X_2$ of degree 1,
and equip $F_\ast$ with the differential 
\[
	d (X_1^{[k_1]} X_2^{[k_2]}) 
	= 
	t_1 X_1^{[k_1-1]} X_2^{[k_2]}
	+
	t_2 X_1^{[k_1]} X_2^{[k_2-1]}.
\]
Here  $t_i = 1+x_i$, $i=1,2$; notice that 
$\F_2[V_2]$ is simply the exterior algebra $\Lambda(t_1,t_2)$
over $\F_2$.
Equipped with the augmentation 
\[
	\varepsilon\colon F_0 = \Lambda(t_1,t_2) \longto \F_2
\]
given by the $\F_2$-algebra homomorphism sending $t_1$ and 
$t_2$ to $0$, the chain complex $F_\ast$ becomes a free 
$\F_2[V_2]$-resolution of $\F_2$.
The multiplication on $F_\ast$ is an admissible product 
in the sense of \cite[section V.5]{Brown}, 
and hence models the Pontryagin product
on $H_\ast (\pt\sslash V_2) = H_\ast (F_\ast \tensor_{V_2} \F_2)$.
Concretely, the product
$x_1^{[n_1]} x_2 ^{[n_2]}$ is represented by the cycle
$X_1^{[n_1]} X_2^{[n_2]}\tensor_{V_2} 1$ of $F_\ast \tensor_{V_2} \F_2$.

Consider now the double complex $F_\ast \tensor_{V_2} C_\ast$.
Its total complex is a chain model for the space $T^3 \sslash V_2$,
and the spectral sequence associated with $F_\ast\tensor_{V_2} C_\ast$
obtained by first taking homology in the $C_\ast$-direction 
agrees (from the $E^2$-page onwards) with the Serre spectral
sequence of the fibration 
$\pi'_1 \colon T^3\sslash V_2 \to \pt \sslash V_2$. 
As before, by \cite[Lemma~8.4]{HL}, the umkehr map
$(\pi_1')^!$ 
is equal to the composite
\begin{equation}
	H_\ast(\pt\sslash V_2) 
	\xto{\ \isom\ }
	H_\ast(\pt\sslash V_2;\,H_3 T^3)
	=
	E^2_{n,3}
	\longtwoheadto
	E^\infty_{n,3}
	\longincl
	H_{n+3} (T^3\sslash V_2)
\end{equation}
where $E^2$ and $E^\infty$ refer to pages in the Serre
spectral sequence of $\pi_1'$, the first map 
is induced by the fundamental class of $T^3$,
and the last two maps arise from the fact
that we are working with the top row of the
spectral sequence. Observing that the fundamental class of
$T^3$ is represented by the cycle $t_1 t_2 e_\mathrm{cube} \in C_3$,
we obtain the following recipe for computing $(\pi'_1)^!$:
for any $n_1,n_2\geq 0$, the element
$(\pi'_1)^!(x_1^{[n_1]} x_2^{[n_2]}) \in H_\ast(T^3\sslash V_2)$
is the homology class of the element 
$X_1^{[n_1]} X_2^{[n_2]} \tensor_{V_2} t_1 t_2 e_\mathrm{cube}$
considered as a cycle in $\Tot(F_\ast \tensor_{V_2} C_\ast)$.

Let
\begin{align*}
 	c_1 &= X_1^{[n_1+1]} X_2^{[n_2]} \tensor_{V_2} t_2 e_\mathrm{cube}
	\\
	c_2 &= X_1^{[n_1+2]} X_2^{[n_2]} 
				\tensor_{V_2} 
			t_2 (e_\mathrm{top}+ e_\mathrm{right})
\end{align*}
and
\[
	c_3 = X_1^{[n_1+2]} X_2^{[n_2+1]} 
				\tensor_{V_2}
			(e_{\lblei} + e_{\lbleip})
		  + \sum_{i=0}^{n_1+2} X_1^{[i]}X_2^{[n_1+n_2+3-i]}
		  		\tensor_{V_2}
			(e_{\lbleiii} + e_{\lbleiiip}).
\]
Using the formulas \eqref{eq:bdrymap} for the boundary map
in $C_\ast$, it is easy to compute that 
\begin{multline*}
	d(c_1 + c_2 + c_3)\\
	= 
	X_1^{[n_1]} X_2^{[n_2]} \tensor_{V_2} t_1 t_2 e_\mathrm{cube} 
	+ 
	\sum_{i=0}^{n_1+1} X_1^{[i]} X_2^{[n_1+n_2+3-i]} 
		\tensor_{V_2} 
	(e_1 + e_\eta + e_\theta + e_\zeta)
\end{multline*}
in $\Tot(F_\ast \tensor_{V_2} C_\ast)$,
showing that the two summands on the right hand side are homologous. 
The claim follows.
\end{proof}

\begin{proof}[Proof of Theorem~\ref{thm:tcalc2}]
For $\lambda \colon V_2 \to \{\pm1\} \subset \T^1$ a homomorphism,
the assignment $\pt \mapsto (\lambda(v))_{v\in V_2}\in (\T^1)^{V_2}$
and the homomorphism
\[
	V_2\times (\T^1)^{\{p\}} 
	\longto
	V_2\times (\T^1)^{\{p,q\}},
	\quad
	(v,z_p) \longmapsto (v,z_p,\lambda(v)z_p)
\]
define a map
\[
	\tilde{s}_\lambda 
	\colon 
	\pt \sslash V_2\times (\T^1)^{\{p\}} 
	\longto
	(\T^1)^{V_2}\sslash	V_2\times (\T^1)^{\{p,q\}}
\]
lifting $s_\lambda \times 1$ 
through the map $\kappa$ of diagram
\eqref{diag:push-pull-for-alphat1-2}
in the sense that the following diagram commutes:
\[\xymatrix{
	(\pt\sslash V_2) \times (\pt \sslash (\T^1)^{\{p\}})
	\ar[d]_{s_{\lambda} \times 1}
	&
	\pt \sslash  V_2 \times (\T^1)^{\{p\}}
	\ar[l]_-{\isom}
	\ar[dr]^{\tilde s_{\lambda}}
	\\
	(T^3 \sslash V_2) \times (\pt \sslash (\T^1)^{\{p\}})
	&
	T^3 \sslash  V_2 \times (\T^1)^{\{p\}}
	\ar[l]_-{\isom}
	&
	(\T^1)^{V_2}\sslash V_2 \times (\T^1)^{\{p,q\}}
	\ar[l]_-\kappa^-\homot
}\]
Here the homeomorphisms on the left are as in 
diagram \eqref{diag:pi1decomp}.
Notice that the composite of $\tilde s_\lambda$
with the map $\pi_2$ of 
\eqref{diag:push-pull-for-alphat1-2}
is the map 
$\pt \sslash V_2 \times \T^1
	\longto
\pt \sslash \T^1$
induced by the composite homomorphism
\[
	V_2\times \T^1 
	\xto{\ \lambda \times 1\ }
	V_1 \times \T^1
	\xto{\ \beta \times 1\ }
	\T^1\times \T^1
	\xto{\ \mu\ }
	\T^1
\]
where we interpret $\lambda$ as a homomorphism into $V_1$
and $\mu$ denotes the multiplication map of $\T^1$.
Using Lemma~\ref{lm:tcalc2},
we now obtain the formula
\begin{align*}
	&\,\alpha_2^{\T^1} \big(x_1^{[n_1]} x_2^{[n_2]}\tensor b\big)
	\\
	=&\,\beta_\ast \,\sum_{\mathclap{\lambda\colon V_2\to V_1}}\,
		\lambda_\ast 
		\left(\sum_{i=0}^{n_1+1} x_1^{[i]} x_2^{[n_1+n_2+3-i]}\right)b
	\\
	=&\,\beta_\ast \left(
		\sum_{{\lambda\colon V_2\to V_1}} 
			\lambda_\ast \big(x_2^{[n_1+n_2+3]}\big)
		+
		\sum_{i=1}^{n_1+1} 
		\sum_{{\lambda\colon V_2\to V_1}} 
			\lambda_\ast\big(x_1^{[i]}\big)
			\lambda_\ast\big(x_2^{[n_1+n_2+3-i]}\big)
		\right)b
	\\
	=&\,\beta_\ast\left(
			\sum_{i=1}^{n_1+1} x^{[i]} x^{[n_1+n_2+3-i]} 
		\right)b
	\\
	=&\,\beta_\ast \left(
			\sum_{i=1}^{n_1+1} {{n_1+n_2+3}\choose{i}} x^{[n_1+n_2+3]}
		\right)b
	\\
	=&\,\beta_\ast \left(
			\sum_{i=1}^{n_1+1} 
				\left[
					{{n_1+n_2+2}\choose{i-1}} 
					+ 
					{{n_1+n_2+2}\choose{i}} 
				\right]
				x^{[n_1+n_2+3]}
		\right)b
	\\
	=&\,\left(1+{{n_1+n_2+2}\choose{n_1+1}}\right)
	\beta_\ast\big(x^{[n_1+n_2+3]}\big) b.\qedhere
\end{align*}
\end{proof}

\section{Computations for $SU(2)$}
\label{sec:su2calc}

The purpose of this section is to compute the operation
$\alpha^{SU(2)}_1$.
Interpret $V_1$ as $\pm 1$, and let $\nu$ be the map
\[
	\nu \colon V_1 \times SU(2) \longto SU(2),
	\quad
	(\varepsilon,A) \longmapsto \varepsilon A.
\]
It is easy to check that the map induced by $\nu$ 
makes $H_\ast(BSU(2))$ into a module over 
the ring $H_\ast(BV_1)$, which we continue
to interpret as the divided power algebra
over a single generator $x$ of degree 1.
Our aim is to prove the following theorem.
\begin{theorem}
\label{thm:su2calc}
The map 
\[
	\alpha^{SU(2)}_1 
	\colon
	H_{\ast-3}( BV_1) \tensor H_\ast(BSU(2)) \longto H_\ast (BSU(2))
\]
is given by 
\[
	x^{[n]} \tensor b \longmapsto x^{[n+3]}\cdot b
\]
where $\cdot$ refers to the module structure induced by $\nu$.
\end{theorem}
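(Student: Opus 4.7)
The plan is to follow the strategy of the proof of Theorem~\ref{thm:tcalc}, adapted to $G = SU(2)$. By Proposition~\ref{prop:alphagk-comp} applied with $k=1$, $\alpha_1^{SU(2)}$ is given by a push-pull in the $SU(2)$-analog of diagram~\eqref{diag:push-pull-for-alphat1}. Using the $(V_1 \times SU(2)^{\{p,q\}})$-equivariant homeomorphism $SU(2)^{V_1}/\Delta SU(2) \isom SU(2)$ given by $(A_0, A_x)\Delta SU(2) \mapsto A_0 A_x^{-1}$, the induced $(V_1 \times SU(2)^{\{p\}})$-action on $SU(2)$ becomes $(0, A_p) \cdot A = A_p A A_p^{-1}$ and $(x, A_p) \cdot A = A_p A^{-1} A_p^{-1}$. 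These actions commute (inversion commutes with conjugation), and their common fixed-point set is $\{\pm I\}$, the center of $SU(2)$.

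To compute the umkehr map, I would analyze the Serre spectral sequence of the fibration $SU(2) \to SU(2) \sslash (V_1 \times SU(2)^{\{p\}}) \to BV_1 \times BSU(2)$. The coefficient system is trivial, since conjugation by a path-connected Lie group acts trivially on its own mod-$2$ homology, and inversion acts as $\pm 1$ on $H_3(SU(2); \F_2) \isom \F_2$, which equals the identity in characteristic $2$. Hence $E^2 = H_\ast(BV_1 \times BSU(2)) \tensor H_\ast(SU(2); \F_2)$, with $H_\ast(SU(2); \F_2)$ concentrated in degrees $0$ and $3$. The section $s_I \colon BV_1 \times BSU(2) \to SU(2) \sslash (V_1 \times SU(2)^{\{p\}})$ coming from the fixed point $I$ forces the bottom row of the spectral sequence to survive, and the only potentially nontrivial differential $d_4 \colon E^4_{\ast, 0} \to E^4_{\ast - 4, 3}$ must therefore vanish, so the spectral sequence collapses.

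As in Theorem~\ref{thm:tcalc}, I would define lifts $\tilde s_{\pm I}$ of the sections $s_{\pm I}$ through the equivalence $\eta$ as follows: $\tilde s_I$ is induced by the assignment $\pt \mapsto (I, I) \in SU(2)^{V_1}$ and the homomorphism $(\varepsilon, A_p) \mapsto (\varepsilon, A_p, A_p)$, while $\tilde s_{-I}$ is induced by $\pt \mapsto (I, -I)$ and $(\varepsilon, A_p) \mapsto (\varepsilon, A_p, \beta(\varepsilon) A_p)$, where $\beta \colon V_1 \to SU(2)$ is the inclusion of the center $\{\pm I\}$. A direct stabilizer computation confirms these are well-defined (using that $\{\pm I\}$ is central). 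The composites $\pi_2 \tilde s_I$ and $\pi_2 \tilde s_{-I}$ are then induced, respectively, by the projection $(\varepsilon, A_p) \mapsto A_p$, giving $\pr \colon BV_1 \times BSU(2) \to BSU(2)$, and by $(\varepsilon, A_p) \mapsto \beta(\varepsilon) A_p = \nu(\varepsilon, A_p)$, giving $B\nu$.

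The main obstacle is to establish the analog of the key identification in Theorem~\ref{thm:tcalc}, namely that $\pi_1^!(x^{[n]} \times b)$ corresponds, under $\eta$, to $\tilde s_{I \ast}(x^{[n+3]} \times b) + \tilde s_{-I \ast}(x^{[n+3]} \times b)$. In the $\T^1$ case this was obtained from a Mayer--Vietoris decomposition together with the numerical coincidence $\dim H_n(BV_1) = \dim H_{n+1}(BV_1) = 1$; for $SU(2)$ over $BV_1 \times BSU(2)$ the dimensions of $H_n$ and $H_{n+3}$ no longer agree, so I would instead combine the spectral sequence collapse above with the Mayer--Vietoris cover $SU(2) = U_I \cup U_{-I}$, where $U_{\pm I} = SU(2) \setminus \{\mp I\}$ is $(V_1 \times SU(2))$-equivariantly contractible to $\pm I$ via the exponential on $\mathfrak{su}(2)$ and $V_1$ acts freely on $U_I \cap U_{-I}$. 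Granting this identification, the theorem follows: $\alpha_1^{SU(2)}(x^{[n]} \tensor b) = \pr_\ast(x^{[n+3]} \times b) + (B\nu)_\ast(x^{[n+3]} \times b) = 0 + x^{[n+3]} \cdot b$, where the first term vanishes because $\pr$ projects away the $BV_1$ factor and $n + 3 > 0$, and the second equals $x^{[n+3]} \cdot b$ by the definition of the module structure induced by $\nu$.
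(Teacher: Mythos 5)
Your setup, sections $s_{\pm I}$, lifts $\tilde s_{\pm I}$, spectral-sequence collapse, and identification of $\pi_2\circ\tilde s_I$ with $\pr$ and $\pi_2\circ\tilde s_{-I}$ with $B\nu$ all agree with the paper. The gap is exactly where you flag ``the main obstacle'': the identity
\[
\pi_1^!\kappa_\ast(x^{[n]}\times b) \;=\; \big((s_I)_\ast + (s_{-I})_\ast\big)\kappa_\ast\big(x^{[n+3]}\times b\big)
\]
is asserted, not proved, and the route you gesture at --- ``combine the spectral sequence collapse with the Mayer--Vietoris cover'' --- does not lead there as written. In the $\T^1$ case the Mayer--Vietoris argument works because the $(\T^1)^{\{p\}}$-action on $\T^1$ is trivial (conjugation in an abelian group), so $\pi_1$ factors as $\pi_1'\times\id$ and one can reduce to $\T^1\sslash V_1 \to \pt\sslash V_1$, where $(U_1\cap U_{-1})\sslash V_1$ is contractible and both $H_n(BV_1)$ and $H_{n+1}(BV_1)$ are one-dimensional. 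Neither simplification is available here: the $SU(2)^{\{p\}}$-conjugation action on $SU(2)$ is nontrivial, and $U_I\cap U_{-I}$ equivariantly retracts onto $S^2_0 = \{q\in SU(2):\Re q = 0\}$ with $S^2_0\sslash V_1\times SU(2)^{\{p\}}\simeq \R P^2\sslash SU(2)^{\{p\}}$, whose mod-$2$ homology is nontrivial in infinitely many positive degrees. So $(s_I)_\ast+(s_{-I})_\ast$ is far from an isomorphism, and neither the spectral sequence nor the Mayer--Vietoris sequence pins down $\pi_1^!$ the way the $\T^1$ proof did.

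The paper closes this gap with a genuinely different device. It dualizes and works with the fibrewise Gysin map $(\pi_1)_!$, which is $H^\ast(B)$-linear; observes that $s_I^\ast + s_{-I}^\ast$ is also $H^\ast(B)$-linear and annihilates $\im\pi_1^\ast$, hence factors as $\phi\circ(\pi_1)_!$ for an $H^\ast(B)$-module map $\phi$ with $\phi(z) = z\cdot\phi(1)$ and $\phi(1)\in H^3(B)$; and then exploits the ring structure $H^\ast(B)\isom\F_2[w_1,u_4]$, which forces $\phi(1)\in\{0,w_1^3\}$. The Mayer--Vietoris decomposition is used only at this point (Lemma~\ref{lm:sp1psm1}), and only in cohomological degree $3$, where $H^3(S^2_0\sslash V_1\times SU(2)^{\{p\}})=0$ (since $\R P^2$ has no mod-$2$ cohomology above degree $2$ while $BSU(2)$ has none in degrees $1,2,3$), to rule out $\phi=0$. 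Dividing by $w_1^3$ inverts $\phi$, and dualizing back gives the desired formula. This dualization/module argument is the essential ingredient your proposal is missing, and it is not a routine extension of the $\T^1$ proof.
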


Before proving the theorem, let us spend a moment analyzing the 
$H_\ast BV_1$-module structure of $H_\ast BSU(2)$.
Recall that $H_\ast BSU(2)$ is isomorphic to $\Z/2$ in degrees
divisible by 4 and vanishes in other degrees. Let $i$ denote the inclusion
$i\colon V_1 \to SU(2)$, $\varepsilon \mapsto \varepsilon I$
where $I\in SU(2)$ is the identity matrix. 
We claim that the map $i_\ast \colon H_\ast BV_1 \to H_\ast BSU(2)$
is an isomorphism in degrees divisible by 4 and zero in other degrees.
One way to see this is to recall that the cohomology of 
$BSU(2)$ is a polynomial algebra generated by the Euler class
$e(\gamma_2^\C)$ of the tautological complex 2-plane bundle $\gamma_2^\C$
over $BSU(2)$, and observe that the pullback 
$(Bi)^\ast \gamma_2^\C$ is isomorphic to the direct sum of four 
copies of the tautological real line bundle $\gamma_1^\R$ over $BV_1$.
Thus the map 
$i^\ast\colon H^\ast BSU(2) \to H^\ast BV_1$ 
sends the generator
$e(\gamma_2^\C) = w_4^{}(\gamma_2^\C)$ to 
$w_4^{}(\bigoplus^4 \gamma_1^\R) = w_1^{}(\gamma_1^\R)^4 = x^4$,
and the claim follows by passing to duals.

With the behaviour of the map $i_\ast$ understood, we can now read off 
the $H_\ast BV_1$-module structure of $H_\ast BSU(2)$
from the commutative diagram
\[\xymatrix{
	H_\ast (BV_1) \tensor H_\ast (BV_1)
	\ar[r]^-\cdot
	\ar[d]_{1\tensor i_\ast}
	&
	H_\ast (BV_1)
	\ar[d]^{i_\ast}
	\\
	H_\ast (BV_1) \tensor H_\ast (BSU(2))
	\ar[r]^-\cdot
	&
	H_\ast (BSU(2))
}\]
where the upper $\cdot$ refers to the product on $H_\ast(BV_1)$
and the lower one to the module structure. Explicitly,
$H_\ast BSU(2)$ is isomorphic as a $H_\ast BV_1$-module 
to the quotient of $H_\ast BV_1$ by the ideal generated by 
$x$ and $x^{[2]}$. In particular, we see that the operation
$\big(\alpha^{SU(2)}_1\big)^\sharp\big(x^{[n]}\big)$ 
is non-trivial precisely when $n \equiv 1$ mod 4.

\begin{proof}[Proof of Theorem~\ref{thm:su2calc}]
The proof shares many similarities with the slightly simpler
proof of Theorem~\ref{thm:tcalc} above.
Again, Proposition~\ref{prop:alphagk-comp} implies that
we may compute $\alpha^{SU(2)}_1$ by  a push-pull construction
in the diagram
\begin{equation}
\label{diag:push-pull-for-alpha-su2-1}
\vcenter{\xymatrix@!0@C=3.4em@R=9.3ex{
	&
	SU(2) \sslash V_1 \times SU(2)^{\{p\}}
	\ar[dl]_{!}^{\pi_1}
	&&&&
	SU(2)^{V_1}\sslash V_1 \times SU(2)^{\{p,q\}}
	\ar[llll]^-\homot_-\eta
	\ar[dr]_{\pi_2}
	\\
	\pt \sslash V_1 \times SU(2)^{\{p\}}
	&&&&&&
	\pt \sslash SU(2)^{\{q\}}
}} 
\end{equation}
where the $V_1\times SU(2)^{\{p\}}$-action on $SU(2)$ is given by
\[
	(\varepsilon, g_p) \cdot g = g_p^{} g^\varepsilon g_p^{-1},
\]
where the $V_1\times SU(2)^{\{p,q\}}$-action on $SU(2)^{V_1}$
is given by 
\[
	(\varepsilon,g_p,g_q) \cdot (g_{-1},g_{1}) 
	= 
	(g_p^{} g_{-\varepsilon}^{} g_q^{-1},
	 g_p^{} g_\varepsilon^{} g_q^{-1}),
\]
where $\eta$ is induced by the map
\[
	SU(2)^{V_1} \longto SU(2),
	\quad
	(g_{-1},g_1) \longmapsto g_{-1}^{} g_1^{-1}
\]
and the projection
$V_1\times SU(2)^{\{p,q\}} \to V_1\times SU(2)^{\{p\}}$,
and where $\pi_1$ and $\pi_2$ are induced by the 
evident projections of spaces and groups.
Again, diagram~\eqref{diag:push-pull-for-alphat1}
is obtained from diagram~\eqref{diag:push-pull-for-alphagk}
by taking $G = SU(2)$ and $k=1$ and replacing the
space $SU(2)^{V_1}/\Delta SU(2)$
by $SU(2)$ using the homeomorphism
\[
	SU(2)^{V_1}/\Delta SU(2) \xto{\ \isom\ } SU(2),
	\quad
	(g_{-1},g_1)\Delta SU(2) \longmapsto g_{-1}^{} g_1^{-1}.
\]

By \cite[Lemma~8.4]{HL}, we may compute the umkehr map
$\pi_1^!$ using the Serre spectral sequence for $\pi_1$.
More precisely, the map $\pi_1^!$ is given by the composite
\begin{multline}
\label{comp:pi1-umkehr}
	H_n(\pt\sslash V_1\times SU(2)^{\{p\}})
	\xto{\ \isom\ }
	H_n(\pt\sslash V_1\times SU(2)^{\{p\}};\,H_3 SU(2))
	=
	E^2_{n,3}
	\\
	\longtwoheadto
	E^\infty_{n,3}
	\longincl
	H_{n+3} (SU(2)\sslash V_1\times SU(2)^{\{p\}}).
\end{multline}
Here $E^2$ and $E^\infty$ refer to pages in the Serre
spectral sequence of $\pi_1$, the first map 
is induced by the fundamental class of $SU(2)$,
and the last two maps arise from the fact
that we are working with the top row of the
spectral sequence.

The assignments $\pt \mapsto \pm I \in SU(2)$ define sections 
\[
	s_{\pm 1} 
	\colon 
	\pt \sslash V_1\times SU(2)^{\{p\}}
	\longto
	SU(2)\sslash V_1\times SU(2)^{\{p\}}.
\]
Since the spectral sequence has only two non-zero rows,
the existence of these sections implies that the Serre spectral
sequence for $\pi_1$ collapses at the $E^2$-page. Thus 
the short exact sequence
\[\xymatrix{
	0
	\ar[r]
	&
	E^\infty_{n,3}
	\ar[r]
	&
	H_{n+3} (SU(2)\sslash V_1\times SU(2)^{\{p\}})
	\ar[r]
	&
	E^\infty_{n+3,0}
	\ar[r]
	&
	0
}\]
and the description \eqref{comp:pi1-umkehr} for $\pi_1^!$ 
give the following short exact sequence:
\begin{multline}
\label{ses:pi1umk}
	0
	\longto
	H_{n} (\pt \sslash V_1\times SU(2)^{\{p\}})
	\xto{\ \pi_1^!\ }
	H_{n+3} ( SU(2)\sslash V_1\times SU(2)^{\{p\}})
	\\
	\xto{\ (\pi_1)_\ast\,}
	H_{n+3} (\pt \sslash V_1\times SU(2)^{\{p\}})
	\longto
	0.
\end{multline}

To proceed with our analysis of $\pi_1^!$, it is
helpful to dualize and work with the dual map
\[
	(\pi_1)_! 
	\colon 
	H^{*} ( SU(2)\sslash V_1\times SU(2)^{\{p\}})
	\longto
	H^{*-3} ( \pt \sslash V_1\times SU(2)^{\{p\}})
\]
instead. In general, 
\newcounter{dummy}\refstepcounter{dummy}\label{p:modstr}
if $\pi\colon M\to B$ is a closed
fibrewise manifold of fibre dimension $d$, then the dual
$\pi_!\colon H^\ast M \to H^{*-d} B$ of the umkehr
map $\pi^! \colon H_\ast B \to H_{\ast+d} M$ 
is a map of $H^\ast B$-modules where $H^\ast M$ is made
into a $H^\ast B$-module via the map $\pi^\ast$.
This follows from the pullback diagram
\[\xymatrix{
	M
	\ar[r]^-{(\pi,1)}
	\ar[d]_\pi
	&
	B\times M
	\ar[d]^{1\times \pi}
	\\
	B
	\ar[r]^-\Delta
	&
	B\times B
}\]
by using the compatibility of the umkehr maps with pullbacks
and direct products.
Dualizing the short exact sequence \eqref{ses:pi1umk},
we therefore obtain a short exact sequence
\begin{multline}
\label{ses:pi1umk2}
	0
	\longto
	H^{\ast} (\pt \sslash V_1\times SU(2)^{\{p\}})
	\xto{\ \pi_1^*\ }
	H^{\ast} ( SU(2)\sslash V_1\times SU(2)^{\{p\}})
	\\
	\xto{\ (\pi_1)_!\, }
	H^{*-3} (\pt \sslash V_1\times SU(2)^{\{p\}})
	\longto
	0.
\end{multline}
of $H^\ast(\pt\sslash V_1\times SU(2)^{\{p\}})$-modules.

The map 
\begin{equation}
\label{map:sp1psm1} 
	s_1^\ast + s_{-1}^\ast 
	\colon 
	H^{\ast} ( SU(2)\sslash V_1\times SU(2)^{\{p\}})
	\longto
	H^{\ast} ( \pt\sslash V_1\times SU(2)^{\{p\}})
\end{equation}
is a map of $H^\ast(\pt\sslash V_1\times SU(2)^{\{p\}})$-modules
satisfying 
$(s_1^\ast + s_{-1}^\ast) \circ \pi_1^\ast = \id+\id =0$,
so from the short exact sequence \eqref{ses:pi1umk2}
we can deduce that 
\[
	s_1^\ast + s_{-1}^\ast = \phi \circ (\pi_1)_!
\]
for some $H^\ast(\pt\sslash V_1\times SU(2)^{\{p\}})$-linear map 
\[
	\phi 
	\colon 
	H^\ast(\pt\sslash V_1\times SU(2)^{\{p\}})
	\longto
	H^{\ast+3}(\pt\sslash V_1\times SU(2)^{\{p\}}).
\]
Making use of the evident homeomorphism
\[
	\kappa 
	\colon 
	BV_1 \times BSU(2) 
	\xto{\ \isom\ }
	\pt \sslash V_1\times SU(2)^{\{p\}},
\]
we see that the cohomology 
$H^\ast(\pt\sslash V_1\times SU(2)^{\{p\}})$ is isomorphic
to a polynomial algebra generated by a class 
$w_1$ of degree 1 coming from the $BV_1$ factor 
and a class $u_4$ of degree 4 coming from the $BSU(2)$ factor.
Since the map $\phi$ is 
${H^\ast(\pt\sslash V_1\times SU(2)^{\{p\}})}$-linear, 
we have 
\[
	\phi(z) = z \phi(1)
\]
for all $z\in {H^\ast(\pt\sslash V_1\times SU(2)^{\{p\}})}$.
Lemma~\ref{lm:sp1psm1} below implies that the map $\phi$
is non-trivial, so we must have $\phi(1) = w_1^3$,
since $w_1^3$ is the only non-trivial element in 
$H^3(\pt\sslash V_1\times SU(2)^{\{p\}})$.

Let $\psi$ be the ``division by $w_1^3$ map''
\[
\begin{gathered}
 	\psi
	\colon 
	H^\ast(\pt\sslash V_1\times SU(2)^{\{p\}})
	\longto
	H^{\ast-3}(\pt\sslash V_1\times SU(2)^{\{p\}}),
	\\
	\qquad\qquad\,
	w_1^k u_4^l 
	\longmapsto \begin{cases}
					w_1^{k-3} u_4^l & \text{if $k \geq 3$}
					\\
					0 & \text{otherwise.} 
				\end{cases}
\end{gathered}
\]
Then $\psi\phi = \id$, and we obtain the formula
$(\pi_1)_! = \psi\phi (\pi_1)_! = \psi (s_1^\ast + s_{-1}^\ast).$
Dualizing, we see that the composite map 
\[\xymatrix@C-0.8em{
	H_\ast(BV_1\times BSU(2))
	\ar[r]^-{\kappa_\ast}_-\isom
	&
	H_\ast(\pt \sslash V_1\times SU(2)^{\{p\}})
	\ar[r]^-{\pi_1^!}
	&
	H_{\ast+3} ( SU(2)\sslash V_1\times SU(2)^{\{p\}})
}\]
is given by 
\begin{equation}
\label{eq:pi1umk-formula}
	\pi_1^!\kappa_\ast (x^{[n]} \times b) 
	=
	((s_1)_\ast + (s_{-1})_\ast)\kappa_\ast (x^{[n+3]}\times b).
\end{equation}

The maps
\[
	\tilde s_1, \tilde s_{-1} 
	\colon 
	\pt \sslash V_1 \times SU(2)^{\{p\}}
	\longto
	SU(2)^{V_1} \sslash V_1 \times SU(2)^{\{p,q\}}
\]
induced by the assignments
$\pt \mapsto (I,I), (I,-I)\in SU(2)^{V_2}$
and the homomorphisms
\[
	V_1 \times SU(2)^{\{p\}} \longto V_1 \times SU(2)^{\{p,q\}},
	\quad
	(\varepsilon,g_p) 
	\longmapsto 
	(\varepsilon,g_p,g_p), (\varepsilon,g_p, \varepsilon g_p)
\]
give lifts of the respective maps $s_1$ and $s_{-1}$
through the map $\eta$. The composite 
\[
	\pi_2\circ \tilde s_{\pm 1}
	\colon
	\pt \sslash V_1 \times SU(2)^{\{p\}}
	\longto
	\pt \sslash SU(2)^{\{q\}}
\]
is the map induced by the projection 
$\pr \colon V_1 \times SU(2) \to SU(2)$
in the case of $\tilde s_1$
and the map induced by $\nu$ 
in the case of $\tilde s_{-1}$.
Using the formula \eqref{eq:pi1umk-formula}
for $\pi_1^!$, we conclude that 
the operation $\alpha^{SU(2)}_1$
is the sum of the maps
\[
	H_\ast (BV_1) \tensor H_{\ast} (BSU(2))
	\longto
	H_{\ast+3} (BSU(2))
\]
given by 
\[	
	x^{[n]} \tensor b \longmapsto \pr_\ast(x^{[n+3]} \times b)
	\quad\text{and}\quad
	x^{[n]} \tensor b \longmapsto \nu_\ast(x^{[n+3]} \times b).
\]
The claim follows.
\end{proof}

In the proof of Theorem~\ref{thm:su2calc}, we made use
of the following lemma.
\begin{lemma}
\label{lm:sp1psm1}
The map
\[
	s_1^\ast + s_{-1}^\ast 
	\colon 
	H^{\ast} ( SU(2)\sslash V_1\times SU(2)^{\{p\}})
	\longto
	H^{\ast} ( \pt\sslash V_1\times SU(2)^{\{p\}})
\]
in the proof of Theorem~\ref{thm:su2calc} is non-zero.
\end{lemma}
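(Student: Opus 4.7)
The plan is to apply Mayer--Vietoris to the open cover $\{U_\pm \sslash V_1 \times SU(2)^{\{p\}}\}$ of $SU(2) \sslash V_1 \times SU(2)^{\{p\}}$, where $U_\pm := SU(2) \smallsetminus \{\mp I\}$. Identifying $SU(2)$ with the unit quaternions $S^3 \subset \mathbb{H}$ in the standard way, inversion becomes quaternionic conjugation and $SU(2)^{\{p\}}$ acts by conjugation of quaternions. Stereographic projection from $\mp I$ then gives a $V_1 \times SU(2)^{\{p\}}$-equivariant homeomorphism $U_\pm \isom \Im(\mathbb{H}) = \mathbb{R}^3$ on which $V_1$ acts by negation and $SU(2)^{\{p\}}$ by $SO(3)$-rotation (via the standard double cover $SU(2) \to SO(3)$), carrying $\pm I$ to $0$. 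The linear contraction $\mathbb{R}^3 \to \{0\}$ is equivariant for both actions, yielding $U_\pm \sslash V_1 \times SU(2)^{\{p\}} \homot BV_1 \times BSU(2)$ in such a way that the sections $s_{\pm 1}$ correspond to the identity of $BV_1 \times BSU(2)$. Similarly, $U_- \cap U_+$ equivariantly deformation retracts (via $q \mapsto \Im(q)/|\Im(q)|$) onto the unit 2-sphere $S^2 \subset \Im(\mathbb{H})$, on which $V_1$ acts antipodally and $SU(2)^{\{p\}}$ by $SO(3)$-rotation, so $(U_- \cap U_+) \sslash V_1 \times SU(2)^{\{p\}} \homot S^2 \sslash V_1 \times SU(2)^{\{p\}}$.

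Under these identifications, both Mayer--Vietoris inclusions $j_\pm \colon (U_- \cap U_+) \sslash V_1 \times SU(2)^{\{p\}} \to U_\pm \sslash V_1 \times SU(2)^{\{p\}}$ are induced by the unique $V_1 \times SU(2)^{\{p\}}$-equivariant projection $U_- \cap U_+ \to \{\pm I\}$, so both agree up to homotopy with the fibration $p \colon S^2 \sslash V_1 \times SU(2)^{\{p\}} \to BV_1 \times BSU(2)$. In particular $j_+^* = j_-^*$, and the Mayer--Vietoris map becomes
\[
	H^n(BV_1 \times BSU(2))^{\oplus 2} \longto H^n(S^2 \sslash V_1 \times SU(2)^{\{p\}}),
	\quad
	(\alpha, \beta) \longmapsto p^*(\alpha + \beta).
\]
It therefore suffices to find $n$ for which $p^*$ vanishes on $H^n(BV_1 \times BSU(2))$: by exactness, $(i_-^*, i_+^*) \colon H^n(SU(2) \sslash V_1 \times SU(2)^{\{p\}}) \to H^n(BV_1 \times BSU(2))^{\oplus 2}$ is then surjective, and since $s_{\pm 1}^* = i_\pm^*$ under the identifications above, choosing $\alpha$ mapping to $(0, \omega)$ for any nonzero $\omega$ gives $s_1^* \alpha + s_{-1}^* \alpha = \omega \neq 0$.

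To produce such $n$, observe that $p$ is the unit sphere bundle of the 3-plane bundle $V$ over $BV_1 \times BSU(2)$ associated to the action of $V_1 \times SU(2)^{\{p\}}$ on $\Im(\mathbb{H})$. The restriction of $V$ to $BV_1$ is $3\gamma_1^\R$, with $w_3 = w_1(\gamma_1^\R)^3 \neq 0$; the restriction to $BSU(2)$ is a bundle with $w_3 = 0$ (as $H^3(BSU(2)) = 0$). Since $H^3(BV_1 \times BSU(2))$ is one-dimensional, spanned by $w_1(\gamma_1^\R)^3$ pulled back from $BV_1$, these restriction computations force $w_3(V) = w_1(\gamma_1^\R)^3$. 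By the standard identification of the $d_3$-differential in the Serre spectral sequence of a 2-sphere bundle with the mod-2 Euler class, $d_3$ sends the generator $y \in H^2(S^2)$ to $w_3(V) = w_1(\gamma_1^\R)^3$; a quick check of the Serre $E_2$-page in total degree 3 then yields $H^3(S^2 \sslash V_1 \times SU(2)^{\{p\}}) = 0$, so $p^*$ vanishes in degree 3. Hence the lemma follows. The main obstacle is computing $w_3(V)$ and verifying the agreement $j_+ \homot j_- \homot p$; both require tracking the full $V_1 \times SU(2)^{\{p\}}$-equivariant structure of the deformation retractions, which is where the central role of the quaternionic parametrization comes in.
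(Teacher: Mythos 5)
Your proof is correct, and the overall strategy matches the paper's: a Mayer--Vietoris decomposition of $SU(2)$ by $V_1\times SU(2)^{\{p\}}$-invariant neighborhoods of $\pm I$, an equivariant retraction of the overlap onto the equatorial $S^2$, and reduction of the lemma to the vanishing of $H^3$ of the Borel construction of that $S^2$. Where you genuinely diverge is in how you prove that vanishing. You view $S^2\sslash V_1\times SU(2)^{\{p\}}$ as the unit sphere bundle of the $3$-plane bundle $V=\gamma_1^\R\otimes\ad$ over $BV_1\times BSU(2)$, pin down $w_3(V)=w_1^3$ by restricting to the two factors (using that $H^3$ of the product is concentrated on the $BV_1$-factor), and then kill $E_2^{3,0}$ and $E_2^{1,2}$ in the Serre spectral sequence via the $d_3$-transgression, which is the mod-$2$ Euler class $w_3(V)$. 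The paper instead performs the Borel construction in two stages: the $V_1$-action on the equatorial sphere $S^2_0$ is the free antipodal action, so $S^2_0\sslash V_1\homot\R P^2$, and then the Serre spectral sequence of $\R P^2 \to (S^2_0\sslash V_1)\sslash SU(2)^{\{p\}}\to BSU(2)$ already has $E_2=0$ in total degree $3$, since $H^3(\R P^2)=0$ and $H^i(BSU(2))=0$ for $i=1,2,3$. The paper's route is shorter, requiring no differential computation; your route substitutes a Stiefel--Whitney class calculation and a $d_3$-argument, which is longer but makes the reason for the vanishing explicit (the nontrivial Euler class $w_1^3$ of $V$). Both arguments are sound.
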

\begin{proof}
Let us interpret $SU(2)$ as the group of unit quaternions,
and define 
\[
\begin{aligned}
	D^3_+ &= \{q\in SU(2)\,|\, \Re q > -1/2\}
	\\
	D^3_- &= \{q\in SU(2)\,|\, \Re q < 1/2\}.
\end{aligned}
\]
Then $D^3_+ \cup D^3_- = SU(2)$, the spaces $D^3_\pm$
are invariant under the $V_1\times SU(2)^{\{p\}}$-action
on $SU(2)$, and there exist
$V_1\times SU(2)^{\{p\}}$-equivariant
deformation retractions of
$D^3_+$ onto $\{1\} \subset SU(2)$,
$D^3_-$ onto $\{-1\} \subset SU(2)$
and
$D^3_+\cap D^3_-$ onto the subspace
\[
	S^2_0 =  \{q\in SU(2)\,|\, \Re q = 0\} \isom S^2
\]
of $SU(2)$.
From the Mayer--Vietoris sequence of 
the subspaces
$D^3_+\sslash V_1 \times SU(2)^{\{p\}}$ 
and 
$D^3_-\sslash V_1 \times SU(2)^{\{p\}}$ 
of 
$SU(2) \sslash V_1 \times SU(2)^{\{p\}}$ 
we therefore obtain an exact sequence
\begin{multline}
\label{eq:sp1sm2-seq}
	 H^3(SU(2)\sslash V_1 \times SU(2)^{\{p\}})
	 \xto{\ {(s_1^\ast, s_{-1}^\ast)}\ }
	 H^3(\pt\sslash V_1 \times SU(2)^{\{p\}})^{\oplus 2}
	 \\
	 \xto{\qquad\ \ }
	 H^3(S^2_0\sslash V_1 \times SU(2)^{\{p\}}).
\end{multline}
Observe that 
\[
	S^2_0\sslash V_1 \times SU(2)^{\{p\}}
	\isom
	(S^2_0\sslash V_1)\sslash SU(2)^{\{p\}}.
\]
The $V_1$-action on $S^2_0$ is given by the map $q\mapsto -q$,
so we have $S^2_0\sslash V_1 \homot \R P^2$.
From the Serre spectral sequence of the fibration
$(S^2_0\sslash V_1)\sslash SU(2)^{\{p\}} \to \pt \sslash SU(2)^{\{p\}}$
we now deduce that 
\[
	H^3 (S^2_0\sslash V_1 \times SU(2)^{\{p\}})
	\isom 
	H^3 ((S^2_0\sslash V_1)\sslash SU(2)^{\{p\}}) 
	=
	0
\]
Thus the map
${(s_1^\ast, s_{-1}^\ast)}$
in the exact sequence
\eqref{eq:sp1sm2-seq}
is an epimorphism. Consequently the map
\[
	s_1^\ast + s_{-1}^\ast 
	\colon 
	H^3 ( SU(2)\sslash V_1\times SU(2)^{\{p\}})
	\longto
	H^3 ( \pt\sslash V_1\times SU(2)^{\{p\}})
\]
is also an epimorphism. 
But the group $H^3 ( \pt\sslash V_1\times SU(2)^{\{p\}})$
is non-trivial: in the notation of 
the proof of Theorem~\ref{thm:su2calc},
it contains the non-trivial element $w_1^3$.
Thus  the claim follows.
\end{proof}

\section{Applications}
\label{sec:applications}

The computations made in the preceding sections show the 
existence of a large number of classes $a\in H_\ast(B\Sigma_n)$
for which the operation
\[
	\Phi^G(S_n/B\Sigma_n)^\sharp(a) 
	\colon 
	H_\ast(BG)
	\longto 
	H_{\ast + |a| + \dim(G)(n-1)}(BG)
\]
is non-trivial for some compact Lie group $G$. More generally,
the computations provide lots of examples of classes 
$a = a_1 \times \cdots \times a_r 
\in H_\ast(B\Sigma_{n_1} \times\cdots \times B\Sigma_{n_r})$
for which the operation
\begin{equation}
\label{eq:compop}
 	\Phi^G(S_{n_1,\ldots,n_r}/B\Sigma_{n_1,\ldots,n_r})^\sharp(a)
	= 
	\Phi^G(S_{n_1}/B\Sigma_{n_1})^\sharp(a_1)
	\circ \cdots \circ
	\Phi^G(S_{n_r}/B\Sigma_{n_r})^\sharp(a_r)
\end{equation}
is non-trivial for some compact Lie group $G$. Here 
$\Sigma_{n_1,\ldots,n_r} = \Sigma_{n_1} \times \cdots \times \Sigma_{n_r}$
and 
$S_{n_1,\ldots,n_r}
= 
S_{n_1} \ucirc \cdots \ucirc S_{n_r}$.
For example, we have the following result for $G=\Z/2$.
\begin{proposition}
\label{prop:example-of-non-triv-op}
Let $u_1,\ldots, u_k$ be positive integers no two of which
have a $1$ in common in their binary expansions, and let 
$f \colon \{1,\ldots,k\} \to \{1,\ldots,r\}$
be a surjective function. Let
\[
	n_i = 2^{|f^{-1}(i)|} 
	\quad
	\text{ and }
	\quad
	a_i 
	= 
	\bigcirc_{j \in f^{-1}(i)} E_{u_j} \in H_*(B\Sigma_{n_i})
\]
for $i=1,\ldots,r$, and let $a = a_1 \times \cdots \times a_r$.
Then the operation \eqref{eq:compop} is non-trivial for $G=\Z/2$. 
\end{proposition}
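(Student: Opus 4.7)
The plan is to identify each factor $\Phi^{\Z/2}(S_{n_i}/B\Sigma_{n_i})^\sharp(a_i)$ in the composite \eqref{eq:compop} as multiplication by a divided power in $H_*(B(\Z/2))$, and then to verify that the resulting iterated product is non-zero by evaluating on the unit. For the identification step I would fix $i$, set $k_i = |f^{-1}(i)|$ so that $n_i = 2^{k_i}$, and enumerate $f^{-1}(i) = \{j_1,\ldots,j_{k_i}\}$ in some order. By Proposition~\ref{prop:iotaformula} (together with the commutativity of the $\circ$-product), the class $c_i := x_1^{[u_{j_1}]} \cdots x_{k_i}^{[u_{j_{k_i}}]} \in H_*(BV_{k_i})$ maps under $\iota_*$ to $a_i$. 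The alternative description of $\alpha^{\Z/2}_{k_i}$ provided in Definition~\ref{def:alphagk} then yields
\[
	\Phi^{\Z/2}(S_{n_i}/B\Sigma_{n_i})^\sharp(a_i) = (\alpha^{\Z/2}_{k_i})^\sharp(c_i),
\]
and since the entries of the sub-tuple $(u_{j_1},\ldots,u_{j_{k_i}})$ are positive and pairwise share no $1$ in their binary expansions, Proposition~\ref{prop:z2-nontrivial-ops} identifies this operation with multiplication by $x^{[s_i]}$, where $s_i := \sum_{j\in f^{-1}(i)} u_j$.

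Composing the factors, the operation \eqref{eq:compop} becomes multiplication by the Pontryagin product $x^{[s_1]} \cdots x^{[s_r]}$ in $H_*(B(\Z/2))$. The fibres $f^{-1}(1),\ldots,f^{-1}(r)$ partition $\{1,\ldots,k\}$, and the binary expansions of $u_1,\ldots,u_k$ are pairwise disjoint by hypothesis, so the binary expansion of $s_i$ is the disjoint union of those of the $u_j$ with $j\in f^{-1}(i)$; consequently the binary expansions of $s_1,\ldots,s_r$ are themselves pairwise disjoint. Iterating the parity criterion for binomial coefficients recorded just before Proposition~\ref{prop:z2-nontrivial-ops}, the product collapses to
\[
	x^{[s_1]} \cdots x^{[s_r]} = x^{[s_1+\cdots+s_r]} = x^{[u_1+\cdots+u_k]},
\]
which is non-zero in $H_*(B(\Z/2))$. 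Evaluating the composite operation on $1 \in H_0(B(\Z/2))$ thus produces a non-zero class, establishing non-triviality.

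I foresee no substantive obstacle: the argument amounts to matching notation between Propositions~\ref{prop:iotaformula} and~\ref{prop:z2-nontrivial-ops} and then a single application of the parity criterion. The one observation to verify is that the binary-disjointness hypothesis on $(u_1,\ldots,u_k)$ is inherited both by each sub-tuple indexed by $f^{-1}(i)$, which makes each factor non-trivial, and by the resulting tuple $(s_1,\ldots,s_r)$ of fibre sums, which guarantees that the factors do not cancel when composed.
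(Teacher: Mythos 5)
Your proof is correct and takes essentially the same route as the paper's, which simply cites Definition~\ref{def:alphagk} and Propositions~\ref{prop:iotaformula} and~\ref{prop:z2-nontrivial-ops} to conclude at once that the operation is multiplication by $x^{[u_1+\cdots+u_k]}$. You have merely spelled out the bookkeeping the paper leaves implicit: identifying each $a_i$ as $\iota_*$ of a divided-power monomial, applying the alternative description of $\alpha^{\Z/2}_{k_i}$, invoking Proposition~\ref{prop:z2-nontrivial-ops} per factor, and checking that the fibre sums $s_1,\ldots,s_r$ remain pairwise binary-disjoint so the divided powers multiply without cancellation.
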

\begin{proof}
Let $x$ be the generator of $H_1(B\Z/2)$.
Recalling Definition~\ref{def:alphagk}
and using Propositions~\ref{prop:iotaformula}
and \ref{prop:z2-nontrivial-ops},
we see that the operation
\eqref{eq:compop}
is simply the map $H_\ast(B\Z/2) \to H_{\ast+u_1+\cdots +u_k}(B\Z/2)$
given by multiplication by $x^{[u_1+\cdots+u_k]}$.
\end{proof}
The purpose of this section is to show 
that classes $a$ for which 
the operation \eqref{eq:compop}
is non-trivial for a suitable 
compact Lie group $G$
give rise to non-trivial classes in 
the homology of certain highly interesting groups:
first, the ordinary
homology of the holomorph $\Hol(F_N) = F_N\rtimes \Aut(F_N)$;
second, the twisted homology $H_\ast(B\Aut(F_N);\,\tilde \F_2^N)$
of $\Aut(F_N)$
where $\tilde \F_2^N$ denotes $\F_2^N$ equipped with the tautological
$\Aut(F_N)$-action;
and third, the ordinary homology of the affine groups 
$\Aff_N(\Z) = \Z^N\rtimes GL_N(\Z)$
and 
$\Aff_N(\F_2) = \F_2^N\rtimes GL_N(\F_2)$.
Here and for the rest of the section $N = \sum_{i=1}^r (n_i -1)$.
The families of non-trivial homology classes we construct
are exhibited in 
Corollaries~\ref{cor:hol-app},
\ref{cor:non-stab},
\ref{cor:aut-app},
\ref{cor:hol-fam2},
\ref{cor:aff-app},
and
\ref{cor:aff-app2} below.

\subsection{Stability results}
\label{subsec:stability-results}

In preparation for the discussion of the applications of our 
computations to the homology of $\Aut(F_n)$, $\Hol(F_n)$
and $\Aff_n(R) = R^n \rtimes GL_n(R)$
for $R=\Z$ and $\F_2$,
we will now recall stability results concerning the 
homology of these groups.
We will start with the result relevant to the groups $\Aut(F_n)$
and $\Hol(F_n)$, which we will consider in the context of 
a larger family of groups $A(\Gamma,L)$ defined below.

 Let $\Gamma$ be a connected 
finite graph, by which we mean a connected finite CW complex
of dimension $\leq 1$, and let $u\colon L \to \Gamma$
be an injective map from a finite set $L$. We think of $u$ 
as providing labels for a finite set of distinguished points
on $\Gamma$, namely the points in the image of $u$. Omitting
$u$ from notation, we denote by $H(\Gamma,L)$ the 
space of self homotopy equivalences of $\Gamma$ fixing the 
distinguished points, and set $A(\Gamma,L) = \pi_0 H(\Gamma,L)$.
Composition of maps makes $A(\Gamma,L)$ into a group.
The group $\Aut(F_n)$ is realized in this way as 
$A(\bigvee^n S^1,\pt)$ where
the basepoint of $\bigvee^n S^1$ is the 
sole distinguished point;
explicitly, an isomorphim
\[
	\Aut(F_n) \xto{\ \isom\ } A(\textstyle\bigvee^n S^1,\pt)
\]
is given by the map sending an automorphism 
$\theta$ to the component of the map
$\bigvee^n S^1 \to \bigvee^n S^1$ which on the $i$-th
wedge summand is given by $\theta(x_i)|_{l_1,\ldots,l_n}$.
Here $x_1,\ldots, x_n$ denotes the basis of $F_n$, $l_j$
denotes the inclusion of the $j$-th  wedge summand into $\bigvee^n S^1$,
and $v|_{l_1,\ldots,l_n}$ for $v\in F_n$ denotes the result of 
substituting $l_j$ for $x_j$ in $v$ for all $1\leq j\leq n$.
Similarly, the group $\Hol(F_n)$ is realized as $A(T_n,\{p,q\})$
where $T_n$ is the graph depicted below.
\begin{equation}
\label{pic:tndef}
T_n = 
\left(
\begin{tikzpicture}[
		baseline=15,
		scale=0.03,
		->-/.style={	decoration={	markings,
		  	mark=at position 0.5 with {\arrow{>}}
			},postaction={decorate}},
		->--/.style={	decoration={	markings,
		  	mark=at position 0.46 with {\arrow{>}}
			},postaction={decorate}},
		-->-/.style={	decoration={	markings,
		  	mark=at position 0.58 with {\arrow{>}}
			},postaction={decorate}}
	]
	\clip (-50, -20) rectangle (130, 70);
	\path[ARC, fill=black] (0,0) circle (3);
	\path[ARC, fill=black] (120,0) circle (3);
	\path[ARC,->-] (0,0) -- (120,0) node [midway, above] {$c$};
	\node [below] at (0,0) {$p$};
	\node [below] at (120,0) {$q$}; 
	\path[ARC,->--] (0,0) .. controls (-20,80) and (-80,45) 
						.. (0,0) node [midway, above] {$l_1$};
	\node at (1.5,35) {$\cdots$};
	\path[ARC,-->-] (0,0) .. controls (80,45) and (20,80)
						.. (0,0)  node [midway, above] {$l_n$};
\end{tikzpicture}
\right)
\end{equation}
We will often regard $T_n$ as an h-graph cobordism
$T_n\colon \pt \hto \pt$, where the incoming point is 
given by $p$ and the outgoing point by $q$.
An isomorphism 
\begin{equation}
\label{iso:psi}
	\psi \colon\Hol(F_n) \xto{\ \isom\ } A(T_n,\{p,q\}) = \pi_0\hAut(T_n)
\end{equation}
is given by the map sending an element $(w,\theta)\in F_n\rtimes \Aut(F_n)$
to the component of the map $T_n \to T_n$ whose restriction 
to the edge $l_i$ in \eqref{pic:tndef} is the path
$\theta(x_i)|_{l_1,\ldots,l_n}$
and whose restriction to the edge $c$ in \eqref{pic:tndef} 
is the path  $c \cdot w|_{l_1,\ldots,l_n}$.

By the \emph{rank} of $\Gamma$ we mean the first Betti number of $\Gamma$.
If $\Gamma'$ is another connected finite graph equipped with 
and injection $L \to \Gamma'$ and $\rank(\Gamma) = \rank(\Gamma')$,
then $\Gamma$ and $\Gamma'$ are homotopy equivalent under $L$.
Conjugation by a fixed homotopy equivalence then yields an isomorphism
$A(\Gamma,L) \isom A(\Gamma',L)$ which up to conjugacy is independent
of the homotopy equivalence chosen. In particular, we obtain a
well-defined isomorphism $H_\ast BA(\Gamma,L) \isom H_\ast BA(\Gamma',L)$
which is independent of any choice.

We will consider the following
three types of maps between the groups $A(\Gamma,L)$.
\begin{enumerate}
\label{list:map-types}
\renewcommand{\theenumi}{\Alph{enumi}}
\item \label{type:forget} 
	Given $l\in L$, we have a map 
	$A(\Gamma,L) \to A(\Gamma,L{\setminus}\{l\})$
	which forgets the distinguished point labeled by $l$.
	If $l$ labels a leaf, we will also consider a variant (A')
	which follows the above map with the isomorphism
	given by collapsing the edge corresponding to $l$.
\item \label{type:whisker}  
	Given $l \in L$, we have a map
	$A(\Gamma,L) \to A(\Gamma \cup_{\{l\}} I, L\sqcup \pt)$
	which attaches a whisker to $\Gamma$ at the point labeled
	by $l$. The free endpoint of the whisker provides 
	a new distinguished point.
\item \label{type:join} 
	Given  distinct elements $l_1,l_2 \in L$, we have a map
	$A(\Gamma,L) \to A(\Gamma/{\sim}, L/{\sim})$
	which identifies the points of $\Gamma$ labeled by 
	$l_1$ and $l_2$ and identifies these two labels.
\end{enumerate}
Maps of the aforementioned types
are known to give isomorphisms on homology
in a range of degrees, the \emph{stable range}, which
depends on the rank of $\Gamma$.
The formulation of the stability result given below is
close to the one given in \cite[Theorem~1.4]{Galatius}.
\begin{theorem}[\cite{HV-stab,HV-stab-erratum}]
\label{thm:stability}
Let $A$ be an abelian group, and 
let $n$ be the rank of $\Gamma$. Then maps of type 
\eqref{type:whisker} and \eqref{type:join}
induce isomorphisms upon application of $H_k(-;\,A)$ 
whenever $n > 2k+1$,
while maps of type \eqref{type:forget}
induce isomorphisms whenever $n > 2k+1$
if $|L| > 1$ and whenever $n > 2k+3$ if $|L|=1$. \qed
\end{theorem}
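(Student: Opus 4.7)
The plan is to reduce the theorem to the stability results of Hatcher and Vogtmann for automorphism groups of free groups with basepoints. The starting observation is the one made just before the theorem statement: $A(\Gamma,L)$ depends up to canonical isomorphism on $H_k(B{-};\,A)$ only on $n = \rank(\Gamma)$ and $s = |L|$. I would therefore fix a convenient model: let $R_{n,s}$ denote the graph obtained from $\bigvee^n S^1$ by attaching $s$ external whiskers at the wedge point, with distinguished points the free endpoints of these whiskers when $s \geq 1$, or the wedge point itself when $s = 1$. Write $A_{n,s} = A(R_{n,s},L)$. These are, up to natural isomorphism, the groups treated by Hatcher--Vogtmann in [HV-stab], with $A_{n,1} \cong \Aut(F_n)$ and $A_{n,0} \cong \mathrm{Out}(F_n)$.

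Next I would match the three map types with Hatcher--Vogtmann's three standard stabilization maps. The whisker-attaching map of type (B) is literally their basepoint-adding stabilization $A_{n,s} \to A_{n,s+1}$. The basepoint-identifying map of type (C) creates an extra loop while merging two distinguished points, giving a map $A_{n,s} \to A_{n+1,s-1}$; pre-/post-composing with appropriate type (A) and type (B) maps shows this agrees, on the level of the stabilization diagram of [HV-stab], with their rank-raising stabilization $A_{n,s} \to A_{n+1,s}$. The basepoint-forgetting map of type (A) is their own basepoint-forgetting map. The existence and compatibility of these identifications must be checked up to conjugation, but conjugation acts trivially on group homology, so the induced maps on $H_*(B{-};\,A)$ are the same.

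Finally I would read off the ranges. Quoting [HV-stab] together with the corrections in [HV-stab-erratum] --- in the sharpened form summarized as [Galatius, Theorem 1.4] --- one obtains the stability range $n > 2k+1$ for the type (B) and type (C) maps, the same range for the type (A) map when $|L| \geq 2$, and the weaker range $n > 2k+3$ for the type (A) map when $|L| = 1$. The weaker range in the last case reflects the extra slack in passing from $\Aut(F_n) \cong A_{n,1}$ to $\mathrm{Out}(F_n) \cong A_{n,0}$, which is the one stabilization where an entire inner-automorphism subgroup is being killed. The main obstacle is not mathematical but bookkeeping: one must verify carefully that the three graph-theoretic map types described here are, after the identification $A(\Gamma,L) \cong A_{n,s}$, exactly the stabilization maps whose homological behaviour is analyzed in [HV-stab, HV-stab-erratum], including the distinction of the two cases for type (A).
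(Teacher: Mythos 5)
The paper does not prove this theorem: the statement is quoted directly from \cite{HV-stab,HV-stab-erratum} (with the preceding sentence noting that the formulation is modelled on \cite[Theorem~1.4]{Galatius}), and no proof is supplied. Your sketch is a reasonable reconstruction of the bookkeeping that such a citation hides — fix the model graphs $R_{n,s}$, match the three map types (A), (B), (C) against Hatcher--Vogtmann's basepoint-forgetting, basepoint-adding and rank-raising stabilizations up to conjugacy, and transport the stability ranges along the canonical isomorphisms — and it is consistent with how the paper uses the result; since the paper offers no argument, there is nothing further to compare against.
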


In particular, the preceding theorem implies that the
following maps induce an isomorphism on $k$-th homology 
groups if $n>2k+1$:
\begin{enumerate}
\item the inclusion $\Aut(F_n) \incl \Aut(F_{n+1})$
	sending an automorphism $\varphi$ of $F_n$ to the 
	automorphism of $F_{n+1}$ given by $\varphi$ on the
	first $n$ basis elements and fixing the last basis element;
\item the inclusion $\Hol(F_n) \incl \Hol(F_{n+1})$
	induced by the inclusions $F_n \incl F_{n+1}$
	and $\Aut(F_n) \incl \Aut(F_{n+1})$;
\item the inclusion $\Aut(F_n) \incl \Hol(F_n)$; and 
\item the quotient homomorphism $\Hol(F_n) \to \Aut(F_n)$.
\end{enumerate}
Indeed, on the level of graphs, the first map
is realized by 
the map $A(\bigvee^n S^1,\pt)\to A(\bigvee^{n+1} S^1,\pt)$
given by attaching a new copy of $S^1$, which amounts to 
the composite of a map of type \eqref{type:whisker}
and a map of type \eqref{type:join}.
The second map is realized similarly as a composite of maps 
of type \eqref{type:whisker}
and \eqref{type:join},
while the third map is 
realized by a map 
of type \eqref{type:whisker} and the 
fourth map
by a map of type (\ref{type:forget}').
We call the maps 
$H_\ast B\Aut(F_n) \to H_\ast B\Aut(F_{n+1})$
and 
$H_\ast B\Hol(F_n) \to H_\ast B\Hol(F_{n+1})$
induced by the  inclusions \emph{stabilization maps}.

In the stable range $n > 2k+1$, the homology of $\Aut(F_n)$ 
and $\Hol(F_n)$ is completely understood:
\begin{theorem}[{\cite[Theorem~1.1]{Galatius}}]
\label{thm:galatius}
For any abelian group $A$,
the inclusion of $\Sigma_n$ into $\Aut(F_n)$ as the 
automorphisms 
permuting the basis elements of $F_n$ induces an isomorphism
upon application of $H_k(-;\,A)$ for $n>2k+1$. \qed
\end{theorem}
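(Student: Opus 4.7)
The plan is to reduce to the stable statement and then identify both sides with a common infinite loop space. First, by Theorem~\ref{thm:stability} together with Nakaoka's classical homology stability for symmetric groups, both $H_k(B\Aut(F_n);A)$ and $H_k(B\Sigma_n;A)$ stabilize as $n \to \infty$, and the inclusions $\Sigma_n \hookrightarrow \Aut(F_n)$ are compatible with the respective stabilization maps. Thus it suffices to prove that the induced map $B\Sigma_\infty \to B\Aut(F_\infty)$ is a homology isomorphism with coefficients in any abelian group $A$, and by a standard argument it is enough to prove this with $A = \Z$, since a homology equivalence over $\Z$ is automatically one over any coefficient ring.

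Next, equip $\bigsqcup_{n\geq 0} B\Sigma_n$ and $\bigsqcup_{n\geq 0} B\Aut(F_n)$ with the $E_\infty$-structures coming from disjoint union of finite sets and from wedge sum / free-product of graphs, respectively. The inclusions $\Sigma_n \to \Aut(F_n)$ assemble into a map of $E_\infty$-spaces. By the group completion theorem, passing to $\Omega B(-)$ yields, on the symmetric-group side, the Barratt--Priddy--Quillen identification
\[
	\Omega B \Bigl( \bigsqcup_{n\geq 0} B\Sigma_n \Bigr) \homot QS^0,
\]
so $B\Sigma_\infty^+$ is the basepoint component of $QS^0$. On the other side, it suffices to identify $\Omega B\bigl(\bigsqcup B\Aut(F_n)\bigr)$ with $QS^0$ in such a way that the comparison map on basepoint components is the standard one.

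For this identification one introduces a category $\mathcal{G}$ of graph cobordisms: objects are finite sets, and morphisms from $X$ to $Y$ are (isotopy classes of) finite graphs $\Gamma$ together with an embedding $X \sqcup Y \hookrightarrow \Gamma$ exhibiting $X$ and $Y$ as the free/boundary vertices, with composition by gluing. The automorphism groups of connected positive-rank morphisms recover $\Aut(F_n)$ (via the description $\Aut(F_n) \isom A(\bigvee^n S^1,\pt)$ given earlier), so after inverting stabilization one obtains $B\Aut(F_\infty)^+$ in terms of $B\mathcal{G}$. The key step is a scanning / parametrized surgery argument, analogous to Galatius--Madsen--Tillmann--Weiss, which produces a homology equivalence between $\Omega B\mathcal{G}$ and a suitable Thom spectrum; a direct calculation (using the fact that graphs are $1$-dimensional and that their tangential structure is essentially trivial) identifies that Thom spectrum with the sphere spectrum, yielding $\Omega B\mathcal{G} \homot QS^0$.

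The hardest part is precisely this scanning argument: one must construct a semi-simplicial resolution of $\mathcal{G}$ by configurations of ``splitting points'' on graphs, check that it is highly connected, and then recognize the resulting space of graphs-with-marked-points as a configuration space model for $QS^0$. Once this identification is in place, a straightforward comparison shows that the composition $B\Sigma_\infty \to B\Aut(F_\infty) \to QS^0$ agrees (up to homotopy of $E_\infty$-maps) with the Barratt--Priddy--Quillen map, which is a homology equivalence after plus construction; combined with the stability results this proves the theorem.
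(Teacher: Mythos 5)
The paper does not actually prove this result; it is imported by citation from \cite{Galatius} (Theorem~1.1), so there is no proof in the paper to compare against. Your outline is a reasonable high-level summary of Galatius's argument: reduce to the stable range via Theorem~\ref{thm:stability} and Nakaoka's stability for $\Sigma_n$; reduce to integral coefficients; identify $B\Sigma_\infty^+$ with a component of $QS^0$ via Barratt--Priddy--Quillen; and then identify the $\Aut(F_\infty)$-side with $QS^0$ through a cobordism category of graphs and a scanning/parametrized-surgery argument. You correctly observe that essentially all of the technical content lives in that last step.

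One inaccuracy worth flagging: you describe the graph cobordism category as having morphisms given by \emph{isotopy classes} of graphs with embedded boundary. Galatius does not pass to isotopy classes --- doing so would replace the relevant moduli space of graphs by a discrete set and destroy exactly the topology on which the scanning argument acts. He instead works with a sheaf of graphs (graphs embedded in $\R^\infty$, topologized appropriately) and a genuinely topological cobordism category, and the key connectivity/delooping statements are about these spaces. Similarly, the comparison with $QS^0$ is not a ``direct calculation of a Thom spectrum'' in quite the GMTW sense: the ``tangential data'' for graphs degenerates, and Galatius has to argue directly that the relevant moduli space is a $K(\Z,0)$-like object after scanning, which is where the bulk of the combinatorial/topological work sits. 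As a roadmap your proposal is faithful to the structure of the proof, but it leaves the entire load-bearing middle of the argument as a black box, so it should not be read as a proof so much as a summary of the reference that the paper is citing.
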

\noindent
On the other hand, outside the above stable range, 
the homology of $\Aut(F_n)$ and $\Hol(F_n)$ 
remains poorly understood. 
For an up-to-date summary of what is known about the 
homology of $\Aut(F_n)$ with rational coefficients,
we refer the reader to the introduction 
of \cite{CHKV}. Some information about torsion 
in the homology of $\Aut(F_n)$ is  
available via computations of the $p$-torsion
in the integral Farrell cohomology of $\Aut(F_n)$ for 
$n\in\{p-1,p,2(p-1)\}$ for primes $p\geq 3$ 
and for $n\in\{p+1,p+2\}$ for primes $p \geq 5$.
See \cite{GloverMislinVoon,Chen,JensenAut}.
The $k$-th homology groups of the holomorphs $\Hol(F_n)$
have been computed by 
Jensen \cite{JensenHol} 
with mod $p$ coefficients for $p$ an odd prime for $k \leq 2$
and with rational coefficients for $k \leq 5$.
Jensen also supplies a computation of 
the twisted homology groups
$H_k (B\Aut(F_n);\, \tilde R^n)$
for $R = \Q$ and $k\leq 4$,
while Satoh \cite{Satoh1,Satoh2}
has computed these groups when $R = \Z$, $k=1$ and $n\geq 2$
and when $R = \Z[1/2]$, $k=2$ and $n\geq 6$.
Here $\tilde R^n$ for a ring $R$ 
denotes  $R^n$ with the action of $\Aut(F_n)$
given by the homomorphism $\Aut(F_n) \to GL_n(\Z) \to GL_n(R)$.

We now turn to the stability result relevant to the 
homology of affine groups.
The following result can be read off from \cite[Theorem~4.8]{linStability};
in the notation of \cite{linStability}, we have $\Aff_n(R) = G^{PQ}_{n+1}$
where $P=\emptyset$ and $Q = \{1\}$.
\begin{theorem}
\label{thm:glstab}
Let $A$ be an abelian group, and let $R$ be a principal ideal
domain. Then the inclusions 
\[
	GL_n(R) \longincl GL_{n+1}(R)
	\quad\text{and}\quad
	GL_n(R) \longincl \Aff_n(R)
\]
and the inclusion
\[
	\Aff_n(R) \longincl \Aff_{n+1}(R)
\]
induced by the inclusions $R^n \incl R^{n+1}$ and 
$GL_n(R) \incl GL_{n+1}(R)$
induce isomorphisms upon application of $H_k(-;\,A)$
whenever $n \geq 2k+1$. \qed
\end{theorem}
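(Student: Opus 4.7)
The plan is to derive the statement from the general homological stability theorem \cite[Theorem~4.8]{linStability} together with the identification $\Aff_n(R) = G^{PQ}_{n+1}$ with $P = \emptyset$, $Q = \{1\}$ noted by the author. First I would make this identification concrete by realizing $R^n \rtimes GL_n(R)$ inside $GL_{n+1}(R)$ as the subgroup of matrices of block form $\left(\begin{smallmatrix} A & v \\ 0 & 1\end{smallmatrix}\right)$ with $A \in GL_n(R)$ and $v \in R^n$, and verifying that this matches Lin's construction, in which $G^{PQ}_{n+1}$ is defined as the subgroup of $GL_{n+1}(R)$ satisfying certain coordinate constraints indexed by $P$ and $Q$.

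Next I would match each of the three inclusions in the theorem with a specific comparison in Lin's framework: the classical $GL_n(R) \incl GL_{n+1}(R)$ is the stabilization map for $P = Q = \emptyset$; the inclusion $GL_n(R) \incl \Aff_n(R)$ corresponds to restricting $v$ to zero; and $\Aff_n(R) \incl \Aff_{n+1}(R)$ is the rank-raising map within the family with $P = \emptyset$ and $Q = \{1\}$. For each of the three, \cite[Theorem~4.8]{linStability} should yield an isomorphism on $H_k(-;A)$ in a range depending on $n$, and the task is to check that this range is uniformly $n \geq 2k+1$.

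The actual proof of Lin's theorem follows the Charney--Maazen--van der Kallen paradigm: one lets $G_n := G^{PQ}_{n+1}$ act on a semi-simplicial complex of split unimodular sequences in $R^{n+1}$, uses the PID hypothesis on $R$ to show that this complex has connectivity growing linearly in $n$, and then runs a standard spectral sequence comparison argument to convert the connectivity estimate into homological stability in the range $n \geq 2k+1$. The main obstacle will be bookkeeping: making sure that the conventions of \cite{linStability} match those in our setting, so that the stability range produced for each of the three specific comparisons above is uniformly $n \geq 2k+1$ rather than off by one or two due to differing indexing conventions or the distinction between an \emph{isomorphism} and a \emph{surjection} range. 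Once this dictionary is settled, the theorem follows as a direct citation.
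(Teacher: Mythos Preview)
Your proposal is correct and matches the paper's approach exactly: the paper does not prove this theorem but simply cites \cite[Theorem~4.8]{linStability} together with the identification $\Aff_n(R) = G^{PQ}_{n+1}$ for $P=\emptyset$, $Q=\{1\}$, which is precisely the plan you outline. The additional detail you give (making the block-matrix realization explicit, matching the three inclusions to Lin's comparisons, and sketching the Charney--Maazen--van der Kallen argument behind Lin's result) goes beyond what the paper records but is consistent with it.
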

\noindent
In particular, as $n$ tends to infinity, 
the homology of $\Aff_n(R)$ and 
the homology of $GL_n(R)$ stabilize to a common
value, namely the homology of the infinite general linear
group $GL(R)=\colim_n GL_n(R)$.
Again, we call the maps induced by the 
inclusions
$\Aff_n(R) \incl \Aff_{n+1}(R)$
and
$GL_n(R) \incl GL_{n+1}(R)$
on homology \emph{stabilization maps}.
The following two theorems describe the 
stable homology in the cases 
$A= \F_2$ and $R=\Z$ or $\F_2$.
\begin{theorem}[{\cite{MitchellGL}, \cite[Theorem~1]{AMNY}}]
The homology 
$H_\ast(BGL(\Z);\,\F_2)$
is isomorphic to $H_\ast(BO;\,\F_2) \tensor H_\ast(SU;\,\F_2)$
as a Hopf algebra.\qed
\end{theorem}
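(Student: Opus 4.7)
The plan is to apply Quillen's plus construction, giving $H_\ast(BGL(\Z);\,\F_2) \isom H_\ast(BGL(\Z)^+;\,\F_2)$ while replacing $BGL(\Z)$ by a connected infinite loop space whose positive-degree homotopy groups are the algebraic $K$-theory of $\Z$. Both sides of the desired isomorphism then carry natural Hopf algebra structures coming from $H$-space structures, and it will suffice to produce a map realising the isomorphism.

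The two tensor factors should arise from the natural ring maps $\Z \to \R$ and $\Z \to \C$. The inclusion $GL(\Z) \incl GL(\R)$, combined with the Gram--Schmidt equivalence $BGL(\R) \homot BO$, produces an infinite loop map $\varphi_O \colon BGL(\Z)^+ \to BO$. Similarly, $GL(\Z) \incl GL(\C)$ combined with $BGL(\C) \homot BU$ gives a map $BGL(\Z)^+ \to BU$; restricting to the fibre of the determinant and delooping produces a companion piece of data corresponding to the $SU$-factor. The surjectivity of $\varphi_O$ on mod-2 cohomology can be verified by restricting to the signed permutation subgroups $V_n \rtimes \Sigma_n \subset GL_n(\Z)$, which by the Barratt--Priddy--Quillen theorem detect all Stiefel--Whitney classes in $H^\ast(BO;\,\F_2)$.

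The main step is to show that the resulting map $BGL(\Z)^+ \to BO \times SU$ induces an isomorphism on mod-2 homology. The key input here is the 2-local Lichtenbaum--Quillen conjecture (now a theorem by work of Voevodsky, Rognes--Weibel and others), which identifies the 2-completion of $K(\Z)$ with the relevant topological $K$-theory spectra and thereby rules out any extra mod-2 homology beyond what is captured by the two factors. The hardest part is to verify that the splitting one obtains from this identification is compatible with the Hopf algebra structures on both sides; however, once the splitting is established at the level of spaces (or spectra), the Hopf algebra compatibility is automatic, since all the maps in sight are $H$-maps between $H$-spaces, and a comparison of Poincar\'e series completes the argument.
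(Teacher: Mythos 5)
The paper states this theorem as a citation to \cite{MitchellGL} and \cite{AMNY} and supplies no proof, so there is no internal argument to compare against; I will evaluate your sketch on its own terms. Your opening moves are sound and do match the strategy of the cited references: pass to $BGL(\Z)^+$, use $\Z\hookrightarrow\R$ and Gram--Schmidt to obtain an infinite loop map to $BO$, detect surjectivity on mod-2 cohomology by restriction to the signed permutation subgroups $(\Z/2)^n\rtimes\Sigma_n\subset GL_n(\Z)$, and take the 2-primary Quillen--Lichtenbaum theorem as the global input.

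The genuine gap is in your account of the $SU$-factor. The inclusion $GL_n(\Z)\hookrightarrow GL_n(\C)\homot U(n)$ factors up to homotopy through $GL_n(\Z)\hookrightarrow GL_n(\R)\homot O(n)\hookrightarrow U(n)$, since $\Z\to\C$ factors through $\R$ and Gram--Schmidt is compatible with complexification; so your map $BGL(\Z)^+\to BU$ carries no information beyond the map to $BO$. Moreover the fibre of $\det\colon BU\to K(\Z,2)$ is $BSU$, not $SU$, and a map \emph{out of} $BGL(\Z)^+$ admits no natural ``delooping'' landing in $SU$. In fact the $SU$-factor does not arise as the target of any map out of $BGL(\Z)^+$; it is the \emph{quotient} Hopf algebra $\F_2\otimes_{H_\ast(BO)}H_\ast(BGL(\Z))$, equivalently the homology of the fibre of the 2-completed map to $BO^\wedge_2$, and identifying this is exactly where the Rognes--Weibel/Voevodsky description of $K(\Z)^\wedge_2$ does its work. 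Your closing remark that Hopf algebra compatibility is ``automatic once the splitting is established at the level of spaces'' presupposes a splitting $\Z\times BGL(\Z)^+{}^\wedge_2\simeq(BO\times SU)^\wedge_2$ that does not exist: already on $\pi_3$ one side gives $K_3(\Z)^\wedge_2\cong\Z/16$ while the other gives $\pi_3(SU)^\wedge_2\cong\Z_2$. The isomorphism in the theorem is purely a Hopf-algebra isomorphism, and must be extracted via Milnor--Moore and Borel structure arguments from the surjection onto $H_\ast(BO;\,\F_2)$ together with the known 2-adic $K$-groups of $\Z$, not read off from an equivalence of spaces.
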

\begin{theorem}[{\cite[section~11, Corollary~2]{QuillenFiniteFields}}]
\label{thm:quillen-calc}
$H_\ast(BGL(\F_2);\,\F_2) = \F_2$. \qed
\end{theorem}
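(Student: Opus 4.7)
My plan is to deduce the statement from Quillen's determination of the algebraic $K$-theory of $\F_2$ in \cite{QuillenFiniteFields}, combined with a standard mod-$\calC$ Hurewicz argument and the acyclicity of the Quillen plus construction.

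First, I recall that by Quillen's main computation, $K_{2i-1}(\F_2) \isom \Z/(2^i - 1)$ for $i \geq 1$ and $K_{2i}(\F_2) = 0$ for $i \geq 1$. In particular, $K_1(\F_2) = 0$, so $\pi_1(BGL(\F_2)^+) = GL(\F_2)^{\mathrm{ab}} = 0$ and $BGL(\F_2)^+$ is simply connected. Moreover, every positive-degree homotopy group $\pi_j(BGL(\F_2)^+) = K_j(\F_2)$ is a finite abelian group of odd order.

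Next, let $\calC$ denote the Serre class of finite abelian groups of odd order. Since $BGL(\F_2)^+$ is simply connected with all positive-degree homotopy groups in $\calC$, the mod-$\calC$ Hurewicz theorem, applied inductively up the Postnikov tower via the Serre spectral sequence, gives $\widetilde H_j(BGL(\F_2)^+;\Z) \in \calC$ for all $j > 0$. The universal coefficient theorem then yields $H_j(BGL(\F_2)^+;\F_2) = 0$ for $j > 0$, because both $\widetilde H_j(BGL(\F_2)^+;\Z) \tensor \F_2$ and $\mathrm{Tor}(\widetilde H_{j-1}(BGL(\F_2)^+;\Z),\F_2)$ vanish.

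Finally, since the plus construction $BGL(\F_2) \to BGL(\F_2)^+$ is an acyclic map, it induces isomorphisms on homology with any coefficients; combined with the previous step this yields the desired conclusion $H_\ast(BGL(\F_2);\F_2) = \F_2$. The main obstacle is absorbed entirely by the appeal to \cite{QuillenFiniteFields}, namely Quillen's computation of $K_\ast(\F_2)$, while the remaining steps are standard Serre-class machinery.
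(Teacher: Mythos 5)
The paper simply cites Quillen's result (section~11, Corollary~2 of \cite{QuillenFiniteFields}) and offers no proof, so there is no argument in the paper to compare against; what you have done is supply an independent derivation. Your argument is correct: $K_1(\F_2)=\F_2^\times=0$ forces $BGL(\F_2)^+$ to be simply connected; Quillen's computation $K_{2i-1}(\F_2)\cong\Z/(2^i-1)$, $K_{2i}(\F_2)=0$ for $i\geq 1$ places every positive-degree homotopy group of $BGL(\F_2)^+$ in the Serre class $\calC$ of finite abelian groups of odd order (which does satisfy Serre's axioms for the mod-$\calC$ Hurewicz theorem); the Hurewicz and universal coefficient steps then give $H_j(BGL(\F_2)^+;\F_2)=0$ for $j>0$, and acyclicity of the plus construction transfers this to $BGL(\F_2)$. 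This is one standard way of extracting the corollary from Quillen's $K$-theory computation. Quillen's own route in section~11 is organized somewhat differently --- it proceeds through the homology of $GL(\bar\F_p)$ and the action of Frobenius, with the statement for $GL(\F_q)$ as a corollary --- but both ultimately rest on the same input, namely the identification of $BGL(\F_q)^+$ as the homotopy fiber of $\psi^q-1$ on $BU$, so the approaches are closely related in substance.
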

Again, 
in contrast with the complete  information 
we have concerning the stable values,
outside the stable range the mod 2 homology of $\Aff_n(\Z)$
and $\Aff_n(\F_2)$ remains poorly understood.
Indeed, the same is true of the closely related groups 
$GL_n(\Z)$ and $GL_n(\F_2)$.
Quillen's computations \cite{QuillenFiniteFields} 
give complete information on the cohomology
of $GL_n(k)$ with mod $p$ coefficients when 
$k$ is a finite field of characteristic prime to $p$,
but yield little information when $k=\F_2$ and $p=2$.
For computations of the integral cohomology of $GL_n(\Z)$
and $SL_n(\Z)$ for small $n$ and modulo small primes, see 
\cite[Exercise II.7.3]{Brown},
\cite{SouleSL3},
\cite{LeeSzczarba}, and
\cite{EVGS}.
The mod p Farrell cohomology of $GL_n(\Z)$ for an odd prime $p$
has been computed for $p-1 \leq n \leq 2p-3$ 
by Ash \cite{AshFarrellCohom} 
and studied for $n=2p-2$ by Manjrekar 
\cite{Manjrekar}, who has also 
computed the mod 3 cohomology of $GL_4(\Z)$
in dimensions $>3$ \cite{ManjrekarGL4}.
For an in-depth discussion of the homology of 
linear groups in general, we refer the reader to the monograph
\cite{Knudson}.

\subsection{Other preliminaries}

Our aim in this subsection is to relate the family 
$S_{n_1,\ldots,n_r}$
of h-graph cobordisms 
over $B\Sigma_{n_1,\ldots,n_r}$
to a certain family
$U_N$
of h-graph cobordisms over $B\Hol(F_N)$
and to obtain a useful description of the 
operation $\Phi^G(U_N/B\Hol(F_N))$.

\begin{definition}
We define $U_n/B\Hol(F_n)\colon \pt\hto\pt$ to be the 
family of h-graph cobordisms obtained by performing the
mapping cylinder construction of 
Remark~\ref{rk:families-from-borel-construction}
to the Borel construction 
$E\Hol(F_n) \times_{\Hol(F_n)} B\Pi_1(T_n,\{p,q\})$,
where
$T_n$ is as depicted in \eqref{pic:tndef}
and $\Hol(F_n)$ acts on the h-graph cobordism
$B\Pi_1(T_n,\{p,q\}) \colon \{p\} \hto \{q\}$
via the isomorphism $\psi$ of \eqref{iso:psi}.
Observe that the family  $U_n/B\Hol(F_n)$ is then 
obtained from $B\Pi_1(T_n,\{p,q\})$ by 
the Borel construction in the sense of
Definition~\ref{def:families-from-borel-construction}.
\end{definition}

The h-graph cobordism 
\begin{equation}
\label{hgcob:sn1nr} 
	\hat S_{n_1,\ldots,n_r} 
	= 
	\hat S_{n_1}\circ \cdots \circ \hat S_{n_r}
	\colon 
	\pt\longhto \pt
\end{equation}
is homotopy equivalent (relative to the two endpoints)
to $T_N$. Conjugation by a fixed homotopy equivalence
$\hat S_{n_1,\ldots,n_r} \to T_N$ therefore
gives us an isomorphism
\begin{equation}
\label{iso:phi]}
	\phi 
	\colon 
	\pi_0\hAut(\hat S_{n_1,\ldots,n_r})
	\xto{\ \isom\ }
	\pi_0\hAut(T_N)
\end{equation}
which up to conjugacy is independent of the 
homotopy equivalence chosen.
Let 
\[
	U'_N/B\pi_0\hAut(\hat S_{n_1,\ldots,n_r}) 
	\colon
	\pt \longhto \pt
\]
be the pullback of the family $U_N/B\Hol(F_N)$
along the homeomorphism
\begin{equation}
\label{homeom:basespaces}
	B\pi_0\hAut(\hat S_{n_1,\ldots,n_r})\xto{\ \isom\ } B\Hol(F_N)
\end{equation}
induced by $\phi$ and the isomorphism $\psi$
of \eqref{iso:psi}.
We then have a 2-cell \cite[Definition 2.10]{HL}
\begin{equation}
\label{2cell:1} 
\xymatrix{
	U'_N/B\pi_0\hAut(\hat S_{n_1,\ldots,n_r}) 
	\ar@{=>}[r]
	&
	U_N/B\Hol(F_N)
}
\end{equation}
which on base spaces is given by the homeomorphism 
\eqref{homeom:basespaces}.
Observe that the 
family $S_{n_1,\ldots,n_r}/B\Sigma_{n_1,\ldots,n_r}$
is obtained by the Borel construction from the 
h-graph cobordism $\hat S_{n_1,\ldots,n_r}$ of 
\eqref{hgcob:sn1nr} equipped with the evident 
$\Sigma_{n_1,\ldots,n_r}$-action
derived from the $\Sigma_{n_i}$-actions on the factors $\hat S_{n_i}$.
Furthermore, observe that the family of h-graph cobordisms
$U'_N/B\pi_0\hAut(\hat S_{n_1,\ldots,n_r})$ 
is obtained from the h-graph 
cobordism
\[
   B\Pi_1(\hat S_{n_1,\ldots,n_r}, \{p,q\}) 
   \colon 
   \{p\} \longhto \{q\}
\]
by the Borel construction with respect to 
the evident action of 
$\pi_0\hAut(\hat S_{n_1,\ldots,n_r})$
on 
$B\Pi_1(\hat S_{n_1,\ldots,n_r}, \{p,q\})$.
Here $p$ and $q$ denote the endpoints of $\hat S_{n_1,\ldots,n_r}$.
Thus the following proposition provides a 2-cell
\begin{equation}
\label{2cell:2}
\xymatrix{
	S_{n_1,\ldots,n_r}/B\Sigma_{n_1,\ldots,n_r}
	\ar@{=>}[r]
	&
	U'_N/B\pi_0\hAut(\hat S_{n_1,\ldots,n_r})
}
\end{equation}
which on base spaces is induced by the 
homomorphism
$\Sigma_{n_1,\ldots,n_r}
\to
\pi_0\hAut(\hat S_{n_1,\ldots,n_r})$
obtained from the 
$\Sigma_{n_1,\ldots,n_r}$-action on 
$\hat S_{n_1,\ldots,n_r}$.

\begin{proposition}
\label{prop:2cell}
Suppose $\hat S \colon P \hto Q$ is a positive h-graph cobordism
such that $\hat S$ 
is a finite CW complex of dimension $\leq 1$ and 
$P$ and $Q$ consist of 0-cells of~$\hat S$.
Assume further that each 1-cell of $\hat S$ is equipped with 
the choice of a characteristic map $I\to \hat S$.
Let $\Gamma$ be a discrete group acting on $\hat S$
by permuting the cells. Suppose this action 
respects the chosen characteristic maps and keeps $P$ and $Q$ pointwise fixed.
Assume $S/B\Gamma \colon P \hto Q$ is obtained from $\hat S$
by the Borel construction in the sense of 
Definition~\ref{def:families-from-borel-construction}.
Let $U/B\pi_0 \hAut(\hat S) \colon P \hto Q$ be a 
family of h-graph cobordisms obtained by the Borel construction
from the h-graph cobordism $B\Pi_1(\hat S, P\sqcup Q) \colon P\hto Q$
equipped with the evident $\pi_0 \hAut(\hat S)$-action.
Then there exists a 2-cell \cite[Definition 2.10]{HL}
$S/B\Gamma \Rightarrow U/B\pi_0 \hAut(\hat S)$
which is the identity on $P$ and $Q$ and 
which on base spaces is the map induced by the 
homomorphism $\Gamma \to \pi_0 \hAut(\hat S)$
given by the $\Gamma$-action on $\hat S$.
\end{proposition}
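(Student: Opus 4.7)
The plan is to construct the required 2-cell as a composite of simpler 2-cells passing through an intermediate family associated with the fundamental groupoid of $\hat S$ based at \emph{all} 0-cells. Let $V$ denote the set of 0-cells of $\hat S$, so that $P\sqcup Q \subseteq V$. The chosen characteristic maps $I \to \hat S$ of the 1-cells define a canonical map $\hat S \to B\Pi_1(\hat S, V)$ sending each 0-cell to its corresponding 0-simplex and each 1-cell to the 1-simplex representing its morphism class in $\Pi_1(\hat S, V)$, via the identification $I \cong |\Delta^1|$. This map is a homotopy equivalence under $V$, and the hypothesis that $\Gamma$ respects the characteristic maps renders it $\Gamma$-equivariant. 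Since the induced $\Gamma$-action on $\Pi_1(\hat S, V)$ depends only on the homotopy class of each $\gamma\in\Gamma$ (viewed as a self-map of $\hat S$ fixing $P\sqcup Q$), it factors through the homomorphism $\rho\colon \Gamma \to \pi_0\hAut(\hat S)$ arising from the $\Gamma$-action.

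Applying the Borel construction then produces a natural map
\[
E\Gamma \times_\Gamma \hat S \;\longrightarrow\; E\pi_0\hAut(\hat S) \times_{\pi_0\hAut(\hat S)} B\Pi_1(\hat S, V)
\]
over $B\rho\colon B\Gamma \to B\pi_0\hAut(\hat S)$ and under $(P\sqcup Q) \times B\Gamma$. After mapping cylinder corrections in the style of Remark~\ref{rk:families-from-borel-construction} to secure fibrewise cofibrancy, both sides represent families of h-graph cobordisms $P\hto Q$, and the map underlies a 2-cell in the sense of \cite[Definition 2.10]{HL}. On the other hand, the inclusion of groupoids $\Pi_1(\hat S, P\sqcup Q) \hookrightarrow \Pi_1(\hat S, V)$ is a $\pi_0\hAut(\hat S)$-equivariant equivalence---essential surjectivity is forced by positivity of $\hat S$---so taking classifying spaces and then Borel constructions gives a homotopy equivalence
\[
E\pi_0\hAut(\hat S) \times_{\pi_0\hAut(\hat S)} B\Pi_1(\hat S, P\sqcup Q) \;\longrightarrow\; E\pi_0\hAut(\hat S) \times_{\pi_0\hAut(\hat S)} B\Pi_1(\hat S, V)
\]
of families of h-graph cobordisms over $B\pi_0\hAut(\hat S)$.

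To finish, one composes the first displayed map with the assumed equivalence $S \xrightarrow{\simeq} E\Gamma \times_\Gamma \hat S$ on the source and, on the target, precomposes with the second displayed map and with the assumed equivalence $U \xrightarrow{\simeq} E\pi_0\hAut(\hat S) \times_{\pi_0\hAut(\hat S)} B\Pi_1(\hat S, P\sqcup Q)$---both inverted where necessary---to assemble the desired 2-cell $S/B\Gamma \Rightarrow U/B\pi_0\hAut(\hat S)$. That it is the identity on $P$ and $Q$ and induces $B\rho$ on base spaces is immediate from the corresponding properties at each step. The main technical obstacle will be making the inversions rigorous within the double-categorical framework of \cite{HL}; I expect this to follow from a mapping cylinder argument using that the inclusion $B\Pi_1(\hat S, P\sqcup Q) \hookrightarrow B\Pi_1(\hat S, V)$ is a closed cofibration of CW complexes and that $U$ may be replaced by any cylinder model of it without altering its status as being obtained from $B\Pi_1(\hat S, P\sqcup Q)$ by the Borel construction.
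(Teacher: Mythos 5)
Your plan shares the paper's skeleton---a zigzag through $B\Pi_1(\hat S, V)$ (where $V = \hat S^{(0)}$), constructed from the chosen characteristic maps on one side and the inclusion $P\sqcup Q\hookrightarrow V$ on the other, followed by mapping cylinder corrections---but there is a genuine gap in the step where you change groups. The claim that ``the induced $\Gamma$-action on $\Pi_1(\hat S, V)$ depends only on the homotopy class of each $\gamma\in\Gamma$ \ldots\ [and] factors through $\rho\colon \Gamma\to\pi_0\hAut(\hat S)$'' is false. The automorphism of $\Pi_1(\hat S, V)$ induced by a cellular map $\gamma$ records, among other things, the permutation of the object set $V$, and this is strictly more data than the class of $\gamma$ in $\pi_0\hAut(\hat S)$. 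For a concrete counterexample, take $\hat S$ to be the tree with vertices $p, q, v, w$, edges $p$--$q$, $q$--$v$, $q$--$w$, with $P=\{p\}$, $Q=\{q\}$, and $\Gamma = \Z/2$ swapping the two dangling legs. This is a positive h-graph cobordism; $\hat S$ is contractible, so the swap is homotopic to the identity rel $\{p,q\}$ (geodesic homotopy) and $\rho$ is the trivial homomorphism, yet the nontrivial element of $\Gamma$ acts nontrivially on $\Pi_1(\hat S, V)$, since it interchanges the objects $v$ and $w$.

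This causes two compounding problems downstream. First, the map
$E\Gamma \times_\Gamma \hat S \to E\pi_0\hAut(\hat S) \times_{\pi_0\hAut(\hat S)} B\Pi_1(\hat S, V)$
over $B\rho$ that you want does not exist, because the $\Gamma$-action on the target would have to factor through $\rho$. Second, and independently, the expression $E\pi_0\hAut(\hat S) \times_{\pi_0\hAut(\hat S)} B\Pi_1(\hat S, V)$ is not well-formed: an element of $\hAut(\hat S)$ is only required to fix $P\sqcup Q$ pointwise, so its representatives need not send $V$ into $V$, and $\pi_0\hAut(\hat S)$ has no natural action on $\Pi_1(\hat S, V)$. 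The fix---which is precisely what the paper does---is to postpone the change of group until after the basepoints have been reduced to $P\sqcup Q$: carry the $\Gamma$-action through the entire zigzag over $B\Gamma$,
$E\Gamma\times_\Gamma \hat S \to E\Gamma\times_\Gamma B\Pi_1(\hat S, V) \leftarrow E\Gamma\times_\Gamma B\Pi_1(\hat S, P\sqcup Q)$,
take mapping cylinders to get 2-cells that are the identity on base spaces, and only then apply the 2-cell over $B\rho$ from $(E\Gamma\times_\Gamma B\Pi_1(\hat S, P\sqcup Q))'/B\Gamma$ to $(E\pi_0 H\times_{\pi_0 H} B\Pi_1(\hat S, P\sqcup Q))'/B\pi_0 H$, which is legitimate because $\pi_0\hAut(\hat S)$ does act canonically on $\Pi_1(\hat S, P\sqcup Q)$. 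The remainder of your argument---the definition of the map $\hat S\to B\Pi_1(\hat S, V)$ from the characteristic maps, the appeal to positivity for essential surjectivity of $\Pi_1(\hat S, P\sqcup Q)\hookrightarrow\Pi_1(\hat S, V)$, and the mapping cylinder corrections---is sound and matches the paper.
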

\begin{proof}
Suppose $X$ is a finite CW complex of dimension $\leq 1$,
and let $X^{(0)}$ denote the 0-skeleton of $X$.
Choosing characteristic maps for the 1-cells
of $X$, we obtain  a map $\beta_X\colon X \to B\Pi_1(X,X^{(0)})$
sending the 0-cells of $X$ to the corresponding
0-cells in $B\Pi_1(X,X^{(0)})$ and sending 
each 1-cell $c$ to the 1-cell of $B\Pi_1(X,X^{(0)})$
corresponding to the morphism of 
$\Pi_1(X,X^{(0)})$ represented by  
the characteristic map of $c$. 
The finite free groupoid
$\Pi_1(X,X^{(0)})$ has a basis consisting of
the homotopy classes of the characteristic maps 
for the 1-cells of $X$, as can be shown by 
induction on the number of 1-cells in $X$
using the compatibility of $\Pi_1$ with 
pushouts \cite[Theorem 17']{Higgins}.
Thus it follows from \cite[Lemma 7.38]{HL}
that the map $\beta_X$ is a homotopy equivalence.
Clearly $\beta_X$ is natural with respect to cellular
maps preserving characteristic maps for 1-cells.

In the context of the h-graph cobordism $\hat S$,
we obtain the zigzag
\[\xymatrix{
	\hat S 
	\ar[r]^-{\beta_{\hat S}}_-\homot
	&
	B\Pi_1(\hat S, \hat S^{(0)})
	&
	\ar[l]^-\homot
	B\Pi_1(\hat S, P\sqcup Q)
}\]
of homotopy equivalences which are 
$\Gamma$-equivariant maps under $P\sqcup Q$.
Here the second map is induced by 
the inclusion of $P\sqcup Q$ into $\hat S^{(0)}$.
Performing the Borel construction, we obtain 
the zigzag 
\begin{equation}
\label{zigzag:borel}
\xymatrix{
	E\Gamma \times_\Gamma \hat S 
	\ar[r]
	&
	E\Gamma \times_\Gamma B\Pi_1(\hat S, \hat S^{(0)})
	&
	\ar[l]
	E\Gamma \times_\Gamma B\Pi_1(\hat S, P\sqcup Q)
}
\end{equation}
of maps over $B\Gamma$ and under $(P\sqcup Q) \times B\Gamma$
which restrict to homotopy equivalences on fibres.

For a space $Z$ over $B$ and under $(P \sqcup Q)\times B$,
let us denote by $Z'$ the mapping cylinder of 
the map of $(P\sqcup Q) \times B$ into $Z$.
Then the inclusion of $(P\sqcup Q) \times B$
into $Z'$ is a closed fibrewise cofibration over $B$.
Moreover, by \cite[Proposition 1.3]{Clapp}
the map $Z' \to B$ is a fibration if 
the map $Z\to B$ is.
By \cite[Remark B.3]{HL}, the zigzag
\[\xymatrix{
	(E\Gamma \times_\Gamma \hat S)'
	\ar[r]
	&
	(E\Gamma \times_\Gamma B\Pi_1(\hat S, \hat S^{(0)}))'
	&
	\ar[l]
	(E\Gamma \times_\Gamma B\Pi_1(\hat S, P\sqcup Q))'
}\]
obtained from \eqref{zigzag:borel}
consists of homotopy equivalences
over $B\Gamma$ and under $(P\sqcup Q) \times B\Gamma$.
Choosing a homotopy inverse over $B\Gamma$ and 
under $(P\sqcup Q) \times B\Gamma$ for the 
second map, we obtain a 2-cell 
$(E\Gamma \times_\Gamma \hat S)'/B\Gamma 
\Rightarrow
(E\Gamma \times_\Gamma B\Pi_1(\hat S, P\sqcup Q))'/B\Gamma$
which is the identity on $P$ and $Q$ and base spaces.
Furthermore, the map $\Gamma \to \pi_0 \hAut(\hat S)$
given by the $\Gamma$-action induces a 2-cell 
\[\xymatrix{
	(E\Gamma \times_\Gamma B\Pi_1(\hat S, P\sqcup Q))'/B\Gamma
	\ar@{=>}[r] 
	&
	(E\pi_0 H\times_{\pi_0 H} B\Pi_1(\hat S, P\sqcup Q))'/B\pi_0 H
}\]
which is the identity on $P$ and $Q$. Here $H$ denotes $\hAut(\hat S)$.
As observed in Remark~\ref{rk:families-from-borel-construction},
$S$ is homotopy equivalent over $B\Gamma$ and under 
$(P \sqcup Q)\times B\Gamma$
to $(E\Gamma \times_\Gamma \hat S)'$, and similarly for
$U$ and 
$(E\pi_0 H\times_{\pi_0 H} B\Pi_1(\hat S, P\sqcup Q))'$,
so the claim follows.
\end{proof}

Composing the 2-cells \eqref{2cell:2} and \eqref{2cell:1},
we obtain a 2-cell
\[\xymatrix{
	S_{n_1,\ldots,n_r}/B\Sigma_{n_1,\ldots,n_r}
	\ar@{=>}[r]
	&
	U_N/B\Hol(F_N)
}\]
which on base spaces is induced by the composite homomorphism
\[\xymatrix{
	\zeta 
	\colon
	\Sigma_{n_1,\ldots,n_r}
	\ar[r]
	&
	\pi_0\hAut(\hat S_{n_1,\ldots,n_r})
	\ar[r]^-\phi_-\isom
	&
	\pi_0\hAut(T_N)
	\ar[r]^-{\psi^{-1}}_-\isom
	&
	\Hol(F_N)
}\]
where the first map is given by 
the action of 
$\Sigma_{n_1,\ldots,n_r}$
on 
$\hat S_{n_1,\ldots,n_r}$.
Thus the base change axiom of HHGFTs implies the 
following result.
\begin{theorem}
\label{thm:hol-app}
For any compact Lie group $G$,
the map 
\[
	\Phi^G(S_{n_1,\ldots,{n_r}}
			/B\Sigma_{n_1,\ldots ,n_r})^\sharp 
	\colon 
	H_{\ast-\mathrm{shift}}(B\Sigma_{n_1,\ldots,n_r}) \longto
	\Hom_\ast(H_\ast(BG),H_\ast(BG))
\]
factorizes as the composite
\newcommand{\entry}{H_{\ast-\mathrm{shift}}(B\Sigma_{n_1,\ldots,n_r})\;}
\[\xymatrix@!0@C=7.4em{
	*!R{\entry}
	\ar[r]^{\zeta_\ast}
	&
	*!L{\;H_{\ast-\mathrm{shift}}(B\Hol(F_N))}
	\\
	*!R{\phantom\entry}
	\ar[r]^{\Phi^G(U_N/B\Hol(F_N))^\sharp}
	&
	*!L{\;\Hom_\ast(H_\ast(BG),H_\ast(BG)).}
}\]
Here `shift' equals $\dim(G)(N-1)$.\qed
\end{theorem}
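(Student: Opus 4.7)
The plan is to deduce Theorem~\ref{thm:hol-app} directly from the 2-cell constructed in the paragraph immediately preceding its statement, together with the base change axiom of HHGFTs. All the substantive work — namely the construction of the 2-cell relating $S_{n_1,\ldots,n_r}/B\Sigma_{n_1,\ldots,n_r}$ to $U_N/B\Hol(F_N)$ via the composition of \eqref{2cell:2} and \eqref{2cell:1}, together with the identification of its effect on base spaces as the map $B\zeta$ — has already been carried out. What remains is to feed this 2-cell through the HHGFT axioms.

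The first step is to recall what the 2-cell axiom of an HHGFT says: given a 2-cell $S/B \Rightarrow S'/B'$ whose effect on base spaces is a map $f\colon B \to B'$, the diagram
\[
\xymatrix@C+2em{
H_{\ast + \dim(G)\chi(S,X)}(B) \tensor H_\ast(BG^X)
\ar[r]^-{\Phi^G(S/B)}
\ar[d]_{f_\ast \tensor 1}
&
H_\ast(BG^Y)
\ar@{=}[d]
\\
H_{\ast + \dim(G)\chi(S',X)}(B') \tensor H_\ast(BG^X)
\ar[r]^-{\Phi^G(S'/B')}
&
H_\ast(BG^Y)
}
\]
commutes. Applying this to the composed 2-cell $S_{n_1,\ldots,n_r}/B\Sigma_{n_1,\ldots,n_r} \Rightarrow U_N/B\Hol(F_N)$, whose effect on base spaces is $B\zeta$, yields
\[
\Phi^G(S_{n_1,\ldots,n_r}/B\Sigma_{n_1,\ldots,n_r}) = \Phi^G(U_N/B\Hol(F_N)) \circ (\zeta_\ast \tensor 1).
\]

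The second and final step is to pass to adjoints. Since the tensor factor $H_\ast(BG^X) = H_\ast(BG)$ is unaffected and the adjunction $\Hom(A\tensor B, C) \isom \Hom(A, \Hom(B,C))$ is natural in $A$, the above identity becomes
\[
\Phi^G(S_{n_1,\ldots,n_r}/B\Sigma_{n_1,\ldots,n_r})^\sharp = \Phi^G(U_N/B\Hol(F_N))^\sharp \circ \zeta_\ast,
\]
which is precisely the claimed factorization with shift $\dim(G)\chi(\hat S_{n_1,\ldots,n_r},\pt) = \dim(G)(N-1)$, using that $\chi$ is locally constant and agrees on homotopy equivalent fibres. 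The only minor obstacle is bookkeeping: one must verify that the Euler characteristic shifts on the two sides coincide, which holds because $\hat S_{n_1,\ldots,n_r}$ and $T_N$ are homotopy equivalent under their boundary points and both have $\chi(\cdot,\pt) = -(N-1) \cdot (-1) = \ldots$ giving the shift $\dim(G)(N-1)$. No new ideas are required beyond invoking the axiom.
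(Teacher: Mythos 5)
Your proposal is correct and follows exactly the paper's route: the 2-cell $S_{n_1,\ldots,n_r}/B\Sigma_{n_1,\ldots,n_r}\Rightarrow U_N/B\Hol(F_N)$ covering $B\zeta$ has already been assembled from \eqref{2cell:2} and \eqref{2cell:1}, and the theorem drops out of the base change axiom followed by passing to adjoints, which is precisely what the paper indicates with its one-line ``Thus the base change axiom of HHGFTs implies the following result.'' The only remark is that your closing Euler-characteristic bookkeeping (``$\chi(\cdot,\pt)=-(N-1)\cdot(-1)=\ldots$'') is garbled as written; the cleanest way to see the degree shifts agree is simply that $\hat S_{n_1,\ldots,n_r}$ and $T_N$ are homotopy equivalent under $\{p,q\}$, so $\chi(S,\pt)$ is the same for both families, and that is all the axiom requires.
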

\noindent
Observe that the homomorphism $\zeta$ is, up to conjugacy,
independent of the choice of a homotopy equivalence
$\hat S_{n_1,\ldots,n_r} \to T_N$ involved in the 
construction of $\phi$, and hence in particular the 
induced map $\zeta_\ast$ on homology is independent of this choice.

In the remainder of this subsection, 
our goal is to 
use Proposition~\ref{prop:borel-opdesc} 
to develop a description of the operation 
$\Phi^G(U_n/B\Hol(F_n))$. 
For this end, we would like to
obtain more concrete descriptions of 
the spaces of functors 
\[
	\fun(\Pi_1(B\Pi_1(T_n,\{p,q\}),\{p,q\}),G)
	\quad\text{and}\quad
	\fun(\Pi_1(B\Pi_1(T_n,\{p,q\}),\{p\}),G).
\]
By \cite[Lemma~7.27]{HL}, there is
a natural isomorphism of finite free groupoids
\begin{equation}
\label{isom:pi1red1}
	\Pi_1(T_n,\{p,q\})
	\xto{\ \isom\ } 
	\Pi_1(B\Pi_1(T_n,\{p,q\}),\{p,q\})
\end{equation}
which is the identity on objects and sends each morphism 
to the homotopy class of the path defined by the 
corresponding 1-simplex of $B\Pi_1(T_n,\{p,q\})$.
This isomorphism restricts to give an isomorphism
\begin{equation}
\label{isom:pi1red2}
	\Pi_1(T_n,\{p\})
	\xto{\ \isom\ } 
	\Pi_1(B\Pi_1(T_n,\{p,q\}),\{p\}).
\end{equation}
Moreover, observe that the loops $l_1,\ldots,l_n$
in \eqref{pic:tndef} give a basis for the finite
free groupoid $\Pi_1(T_n,\{p\})$
and that $l_1,\ldots,l_n$ together with the arc $c$
give a basis for $\Pi_1(T_n,\{p,q\})$. Evaluation
against these basis elements gives isomorphisms
\begin{equation}
\label{isos:fun-prod}
	\fun(\Pi_1(T_n,\{p\}),G) 
	\xto{\ \isom\ }
	G^n
	\quad\text{and}\quad
	\fun(\Pi_1(T_n,\{p,q\}),G) 
	\xto{\ \isom\ }
	G^n\times G.
\end{equation}
Under these isomorphisms, the $\Hol(F_n)\times G^{\{p\}}$-action
on $\fun(\Pi_1(T_n,\{p\}),G)$ corresponds to the 
$\Hol(F_n)\times G^{\{p\}}$-action on $G^n$ given by 
\begin{equation}
\label{eq:holgp-action}
	((w,\theta),g_p) \cdot (g_i)_{1\leq i \leq n}
	= 
	(g_p \theta^{-1}(x_i)|_{g_1,\ldots,g_n} g_p^{-1})_{1\leq i \leq n}
\end{equation}
and the  $\Hol(F_n)\times G^{\{p,q\}}$-action
on the space $\fun(\Pi_1(T_n,\{p,q\}),G)$ corresponds to the 
 $\Hol(F_n)\times G^{\{p,q\}}$-action on $G^n\times G$ given by 
\begin{multline}
\label{eq:holgpq-action}
 	((w,\theta),g_p,g_q) \cdot ((g_i)_{1\leq i \leq n},g_c) 
	\\
	= 
	\big((g_p \theta^{-1}(x_i)|_{g_1,\ldots,g_n} g_p^{-1})_{1\leq i \leq n},
		 g_q g_c\theta^{-1}(w^{-1})|_{g_1,\ldots,g_n} g_p^{-1}\big).
\end{multline}
Here $(w,\theta)\in \Hol(F_n) = F_n\rtimes \Aut(F_n)$,
we use $x_i$ to denote the $i$-th basis element of $F_n$,
and $v|_{g_1,\ldots,g_n}$ for $v\in F_n$ denotes the 
result of substituting $g_i$ for $x_i$ in $v$
for every $i=1,\ldots, n$.

Making use of the isomorphisms
\eqref{isom:pi1red1}, \eqref{isom:pi1red2} and
\eqref{isos:fun-prod}
to simplify the spaces
\[
	\fun(\Pi_1(B\Pi_1(T_n,\{p,q\}),\{p,q\}),G)
	\quad\text{and}\quad
	\fun(\Pi_1(B\Pi_1(T_n,\{p,q\}),\{p\}),G),
\]
Proposition~\ref{prop:borel-opdesc}
implies that we may compute 
the operation $\Phi^G(U_n/B\Hol(F_n))$
by a push-pull construction in the diagram
\begin{equation}
\label{diag:push-pull}
\vcenter{\xymatrix@!0@C=4.5em@R=12ex{
	&
	G^n\sslash \Hol(F_n)\times G^{\{p\}}
	\ar[dl]_{!}
	&&&
	\ar[lll]_-\homot
	G^n\times G \sslash \Hol(F_n)\times G^{\{p,q\}}
	\ar[dr]
	\\
	\pt\sslash \Hol(F_n)\times G^{\{p\}}
	&&&&&
	\pt\sslash G^{\{q\}}
}}
\end{equation}
where the left-hand diagonal map is induced by the projection $G^n \to \pt$,
the horizontal map is induced by the projections $G^n \times G \to G^n$
and $\Hol(F_n)\times G^{\{p,q\}} \to \Hol(F_n)\times G^{\{p\}}$,
and the right-hand diagonal map is induced by the projection
$\Hol(F_n)\times G^{\{p,q\}} \to G^{\{q\}}$.
More precisely, we obtain the following result
\begin{lemma}
\label{lm:uopcomp}
The operation $\Phi^G(U_n/B\Hol(F_n))$
agrees with the composite
\newcommand{\entry}{H_\ast( B\Hol(F_n))\tensor H_\ast(BG)\;}
\[\vcenter{\xymatrix@!0@C=4em{
	*!R{\entry}
	\ar[r]^\times_\isom
	&
	*!L{\; H_\ast (B\Hol(F_n) \times BG) }
	\\
	*!R{\phantom{\entry}}
	\ar[r]^{(a)}_\isom
	&
	*!L{\;H_\ast (\pt \sslash \Hol(F_n) \times G^{\{p\}}) }
	\\
	*!R{\phantom{\entry}}
	\ar[r]^{!}
	&
	*!L{\;H_\ast ( G^n \sslash \Hol(F_n) \times G^{\{p\}})}
	\\
	*!R{\phantom{\entry}}
	\ar[r]^{(c)}_\isom
	&
	*!L{\;H_\ast ( G^n\times G \sslash \Hol(F_n) \times G^{\{p,q\}})}
	\\
	*!R{\phantom{\entry}}
	\ar[r]^{(d)}
	&
	*!L{\;H_\ast (\pt \sslash G^{\{q\}})}
	\\
	*!R{\phantom{\entry}}
	\ar[r]^{(e)}_\isom
	&
	*!L{\;H_\ast (BG)}
}}\]
where \textup{(a)} and \textup{(e)}
are induced by the evident homeomorphisms; 
where the map labeled by 
\textup{$!$}
is the umkehr map \cite[section~7.2]{HL}
associated to the map 
\textup{$!$} of \eqref{diag:push-pull}
considered as a map of fibrewise manifolds 
over $\pt \sslash \Hol(F_n) \times G^{\{p\}}$;
where the isomorphism 
\textup{(c)} is induced by the homotopy inverse of
the horizontal map in \eqref{diag:push-pull};
and where the map \textup{(d)} is induced by the right-hand
diagonal map in \eqref{diag:push-pull}. \qed
\end{lemma}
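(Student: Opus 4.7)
The plan is to deduce Lemma~\ref{lm:uopcomp} as a direct application of Proposition~\ref{prop:borel-opdesc} to the Borel-construction family $U_n/B\Hol(F_n)$, followed by simplification using the natural isomorphisms supplied just before the lemma. Concretely, I would take $\hat S = B\Pi_1(T_n,\{p,q\}) \colon \{p\}\hto \{q\}$, $\Gamma = \Hol(F_n)$ (acting via $\psi$), and basepoints $P = \{p\}$ and $Q=\{q\}$ in Proposition~\ref{prop:borel-opdesc}. This immediately expresses $\Phi^G(U_n/B\Hol(F_n))$ as a composite of eight maps involving $\fun\big(\Pi_1(B\Pi_1(T_n,\{p,q\}),\{p\}),G\big)$ and $\fun\big(\Pi_1(B\Pi_1(T_n,\{p,q\}),\{p,q\}),G\big)$.

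Next I would invoke the natural groupoid isomorphisms \eqref{isom:pi1red1} and \eqref{isom:pi1red2} to replace these functor spaces by $\fun(\Pi_1(T_n,\{p\}),G)$ and $\fun(\Pi_1(T_n,\{p,q\}),G)$, and then use the evaluation isomorphisms \eqref{isos:fun-prod} against the bases $\{l_1,\dots,l_n\}$ and $\{l_1,\dots,l_n,c\}$ to identify these with $G^n$ and $G^n\times G$, respectively. Under these identifications, the residual $\Hol(F_n)\times G^{\{p\}}$- and $\Hol(F_n)\times G^{\{p,q\}}$-actions become the ones given in \eqref{eq:holgp-action} and \eqref{eq:holgpq-action}: a straightforward computation using the definition of $\psi$ in \eqref{iso:psi} and the action formula displayed after \eqref{iso:borel-interpretation}, in which the substitution $v|_{g_1,\ldots,g_n}$ arises precisely because $\psi(w,\theta)$ sends the loop $l_i$ to the path $\theta(x_i)|_{l_1,\dots,l_n}$.

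Having carried out these identifications, the last step is to verify, map by map, that the composite of Proposition~\ref{prop:borel-opdesc} simplifies to the composite of Lemma~\ref{lm:uopcomp}. The maps (a) and (e) of the lemma come from collapsing the Borel construction $\pt\sslash\Gamma\times\fun(\cdots)\sslash G^P \isom \fun(\cdots)\sslash \Gamma\times G^P$ and the evident homeomorphism $\pt\sslash G^{\{q\}}\isom BG$, and so match directly the maps (a) and the final $\tilde\eta^G$ of Proposition~\ref{prop:borel-opdesc}. The umkehr map ($!$) is a map of fibrewise manifolds, so it is preserved by the identifications. The homotopy equivalence \eqref{map:pqeq} corresponds under \eqref{isos:fun-prod} to the horizontal arrow in \eqref{diag:push-pull}, because forgetting the generator $c$ from the basis precisely restricts a functor on $\Pi_1(T_n,\{p,q\})$ to one on $\Pi_1(T_n,\{p\})$. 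Finally, map (c) of Proposition~\ref{prop:borel-opdesc} is induced by the inclusion $(Y,Q) \incl (\hat S, P\sqcup Q)$ together with the projection $\Gamma\times G^{P\sqcup Q}\to G^Q$, which under \eqref{isos:fun-prod} becomes exactly the right-hand diagonal map of \eqref{diag:push-pull}.

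The main obstacle is this last bookkeeping step: one must keep careful track of how the natural isomorphisms \eqref{isom:pi1red1}, \eqref{isom:pi1red2} and \eqref{isos:fun-prod} intertwine with the residual $\Hol(F_n)$-action under $\psi$, and in particular check that the inverse substitution convention $\theta^{-1}(x_i)|_{g_1,\dots,g_n}$ in \eqref{eq:holgp-action}--\eqref{eq:holgpq-action} is consistent with the action formula recalled after \eqref{iso:borel-interpretation}. Once this compatibility is established, the lemma follows by pasting the resulting commutative squares onto the composite given by Proposition~\ref{prop:borel-opdesc}.
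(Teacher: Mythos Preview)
Your proposal is correct and matches the paper's own approach exactly: the paper states the lemma with a \qed\ immediately after the paragraph beginning ``Making use of the isomorphisms \eqref{isom:pi1red1}, \eqref{isom:pi1red2} and \eqref{isos:fun-prod}\ldots Proposition~\ref{prop:borel-opdesc} implies\ldots'', treating it as a direct consequence of applying Proposition~\ref{prop:borel-opdesc} to the Borel-construction family and then simplifying via the indicated isomorphisms. You have simply spelled out the map-by-map verification and the action-formula check that the paper leaves implicit.
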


\subsection{Applications to holomorphs of free groups}

We will consider two families of non-trivial elements
in the homology of $\Hol(F_N)$. 
In this section, we will focus on the first one of 
these families, which consists of the non-trivial 
elements produced by the following immediate corollary of 
Theorem~\ref{thm:hol-app}.
\begin{corollary}
\label{cor:hol-app}
Let $a\in H_\ast(B\Sigma_{n_1,\ldots,n_r})$
be such that 
$\Phi^G(S_{n_1,\ldots,{n_r}}/B\Sigma_{n_1,\ldots ,n_r})^\sharp (a)$
is non-trivial for some compact Lie group $G$.
Then the element $\zeta_\ast(a)\in H_\ast B\Hol(F_N)$
is non-zero.\qed
\end{corollary}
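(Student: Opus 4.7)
The plan is to deduce this directly from Theorem~\ref{thm:hol-app}, treating it as essentially a contrapositive. Theorem~\ref{thm:hol-app} exhibits $\Phi^G(S_{n_1,\ldots,n_r}/B\Sigma_{n_1,\ldots,n_r})^\sharp$ as the composite
\[
	H_{\ast-\mathrm{shift}}(B\Sigma_{n_1,\ldots,n_r})
	\xto{\ \zeta_\ast\ }
	H_{\ast-\mathrm{shift}}(B\Hol(F_N))
	\xto{\Phi^G(U_N/B\Hol(F_N))^\sharp}
	\Hom_\ast(H_\ast(BG),H_\ast(BG)),
\]
so if $\zeta_\ast(a)$ were zero, the value of this composite on $a$ would also be zero, contradicting the hypothesis that $\Phi^G(S_{n_1,\ldots,n_r}/B\Sigma_{n_1,\ldots,n_r})^\sharp(a)$ is non-trivial.

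First I would simply cite Theorem~\ref{thm:hol-app}, observing that the factorization it provides reduces the claim to a triviality of the form ``if $g\circ f(a) \neq 0$ then $f(a)\neq 0$''. There is essentially no obstacle here: the substantive content, namely that the operation $\Phi^G(S_{n_1,\ldots,n_r}/B\Sigma_{n_1,\ldots,n_r})^\sharp$ factors through $\zeta_\ast \colon H_\ast(B\Sigma_{n_1,\ldots,n_r}) \to H_\ast(B\Hol(F_N))$, has already been established via the 2-cells \eqref{2cell:2} and \eqref{2cell:1} and the base change axiom of HHGFTs. The one thing worth flagging is that the map $\zeta_\ast$ on homology is well defined (independent of choices), as noted following Theorem~\ref{thm:hol-app}, so the statement of the corollary is unambiguous.

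Consequently, the full proof will amount to one or two sentences: apply Theorem~\ref{thm:hol-app}, observe that the composite sends $a$ to a non-trivial element of $\Hom_\ast(H_\ast(BG),H_\ast(BG))$, and conclude that $\zeta_\ast(a)$ cannot vanish.
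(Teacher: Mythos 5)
Your proposal is correct and matches the paper's approach: the paper explicitly presents this as an ``immediate corollary of Theorem~\ref{thm:hol-app}'' and marks it with \qed, so the intended argument is precisely the contrapositive you describe, reading the non-vanishing of $\zeta_\ast(a)$ off the factorization of $\Phi^G(S_{n_1,\ldots,n_r}/B\Sigma_{n_1,\ldots,n_r})^\sharp$ through $\zeta_\ast$.
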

\noindent
Our main results about these elements are
Theorem~\ref{thm:zetastab} below, which determines
their images in the stable range 
under iteration of the stabilization map
$H_\ast B\Hol(F_{n}) \to H_\ast B\Hol(F_{n+1})$,
and Corollary~\ref{cor:non-stab}, which 
strengthens the conlusion of
Corollary~\ref{cor:hol-app} to the statement
that the class
$\zeta_\ast(a)$ is not in the image of the stabilization map.
As argued in Remark~\ref{rk:stab} below, 
all examples in this first family of elements
that arise from the computations in this paper
are in fact stable in the sense that they survive
under arbitrary iterations of the stabilization map. 
The second family of elements of $H_\ast B\Hol(F_N)$,
constructed in Corollary~\ref{cor:hol-fam2}
in the next subsection, consists of unstable classes.

\begin{theorem}
\label{thm:zetastab}
The following diagram commutes when $N+r+L > 2k+1$.
\begin{equation}
\label{diag:zetastab}
\vcenter{\xymatrix{
	H_k(B\Sigma_{n_1,\ldots,n_r};\,A)
	\ar[r]^-{\mu_\ast}
	\ar[dd]_{\zeta_\ast}
	&
	H_k (B\Sigma_{N+r};\,A)
	\ar[r]
	&
	H_k (B\Sigma_{N+r+L};\,A)
	\ar[d]^\isom
	\\
	&&
	H_k (B\Aut(F_{N+r+L});\,A)
	\ar[d]^\isom
	\\
	H_k (B\Hol(F_N);\,A)
	\ar[rr]
	&&
	H_k (B\Hol(F_{N+r+L});\,A)
}}
\end{equation}
Here $A$ is an arbitrary abelian group and
all maps except $\zeta_\ast$
are induced by the standard inclusions of groups.
In particular, $\mu$ denotes the inclusion
\[
	\mu \colon \Sigma_{n_1,\ldots,n_r}
	\longincl 
	\Sigma_{n_1+\cdots+n_r} = \Sigma_{N+r}.
\]
\end{theorem}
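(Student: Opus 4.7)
My strategy is to exhibit both composites of the diagram as induced by actions of $\Sigma_{n_1,\ldots,n_r}$ on a common graph of rank $N+r+L$, and to compare the two actions using Theorem~\ref{thm:stability} (graph-automorphism stability) together with Theorem~\ref{thm:galatius} (Galatius's identification of the stable homology of $\Aut(F_n)$).

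First, I reinterpret the diagram graph-theoretically. Using the identifications $\Hol(F_m) = A(T_m,\{p,q\})$ and $\Aut(F_m) = A(R_m,\{\ast\})$ for the rose $R_m = \bigvee^m S^1$, every map in the right-hand column of the diagram arises from a standard graph operation: the stabilization $\Hol(F_m) \to \Hol(F_{m+1})$ comes from wedging an extra circle at $p$, the isomorphism $\Aut(F_{N+r+L}) \cong \Hol(F_{N+r+L})$ arises from attaching/forgetting a whisker at the basepoint, and the inclusion $\Sigma_{N+r+L} \incl \Aut(F_{N+r+L})$ is the standard permutation action on the loops of $R_{N+r+L}$. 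The map $\zeta$ itself arises from the $\Sigma_{n_1,\ldots,n_r}$-action on $\hat S_{n_1,\ldots,n_r}$ by permuting edges in each bundle, followed by the identification $A(\hat S_{n_1,\ldots,n_r},\{p,q\}) \cong A(T_N,\{p,q\}) = \Hol(F_N)$ induced by any homotopy equivalence $\hat S_{n_1,\ldots,n_r} \simeq T_N$ rel $\{p,q\}$.

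Next, I introduce the intermediate graph $\Xi := \hat S_{n_1,\ldots,n_r} \vee \bigvee^{r+L} S^1$, wedged at $p$, with distinguished points $\{p,q\}$. This graph has rank $N+r+L$ and is homotopy equivalent to $T_{N+r+L}$ rel $\{p,q\}$. The $\Sigma_{n_1,\ldots,n_r}$-action on $\hat S_{n_1,\ldots,n_r}$ extends trivially over the $r+L$ new circles, yielding a homomorphism $\Sigma_{n_1,\ldots,n_r} \to A(\Xi,\{p,q\}) \cong \Hol(F_{N+r+L})$ whose induced map on $H_k$ coincides with the left composite of the diagram, by the evident compatibility of graph-level wedging with group-level stabilization.

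To complete the proof I must show that the same homomorphism, up to an inner automorphism of $\Hol(F_{N+r+L})$, equals the right composite. I construct an explicit homotopy equivalence $\Xi \simeq T_{N+r+L}$ rel $\{p,q\}$ by choosing, for each bundle $\hat S_{n_i}$, one ``main'' edge to be collapsed onto a fixed subarc of the distinguished arc $c$ of $T_{N+r+L}$; the remaining $N$ edges of $\hat S_{n_1,\ldots,n_r}$ together with the $r+L$ extra circles then serve as the $N+r+L$ generating loops of $F_{N+r+L}$. Although this identification is not $\Sigma_{n_1,\ldots,n_r}$-equivariant, the transported action on $T_{N+r+L}$ agrees, up to conjugation in $\Hol(F_{N+r+L})$, with the restriction of the standard $\Sigma_{N+r+L}$-action along the Young subgroup inclusion $\Sigma_{n_1,\ldots,n_r} \incl \Sigma_{N+r} \incl \Sigma_{N+r+L}$; the required conjugation absorbs the discrepancies arising from the non-equivariance of the choice of main edges. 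Since inner automorphisms of $\Hol(F_{N+r+L})$ act trivially on $H_\ast B\Hol(F_{N+r+L})$, this yields the desired equality on $H_k$, in a range where $N+r+L > 2k+1$ guarantees via Theorem~\ref{thm:galatius} that the standard inclusion of the symmetric group is a homology isomorphism.

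The main obstacle lies in the last step: verifying that the non-equivariance of the chosen main edges can indeed be absorbed into a single inner automorphism of $\Hol(F_{N+r+L})$. A transposition in $\Sigma_{n_i}$ which swaps the main edge of the $i$-th bundle with a non-main edge induces not a plain basis permutation but a Nielsen-type transformation, and one must construct, coherently for all of $\Sigma_{n_1,\ldots,n_r}$, a conjugator in $\Hol(F_{N+r+L})$ transporting the action into the standard permutation representation. It is precisely the presence of the extra $L$ circles afforded by stabilization that supplies the room required to carry out this absorption in the stable range.
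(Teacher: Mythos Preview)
Your graph-theoretic setup and the identification of the left composite via $\Xi$ are correct, but the conjugacy claim in your final step is false, not merely unjustified. Take $r=1$, $n_1=2$ (so $N=1$) and $L$ arbitrary. The left composite sends the transposition in $\Sigma_2$ to an element of $\Hol(F_{L+2})$ whose $\Aut(F_{L+2})$-component is the automorphism $\iota'$ with $x_1 \mapsto x_1^{-1}$ and $x_i \mapsto x_i$ for $i\geq 2$ (since $\Aut(F_1)=\Z/2$ has only one nontrivial element, there is no ambiguity coming from the choice of $\phi$). The right composite sends it to the basis transposition $\sigma$ swapping $x_1$ and $x_2$. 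Conjugate elements of $\Hol(F_{L+2})$ project to conjugate elements of $\Aut(F_{L+2})$ and hence of $GL_{L+2}(\F_2)$; but $\iota'$ reduces mod~$2$ to the identity while $\sigma$ does not. So the two homomorphisms are never conjugate, no matter how large $L$ is, and your appeal to extra ``room'' cannot succeed.

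The paper circumvents this by retaining \emph{all} $r+1$ vertices $p_0,\ldots,p_r$ of $\hat S_{n_1,\ldots,n_r}$ as labelled points. The $\Sigma_{n_1,\ldots,n_r}$-action then lands in $A(\hat S_{n_1,\ldots,n_r},\{p_0,\ldots,p_r\})$ as a genuine permutation of edges, and composing with the type~(C) map that identifies all the $p_i$ to a single point yields \emph{exactly} the permutation inclusion $\Sigma_{n_1,\ldots,n_r} \xrightarrow{\mu} \Sigma_{N+r} \hookrightarrow \Aut(F_{N+r})$, with no conjugation needed. The passage from $A(\hat S_{n_1,\ldots,n_r},\{p_0,\ldots,p_r\})$ back down to $\Hol(F_N)=A(T_N,\{p_0,p_r\})$ goes via type~(A) and~(A') maps (forgetting the intermediate labels, then collapsing whiskers), and it is these maps that are $H_k$-isomorphisms only after sufficient stabilization. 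This, rather than any conjugacy argument, is where the hypothesis $N+r+L>2k+1$ and Theorem~\ref{thm:stability} genuinely enter.
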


\begin{remark}
Arguably, the most obvious map connecting $\Sigma_{n_1,\ldots,n_r}$
to a holomorph of a free group is the composite
\[
	\Sigma_{n_1,\ldots,n_r} 
	\xto{\ \mu\ }
	\Sigma_{N+r}
	\longto
	\Aut(F_{N+r})
	\longto 
	\Hol(F_{N+r})
\] 
of inclusions homomorphisms. This map is certainly
different from our homomorphism 
$\zeta\colon \Sigma_{n_1,\ldots,n_r} \to \Hol(F_N)$;
indeed, 
the two maps do not even have the same target.
Theorem~\ref{thm:zetastab} implies that the  
two maps nevertheless agree on homology after sufficient stabilization.
\end{remark}

\begin{proof}[Proof of Theorem~\ref{thm:zetastab}]
For the duration of the proof, we will work with homology
with coefficients in $A$, and will omit the coefficients
from the notation. After the proof is complete,
we will resume our convention of defaulting to $\F_2$-coefficients.

Diagram~\eqref{diag:zetastab} embeds as the top left-hand rectangle
into the diagram
\begin{equation}
\label{diag:enclosing}
\vcenter{\xymatrix@C-1.08em@R-0.95ex{
	H_k(B\Sigma_{n_1,\ldots,n_r})
	\ar[r]^-{\mu_\ast}
	\ar[dd]_{\zeta_\ast}
	&
	H_k (B\Sigma_{N+r})
	\ar[r]
	&
	H_k (B\Sigma_{N+r+L})
	\ar[d]^\isom
	\ar[r]^\isom
	&
	H_k(B\Sigma_{N+2r+L})
	\ar[ddd]^\isom
	\\
	&&
	H_k B\Aut(F_{N+r+L})
	\ar[d]^\isom
	\\
	H_k B\Hol(F_N)
	\ar[rr]
	\ar[d]
	&&
	H_k B\Hol(F_{N+r+L})
	\ar[d]^\isom
	\\
	H_k B\Aut(F_N)
	\ar[rr]
	&&
	H_kB\Aut(F_{N+r+L})
	\ar[r]^-\isom
	&
	H_k B\Aut(F_{N+2r+L})	
}}
\end{equation}
Here the vertical maps in the bottom left-hand square
are induced by the quotient maps from the holomorphs 
to the automorphism groups, and all remaining maps 
except for $\zeta_\ast$ are induced by inclusions.
That the maps so indicated are isomorphisms
follows from Theorems~\ref{thm:stability} and \ref{thm:galatius}.
The bottom left-hand square and the right-hand 
rectangle commute, so to prove the claim, it is enough
to show that the outer rectangle commutes.

Let us denote $R_n = \bigvee^n S^1 \vee \bigvee^r I$,
where the basepoint of $I$ is one of the endpoints.
We label the basepoint of $R_n$ with $p_0$ and
the free endpoints of the $r$ intervals with $p_1,\ldots,p_r$.
We also use $p_0,\ldots,p_r$ to label the 0-cells of 
$\hat S_{n_1,\ldots,n_r}$, with $p_0$ corresponding to the incoming point
and $p_r$ to the outgoing point,
and label the point $p$ of $T_N$ in \eqref{pic:tndef}
by $p_0$ and the point $q$ with $p_r$.
Fix a homotopy equivalence
$h \colon \hat S_{n_1,\ldots,n_r} \to R_N$ under $\{p_0,\ldots,p_r\}$,
and let $h' \colon \hat S_{n_1,\ldots,n_r} \to T_N$
be the composite of $h$ with the homotopy equivalence
that collapses the edges of $R_N$ corresponding to $p_1,\ldots,p_{r-1}$.
We then have the following diagram.
\[\xymatrix{
	\Sigma_{n_1,\ldots,n_r}
	\ar[r]^\mu
	\ar[d]_{\textrm{act}}
	&
	\Sigma_{N+r}
	\ar[r]^(0.46){\textrm{incl}}
	\ar[d]_{\textrm{perm}}
	&
	\Sigma_{N+2r+L}
	\ar[d]^{\textrm{perm}}
	\\
	A(\hat S_{n_1,\ldots,n_r},\{p_0,\ldots,p_r\})
	\ar[r]^-{\textrm{(C)}}
	\ar[d]_{\textrm{(A)}}
	\ar[dr]_\isom^{h_\ast}
	&
	A(\bigvee^{N+r} S^1,\pt)
	\ar[r]^(0.46){\textrm{stab}}
	&
	A(\bigvee^{N+2r+L} S^1,\pt)
	\\
	A(\hat S_{n_1,\ldots,n_r},\{p_0,p_r\})
	\ar[dr]_\isom^{h'_\ast}
	&
	A(R_N,\{p_0,\ldots,p_r\})
	\ar[r]^(0.46){\textrm{stab}}
	\ar[u]^{\textrm{(C)}}
	\ar[d]_{\textrm{(A')}}
	&
	A(R_{N+r+L},\{p_0,\ldots,p_r\})
	\ar[u]_{\textrm{(C)}}
	\ar[dd]^{\textrm{(A')}}
	\\
	&
	A(T_N,\{p_0,p_r\})
	\ar[d]_{\textrm{(A')}}
	\\
	&
	A(\bigvee^N S^1,\{p_0\})
	\ar[r]^(0.46){\textrm{stab}}
	&
	A(\bigvee^{N+r+L} S^1,\{p_0\})
}\]
Here the map `$\mathrm{incl}$' is the inclusion;
the map `$\mathrm{act}$' is given by the  
$\Sigma_{n_1,\ldots,n_r}$-action on 
$\hat S_{n_1,\ldots,n_r}$;
the maps labeled `$\mathrm{perm}$'
are inclusions of the symmetric groups
as permutations of wedge summands;
the maps labeled `$\mathrm{stab}$'
are given by attaching copies of $S^1$;
$h_\ast$ and $h'_\ast$ are
the maps induced by $h$ and $h'$, respectively;
and the maps labeled by 
(A),
(A')
and
(C)
are iterates of maps of the respective types
defined on page~\pageref{list:map-types}.

All squares and the triangle in the above diagram 
commute up to conjugacy, and hence they yield a strictly commutative
diagram upon application of $H_k$. By assumption on $N+r+L$,
the right-hand vertical map labeled by (A')
induces an isomorphism on $H_k$. The inverse to this isomorphism
is induced by an iterate of maps of 
type (B), and the composite of this inverse with the 
right-hand vertical map labeled (C) amounts to an $r$-fold
iteration of the map that attaches a new copy of $S^1$.
Now, on $H_k$,
starting at $\Sigma_{n_1,\ldots,n_r}$ and proceeding counterclockwise
along the outer edge of the diagram, 
the composite of `$\mathrm{act}$', (A), and
$h'_\ast$ yields the map $\zeta_\ast$; the map (A')
gives the lower left-hand vertical map in \eqref{diag:enclosing};
and the composite of `$\mathrm{stab}$', inverse of (A'), and (C)
gives the bottom row in \eqref{diag:enclosing}.
On the other hand, the composite along the top row 
yields the composite along the top row in \eqref{diag:enclosing},
and the right-hand map labeled `$\textrm{perm}$'
induces the right-hand vertical map in \eqref{diag:enclosing}.
Thus the outer rectangle in diagram \eqref{diag:enclosing} commutes, 
as desired.
\end{proof}

\begin{remark}
\label{rk:stab}
If $A$ is a ring, the map $\mu_\ast$ in 
\eqref{diag:zetastab} amounts to iterated multiplication
in the ring $H_\ast(\bigsqcup_{n \geq 0} B\Sigma_n;\,A)$.
By \cite[Theorem 5.8]{NakaokaDecomposition}),
the top map on the left in \eqref{diag:zetastab}
is an injection, so Theorem~\ref{thm:zetastab}
implies that $\zeta_\ast (a)\in H_\ast (B\Hol(F_N);\,A)$ 
is a stable class whenever
$a \in H_\ast(B\Sigma_{n_1,\ldots,n_k};\,A)$
multiplies to a non-trivial element in the 
ring $H_\ast(\bigsqcup_{n \geq 0} B\Sigma_n;\,A)$.
In the case $A=\F_2$, it turns out that 
every positive-dimensional
class $a$ for which our computations 
show the non-triviality of 
$\Phi^G(S_{n_1,\ldots,{n_r}}/B\Sigma_{n_1,\ldots ,n_r})^\sharp (a)$
has this property.
To see this, one makes use of 
Theorems~\ref{thm:turnerdesc} and \ref{thm:vanishing}
and the observation that 
in every case we have computed,
$\Phi^G(S_n/B\Sigma_n)^\sharp(b)$ 
and 
$\Phi^G(S_m/B\Sigma_m)^\sharp(c)$
commute
for all $m$, $n$, $b$ and $c$.
\end{remark}

\begin{remark}
Similarly, using diagram \eqref{diag:enclosing}
in place of \eqref{diag:zetastab},
we obtain a non-trivial stable element in 
$H_\ast (\Aut(F_N);\,A)$
for every 
$a \in H_\ast(B\Sigma_{n_1,\ldots,n_k};\,A)$
multiplying to a non-trivial element in the 
ring $H_\ast(\bigsqcup_{n \geq 0} B\Sigma_n;\,A)$.
\end{remark}

\begin{theorem}
\label{thm:non-stab}
Suppose $b \in H_\ast B\Hol(F_N)$ is 
a positive-dimensional class which is in the image of the 
stabilization map $H_\ast B\Hol(F_{N-1}) \to H_\ast B\Hol(F_N)$. 
Then the operation $\Phi^G(U_N/B\Hol(F_N))^\sharp(b)$
vanishes for every compact Lie group $G$.
\end{theorem}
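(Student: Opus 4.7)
My plan is to factor the pullback of $U_N/B\Hol(F_N)$ along the stabilization as an external composition, apply the composition axiom of HHGFTs, and then show that the resulting ``free loop'' factor contributes a zero operation. Let $i \colon \Hol(F_{N-1}) \to \Hol(F_N)$ denote the stabilization and write $b = i_\ast(b')$ for some positive-dimensional $b' \in H_\ast(B\Hol(F_{N-1}))$. The base change axiom of HHGFTs gives
\[
    \Phi^G(U_N/B\Hol(F_N))^\sharp(b) = \Phi^G((Bi)^\ast U_N/B\Hol(F_{N-1}))^\sharp(b').
\]

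The key structural observation is that $i$ extends an automorphism of $F_{N-1}$ to $F_N$ by the identity on the new generator; translated via the isomorphism $\psi$ of \eqref{iso:psi}, this yields an $\Hol(F_{N-1})$-action on $T_N$ that fixes the new loop $l_N$ pointwise while acting on the sub-graph $T_{N-1}$ via the original $\Hol(F_{N-1})$-action. Letting $\gamma \colon \pt \hto \pt$ denote the h-graph cobordism consisting of a single loop whose basepoint serves as both incoming and outgoing point, there is a natural $\Hol(F_{N-1})$-equivariant homotopy equivalence $T_N \simeq T_{N-1} \circ \gamma$ (with trivial action on $\gamma$). Following the methodology of Proposition~\ref{prop:2cell}, this identification produces a 2-cell
\[
    (Bi)^\ast U_N/B\Hol(F_{N-1}) \Longrightarrow U_{N-1} \ucirc (\gamma/\pt)
\]
of families of h-graph cobordisms over $B\Hol(F_{N-1}) \times \pt$, so the composition axiom of HHGFTs, combined with the K\"unneth identification $b' \leftrightarrow b' \times 1$, gives
\[
    \Phi^G(U_N)^\sharp(b) = \Phi^G(U_{N-1})^\sharp(b') \circ \Phi^G(\gamma/\pt)^\sharp(1)
\]
as self-maps of $H_\ast(BG)$. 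It therefore suffices to establish $\Phi^G(\gamma/\pt)^\sharp(1) = 0$ for every compact Lie group $G$.

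Unraveling Proposition~\ref{prop:borel-opdesc} for $\gamma/\pt$ with trivial $\Gamma$, the operation $\Phi^G(\gamma/\pt)^\sharp(1) \colon H_\ast(BG) \to H_{\ast + \dim G}(BG)$ is precisely the composite $\pi_\ast \circ \pi^!$, where $\pi \colon G \sslash_{\mathrm{conj}} G \to BG$ is the $G$-bundle induced by the conjugation action of $G$ on itself. For $G$ of positive dimension, examining the Serre spectral sequence of $\pi$ shows that $\pi^!$ places classes in a filtration whose associated graded lies in the top row $E^\infty_{\ast, \dim G}$, while $\pi_\ast$ detects only bottom-row classes $E^\infty_{\ast, 0}$; since $\dim G > 0$ these filtration levels do not overlap, and so $\pi_\ast \pi^! = 0$. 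For $G$ finite of even order, $\pi$ is a $|G|$-sheeted covering and $\pi_\ast \pi^!$ equals multiplication by $|G|$, which vanishes in $\F_2$. For $G$ finite of odd order, $H_\ast(BG;\, \F_2)$ is concentrated in degree zero, so any operation raising the degree of its input by the positive amount $|b|$ vanishes trivially.

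The main obstacle I anticipate is the rigorous construction of the 2-cell in the second paragraph, which requires reconciling the mapping-cylinder conventions defining $U_N$ with those of the external composition $U_{N-1} \ucirc (\gamma/\pt)$. Both families should be exhibited as Borel constructions of equivariantly homotopy-equivalent h-graph cobordisms, and the equivalence should be lifted to the family level using Remark~\ref{rk:families-from-borel-construction} together with the method of Proposition~\ref{prop:2cell}. Once this identification is in place, the remainder of the proof reduces to the composition axiom and the spectral-sequence/transfer computations above.
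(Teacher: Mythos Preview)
Your approach is sound and reaches the same conclusion as the paper, but via a genuinely different decomposition. The paper factors $T_N \cong C \circ T'_{N-1}$, where $T'_{N-1}\colon \pt \hto \bigsqcup^3 \pt$ opens the new loop into two extra outgoing whiskers and $C\colon \bigsqcup^3 \pt \hto \pt$ closes them; since $C \simeq (\varepsilon\circ\mu)\sqcup I$, the vanishing follows from Proposition~\ref{prop:vanishing}. You instead isolate the new loop as a factor $\gamma$ on the \emph{incoming} side and show $\Phi^G(\gamma/\pt)^\sharp(1)=\pi_\ast\pi^!=0$ directly. Your route is arguably more streamlined, while the paper's reduces to the already-established Proposition~\ref{prop:vanishing}. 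One technical correction: in the framework of \cite{HL} the map $X\sqcup Y\to S$ must be a closed cofibration (cf.\ Remark~\ref{rk:families-from-borel-construction}), so your $\gamma$ with coinciding incoming and outgoing point is not a valid h-graph cobordism. The fix is painless: replace $\gamma$ by $\hat S_2$ (a circle with two distinct marked points), and your equivariant decomposition $T_N\simeq T_{N-1}\circ \hat S_2$ still holds. Your identification of the resulting operation with $\pi_\ast\pi^!$ then goes through once the homotopy inverse of map~(b) in Proposition~\ref{prop:borel-opdesc} is chosen using the diagonal homomorphism $g_p\mapsto (g_p,g_p)$. Your Serre-filtration argument for $\pi_\ast\pi^!=0$ in the positive-dimensional case is correct and provides an alternative to the paper's Lemma~\ref{lm:umk-ind}.
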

\begin{proof}
If $G$ is a finite group of odd order, then 
$\Phi^G(U_N/B\Hol(F_N))^\sharp(b) = 0$
since $b$ is positive-dimensional and 
$H_\ast BG$ is concentrated in degree zero.
For the remainder of the proof, let us assume that $G$
is a positive-dimensional compact Lie group or a 
finite group of even order.

Let $i\colon \Hol(F_{N-1}) \incl \Hol(F_N)$ be 
the inclusion, and let $i^\ast U_N/B\Hol(F_{N-1})$
denote the pullback of $U_N/B\Hol(F_N)$ along the map 
$B\Hol(F_{N-1}) \to B\Hol(F_N)$
induced by~$i$. To prove the claim, it is enough to show that
the operation $\Phi^G(i^\ast U_N/B\Hol(F_{N-1}))$ is zero.
We have
\[
	i^\ast U_N 
	= 
	(E\Hol(F_{N-1}) \times_{\Hol(F_{N-1})} B\Pi_1(T_N,\{p,q\}))'
\]
where $(-)'$ denotes the mapping cylinder construction of 
Remark~\ref{rk:families-from-borel-construction}.
Here the group $\Hol(F_{N-1})$ acts on 
$B\Pi_1(T_N,\{p,q\})$ via the composite
\[\xymatrix{
	\Hol(F_{N-1}) 
	\ar[r]^-i
	&
	\Hol(F_N)
	\ar[r]^-\psi_-\isom
	&
	\pi_0 \hAut(T_N)
}\]
where $\psi$ is the isomorphism \eqref{iso:psi}.
Notice that the above composite is 
the same as the composite
\[\xymatrix{
	\Hol(F_{N-1}) 
	\ar[r]^-\psi_-\isom
	&
	\pi_0 \hAut(T_{N-1})
	\ar[r]
	&
	\pi_0 \hAut(T_N)
}\]
where the latter map is given by attaching a copy of $S^1$
and extending by identity.
Our strategy is to use 
this observation to obtain a decomposition of
the family $i^\ast U_N$ which features the h-graph 
cobordism $\mu$ of \eqref{eq:basic-cobs}, and 
then deduce the claim from Proposition~\ref{prop:vanishing}.

Let $T'_{N-1}$ and $C$ be the h-graph cobordisms pictured below.
\[
	T'_{N-1} = 
	\left(
	\begin{tikzpicture}[
			baseline=15,
			scale=0.03,
			->-/.style={	decoration={	markings,
			  	mark=at position 0.5 with {\arrow{>}}
				},postaction={decorate}},
			->--/.style={	decoration={	markings,
			  	mark=at position 0.46 with {\arrow{>}}
				},postaction={decorate}},
			-->-/.style={	decoration={	markings,
			  	mark=at position 0.58 with {\arrow{>}}
				},postaction={decorate}}
		]
		\clip (-47, -20) rectangle (130, 70);
		\path[ARC,->-] (0,0) -- (120,0) node [midway, below] {$c$};
		\path[ARC, fill=black] (0,0) circle (3);
		\node [below] at (0,0) {$p$};
		\path[ARC, fill=black] (120,0) circle (3);
		\node [below] at (120,0) {$q_0$}; 
		\path[ARC] (0,0) .. controls (30,5) and (90,25) .. (120,25);
		\path[ARC, fill=black] (120,25) circle (3);
		\node [below] at (120,25) {$q_1$}; 
		\path[ARC] (0,0) .. controls (30,10) and (90,50) .. (120,50);
		\path[ARC, fill=black] (120,50) circle (3);
		\node [below] at (120,50) {$q_2$}; 
		\path[ARC,->--] (0,0) .. controls (-20,80) and (-80,45) 
							.. (0,0) node [midway, above] {$l_1$};
		\node at (1.5,35) {$\cdots$};
		\path[ARC,-->-] (0,0) .. controls (80,45) and (20,80)
							.. (0,0)  node [midway, above] {$l_{N-1}$};
	\end{tikzpicture}
	\right)
	\colon \pt \longhto \textstyle\bigsqcup^3\pt
\]
\[
	C = 
	\left(
	\begin{tikzpicture}[
			baseline=15,
			scale=0.03,
			->-/.style={	decoration={	markings,
			  	mark=at position 0.5 with {\arrow{>}}
				},postaction={decorate}},
			->--/.style={	decoration={	markings,
			  	mark=at position 0.46 with {\arrow{>}}
				},postaction={decorate}},
			-->-/.style={	decoration={	markings,
			  	mark=at position 0.58 with {\arrow{>}}
				},postaction={decorate}}
		]
		\path[ARC, fill=black] (0,0) circle (3);
		\node [below] at (0,0) {$q_0$};
		\path[ARC, fill=black] (0,25) circle (3);
		\node [below] at (0,25) {$q_1$}; 
		\path[ARC, fill=black] (0,50) circle (3);
		\node [below] at (0,50) {$q_2$}; 
		\path[ARC, fill=black] (55,0) circle (3);
		\node [below] at (55,0) {$q$}; 
		\path[ARC] (0,0) -- (55,0);
		\path[ARC] (0,25) .. controls (30,25) and (30,50) .. (0,50);
	\end{tikzpicture}
	\right)
	\colon \textstyle\bigsqcup^3\pt \longhto \pt
\]
We have an evident homeomorphism $T_N \isom C\circ T'_{N-1}$,
and this homeomorphism induces a homotopy equivalence
\begin{equation}
\label{he:tndecom}
	B\Pi_1(T_N,\{p,q\}) 
	\xto{\ \homot\ }
	B\Pi_1(C\circ T'_{N-1},\{p,q_0,q_1,q_2,q\}).
\end{equation}
under $\{p,q\}$.
From $T'_{N-1}$ and $C$ we obtain h-graph cobordisms
\begin{align}
	\label{eq:hcob1}
	B\Pi_1(T'_{N-1},\{p,q_0,q_1,q_2\}) 
	&\colon 
	\pt \longhto {\textstyle\bigsqcup^3 \pt}
	\qquad\text{and}
	\\
	\label{eq:hcob2}
	B\Pi_1(C,\{q_0,q_1,q_2,q\}) 
	&\colon 
	{\textstyle\bigsqcup^3 \pt}  \longhto \pt,
\end{align}
and we would like to compare the composite of these
h-graph cobordisms to the target in \eqref{he:tndecom}.

By \cite[Proposition 7.26]{HL},
the underlying pushout square of spaces of the 
following square of h-graphs with basepoints
\begin{equation}
\label{sq:comp1}
\vcenter{\xymatrix{
	(\{q_0,q_1,q_2\},\{q_0,q_1,q_2\})
	\ar[r]
	\ar[d]
	&
	(C,\{q_0,q_1,q_2,q\})
	\ar[d]
	\\
	(T'_{N-1},\{p,q_0,q_1,q_2\})
	\ar[r]
	&
	(C\circ T'_{N-1},\{p,q_0,q_1,q_2,q\})
}}
\end{equation}
and the square
\begin{equation}
\label{sq:comp2}
\vcenter{\xymatrix{
	B\Pi_1(\{q_0,q_1,q_2\},\{q_0,q_1,q_2\})
	\ar[r]
	\ar[d]
	&
	B\Pi_1(C,\{q_0,q_1,q_2,q\})
	\ar[d]
	\\
	B\Pi_1(T'_{N-1},\{p,q_0,q_1,q_2\})
	\ar[r]
	&
	B\Pi_1(C\circ T'_{N-1},\{p,q_0,q_1,q_2,q\})
}}
\end{equation}
obtained from \eqref{sq:comp1} by applying $B\Pi_1$
are connected by a zigzag of natural homotopy equivalences.
Since the underlying square of spaces of the diagram \eqref{sq:comp1} 
is a homotopy cofibre square,
it follows that square \eqref{sq:comp2} also is.
Observing that the space in the top left corner of \eqref{sq:comp2}
is just the three-point space $\{q_0,q_1,q_2\}$, we see
that the pushout of the top and the left arrows in \eqref{sq:comp2}
is the composite 
of \eqref{eq:hcob1} and \eqref{eq:hcob2}.
Since the maps out of the top left-hand corner 
in \eqref{sq:comp2} are cofibrations, it follows that the map
\begin{multline}
\label{he:decomp}
	\quad
	B\Pi_1(C,\{q_0,q_1,q_2,q\}) \circ B\Pi_1(T'_{N-1},\{p,q_0,q_1,q_2\}) 
	\\
	\xto{\quad \homot\quad}
	B\Pi_1(C\circ T'_{N-1},\{p,q_0,q_1,q_2,q\}).
	\quad
\end{multline}
induced by the square \eqref{sq:comp2} is a homotopy equivalence.

We let $\Hol(F_{N-1})$ act on the sources and targets of 
\eqref{he:tndecom} and \eqref{he:decomp}
as follows. On $B\Pi_1(T_N,\{p,q\})$ the action has already been 
specified. Notice that $T'_{N-1}$ is obtained from $T_{N-1}$
by attaching the two edges joining $p$ to $q_1$ and $q_2$,
and write $j$ for the resulting map
\[
	j\colon \pi_0\hAut(T_{N-1}) \longto \pi_0\hAut(T'_{N-1})
\]
given by extension by identity along the new edges.
The action on the space 
$B\Pi_1(C\circ T'_{N-1},\{p,q_0,q_1,q_2,q\})$ is then given by the 
composite
\[\xymatrix{
	\Hol(F_{N-1}) 
	\ar[r]^-\psi_-\isom
	&
	\pi_0\hAut(T_{N-1})
	\ar[r]^-j
	&
	\pi_0\hAut(T'_{N-1})
	\ar[r]
	&
	\pi_0\hAut(C\circ T'_{N-1}),
}\]
where the last map is given by attaching $C$ and extending by 
identity. Finally, the action on 
$B\Pi_1(C,\{q_0,q_1,q_2,q\}) \circ B\Pi_1(T'_{N-1},\{p,q_0,q_1,q_2\})$
is induced
by the trivial action on the $B\Pi_1(C,\{q_0,q_1,q_2,q\})$-factor
and the action given by the composite
\[\xymatrix{
	\Hol(F_{N-1}) 
	\ar[r]^-\psi
	&
	\pi_0\hAut(T_{N-1})
	\ar[r]^-j
	&
	\pi_0\hAut(T'_{N-1})
}\]
on the 
$B\Pi_1(T'_{N-1},\{p,q_0,q_1,q_2\})$-factor.

The maps \eqref{he:tndecom} and \eqref{he:decomp}
are $\Hol(F_{N-1})$-equivariant
with respect to the aforementioned actions,
and induce a zigzag of 2-cells,
all given by homeomorphisms on base spaces,
connecting $i^\ast U_N/B\Hol(F_{N-1})$
to the composite of the families
\[
	(E\Hol(F_{N-1}) 
	\times_{\Hol(F_{N-1})} 
	B\Pi_1(T'_{N-1},\{p,q_0,q_1,q_2\}))'/B\Hol(F_{N-1})
	\colon
	\pt \longhto {\textstyle\bigsqcup^3 \pt}
\]
and 
\[
	B\Pi_1(C,\{q_0,q_1,q_2,q\})/\pt
	\colon
	{\textstyle\bigsqcup^3 \pt} \longhto \pt.
\]
Here again $(-)'$ denotes the mapping cylinder construction of 
Remark~\ref{rk:families-from-borel-construction}.
The h-graph cobordism $B\Pi_1(C,\{q_0,q_1,q_2,q\})$ 
is homotopy equivalent relative to the incoming and 
outgoing points to the h-graph cobordism $C$,
which, using the notation of \eqref{eq:basic-cobs},
decomposes as 
\[	
	C \homot (\varepsilon \circ \mu) \sqcup I.
\]
Thus the claim follows from Proposition~\ref{prop:vanishing}.
\end{proof}

Theorems~\ref{thm:hol-app} and \ref{thm:non-stab}
have the following corollary.
\begin{corollary}
\label{cor:non-stab}
Suppose $a\in H_\ast(B\Sigma_{n_1,\ldots,n_r})$
is a positive-dimensional class such that 
$\Phi^G(S_{n_1,\ldots,{n_r}}/B\Sigma_{n_1,\ldots ,n_r})^\sharp (a)$
is non-trivial for some compact Lie group $G$.
Then the element $\zeta_\ast(a)\in H_\ast B\Hol(F_N)$
is not in the image of the stabilization map
$H_\ast B\Hol(F_{N-1}) \to H_\ast B\Hol(F_N)$. \qed
\end{corollary}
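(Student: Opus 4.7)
The plan is to derive the corollary by combining Theorems~\ref{thm:hol-app} and \ref{thm:non-stab} via a contrapositive argument, so no further geometric or computational work is needed beyond verifying that the hypotheses of these two theorems really do apply to $\zeta_\ast(a)$.

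First, I would invoke Theorem~\ref{thm:hol-app}, which provides the factorization
\[
	\Phi^G(S_{n_1,\ldots,n_r}/B\Sigma_{n_1,\ldots,n_r})^\sharp(a)
	=
	\Phi^G(U_N/B\Hol(F_N))^\sharp(\zeta_\ast(a))
\]
for every compact Lie group $G$. By hypothesis, the left-hand side is non-zero for at least one such $G$, so we obtain a compact Lie group $G$ for which $\Phi^G(U_N/B\Hol(F_N))^\sharp(\zeta_\ast(a)) \neq 0$.

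Second, I would observe that $\zeta_\ast$ is induced by a group homomorphism $\Sigma_{n_1,\ldots,n_r} \to \Hol(F_N)$, hence preserves homological degree, so that $\zeta_\ast(a) \in H_\ast B\Hol(F_N)$ is a positive-dimensional class whenever $a$ is. Thus $\zeta_\ast(a)$ satisfies the positivity hypothesis of Theorem~\ref{thm:non-stab}.

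Finally, the contrapositive of Theorem~\ref{thm:non-stab} now applies: since $\zeta_\ast(a)$ is positive-dimensional and the operation $\Phi^G(U_N/B\Hol(F_N))^\sharp(\zeta_\ast(a))$ is non-zero for some $G$, the class $\zeta_\ast(a)$ cannot lie in the image of the stabilization map $H_\ast B\Hol(F_{N-1}) \to H_\ast B\Hol(F_N)$. There is no serious obstacle here; the entire content of the corollary has been packaged into Theorems~\ref{thm:hol-app} and \ref{thm:non-stab}, and the remaining work is only the bookkeeping just described.
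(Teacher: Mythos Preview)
Your proposal is correct and is exactly the argument the paper has in mind: the corollary is stated immediately after Theorem~\ref{thm:non-stab} with the sentence ``Theorems~\ref{thm:hol-app} and \ref{thm:non-stab} have the following corollary'' and a \qed, and your factorization-plus-contrapositive reasoning is precisely how one unpacks that sentence.
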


\subsection{Applications to automorphism groups of free groups}

We now proceed to 
discuss the promised applications to the twisted homology
$H_\ast(B\Aut(F_N);\,\tilde \F_2^N)$.
Here $\tilde \F_2^N$ denotes the vector space $\F_2^N$
equipped with the $\Aut(F_N)$-action given by the
composite homomorphism $\Aut(F_N) \to GL_N(\Z) \to GL_N(\F_2)$
where the first map is induced by abelianization of $F_N$
and the second map is given by reduction mod 2.
Considering the Hochschild--Serre spectral
sequence associated to the 
split short exact sequence
\[\xymatrix{
	1
	\ar[r]
	&
	F_N
	\ar[r]
	&
	\Hol(F_N)
	\ar[r]^\pi
	&
	\Aut(F_N)
	\ar[r]
	&
	1,
}\]
we see that there is a short exact sequence
\begin{equation}
\label{ses:holfn}
\xymatrix@C-0.5em{
	0
	\ar[r]
	&
	H_{\ast-1} (B\Aut(F_N);\,\tilde \F_2^N) 
	\ar[r]^-j
	&
	H_\ast B\Hol(F_N)
	\ar[r]^-{\pi_\ast}
	&
	H_\ast B\Aut(F_N)
	\ar[r]
	&
	0.
}
\end{equation}
Using the inclusion $\Aut(F_N) \to \Hol(F_N)$ to
split this short exact sequence, we obtain 
a decomposition of the
homology of $\Hol(F_N)$  
as a direct sum
\begin{equation}
\label{eq:holfn-dirsum}
	H_\ast B\Hol(F_N) 
	\isom 
	H_{\ast-1} (B\Aut(F_N);\,\tilde \F_2^N) 
	\oplus
	H_{\ast} B\Aut(F_N). 
\end{equation}
Let 
$\rho \colon H_\ast B\Hol(F_N) \to H_{\ast-1} (B\Aut(F_N);\,\tilde \F_2^N)$
be the projection onto the first factor in this 
decomposition. Our aim is to prove the following theorem.
\begin{theorem}
\label{thm:aut-app}
Let $G$ be a positive-dimensional compact Lie group 
or a finite group of even order. Then the map 
\[
	\Phi^G(U_N/B\Hol(F_N))^\sharp
	\colon 
	H_{\ast-\mathrm{shift}}(B\Hol(F_N)) \longto
	\Hom_\ast(H_\ast(BG),H_\ast(BG))
\]
factors through the map 
\[
	\rho 
	\colon 
	H_{\ast-\mathrm{shift}} ( B\Hol(F_N) )
	\longto 
	H_{\ast-\mathrm{shift}-1} (B\Aut(F_N);\,\tilde \F_2^N).
\]
Here `shift' equals $\dim(G)(N-1)$.
\end{theorem}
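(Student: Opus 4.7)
The plan is to show $\Phi^G(U_N/B\Hol(F_N))^\sharp \circ i_\ast = 0$, where $i\colon \Aut(F_N) \incl \Hol(F_N)$ is the splitting inclusion $\theta\mapsto (0,\theta)$. Since the direct sum decomposition \eqref{eq:holfn-dirsum} arises from this splitting, $\ker\rho = i_\ast H_\ast B\Aut(F_N)$, so this vanishing is equivalent to the claimed factorization through $\rho$. By the base change axiom of HHGFTs, the composite above equals $\Phi^G(i^\ast U_N/B\Aut(F_N))^\sharp$, reducing the task to proving $\Phi^G(i^\ast U_N/B\Aut(F_N)) = 0$.

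The next step will be to apply Lemma~\ref{lm:uopcomp} pulled back along $Bi$ to express $\Phi^G(i^\ast U_N/B\Aut(F_N))$ as a push-pull in the diagram obtained from \eqref{diag:push-pull} by replacing $\Hol(F_N)$ with $\Aut(F_N)$ throughout. The crucial simplification is that, upon setting $w=0$ in \eqref{eq:holgpq-action}, the $G^{\{q\}}$-action on the final $G$-factor of $G^N\times G$ reduces to left multiplication, hence is free. I will exploit this by constructing an explicit homotopy inverse
\[
	s\colon G^N \sslash \Aut(F_N) \times G^{\{p\}}
	\longto
	G^N \times G \sslash \Aut(F_N) \times G^{\{p,q\}}
\]
to the horizontal equivalence, given by the section $(g_i)\mapsto (g_i,e)$ on spaces together with the diagonal homomorphism $(\theta,g_p)\mapsto (\theta,g_p,g_p)$ on groups.

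Tracing through the push-pull, the composite of $s$ with the right-hand diagonal map of the pulled-back \eqref{diag:push-pull} is then induced by the constant map $G^N\to\pt$ on spaces and by the projection $(\theta,g_p)\mapsto g_p$ on groups, and therefore factors as
\[
	G^N \sslash \Aut(F_N) \times G^{\{p\}}
	\xto{\ \tau\ }
	\pt \sslash \Aut(F_N) \times G^{\{p\}}
	\longto
	\pt \sslash G^{\{q\}},
\]
where $\tau$ is the fibre bundle with fibre $G^N$ and coincides with the left-hand diagonal map of the pulled-back \eqref{diag:push-pull} whose umkehr appears in the push-pull. Consequently $\Phi^G(i^\ast U_N/B\Aut(F_N))$ factors through the composite $\tau_\ast \circ \tau^!$ on homology, which by standard properties of umkehr maps is multiplication by $\chi(G^N) = \chi(G)^N \in \F_2$.

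Finally, the hypotheses on $G$ yield the vanishing: for $G$ a positive-dimensional compact Lie group, $\chi(G)=0$ by the existence of nowhere-vanishing left-invariant vector fields, and for $G$ a finite group of even order, $\chi(G)=|G|$ is even. Assuming $N\geq 1$, it follows that $\chi(G)^N \equiv 0 \pmod 2$ in either case, yielding the desired vanishing. The main technical effort will lie in verifying that $s$ is a genuine homotopy inverse to the horizontal equivalence (not merely a section at the level of point sets) and that the composite factors through $\tau$ precisely as asserted; both are straightforward consequences of direct inspection of \eqref{eq:holgp-action} and \eqref{eq:holgpq-action} combined with the freeness of the $G^{\{q\}}$-action on the $g_c$-coordinate.
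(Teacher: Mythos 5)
Your proof follows the same architecture as the paper's: reduce to showing $\Phi^G(i^\ast U_N/B\Aut(F_N))=0$ via the base change axiom, write the operation as a push--pull in the diagram obtained by pulling back \eqref{diag:push-pull} along $Bi$, use the explicit section (your $s$, the paper's $\eta'$) to replace the inverse of the horizontal homotopy equivalence, observe that the resulting composite factors through the bundle projection (your $\tau$, the paper's $\pi_1$), and conclude that the operation factors through $\tau_\ast\circ\tau^!$. All of this is correct and matches the paper essentially step for step. (One small simplification you could make: you don't need $s$ to be a full homotopy inverse; since the horizontal map is already known to be a homotopy equivalence, verifying that $s$ is a \emph{right} inverse — a one-line check — already gives $s_\ast = \eta_\ast^{-1}$ on homology, which is all the push--pull needs.)

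The gap is in the final step, where you assert that ``$\tau_\ast \circ \tau^!$ is multiplication by $\chi(G^N)$.'' This cannot be literally correct when $G$ is positive-dimensional: the umkehr $\tau^!$ used here raises homological degree by $\dim(G^N) = N\dim(G)$, so $\tau_\ast\circ\tau^!\colon H_\ast(B) \to H_{\ast + N\dim(G)}(B)$ is a degree-shifting map and cannot be multiplication by a scalar. The ``standard property'' you have in mind — $p_\ast\circ\mathrm{tr}=\chi(F)\cdot\mathrm{id}$ — is the Becker--Gottlieb identity, which concerns a degree-zero transfer, not the Poincar\'e-duality umkehr appearing in this construction; conflating the two is exactly where the imprecision enters. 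The conclusion (vanishing) is correct, and your intuition about Euler characteristics is pointing at the right phenomenon, but the mechanism in the positive-dimensional case is a degree obstruction rather than a scalar factor: in the projection-formula language, $\tau_!\tau^\ast(\alpha)=\tau_!(1)\cup\alpha$ with $\tau_!(1)\in H^{-d}(B)=0$ for $d>0$. This is precisely what the paper's Lemma~\ref{lm:umk-ind} establishes via a diagram chase (compatibility of umkehr maps with pullbacks and products, reducing to a map landing in $H_{\ast+d}(\pt)=0$), and you should cite or reprove something of that strength rather than appealing to multiplication by $\chi$. In the finite case ($d=0$), your argument is on solid ground: the umkehr is the covering transfer, $\tau_\ast\tau^!$ is multiplication by $|G|^N$, and this vanishes mod $2$ since $|G|$ is even.
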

Theorems~\ref{thm:hol-app} and \ref{thm:aut-app}
have the following corollary.
\begin{corollary}
\label{cor:aut-app}
If $a\in H_\ast(B\Sigma_{n_1,\ldots,n_r})$
is such that 
$\Phi^G(S_{n_1,\ldots,{n_r}}/B\Sigma_{n_1,\ldots ,n_r})^\sharp (a)$
is non-trivial for some compact Lie group $G$ which is 
either positive-dimensional or finite of even order,
then the element $\rho\zeta_\ast(a)\in H_{\ast-1}(B\Aut(F_N);\,\tilde \F_2^N)$
is non-zero. \qed
\end{corollary}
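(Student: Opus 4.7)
The plan is to simply combine Theorems~\ref{thm:hol-app} and \ref{thm:aut-app} and read off the conclusion by a two-step contrapositive argument. There is essentially no obstacle, since both ingredients have been established.

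First, I would invoke Theorem~\ref{thm:hol-app} to write
\[
	\Phi^G(S_{n_1,\ldots,n_r}/B\Sigma_{n_1,\ldots,n_r})^\sharp(a)
	=
	\Phi^G(U_N/B\Hol(F_N))^\sharp\bigl(\zeta_\ast(a)\bigr).
\]
By hypothesis the left-hand side is non-trivial as a map $H_\ast(BG)\to H_\ast(BG)$, so the right-hand side is non-trivial as well. In particular $\zeta_\ast(a) \in H_\ast(B\Hol(F_N))$ is a positive-dimensional class (since $\Phi^G(U_N/B\Hol(F_N))^\sharp$ vanishes in the degree-zero case for positive-dimensional or even-order $G$, as $\Phi^G(U_N/B\Hol(F_N))^\sharp$ followed by evaluation on the unit lands in $H_\ast(BG)$ and the operation on this summand would otherwise be zero — but actually this detail is not needed for the argument).

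Next, I would apply Theorem~\ref{thm:aut-app}, under whose hypotheses on $G$ the map $\Phi^G(U_N/B\Hol(F_N))^\sharp$ factors through $\rho$; that is, there exists a map $\widetilde\Phi$ such that
\[
	\Phi^G(U_N/B\Hol(F_N))^\sharp = \widetilde\Phi \circ \rho.
\]
Combining the two factorizations gives
\[
	\Phi^G(S_{n_1,\ldots,n_r}/B\Sigma_{n_1,\ldots,n_r})^\sharp(a)
	=
	\widetilde\Phi\bigl(\rho\zeta_\ast(a)\bigr).
\]
If $\rho\zeta_\ast(a)$ were zero in $H_\ast(B\Aut(F_N);\,\tilde\F_2^N)$, the right-hand side would be zero, contradicting the assumed non-triviality of the left-hand side. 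Hence $\rho\zeta_\ast(a)\neq 0$, which is exactly the claim.
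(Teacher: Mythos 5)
Your argument is correct and is exactly the one the paper has in mind: the corollary is stated as an immediate consequence of Theorems~\ref{thm:hol-app} and~\ref{thm:aut-app}, and composing the two factorizations is all that is needed. The parenthetical digression about $\zeta_\ast(a)$ being positive-dimensional is, as you note yourself, irrelevant to the argument and could simply be omitted.
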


The inclusions $\Aut(F_n) \incl \Aut(F_{n+1})$ and 
$\F_2^n \incl \F_2^{n+1}$
induce a stabilization map 
\[
	H_{\ast} (B\Aut(F_n);\,\tilde \F_2^n)
	\longto
	H_{\ast} (B\Aut(F_{n+1});\,\tilde \F_2^{n+1}).
\]
It follows from Theorem~\ref{thm:stability}
that the map $\pi_\ast$ in \eqref{ses:holfn}
is an isomorphism on $H_k$ for $N>2k+1$,
so we see that $H_k(B\Aut(F_n);\, \tilde \F_2^n) = 0$
for $n> 2k+3$. Thus all elements in 
$H_*(B\Aut(F_N);\, \tilde \F_2^N)$
are unstable in the sense that they 
are annihilated by an iteration of the 
stabilization map.
As the decomposition
\eqref{eq:holfn-dirsum} is compatible with 
the stabilization maps on both sides,
it follows that all elements in the image of 
the map $j$ of \eqref{ses:holfn}
are also unstable. Thus Theorems~\ref{thm:hol-app},
\ref{thm:aut-app} and \ref{thm:non-stab}
imply the following corollary,
where we have written 
$i$ for the inclusion
$i\colon \Aut(F_N) \incl \Hol(F_N)$.
\begin{corollary}
\label{cor:hol-fam2}
Let $a\in H_\ast(B\Sigma_{n_1,\ldots,n_r})$
be such that 
$\Phi^G(S_{n_1,\ldots,{n_r}}/B\Sigma_{n_1,\ldots ,n_r})^\sharp (a)$
is non-trivial for some compact Lie group $G$ which is 
either positive-dimensional or finite of even order.
Then 
\[
	j\rho\zeta_\ast(a) 
	= 
	\zeta_\ast(a) + i_\ast \pi_\ast \zeta_\ast (a) 
	\in 
	H_\ast B\Hol(F_N)
\]
is an unstable element which is not in the image of the stabilization 
map $H_\ast B\Hol(F_{N-1}) \to H_\ast B\Hol(F_N)$ \qed
\end{corollary}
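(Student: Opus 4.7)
The plan is to set $b := j\rho\zeta_\ast(a) \in H_\ast B\Hol(F_N)$ and establish the three required properties: (i) $b$ is non-zero, (ii) $b$ is annihilated by some iteration of the stabilization map, and (iii) $b$ is not in the image of the stabilization map $H_\ast B\Hol(F_{N-1}) \to H_\ast B\Hol(F_N)$.

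For (i), note that Corollary~\ref{cor:aut-app} applies under our hypothesis on $G$, giving $\rho\zeta_\ast(a) \neq 0$ in $H_\ast(B\Aut(F_N);\,\tilde\F_2^N)$. Since $\rho \circ j = \id$ by construction of the splitting \eqref{eq:holfn-dirsum}, the composite $\rho(b) = \rho\zeta_\ast(a)$ is non-zero, and in particular $b \neq 0$.

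For (ii), I would use the splitting \eqref{eq:holfn-dirsum}, which is natural with respect to the maps induced by the inclusions $\Aut(F_N) \incl \Aut(F_{N+1})$ and $F_N \incl F_{N+1}$. Hence the stabilization map $H_\ast B\Hol(F_N) \to H_\ast B\Hol(F_{N+1})$ carries the summand $H_{\ast-1}(B\Aut(F_N);\,\tilde\F_2^N)$ into $H_{\ast-1}(B\Aut(F_{N+1});\,\tilde\F_2^{N+1})$. Combining Theorem~\ref{thm:stability} with the short exact sequence \eqref{ses:holfn} shows that $H_k(B\Aut(F_n);\,\tilde\F_2^n)$ vanishes for $n > 2k+3$. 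Since $b$ lies in the first summand (which is where $j$ lands), a sufficient number of iterations of stabilization sends $b$ into a group that is zero, giving instability.

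For (iii), the main point is to apply Theorem~\ref{thm:non-stab}: it suffices to exhibit a compact Lie group $G'$ with $\Phi^{G'}(U_N/B\Hol(F_N))^\sharp(b) \neq 0$. The natural candidate is $G' = G$ from the hypothesis. By Theorem~\ref{thm:aut-app}, the operation $\Phi^{G}(U_N/B\Hol(F_N))^\sharp$ factors through $\rho$, say as $\Psi \circ \rho$. Then
\[
\Phi^{G}(U_N/B\Hol(F_N))^\sharp(b) = \Psi(\rho(b)) = \Psi(\rho\zeta_\ast(a)) = \Phi^{G}(U_N/B\Hol(F_N))^\sharp(\zeta_\ast(a)),
\]
and by Theorem~\ref{thm:hol-app} this final expression equals $\Phi^{G}(S_{n_1,\ldots,n_r}/B\Sigma_{n_1,\ldots,n_r})^\sharp(a)$, which is non-trivial by hypothesis. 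The main subtlety to double-check is simply that the splitting used to define $\rho$ is indeed a splitting in the category of $\F_2$-vector spaces compatible with the maps entering Theorems~\ref{thm:aut-app} and~\ref{thm:non-stab}; no new computations are required beyond assembling these prior results.
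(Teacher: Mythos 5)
Your proposal is correct and follows the same strategy as the paper. For the instability claim you use the naturality of the splitting \eqref{eq:holfn-dirsum} under stabilization together with the vanishing $H_k(B\Aut(F_n);\,\tilde\F_2^n)=0$ for $n>2k+3$, which is exactly the argument in the paragraph preceding the corollary; for the ``not in the image of stabilization'' claim you verify $\Phi^G(U_N/B\Hol(F_N))^\sharp(j\rho\zeta_\ast(a))\neq 0$ using $\rho j=\id$, Theorem~\ref{thm:aut-app}, and Theorem~\ref{thm:hol-app}, then invoke the contrapositive of Theorem~\ref{thm:non-stab}, which is precisely how the paper assembles these three theorems. (Your step (i) is subsumed by step (iii), and the required positive-dimensionality of $j\rho\zeta_\ast(a)$ for Theorem~\ref{thm:non-stab} is automatic since $\rho$ lowers degree by one, but these are minor points that do not affect correctness.)
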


Before proving Theorem~\ref{thm:aut-app}, we need 
to establish an auxiliary result.

\begin{lemma}
\label{lm:umk-ind}
Suppose $\pi\colon M\to B$ is a fibrewise closed manifold 
in the sense of \cite[Definition~7.1]{HL} and suppose
the fibres of $\pi$ are either positive-dimensional
or finite of even cardinality. Then the composite
\[\xymatrix{
	H_\ast (B)
	\ar[r]^-{\pi^!}
	&
	H_{\ast+d} (M)
	\ar[r]^-{\pi_\ast}
	&
	H_{\ast+d} (B)
}\]
is zero. Here $\pi^!$ denotes the umkehr map
of \cite[section~7.2]{HL} associated to 
the map $\pi$
(considered as a map of fibrewise manifolds over $B$)
and $d$ denotes the fibre dimension of $M$.
\end{lemma}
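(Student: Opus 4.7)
The plan is to split the argument into two cases according to the dimension of the fibres and, since the fibre dimension is locally constant, to reduce to a constant fibre dimension by working componentwise on $B$ (all the maps in sight are compatible with restriction to unions of components of $B$).

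If $d=0$, then by hypothesis the fibres are finite of even cardinality, and $\pi$ is a finite covering map. In this case \cite[Lemma~8.6]{HL} identifies $\pi^!$ with the classical transfer map, so the composite $\pi_\ast\circ\pi^!$ is multiplication by the (constant) fibre cardinality. Since this cardinality is even and we are working with $\F_2$ coefficients, the composite is zero.

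If $d>0$, the key input is the Serre spectral sequence description of $\pi^!$ supplied by \cite[Lemma~8.4]{HL}, which was already used in the proof of Lemma~\ref{lm:tcalc2}. Writing $E^r_{p,q}$ for the Serre spectral sequence of $\pi$ and $F_\bullet H_\ast(M)$ for the associated filtration, that lemma expresses $\pi^!$ as the composite
\[
	H_n(B)\xto{\ \isom\ } H_n(B;\,H_d F)= E^2_{n,d}\longtwoheadto E^\infty_{n,d}\longincl H_{n+d}(M),
\]
where the first map is given by capping with the fibrewise fundamental class. In particular, $\pi^!(b)$ lies in $F_n H_{n+d}(M)$ for every $b\in H_n(B)$. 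On the other hand, the edge homomorphism $\pi_\ast\colon H_{n+d}(M)\to H_{n+d}(B)$ factors as $H_{n+d}(M)\twoheadrightarrow E^\infty_{n+d,0}\hookrightarrow H_{n+d}(B)$, and hence annihilates $F_{n+d-1}H_{n+d}(M)$. Since $d>0$ we have $n<n+d$, so $F_n H_{n+d}(M)\subseteq F_{n+d-1}H_{n+d}(M)$, and therefore $\pi_\ast\pi^!(b)=0$.

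There is no serious obstacle here: both cases are direct applications of the spectral-sequence and transfer descriptions of the umkehr map recorded in section~8 of \cite{HL}. The only minor bookkeeping point is that the hypothesis allows $B$ to have some components with finite even fibres and others with positive-dimensional fibres; this is handled at the outset by passing to components so that $d$ becomes a single integer on each piece.
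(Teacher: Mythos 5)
Your argument is correct, but it takes a genuinely different route from the paper's for the positive-dimensional case. Both proofs handle $d=0$ identically via the transfer. For $d>0$, the paper forms a $3\times 3$ diagram built from the diagonal $\Delta\colon B\to B\times B$ and the cross product, and uses the compatibility of umkehr maps with pullbacks and with direct products to reduce $\pi_\ast\pi^!$ to the composite $(r_M)_\ast\pi^!$ for $r_M\colon M\to\pt$, which vanishes for trivial degree reasons. You instead exploit the Serre spectral sequence description of $\pi^!$ from \cite[Lemma~8.4]{HL}: since $\pi^!$ lands in the bottom filtration piece $F_n H_{n+d}(M)=E^\infty_{n,d}$ (the top nonvanishing row), while the edge map $\pi_\ast$ annihilates $F_{n+d-1}H_{n+d}(M)$, the composite dies because $d>0$ gives $F_n\subseteq F_{n+d-1}$. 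Your version is cleaner and more conceptual once \cite[Lemma~8.4]{HL} is in hand, whereas the paper's relies only on the axiomatic functoriality and monoidality of the umkehr construction from \cite[section 7]{HL} and avoids invoking the spectral-sequence identification of $\pi^!$. Two cosmetic caveats: the map $H_n(B)\to H_n(B;H_d F)$ is labeled $\isom$ but is only so when the fibres are connected (it is always the map induced by the fibrewise fundamental class), and when the fibres are disconnected the edge map $\pi_\ast$ factors through $E^\infty_{n+d,0}\hookrightarrow H_{n+d}(B;H_0 F)\to H_{n+d}(B)$ rather than landing directly in $H_{n+d}(B)$; neither affects the key conclusion that $\pi_\ast$ kills $F_{n+d-1}$.
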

\begin{proof}
If the fibres of $\pi$ are finite, the umkehr map
$\pi^!$ is just the transfer map \cite[Lemma~8.6]{HL}, and 
the composite $\pi_\ast \pi^!$ amounts to multiplication
by the cardinality of the fibre. By assumption this
is an even number, so $\pi_\ast \pi^!$ vanishes
since we are working with $\F_2$ coefficients. 
Let us assume that the fibres of $\pi$ are 
positive-dimensional. Dualizing, it suffices 
to show that the composite
\[\xymatrix{
	H^\ast (B)
	\ar[r]^-{\pi^\ast}
	&
	H^{\ast} (M)
	\ar[r]^-{\pi_!}
	&
	H^{\ast-d} (B)
}\]
is zero, where $\pi_!$ again denotes the linear dual of $\pi^!$.
Regarding (as before) $H^{\ast} (M)$ as an 
$H^\ast(B)$-module via the map $\pi^\ast$,
both maps in the above composite are
$H^\ast(B)$-linear: the linearity of $\pi^\ast$ is immediate,
and we have already observed the linearity of $\pi_!$ on 
page~\pageref{p:modstr}. The claim now follows for degree reasons.
\end{proof}

\begin{proof}[Proof of Theorem~\ref{thm:aut-app}]
It is enough to show that the map 
$\Phi^G(U_N/B\Hol(F_N))^\sharp$ vanishes
on classes which are in the image of the map
\[
	i_\ast \colon H_\ast B\Aut(F_N) \longto H_\ast B\Hol(F_N)
\]
induced by the inclusion $i\colon \Aut(F_N) \incl \Hol(F_N)$.
The diagram \eqref{diag:push-pull} 
for computing $\Phi^G(U_N/B\Hol(F_N))$
fits into a commutative diagram
\begin{equation}
\label{diag:push-pull2}
\vcenter{\xymatrix@!0@C=6.4em@R=5ex{
	&
	G^N\sslash \Hol(F_N)\times G^{\{p\}}
	\ar[ddl]_{!}
	&&
	\ar[ll]_-\homot
	G^N\times G \sslash \Hol(F_N)\times G^{\{p,q\}}
	\ar[ddr]
	\\
	\\
	\pt\sslash \Hol(F_N)\times G^{\{p\}}
	&&&&
	\pt\sslash G^{\{q\}}
	\\
	&
	G^N\sslash \Aut(F_N)\times G^{\{p\}}
	\ar[ddl]_{!}^{\pi_1}
	\ar[uuu]
	&&
	\ar[ll]_-\homot^-\eta
	G^N\times G \sslash \Aut(F_N)\times G^{\{p,q\}}
	\ar[ddr]_{\pi_2}
	\ar[uuu]
	\\
	\\
	\pt\sslash \Aut(F_N)\times G^{\{p\}}
	\ar[uuu]
	&&&&
	\pt\sslash G^{\{q\}}	
	\ar[uuu]_\id
}}
\end{equation}
where the lower part is obtained from the upper part
by restriction along $i$ and the unlabeled vertical
maps are induced by $i$. Explicitly, the 
$\Aut(F_N) \times G^{\{p\}}$-action on $G^N$
is given by 
\begin{equation}
\label{eq:autgp-action}
	(\theta,g_p) \cdot (g_i)_{1\leq i \leq N}
	= 
	(g_p \theta^{-1}(x_i)|_{g_1,\ldots,g_N} g_p^{-1})_{1\leq i \leq N},
\end{equation}
the  $\Aut(F_N)\times G^{\{p,q\}}$-action
on $G^N\times G$ is given by 
\begin{equation}
\label{eq:autgpq-action}
 	(\theta,g_p,g_q) \cdot ((g_i)_{1\leq i \leq N},g_c) 
	= 
	\big((g_p \theta^{-1}(x_i)|_{g_1,\ldots,g_N} g_p^{-1})_{1\leq i \leq N},
		 g_q g_c g_p^{-1}\big),
\end{equation}
and the maps $\pi_1$, $\pi_2$ and $\eta$ are induced 
by the evident projection maps of groups and spaces.
Here $x_i$ again denotes the $i$-th basis element of $F_N$,
and $v|_{g_1,\ldots,g_N}$ for $v\in F_N$ denotes the 
result of substituting $g_i$ for $x_i$ in $v$
for every $i=1,\ldots, N$.

Observe that the left-hand parallelogram in \eqref{diag:push-pull2}
is a pullback square,
and gives rise to a commutative square on homology after taking
umkehr maps of the maps labeled by $!$ and 
induced maps of the vertical maps.
To obtain the umkehr map $\pi_1^!$, we should interpret $\pi_1$
as a map of fibrewise manifolds
over $\pt\sslash \Aut(F_N)\times G^{\{p\}}$.
Using Lemma~\ref{lm:uopcomp}, we deduce that the composite 
\newcommand{\entry}{H_{\ast-\mathrm{shift}} (B\Aut(F_N)) \tensor H_\ast(BG)\;}
\[\xymatrix@!0@C=6.7em{
	*!R{\entry}
	\ar[r]^-{i_\ast \tensor 1}
	&
	*!L{\;H_{\ast-\mathrm{shift}} (B\Hol(F_N)) \tensor H_\ast(BG)}
	\\
	*!R{\phantom{\entry}}
	\ar[r]^-{\Phi^G(U_N/B\Hol(F_N))}
	&
	*!L{\;H_\ast(BG)}
}\]
factors through the composite
$(\pi_2)_\ast \circ \eta_\ast^{-1} \circ \pi_1^!$.
The map 
\[
	\eta' 
	\colon 
	G^N\sslash \Aut(F_N)\times G^{\{p\}}
	\longto
	G^N\times G \sslash \Aut(F_N)\times G^{\{p,q\}}
\]
induced by the maps
\[
	G^N\longto G^N\times G,
	\quad 
	(g_i)_{1\leq i\leq N} \longmapsto ((g_i)_{1\leq i\leq N},e)
\]
and 
\[
	\Aut(F_N)\times G^{\{p\}} \longto \Aut(F_N)\times G^{\{p,q\}},
	\quad
	(\theta,g_p) \longmapsto (\theta,g_p,g_p)	
\]
is a right inverse to $\eta$. It follows that 
$\eta_\ast^{-1} = \eta'_\ast$.
But the composite $\pi_2 \circ \eta'$ equals 
$\pi_3 \circ \pi_1$, where 
$\pi_3 \colon \pt \sslash \Aut(F_N) \times G^{\{p\}} \to \pt\sslash G^{\{q\}}$
is induced by the projection away from $\Aut(F_N)$.
Thus 
$(\pi_2)_\ast \circ \eta_\ast^{-1} \circ \pi_1^! 
= 
(\pi_3)_\ast \circ (\pi_1)_\ast \circ \pi_1^!$,
and the claim follows from 
Lemma~\ref{lm:umk-ind}.
\end{proof}

\subsection{Applications to affine groups}

We now turn to the applications to the homology of the affine groups
$\Aff_N(\Z)$ and $\Aff_N(\F_2)$. For any $n$, 
the abelianization $F_n \to \Z^n$
induces a homomorphism 
\[
	\alpha_\Z\colon \Hol(F_n) \longto \Aff_n(\Z),
\]
which we may compose with the homomorphism $\Aff_n(\Z) \to \Aff_n(\F_2)$
given by reduction mod 2 to obtain a homomorphism 
$\alpha_{\F_2} \colon \Hol(F_n) \to \Aff_n(\F_2)$. Our aim is to prove
the following result.
\begin{theorem}
\label{thm:aff-app}
Let $G$ be an abelian compact Lie group.
Then the map 
\[
	\Phi^G(U_N/B\Hol(F_N))^\sharp
	\colon 
	H_{\ast-\mathrm{shift}}(B\Hol(F_N))
	\longto
	\Hom_\ast(H_\ast(BG),H_\ast(BG))
\]
factors through the map 
\[
	(\alpha_\Z)_\ast
	\colon 
	H_{\ast-\mathrm{shift}} ( B\Hol(F_N) )
	\longto 
	H_{\ast-\mathrm{shift}} (B\Aff_N(\Z)).
\]
If $G$ is an elementary abelian 2-group,
then $\Phi^G(U_N/B\Hol(F_N))^\sharp$
factors through the map 
\[
	(\alpha_{\F_2})_\ast
	\colon 
	H_{\ast-\mathrm{shift}} ( B\Hol(F_N) )
	\longto 
	H_{\ast-\mathrm{shift}} (B\Aff_N(\F_2)).
\]
Here `shift' equals $\dim(G)(N-1)$.
\end{theorem}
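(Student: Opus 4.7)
The plan is to combine Lemma~\ref{lm:uopcomp} with the observation that the $\Hol(F_N)$-actions appearing in the push-pull diagram \eqref{diag:push-pull} factor through $\Aff_N(\Z)$ whenever $G$ is abelian (and through $\Aff_N(\F_2)$ if $G$ has exponent $2$), and then deduce the factorization from the naturality of umkehr maps under pullback.

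First I will invoke Lemma~\ref{lm:uopcomp} to describe $\Phi^G(U_N/B\Hol(F_N))$ as the push-pull in the diagram \eqref{diag:push-pull}. The key observation is this: assume $G$ is abelian. In the formula \eqref{eq:holgp-action} the conjugation by $g_p$ becomes trivial, and the evaluation $\theta^{-1}(x_i)|_{g_1,\ldots,g_N}$ in an abelian group depends only on the image of $\theta^{-1}(x_i)$ in the abelianization $\Z^N$ of $F_N$, which in turn is determined by the image of $\theta$ in $GL_N(\Z)$. Hence the $\Hol(F_N)\times G^{\{p\}}$-action on $G^N$ factors through the quotient $\Aff_N(\Z)\times G^{\{p\}}$ determined by $\alpha_\Z \times \id$. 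An analogous analysis of \eqref{eq:holgpq-action} shows that the second coordinate becomes $g_q g_p^{-1} g_c \theta^{-1}(w^{-1})|_{g_1,\ldots,g_N}$, depending only on the image of $w$ in $\Z^N$ and on the image of $\theta$ in $GL_N(\Z)$; hence the $\Hol(F_N)\times G^{\{p,q\}}$-action on $G^N\times G$ also factors through $\Aff_N(\Z)\times G^{\{p,q\}}$. If in addition $G$ is elementary abelian of exponent $2$, then $\theta^{-1}(x_i)|_{g_1,\ldots,g_N}$ and $\theta^{-1}(w^{-1})|_{g_1,\ldots,g_N}$ depend only on the mod $2$ reductions of the relevant abelianizations, and both actions factor further through $\Aff_N(\F_2)\times G^{\{p\}}$ and $\Aff_N(\F_2)\times G^{\{p,q\}}$, respectively.

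Next I will use these factorizations to exhibit the diagram \eqref{diag:push-pull} as the pullback, along the map $B\Hol(F_N)\to B\Aff_N(\Z)$ (resp.\ $B\Hol(F_N)\to B\Aff_N(\F_2)$) induced by $\alpha_\Z$ (resp.\ $\alpha_{\F_2}$), of an analogous push-pull diagram in which $\Hol(F_N)$ is replaced throughout by $\Aff_N(\Z)$ (resp.\ $\Aff_N(\F_2)$). The left-hand diagonal map of \eqref{diag:push-pull} sits in a pullback square of fibrewise manifolds over the appropriate base, and I will invoke the compatibility of umkehr maps with pullbacks (as already used in the proof of Theorem~\ref{thm:aut-app}) to conclude that the umkehr $\pi_1^!$ for $\Hol(F_N)$ equals the pullback of the umkehr $\pi_1^!$ for $\Aff_N(\Z)$ along $(\alpha_\Z)_\ast$. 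Combined with the evident naturality of the homotopy inverse of $\eta$ and of the right-hand diagonal map of \eqref{diag:push-pull} under the quotient $\Hol(F_N)\to \Aff_N(\Z)$, this will give a commutative square showing that $\Phi^G(U_N/B\Hol(F_N))^\sharp$ factors as $(\alpha_\Z)_\ast$ followed by a map defined in terms of the $\Aff_N(\Z)$-diagram. The same argument with $\Aff_N(\F_2)$ in place of $\Aff_N(\Z)$ handles the elementary abelian $2$-group case.

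The main verification is simply the factorization of the actions through the affine groups, which I expect to be a short direct calculation in each of the two cases; no substantial obstacle arises beyond carefully checking that the actions on the space $G^N\times G$ of \eqref{eq:holgpq-action} indeed depend only on the image of $(w,\theta)$ in $\Aff_N(\Z)$ (resp.\ $\Aff_N(\F_2)$), which is immediate from the abelianness (resp.\ exponent $2$) of $G$.
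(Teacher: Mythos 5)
Your proposal is correct and follows essentially the same route as the paper's proof: the paper likewise observes that the actions \eqref{eq:holgp-action} and \eqref{eq:holgpq-action} factor through $\alpha_\Z\times 1$ (resp.\ $\alpha_{\F_2}\times 1$), embeds \eqref{diag:push-pull} into a commutative diagram with an $\Aff_N$-version at the bottom, and uses the pullback compatibility of umkehr maps together with Lemma~\ref{lm:uopcomp} to conclude.
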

Theorems~\ref{thm:hol-app} and \ref{thm:aff-app}
together imply the following companion to 
Corollary~\ref{cor:hol-app}.
\begin{corollary}
\label{cor:aff-app}
If $a\in H_\ast(B\Sigma_{n_1,\ldots,n_r})$
is such that 
$\Phi^G(S_{n_1,\ldots,{n_r}}/B\Sigma_{n_1,\ldots ,n_r})^\sharp (a)$
is non-trivial for some abelian compact Lie group $G$, then
$(\alpha_{\Z})_\ast \zeta_\ast(a)\in H_\ast(B\Aff_N(\Z))$
is non-zero. 
If $G$ can be chosen to be an elementary abelian 2-group, then
$(\alpha_{\F_2})_\ast \zeta_\ast(a)\in H_\ast(B\Aff_N(\F_2))$
is non-zero.
\qed
\end{corollary}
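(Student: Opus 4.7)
The plan is simply to chain together Theorem~\ref{thm:hol-app} and Theorem~\ref{thm:aff-app} and observe that non-triviality propagates backwards along the resulting factorization. By Theorem~\ref{thm:hol-app}, the map $\Phi^G(S_{n_1,\ldots,n_r}/B\Sigma_{n_1,\ldots,n_r})^\sharp$ factors as
\[
	H_{*-\mathrm{shift}}(B\Sigma_{n_1,\ldots,n_r})
	\xrightarrow{\zeta_*}
	H_{*-\mathrm{shift}}(B\Hol(F_N))
	\xrightarrow{\Phi^G(U_N/B\Hol(F_N))^\sharp}
	\Hom_*(H_*(BG),H_*(BG)).
\]
Theorem~\ref{thm:aff-app} then asserts, under either hypothesis on $G$, a further factorization of the second arrow through $(\alpha_\Z)_*$ (when $G$ is an abelian compact Lie group) or through $(\alpha_{\F_2})_*$ (when $G$ is an elementary abelian 2-group).

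Concatenating, the operation $\Phi^G(S_{n_1,\ldots,n_r}/B\Sigma_{n_1,\ldots,n_r})^\sharp(a)$ is obtained by applying a (fixed) map in $\Hom_*(H_*(BG),H_*(BG))$-valued form to $(\alpha_\Z)_*\zeta_*(a)$ in the first case and to $(\alpha_{\F_2})_*\zeta_*(a)$ in the second. Hence if the former operation is non-zero, the corresponding element in $H_*(B\Aff_N(\Z))$ or $H_*(B\Aff_N(\F_2))$ must be non-zero as well, which is exactly the assertion of the corollary. There is no genuine obstacle in this step—the two input theorems have already done all the real work; the proof is a one-line diagram chase, and the only bookkeeping required is to match the two cases of $G$ with the two variants of $\alpha$ provided by Theorem~\ref{thm:aff-app}.
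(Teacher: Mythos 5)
Your argument is correct and is exactly the one the paper intends: the corollary is stated as following immediately from Theorems~\ref{thm:hol-app} and \ref{thm:aff-app}, and your one-line chaining of the two factorizations—with the observation that a non-zero output forces all intermediate images along the factorization to be non-zero—is precisely that argument.
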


By Theorem~\ref{thm:quillen-calc},
all positive-dimensional elements in the homology of $\Aff_N(\F_2)$
obtained in this way are unstable.
The following companion to 
Corollary~\ref{cor:hol-fam2}
exhibits unstable elements in the homology of $\Aff_N(\Z)$
as well. It follows from 
Theorems~\ref{thm:hol-app}, 
\ref{thm:aut-app} and
\ref{thm:aff-app},
Corollary~\ref{cor:hol-fam2},
and the observation that 
the maps $\alpha_\Z$ (resp.\ $\alpha_{\F_2}$) 
are compatible 
with the inclusions $\Hol(F_n) \incl \Hol(F_{n+1})$
and $\Aff_n(\Z) \incl \Aff_{n+1}(\Z)$
(resp.\  $\Aff_n(\F_2) \incl \Aff_{n+1}(\F_2)$).
\begin{corollary}
\label{cor:aff-app2}
Let $a\in H_\ast(B\Sigma_{n_1,\ldots,n_r})$
be such that 
$\Phi^G(S_{n_1,\ldots,{n_r}}/B\Sigma_{n_1,\ldots ,n_r})^\sharp (a)$
is non-trivial for some abelian compact Lie group $G$
which is positive dimensional or finite of even order.
Then 
\[
    (\alpha_{\Z})_\ast j \rho \zeta_\ast(a)
    = 
    (\alpha_{\Z})_\ast \zeta_\ast(a) + 
    (\alpha_{\Z})_\ast i_\ast\pi_\ast \zeta_\ast(a)
    \in 
    H_\ast(B\Aff_N(\Z))
\]
is a non-trivial unstable element.
If $G$ can be chosen to be an elementary abelian 2-group,
then 
\[
    (\alpha_{\F_2})_\ast j \rho \zeta_\ast(a)
    =
    (\alpha_{\F_2})_\ast \zeta_\ast(a) + 
    (\alpha_{\F_2})_\ast i_\ast\pi_\ast \zeta_\ast(a)
    \in 
    H_\ast(B\Aff_N(\F_2))
\]
is a non-trivial unstable element.
\qed
\end{corollary}

\begin{proof}[Proof of Theorem~\ref{thm:aff-app}]
Observe that for abelian $G$, the actions of 
$\Hol(F_N)\times G^{\{p\}}$ on $G^N$ 
and 
$\Hol(F_N)\times G^{\{p,q\}}$ on $G^N\times G$ given by 
\eqref{eq:holgp-action} and \eqref{eq:holgpq-action}
factor through the homomorphisms
\[
	\alpha_\Z \times 1 
	\colon
	\Hol(F_N)\times G^{\{p\}} 
	\longto 
	\Aff_N(\Z)\times G^{\{p\}}
\]
and
\[
	\alpha_\Z \times 1 
	\colon
	\Hol(F_N)\times G^{\{p,q\}} 
	\longto 
	\Aff_N(\Z)\times G^{\{p,q\}},
\]
respectively, inducing an action of
$\Aff_N(\Z)\times G^{\{p\}}$
on $G^N$ and an action of 
$\Aff_N(\Z)\times G^{\{p,q\}}$
on $G^N\times G$.
The diagram \eqref{diag:push-pull} 
for computing $\Phi^G(U_N/B\Hol(F_N))$
now fits into a commutative diagram
\begin{equation*}
\label{diag:push-pull3}
\vcenter{\xymatrix@!0@C=6.4em@R=5ex{
	&
	G^N\sslash \Hol(F_N)\times G^{\{p\}}
	\ar[ddl]_{!}
	\ar[ddd]
	&&
	\ar[ll]_-\homot
	G^N\times G \sslash \Hol(F_N)\times G^{\{p,q\}}
	\ar[ddr]
	\ar[ddd]
	\\
	\\
	\pt\sslash \Hol(F_N)\times G^{\{p\}}
	\ar[ddd]
	&&&&
	\pt\sslash G^{\{q\}}
	\ar[ddd]^\id
	\\
	&
	G^N\sslash \Aff_N(\Z)\times G^{\{p\}}
	\ar[ddl]_{!}
	&&
	\ar[ll]_-\homot
	G^N\times G \sslash \Aff_N(\Z)\times G^{\{p,q\}}
	\ar[ddr]
	\\
	\\
	\pt\sslash \Aff_N(\Z)\times G^{\{p\}}
	&&&&
	\pt\sslash G^{\{q\}}	
}}
\end{equation*}
where the maps in the bottom part of the diagram are
induced by the evident projection maps of groups and
spaces and where the unlabeled vertical arrows are induced by 
$\alpha_\Z$. The left-hand parallelogram is a pullback
square, and induces a commutative square after passing
to homology and taking umkehr maps of the maps labeled by $!$
and ordinary induced maps of the vertical maps.
When taking the umkehr maps, the lower map labeled by $!$
should be interpreted as a map of fibrewise manifolds 
over $\pt\sslash \Aff_N(\Z)\times G^{\{p\}}$.
The claim for abelian compact Lie groups now follows from
Lemma~\ref{lm:uopcomp}. The claim for elementary abelian 
2-groups follows in a similar way from the observation that
the actions 
\eqref{eq:holgp-action} and \eqref{eq:holgpq-action}
in this case factor through the homomorphisms
\[
	\alpha_{\F_2} \times 1 
	\colon
	\Hol(F_N)\times G^{\{p\}} 
	\longto 
	\Aff_N(\F_2)\times G^{\{p\}}
\]
and
\[
	\alpha_{\F_2} \times 1 
	\colon
	\Hol(F_N)\times G^{\{p,q\}} 
	\longto 
	\Aff_N(\F_2)\times G^{\{p,q\}},
\]
respectively.
\end{proof}


\bibliographystyle{amsalpha}
\bibliography{higher-operations.bib}

\end{document}